\newtheorem{Theorem}{Theorem}[section]
\newtheorem{Proposition}[Theorem]{Proposition}
\newtheorem{Lemma}[Theorem]{Lemma}
\newtheorem{Corollary}[Theorem]{Corollary}
\newtheorem{Claim}{Claim}[Theorem]
\theoremstyle{definition}
\newtheorem{Definition}[Theorem]{Definition}
\newtheorem{Remark}[Theorem]{Remark}
\newtheorem{Example}[Theorem]{Example}
\newtheorem{Conjecture}[Theorem]{Conjecture}
\newtheorem{Question}[Theorem]{Question}
\newtheorem{Assumption}[Theorem]{Assumption}
\renewcommand{\theTheorem}{\arabic{section}.\arabic{Theorem}}
\renewcommand{\theClaim}{\arabic{section}.\arabic{Theorem}.\arabic{Claim}}
\renewcommand{\theequation}{\arabic{section}.\arabic{equation}}
\theoremstyle{definition}
\newtheorem{Theorem}{Theorem}[section]
\newtheorem{Proposition}[Theorem]{Proposition}
\newtheorem{Lemma}[Theorem]{Lemma}
\newtheorem{Corollary}[Theorem]{Corollary}
\newtheorem{Claim}{Claim}[section]
\newtheorem{Definition}[Theorem]{Definition}
\newtheorem{Remark}[Theorem]{Remark}
\newtheorem{Example}[Theorem]{Example}
\newtheorem{Assumption}[Theorem]{Assumption}
\newcommand{\Proof}{{\sl Proof.}\quad}
\newcommand{\QED}{{\unskip\nobreak\hfil\penalty50\quad\null\nobreak\hfil
{$\Box$}\parfillskip0pt\finalhyphendemerits0\par\medskip}}
\renewcommand{\thesubsection}{\arabic{section}.\arabic{subsection}}
\renewcommand{\theTheorem}{\arabic{section}.\arabic{Theorem}}
\renewcommand{\theClaim}{\arabic{section}.\arabic{Claim}}
\renewcommand{\theequation}{\arabic{section}.\arabic{equation}}
\renewcommand{\a}{\alpha}
\renewcommand{\b}{\beta}
\renewcommand{\d}{\delta}
\newcommand{\D}{\Delta}
\newcommand{\ve}{\varepsilon}
\newcommand{\g}{\gamma}
\renewcommand{\k}{\kappa}
\newcommand{\vp}{\varphi}
\newcommand{\s}{\sigma}
\newcommand{\ga}{{\mathfrak{a}}}
\newcommand{\Bcal}{{\mathcal B}}
\newcommand{\Ccal}{{\mathcal C}}
\newcommand{\Ecal}{{\mathcal E}}
\newcommand{\Lcal}{{\mathcal L}}
\newcommand{\Ocal}{{\mathcal O}}
\newcommand{\bfH}{\mathbf{H}}
\newcommand{\bfJ}{\mathbf{J}}
\newcommand{\bfP}{\mathbf{P}}
\newcommand{\bfQ}{\mathbf{Q}}
\newcommand{\bfr}{\mathbf{r}}
\newcommand{\FF}{{\mathbb{F}}}
\newcommand{\ZZ}{{\mathbb{Z}}}
\newcommand{\QQ}{{\mathbb{Q}}}
\newcommand{\RR}{{\mathbb{R}}}
\newcommand{\CC}{{\mathbb{C}}}
\newcommand{\PP}{{\mathbb{P}}}
\newcommand{\OO}{{\mathcal{O}}}
\newcommand{\Spec}{\operatorname{Spec}}
\newcommand{\ord}{\operatorname{ord}}
\newcommand{\Pic}{\operatorname{Pic}}
\newcommand{\Gal}{\operatorname{Gal}}
\newcommand{\GL}{\operatorname{GL}}
\newcommand{\rest}[2]{\left.{#1}\right\vert_{{#2}}}
\newcommand{\norm}[1]{\Vert #1 \Vert}
\newcommand{\Aff}{\mathbb{A}}
\newcommand{\KK}{{\mathbb{K}}}
\newcommand{\Phibar}{\overline{\Phi}}
\newcommand{\Lcalbar}{\overline{\Lcal}}
\newcommand{\hhat}{{\hat{h}}}
\newcommand{\reduce}[1]{\widetilde{{#1}}}
\newcommand{\Res}{\operatorname{Res}}
\newcommand{\htilde}{\reduce{h}}
\DeclareMathOperator{\per}{Per}
\newenvironment{parts}[0]{%
  \begin{list}{}%
    {\setlength{\itemindent}{0pt}
     \setlength{\labelwidth}{1.5\parindent}
     \setlength{\labelsep}{.5\parindent}
     \setlength{\leftmargin}{2\parindent}
     \setlength{\itemsep}{0pt}
     }%
   }%
  {\end{list}}
\newcommand{\Part}[1]{\item[\upshape#1]}
\begin{document}

\title{Heights and periodic points for one-parameter families of H\'enon maps }
\author{Liang-Chung Hsia}
\address{Department of Mathematics, National
Taiwan Normal University, Taipei, Taiwan, ROC}
\email{hsia@math.ntnu.edu.tw}
\author{Shu Kawaguchi}
\address{Department of Mathematical Sciences, 
Doshisha University, Kyoto 610-0394, Japan}
\email{kawaguch@mail.doshisha.ac.jp}
\thanks{The first named author's research was supported by Grant MOST106-2115-M-003-014-MY2 and the second named author's research was supported by JSPS Grant 18H01114}
\date{}

\maketitle

\begin{abstract}
In this paper we study arithmetic properties of a one-parameter family $\bfH$ of H\'enon maps 
over the affine line. Given a family of initial points $\bfP$ satisfying a natural condition, 
we show the height function 
$h_{\bfP}$ associated to $\bfH$ and $\bfP$ is the restriction of the height function associated to 
a semipositive adelically metrized line bundle on projective line. We then show various local properties of 
$h_{\bfP}$. Next we consider the set $\Sigma(\bfP)$ consisting of periodic parameter values, 
and study when $\Sigma(\bfP)$ is an infinite set or not. We also study unlikely intersections 
of periodic parameter values. 
\end{abstract}

\section*{Introduction}
\label{sec:introduction}
\renewcommand{\theTheorem}{\Alph{Theorem}}

In this paper, we study arithmetic properties of families of 
H\'enon maps. 
A H\'enon map~$H$ over a field $K$ is an automorphism of the affine plane of the form 
\begin{equation}
\label{eqn:intro}
  H\colon \Aff^2 \to \Aff^2, \quad (x, y) \mapsto (\delta y + f(x), x), 
\end{equation}
where $\delta \in K \setminus \{0\}$ and $f(x) \in K[x]$ is a polynomial of degree $d \geq 2$. 
Up to conjugacy and a finite field extension, we may assume that $f(x)$ is a monic polynomial. 
H\'enon maps constitute an important class of affine plane automorphisms. Indeed, Friedland--Milnor \cite[Theorem~2.6]{FM} showed that any dynamically interesting affine plane automorphism (i.e. one with positive first dynamical degree) over $\CC$ is, up to conjugacy, compositions of H\'enon maps with monic $f(x)$. Properties 
of H\'enon maps over $\CC$ have been studied in e.g. \cite{bs, fs, ho} from among the vast literature. 
Arithmetic properties of H\'enon maps have also been studied in e.g. \cite{Denis, DF, ing, Ka06, Ka, Lee1, Lee, Marcello, SilHenon}. 

In this paper, we are interested in {\em families} of H\'enon maps. To be precise, we consider a one-parameter family $\bfH$ of H\'enon maps  over $\Aff^1 = \Spec(K[t])$, so that we replace $K$ in~\eqref{eqn:intro} with $K[t]$. Note that, to have the inverse $\bfH^{-1}$ over $\Aff^1$, $\delta \in K[t]\setminus \{0\}$ should belong to~$K \setminus \{0\}$, and our family is thus of the form 
\begin{equation}
\label{eqn:intro:2}
  \bfH \colon \Aff^2 \to \Aff^2, \quad (x, y) \mapsto (\delta y + f_t(x), x), 
\end{equation}
where $\delta \in K \setminus \{0\}$ and $f_t(x) \in (K[t])[x]$ is a monic polynomial in $x$ of degree $d \geq 2$ (with respect to $x$). We regard $\bfH$ as a family of H\'enon maps $H_t$ over $\overline{K}$, where $t$ runs through elements of an algebraic closure 
$\overline{K}$ of $K$. 

When $K$ is a field equipped with a set $M_K$ of inequivalent, non-trivial absolute values 
that satisfies the product formula, we have the canonical height function $\widetilde{h}_{H_t}$ for 
$H_t$ for each $t \in \overline{K}$, as we now explain. 
For each $v \in M_K$, we denote by $K_v$ the completion of $K$ with respect to $v$, and 
by $\KK_v$ the completion of an algebraic closure of $K_v$. 
Suppose that $v$ is archimedean, i.e., $\KK_v = \CC$. 
Then, as constructed and proven to be extremely useful in e.g. \cite{bs, fs, ho}, 
we have the (complex) Green function $G_{H_t, v}\colon 
\Aff^2(\CC) \to \RR_{\geq 0}$ defined by  
\begin{equation}
\label{eqn:intro:Green}
  G_{H_t, v}(P) = \max\left\{
  \lim_{n\to +\infty} \frac{1}{d^n} \log^+ \left| H_t^n(P)\right|, 
  \lim_{n\to +\infty} \frac{1}{d^n} \log^+ \left| H_t^{-n}(P)\right|
  \right\},
\end{equation}
where $P \in \Aff^2(\CC)$ and $\log^+(r) := \log \max \{1, r\}$ for any $r \in \RR$. 
When $v$ is nonarchimedean, we also have the $v$-adic Green function 
$G_{H_t, v}\colon \Aff^2(\KK_v) \to \RR_{\geq 0}$, and we have the specialized 
canonical height function $\widetilde{h}_{H_t}\colon \Aff^2(\overline{K}) \to \RR_{\geq 0}$ defined as 
the sum of these Green functions with some normalizing constants $n_v$ (see \S~\ref{sec:preliminaries} for details). 
Since $K(t)$ is the function field of $\PP^1$ over a field $K$, $K(t)$ is also naturally equipped with 
a set of absolute values that satisfies the product formula (see Example~\ref{eg:prod:ft}), and we have 
the function field canonical height 
$\widetilde{h}_{\bfH}\colon \Aff^2(\overline{K(t)}) \to \RR_{\geq 0}$ as well. 

Suppose we are given an initial point $\bfP \in \Aff^2(K[t])$, which we regard as 
a family of initial points $P_t := \bfP(t)$ with $t \in \overline{K}$. Then 
both the specialized canonical heights 
$\widetilde{h}_{H_t}(P_t) \in \RR_{\geq 0}$ for each $t \in \overline{K}$ and 
the function field canonical height $\widetilde{h}_{\bfH}(\bfP) \in \RR_{\geq 0}$ are defined.  
Assuming that $\widetilde{h}_{\bfH}(\bfP) \neq 0$, we put 
\begin{equation}
\label{eqn:intro:hP}
  h_{\bfP}(t) := \frac{\widetilde{h}_{H_t}(P_t)}{\widetilde{h}_{\bfH}(\bfP)}. 
\end{equation}

One of the main results of this paper is the following theorem on variation of canonical heights for H\'enon maps. 

\begin{Theorem}[see Theorem~\ref{thm:main2:revisited}]
\label{thm:main:A}
Let $K$ be a number field or the function field of an integral projective variety that is regular in codimension 
one over another field $F$. Let $\bfH$ and $\bfP$ be given as above such that 
$\widetilde{h}_{\bfH}(\bfP) \ne 0$.     
Then $h_{\bfP}$ is the restriction to $\Aff^1(\overline{K})$ of 
a height function on $\PP^1(\overline{K})$ associated to $\OO_{\PP^1}(1)$ with 
semipositive adelic metrics. In other words, there exists a semipositive 
adelically metrized line bundle $\Lcalbar_{\bfP} = (\Ocal_{\PP^1}(1),\{\norm{\cdot}_v\}_{v\in M_K})$ 
on the parameter space $\PP^1$ such that, if $h_{\Lcalbar_{\bfP}}$ denotes the height function associated to $\Lcalbar_{\bfP}$, then 
we have 
\[
  \widetilde{h}_{H_t}(P_t) = \widetilde{h}_{\bfH}(\bfP)\, h_{\Lcalbar_{\bfP}}(t) 
\]
for any $t \in \Aff^1(\overline{K})$. (For the definitions of a 
semipositive adelically metrized line bundle and the associated height function, see \S~\ref{subsec:function field henon}.)
\end{Theorem}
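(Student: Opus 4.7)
The plan is to construct, for each place $v\in M_K$, a continuous semipositive metric $\|\cdot\|_v$ on $\Ocal_{\PP^1}(1)$ whose associated local height reproduces $G_{H_t,v}(P_t)/\widetilde{h}_{\bfH}(\bfP)$, and then to verify that these metrics assemble into an adelic metric. Summing local contributions across $v\in M_K$ with the normalizing constants $n_v$ will then recover $\widetilde{h}_{H_t}(P_t)/\widetilde{h}_{\bfH}(\bfP)$ by the very definition of the specialized canonical height. This combines the Call--Silverman specialization philosophy with the adelic-metric viewpoint on canonical heights due to Zhang, Chambert-Loir and Yuan; the substance of the proof lies in showing that the local Green function $\lambda_v(t):=G_{H_t,v}(P_t)$, viewed as a function of the parameter $t$, extends to a continuous semipositive metric on the Berkovich analytification $\PP^{1,\mathrm{an}}_v$.

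First I would analyse $\lambda_v(t)$ for large $|t|_v$. Since the entries of $\bfP$ and the coefficients of $f_t$ are polynomials in $t$, a straightforward iteration estimate applied to the defining limit~\eqref{eqn:intro:Green} of $G_{H_t,v}$ shows that $\lambda_v(t)-\widetilde{h}_{\bfH}(\bfP)\log^+|t|_v$ is bounded, uniformly on $\PP^1(\KK_v)$ minus a neighbourhood of the poles of $\bfP$. The precise coefficient $\widetilde{h}_{\bfH}(\bfP)$ in front of $\log^+|t|_v$ is forced by the product formula on $K(t)$ together with the function-field interpretation of $\widetilde{h}_{\bfH}(\bfP)$ as the sum of its place-wise Green contributions over $M_{K(t)}$. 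This extends $\lambda_v$ to a continuous function on $\PP^{1,\mathrm{an}}_v$ and lets me define $\|\cdot\|_v$ on $\Ocal_{\PP^1}(1)$ by declaring $-\log\|s_\infty\|_v(t)=\lambda_v(t)/\widetilde{h}_{\bfH}(\bfP)$ for the tautological section $s_\infty$ with divisor $[\infty]$, up to a fixed reference term.

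Next I would establish semipositivity. At archimedean places this reduces to plurisubharmonicity of $\lambda_v$ in $t$, which is immediate from~\eqref{eqn:intro:Green}: $\lambda_v$ is the maximum of the two decreasing limits $\frac{1}{d^n}\log^+|H_t^{\pm n}(P_t)|$, each of which is plurisubharmonic in $t$, and such limits preserve plurisubharmonicity. At nonarchimedean places the functional equations $G_{H_t,v}\circ H_t = d\, G_{H_t,v}$ together with the iteration estimate above realise $\lambda_v$ as a uniform limit on Berkovich compacta of explicit model functions arising from positive line bundles on projective models, and the uniform limit of semipositive model metrics is semipositive. The adelic condition is then verified by observing that at all but finitely many $v$, the family $\bfH$ and the point $\bfP$ have good reduction, in which case one can compute $\lambda_v(t)=\widetilde{h}_{\bfH}(\bfP)\log^+|t|_v$ exactly, so that $\|\cdot\|_v$ agrees with the tautological metric on $\Ocal_{\PP^1}(1)$.

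The main obstacle I anticipate is the nonarchimedean semipositivity step. The archimedean plurisubharmonicity of Green functions in parameters is classical in complex dynamics, but its Berkovich counterpart for H\'enon maps is more delicate: unlike for polynomial maps of $\PP^1$, the H\'enon Green function is built from two competing escape rates (the forward and backward iterations in~\eqref{eqn:intro:Green}), so one must prove uniform convergence of each piece on Berkovich compacta and then carefully handle the maximum, all while tracking the behaviour across $t=\infty$. Once this analytic package is in place, the height identity $\widetilde{h}_{H_t}(P_t)=\widetilde{h}_{\bfH}(\bfP)\,h_{\Lcalbar_{\bfP}}(t)$ is a formal consequence of the construction, by decomposing both sides into local contributions.
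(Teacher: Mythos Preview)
Your overall strategy---define the local metric via $\lambda_v(t)=G_{H_t,v}(P_t)$, verify continuity and the right growth at $\infty$, check semipositivity place by place, and then observe that the sum of local terms recovers $\widetilde{h}_{H_t}(P_t)$---is exactly the shape of the paper's argument. But two of your key steps are either incorrect or not yet a proof, and the paper's remedy for both is the same structural device that you do not mention.

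First, the archimedean semipositivity step is wrong as written. The sequences $\frac{1}{d^n}\log^+\Vert H_t^{\pm n}(P_t)\Vert$ are \emph{not} decreasing in $n$; the convergence to $G_v^{\pm}(P_t)$ is obtained by a telescoping bound, not by monotonicity. Moreover, even if you establish that $\lambda_v$ is plurisubharmonic in $t$, that by itself does not yield semipositivity in Zhang's sense, which demands an approximation by \emph{smooth} metrics with semipositive curvature (condition~(b) in \S\ref{subsec:function field henon}). You still have to produce those smooth approximants.

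Second, at nonarchimedean places you assert that $\lambda_v$ is a uniform limit of ``explicit model functions arising from positive line bundles on projective models,'' but you do not say what those models are. This is the heart of the matter, and the paper resolves both issues at once by introducing the morphisms
\[
\overline{\Phi}_n\colon \PP^1 \longrightarrow \PP^4,\qquad (t:1)\mapsto \bigl(H_t^n(P_t):H_t^{-n}(P_t):1\bigr),
\]
and defining the $n$-th metric on $\Ocal_{\PP^1}(1)$ as $\bigl(\overline{\Phi}_n^{\,*}\Vert\cdot\Vert\bigr)^{1/\ell_n}$ with $\ell_n=\deg\overline{\Phi}_n$. At archimedean places one pulls back the Fubini--Study metric, so each approximant is smooth with semipositive curvature automatically; at nonarchimedean places the model is the pullback of $\Ocal_{\PP^4_B}(1)$ through (the normalization of) $\overline{\Phi}_n$, which is relatively ample and hence relatively nef. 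The substantive analytic work then becomes proving that these pulled-back metrics converge \emph{uniformly on all of $\PP^1(\KK_v)$}, including at $\infty$ (Theorem~\ref{thm:main:Mn}). The paper does this by splitting $\Aff^1(\KK_v)$ into a bounded disk $\Bcal_L$ and its complement, and comparing $M_{n+1}(t)$ with $M_n(t)^d$ on each piece via the H\'enon filtration $V_L^{\pm}$ (Lemma~\ref{lem:property:Henon}, Propositions~\ref{prop:estimate:Mn:lower} and~\ref{prop:Mn:uniform bound}). Your ``straightforward iteration estimate'' for the growth at $\infty$ is precisely this unbounded-region analysis, and it is not quite straightforward: it requires tracking the degrees of $A_n(t)$ and $B_n(t)$ and the leading coefficients (Proposition~\ref{prop:on:assumption}, Remark~\ref{rem:alphaD-betaD}).

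In short, your outline is correct, but the missing ingredient is the morphisms $\overline{\Phi}_n$: they simultaneously supply the smooth archimedean approximants, the nonarchimedean models, and reduce the whole problem to a single uniform-convergence statement whose proof occupies Section~\ref{sec:valued fields}.
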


Theorem~\ref{thm:main:A} may best be compared with variation of canonical heights for families of 
elliptic curves.  Let $X$ (resp. $C$) be a smooth projective surface (resp. curve) over a number field $K$, 
and let $\pi\colon X \to C$ be an elliptic surface. For simplicity, we assume that 
$C = \PP^1$. Let $U$ be a Zariski open subset of $\PP^1$ such that $E_t:= \pi^{-1}(t)$ is 
an elliptic curve for any $t \in U(\overline{K})$. We denote by $\mathbf{E}$ the generic fiber of $\pi$, which is an elliptic curve over $K(\PP^1)$. Let $\bfP\colon \PP^1 \to X$ be a section. 
Then we have the N\'eron--Tate heights $\widehat{h}_{\mathbf{E}}(\bfP)$ and $\widehat{h}_{E_t}(P_t)$ for 
$t \in U(\overline{K})$.  Silverman \cite{silverman} and Tate \cite{tate} showed, among other things,  that 
\begin{equation}
\label{eqn:intro:variation:ell}
  \widehat{h}_{E_t}(P_t) = \widehat{h}_{\mathbf{E}}(\bfP) h(t) + O(1) \quad \text{for any $t \in U(\overline{K})$},
\end{equation}
where $h(t)$ is the (usual) Weil height function on $\PP^1$. For the dynamical setting, Ingram \cite{ing13} 
showed a similar equality for a family of one-variable polynomial maps. In a subsequent paper, 
Ingram~\cite{ing} considered a family of H\'enon maps and obtains similar results  (see 
Theorem~\ref{thm:Ingram:variation}). 

For applications to unlikely intersection problems and further generalizations (see 
\S~\ref{subsec:intro:periodic:para}-\ref{subsec:intro:unlikely} below), it has been noticed that it is important that there is a ``nice'' height function without the $O(1)$ term in \eqref{eqn:intro:variation:ell}  or in its dynamical counterparts, just as it is important that N\'eron-Tate heights and canonical heights in dynamical systems are defined without the $O(1)$ term. In the setting of \eqref{eqn:intro:variation:ell}, assuming that $\widehat{h}_{\mathbf{E}}(\bfP) \neq 0$ and putting $h_{\bfP}(t) :=  \widehat{h}_{E_t}(P_t)/\widehat{h}_{\mathbf{E}}(\bfP)$ (for $t \in U(\overline{K})$), the question is whether or not $h_{\bfP}$ is the restriction to $U(\overline{K})$ of a ``nice'' height function on $\PP^1(\overline{K})$. This question has been 
answered in the affirmative in DeMarco--Wang--Ye \cite[Theorem~1.5]{DWY} and DeMarco--Mavraki \cite{DM}, which gives an alternate proof 
of Masser--Zannier's unlikely intersections of simultaneous torsion sections of elliptic curves 
\cite{MZ1, MZ2, MZ3} for the Legendre family. 
For various families of one-variable polynomial maps, this question has also been answered in the 
affirmative (see \S~\ref{subsec:intro:periodic:para}-\ref{subsec:intro:unlikely} below). However, this question 
is still widely open for families of rational self-maps of  higher dimensional varieties. 
Theorem~\ref{thm:main:A} gives a refinement of Ingram's result 
($=$ Theorem~\ref{thm:Ingram:variation} 
with $\widetilde{h}$ in place of $\widehat{h}$),
and answers this question in the affirmative for a family of H\'enon maps. 

\medskip
In the following, we explain some properties and applications of $h_{\bfP}$ in \eqref{eqn:intro:hP}. 

\subsection{Local (i.e., $v$-adic) properties of $h_{\bfP}$}
\label{subsec:intro:local}
Let $K$ be a field equipped with a set $M_K$ of inequivalent, non-trivial absolute values 
that satisfies the product formula. 
Then, for each $v \in M_K$,  we have the $v$-adic Green function 
$G_{H_t, v}\colon \Aff^2(\KK_v) \to \RR_{\geq 0}$ as explained above (cf. \eqref{eqn:intro:Green}). 
We set 
\[
  G_{\bfP, v}(t) := G_{H_t, v}(P_t)
\] 
for any $t \in \Aff^1(\KK_v)$.  Let $|\cdot|_v$ denote the $v$-adic absolute value, and 
we write $\Vert \mathbf{a} \Vert_v := \max\{|a_1|_v, \ldots, |a_k|_v\}$ 
for $\mathbf{a} = (a_1, \ldots, a_k) \in \KK_v^k.$ We define
\begin{align*}
W_{\bfP, v} & := \left\{t \in \Aff^1(\KK_v)
\;\left|\; \lim_{n\to+\infty} \Vert (H_t^n(P_t), H_t^{-n}(P_t)) \Vert_v = +\infty \right.\right\}, \\
K_{\bfP, v} & := \left\{t \in \Aff^1(\KK_v)
\;\left|\; \text{the sequence $\{H_t^n(P_t)\}_{n \in \ZZ}$ is bounded}
\right. \right\}. 
\end{align*}

Using (the proof of) Theorem~\ref{thm:main:A}, we obtain the following.  

\begin{Theorem}[see Proposition~\ref{prop:decomposition}, Theorem~\ref{thm:K:P:v:characterization}, Corollary~\ref{cor:thm:main:Mn}, Remark~\ref{rmk:cor:thm:main:Mn}]
\label{thm:intro:local}
\begin{parts}
\Part{(1)}
Set $c := \widetilde{h}_{\bfH}(\bfP)$. Then $c$ is a rational number. The function $h_{\bfP}$ decomposes into the sum of $v$-adic functions $G_{\bfP, v}$ with coefficients $n_v/c$ (see~\eqref{eqn:can:height:tilde:2} and \eqref{eqn:can:height:tilde:3}).
Further, 
for any $v \in M_K$, 
\[
  G_{\bfP, v}(t) - c \log |t|_v 
\]
converges as $t \to \infty$. 
\Part{(2)}
We have $\Aff^1(\KK_v) = W_{\bfP, v} \,\amalg\, K_{\bfP, v}$, and 
$K_{\bfP, v}$ is exactly the set of points where $G_{\bfP, v}$ vanish:
\[
  K_{\bfP, v} = \{t \in \Aff^1(\KK_v) \mid G_{\bfP, v}(t) =0 \}. 
\]
\end{parts}
\end{Theorem}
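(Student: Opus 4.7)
The theorem splits into three assertions in (1) and two in (2), which I would handle largely independently. The decomposition of $h_{\bfP}$ is immediate from the definition $\widetilde{h}_{H_t}(P_t) = \sum_{v\in M_K} n_v G_{H_t,v}(P_t)$ upon dividing by $c := \widetilde{h}_{\bfH}(\bfP)$. For rationality of $c$, I would expand the function field canonical height as a sum of local contributions $G_{\bfH,v}(\bfP)$ over the places $v$ of $K(t)/K$. Because $\bfP\in\Aff^2(K[t])$ has polynomial coordinates and the inverse H\'enon map is again polynomial, every iterate $\bfH^{\pm n}(\bfP)$ lies in $K[t]^2$; hence $\Vert\bfH^{\pm n}(\bfP)\Vert_v\le 1$ at every finite place of $K(t)/K$, so only the place $v_\infty$ at infinity contributes. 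At $v_\infty$, $d^{-n}\log^+\Vert\bfH^{\pm n}(\bfP)\Vert_{v_\infty}$ is $d^{-n}$ times the maximum of the degrees of the two coordinate polynomials (times the normalizing constant), and the limit is a rational number; hence $c\in\QQ$.

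For the asymptotic at $\infty$, I invoke Theorem~\ref{thm:main:A} decisively. It supplies, at each $v$, a continuous semipositive adelic metric $\Vert\cdot\Vert_v$ on $\Ocal_{\PP^1}(1)$, and its construction identifies $G_{\bfP,v}(t) = c\,\lambda_{\Lcalbar_{\bfP},v}(t)$ on $\Aff^1(\KK_v)$, where $\lambda_{\Lcalbar_{\bfP},v}$ is the local height. Taking the standard section $x_1$ of $\Ocal_{\PP^1}(1)$ that vanishes only at $\infty$ and writing $\Vert\cdot\Vert_v = \Vert\cdot\Vert_{v,\mathrm{std}}\cdot e^{-g_v}$ for a continuous function $g_v$ on $\PP^1(\KK_v)$, one obtains $\lambda_{\Lcalbar_{\bfP},v}(t) = \log^+|t|_v + g_v(t)$, whence
\begin{equation*}
G_{\bfP,v}(t) - c\log|t|_v = c\bigl(\log^+|t|_v - \log|t|_v\bigr) + c\,g_v(t).
\end{equation*}
The first term vanishes for $|t|_v\ge 1$ in the non-archimedean case and tends to $0$ as $t\to\infty$ in the archimedean case, and $g_v(t)\to g_v(\infty)$ by continuity at $\infty$, so the difference converges to the finite value $c\,g_v(\infty)$.

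Part (2) is a local statement about $v$-adic H\'enon dynamics at a fixed parameter $t$, which I would derive from the classical H\'enon filtration over $\KK_v$: there exist $R>0$ and regions $V^+ = \{|y|_v > \max(|x|_v,R)\}$, $V^- = \{|x|_v > \max(|y|_v,R)\}$ and $V = \{|x|_v,|y|_v\le R\}$ with $H_t(V^+)\subset V^+$, $H_t^{-1}(V^-)\subset V^-$, and strict norm-escape on $V^\pm$. Tracking $P_t$ through the filtration shows that exactly one of the following occurs: the forward orbit eventually enters $V^+$ (so $\Vert H_t^n(P_t)\Vert_v\to\infty$), the backward orbit enters $V^-$ (so $\Vert H_t^{-n}(P_t)\Vert_v\to\infty$), or the two-sided orbit stays in $V$ forever. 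The first two cases give $t\in W_{\bfP,v}$ and the last $t\in K_{\bfP,v}$, yielding $\Aff^1(\KK_v) = W_{\bfP,v}\amalg K_{\bfP,v}$. For the vanishing characterization, decompose $G_{H_t,v} = \max(G_{H_t,v}^+,G_{H_t,v}^-)$ into the forward and backward Green functions; each vanishes precisely on points with bounded forward, resp.\ backward, orbit, so $G_{\bfP,v}(t)=0$ iff both orbits of $P_t$ are bounded, iff $t\in K_{\bfP,v}$.

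\emph{The main obstacle} is the asymptotic at $\infty$: one must interpret $G_{\bfP,v}$ as $c$ times the local height for a genuinely continuous semipositive adelic metric, which is available only through Theorem~\ref{thm:main:A}. It is precisely this ingredient that upgrades an Ingram-style $O(1)$ estimate to a true limit. The other two parts are relatively routine: rationality reduces to degree computations for polynomial orbits, and part (2) uses only the H\'enon filtration together with the defining property of Green functions.
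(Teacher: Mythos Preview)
Your approach is essentially the same as the paper's: rationality of $c$ via degree computations at the infinite place of $K(t)$ (Proposition~\ref{prop:on:assumption}), the decomposition directly from the definition (Proposition~\ref{prop:decomposition}), the asymptotic at $\infty$ from continuity of the adelic metric on $\PP^1$ furnished by Theorem~\ref{thm:main:A} (Corollary~\ref{cor:thm:main:Mn}), and part~(2) from the H\'enon filtration together with the characterization $G^\pm=0\Leftrightarrow$ bounded orbit (the paper simply cites \cite[Theorem~3.1]{Ka} for the latter). Two small slips to fix: for the map $H_t(x,y)=(\delta y+f_t(x),x)$ the forward-invariant escape region is $\{|x|_v>\max(|y|_v,R)\}$, not $\{|y|_v>\max(|x|_v,R)\}$, so your labels $V^+,V^-$ are swapped; and the three alternatives you list are not mutually exclusive (both forward and backward orbits can escape), so replace ``exactly one'' by the dichotomy ``either the two-sided orbit is bounded, or at least one direction escapes'', which is what the disjoint union actually requires.
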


We note that, in the case of one-parameter families of elliptic curves, the continuity at the 
singular fibers was shown by Silverman \cite{Sil1, Sil2, Sil3}, which provided a basis for \cite{DM}. 

\begin{Remark}
The question of rationality of canonical heights associated to dynamical systems over function 
fields has been investigated in~\cite{DG} for one-variable dynamics.  In the case of H\'enon maps, the 
canonical height $\widetilde{h}_{\bfH}(\bfP)$ appearing in Theorem~\ref{thm:intro:local} provides an instance 
of rational canonical height for higher dimensional dynamics. It would be interesting to study whether or not the 
property of rationality of canonical height  holds for general H\'enon maps $\bfH$ and points $\bfP$ over a 
function field. 
\end{Remark}

\medskip
As an illustration of $K_{\bfP, v}$, Figure~1 below, drawn with Qfract (by Hiroyuki Inou), concerns the family of quadratic H\'enon maps $\bfH = (H_t)_{t \in \overline{\QQ}}\colon (x, y) \mapsto (y +x^2 + t, x)$ and the (constant) family of initial points $\bfP = (0, 0)$ centered around $t=0$. Here $K = \QQ$ and 
$\KK_v = \CC$. 
Shades depend on how fast the orbit $\{H^n_t((a, b))\}$ escapes 
as $|n|$ becomes large (the escaping rate is very small in the black region). 
Note that, at the center $t = 0$ of the images, the point $(0, 0)$ is fixed with respect to $H_0$. Note also that, 
for this example, there are infinitely many parameter values $t$ such that 
$(0, 0)$ is periodic  respect to $H_t$ (see Theorem~\ref{thm:intro:infiniteness} below). 

\begin{figure}[htb!]
  \includegraphics[width=7.0cm, bb=0 0 640 640]{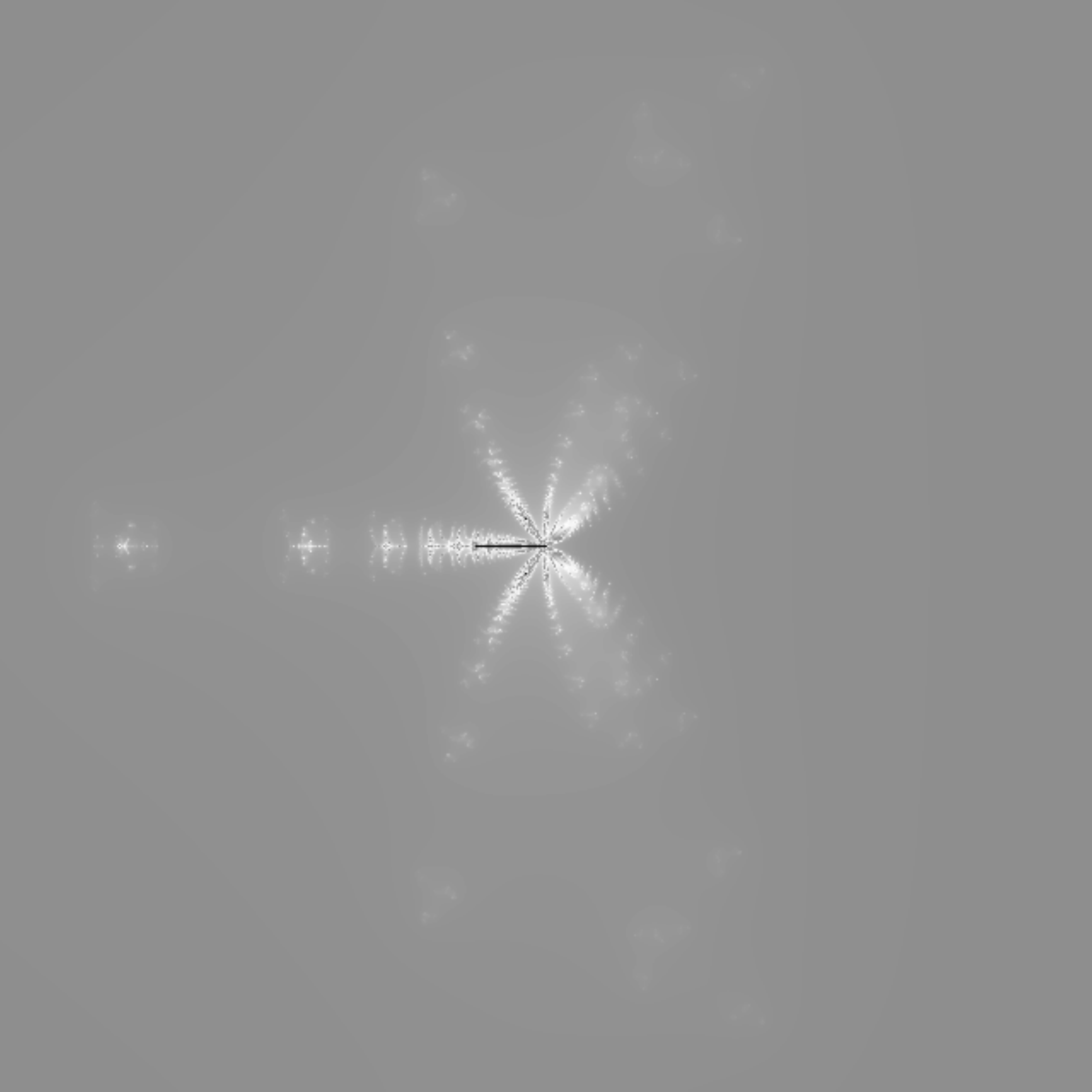}\hspace{2eM}
  \includegraphics[width=7.0cm, bb=0 0 640 640]{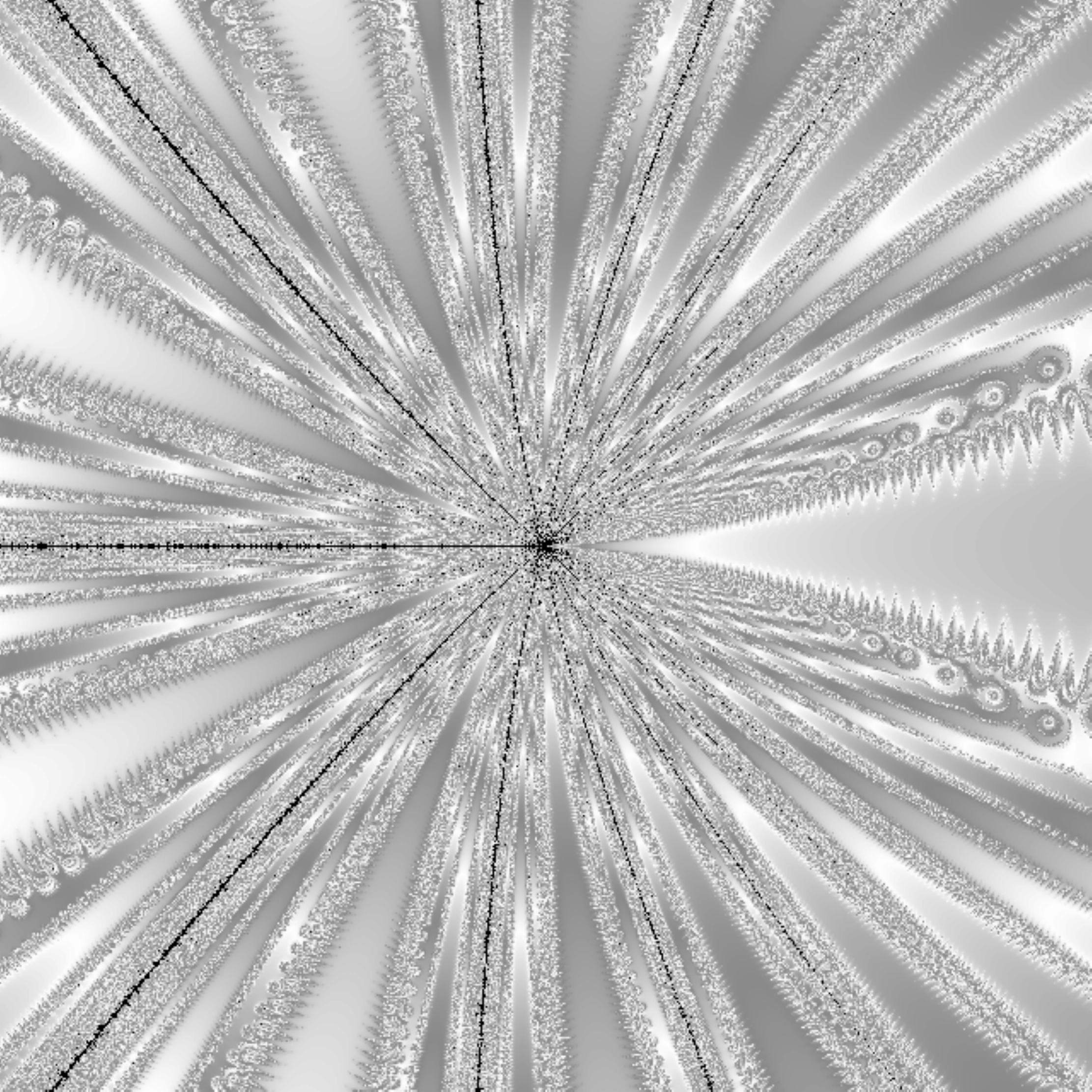}
  \caption{The region on the left is $\{t \in \CC \mid |{\rm Re}(t)| \leq 1, \, 
   |{\rm Im}(t)| \leq 1\}$, and that on the right is $\{t \in \CC \mid |{\rm Re}(t)| \leq 0.01, \, 
   |{\rm Im}(t)| \leq 0.01\}$.}
\end{figure}

\subsection{The set of periodic parameter values}
\label{subsec:intro:periodic:para}
To study some global properties of $h_{\bfP}$, we introduce the 
set $\Sigma(\bfP)$ of periodic parameter values: 
\[
  \Sigma(\bfP) := \{t \in \Aff^1(\overline{K}) \mid \text{$P_t$ is periodic with respect to $H_t$}\}. 
\]
We remark that, if $K$ is a number field, then 
the Northcott property implies that 
$\Sigma(\bfP)$ is exactly the set of points where $h_{\bfP}$ vanish (see 
Proposition~\ref{prop:periodic parameter}):
\[
  \Sigma(\bfP) = \{t \in \Aff^1(\overline{K}) \mid h_{\bfP}(t) =0 \}. 
\]

Now let $\bfQ\in \Aff^2(K[t])$ be another initial point with $\widetilde{h}_{\bfH}(\bfQ) \neq 0$. We 
consider a question  
if there exist infinitely many $t\in \overline{K}$ such that both $P_t$ and $Q_t$ are simultaneously periodic with respect to $H_t$, i.e., {whether or not $\Sigma(\bfP) \cap \Sigma(\bfQ)$ is an infinite set. 

This question is about the study of unlikely intersections in arithmetic dynamics, which was initiated by 
Baker--DeMarco \cite{Matt-Laura} with motivation from Masser--Zannier's study of unlikely intersections 
of simultaneous torsion sections of elliptic curves \cite{MZ1, MZ2, MZ3} and  the Pink--Zilber conjecture in arithmetic geometry (see \cite{Zannier}). Unlikely intersection in arithmetic dynamics has since been further and beautifully explored in e.g. \cite{BD, demarco, DWY, ght1, ght2, ght3, fg}. 
 
For the present family of H\'enon maps, however, we encounter a phenomenon that is not observed in families of one-variable maps, in which the set corresponding $\Sigma(\bfP)$ is an 
infinite set. (This follows from Montel's theorem on normal families in one-variable complex analysis.)
In our case, as we now explain, $\Sigma(\bfP)$ can be either an infinite set or a finite set.  

First we explain our result toward infiniteness of $\Sigma(\bfP)$. Inspired by the paper Dujardin--Favre \cite{DF} on dynamical Manin--Mumford conjecture for affine plane polynomial automorphisms, we suppose that our family $\bfH$ has an involution $\iota\colon \Aff^2 \to \Aff^2$ over $K$ such that 
$\iota \circ \bfH \circ \iota =  \bfH^{-1}$. 
Taking the Jacobian, we get $\delta \in \{1, -1\}$. It is straightforward to check that the involution
$\iota_{\d}\colon (x, y) \mapsto (-\d y, -\d x) $ reverses $\bfH$ provided that $f_t (-\d x) = f_t(x).$ Note that this condition gives non-trivial restriction on $f_{t}(x)$ only when $\d = 1.$  
Let the line 
\[
   C_\delta\colon \, \delta x +  y = 0
\]
in $\Aff^2$ denote the set of $\iota_{\d}$-fixed points for $\delta = 1, -1$. Then we have the following result. 

\begin{Theorem}[$=$ Theorem~\ref{thm:infiniteness:Sigma:P}]
\label{thm:intro:infiniteness}
Let $K$ be a field (of any characteristic). 
Let $\bfH$ be the family of H\'enon maps in \eqref{eqn:intro:2} such that 
$\delta \in \{1, -1\}$. When $\delta = 1$, we assume that $f_t(x)$ is an 
even polynomial in $x$. Let $\bfP = (a(t), b(t)) \in \Aff^2(K[x])$. 
If $\bfP$ lies in $C_\delta$, i.e., $\delta a(t) + b(t) = 0$ in $K[t]$, 
then $\Sigma(\bfP)$ is an infinite set. 
\end{Theorem}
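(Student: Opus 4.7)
The plan is to use the reversing symmetry to convert the two-dimensional periodicity condition into the vanishing of a single polynomial in $t$, and then to exploit the exponential degree growth of these polynomials together with an arithmetic trick using prime iterate indices.

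Since $\iota_\delta \circ \bfH \circ \iota_\delta = \bfH^{-1}$ and $\iota_\delta(\bfP) = \bfP$ (the hypothesis $\bfP\in C_\delta$), one gets $\iota_\delta(\bfH^n(\bfP)) = \bfH^{-n}(\bfP)$ for every $n\ge 0$, so, since $C_\delta$ is exactly the fixed locus of $\iota_\delta$,
\[
\bfH^n(\bfP) \in C_\delta \;\Longleftrightarrow\; \bfH^n(\bfP) = \bfH^{-n}(\bfP) \;\Longleftrightarrow\; \bfH^{2n}(\bfP) = \bfP.
\]
Writing $\bfH^n(\bfP) = (x_n(t), y_n(t)) \in K[t]^2$ and setting $\phi_n(t) := \delta\, x_n(t) + y_n(t) \in K[t]$, the above translates into the identity
$Z_n := \{t \in \Aff^1(\overline{K}) : \phi_n(t)=0\} = \{t : H_t^{2n}(P_t)=P_t\}.$
Hence $Z_n\subset \Sigma(\bfP)$, and conversely any $t\in\Sigma(\bfP)$ with period $N$ lies in $Z_N$, giving $\Sigma(\bfP)=\bigcup_{n\ge 1} Z_n$.

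If $\bfP$ is already periodic as a family over $K(t)$, then $\Sigma(\bfP)=\Aff^1(\overline{K})$ and we are done. Otherwise, $\phi_n\not\equiv 0$ for every $n\ge 1$, and using the H\'enon recursion $x_{n+1}=\delta y_n + f_t(x_n),\ y_{n+1}=x_n$ together with $\deg_x f_t = d\ge 2$, I would show by induction that $\deg_t x_n$ grows exponentially, so $\alpha_n:=\deg_t \phi_n \to \infty$. Hence each $\phi_n$ splits into $\alpha_n$ linear factors over $\overline{K}[t]$, all of whose roots lie in $\Sigma(\bfP)$.

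To conclude that $\Sigma(\bfP)$ is infinite, I would argue by contradiction: assume $\Sigma(\bfP)=\{s_1,\ldots,s_m\}$ is finite, with exact periods $N_1,\ldots,N_m$. For a prime $p>\max_i N_i$, the condition $\phi_p(s_i)=0$ requires $N_i\mid 2p$, which forces $N_i\in\{1,2\}$; hence all roots of $\phi_p$ lie in the fixed finite set $S_{\le 2}:=\{s_i:N_i\le 2\}$. Since $|S_{\le 2}|$ is a fixed constant while $\deg \phi_p\to \infty$, the multiplicity of $\phi_p$ at some $s\in S_{\le 2}$ must grow without bound as $p\to\infty$. A local Taylor expansion of $H_t^p(P_t)$ around $t=s$, via the chain rule, writes $\partial_t H_t^p(P_t)\big|_{t=s}$ in terms of powers of the Jacobian $A:=DH_s(P_s)$ applied to the fixed data $\partial_t\bfP(s)$, $\partial_t \bfH|_{(s,P_s)}$, etc. Since $\det A=\pm 1$ and generically the eigenvalues of $A$ avoid roots of unity, this would yield a uniform bound on the order of vanishing of $\phi_p$ at $s$, contradicting the unbounded multiplicity growth.

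The main obstacle is this uniform multiplicity bound in the degenerate cases where $A$ (or an iterate $A^{N_i}$) has $1$ as an eigenvalue, i.e.\ when the cycle is neutral. In those cases the leading-order expansion degenerates and one must pass to higher-order Taylor coefficients, using the genuine $t$-dependence of $\bfH$ (or of the section $\bfP$) at $s$ in an essential way to produce a nonvanishing derivative of bounded order; this is the delicate algebraic ingredient that underlies the whole argument.
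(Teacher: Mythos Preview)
Your approach is essentially the paper's: reduce via the reversing symmetry to the single polynomial $\phi_n(t)=\delta A_n(t)+A_{n-1}(t)$, use degree growth, and then control multiplicities at the roots of $\phi_1$ via a linearization involving the Jacobian. However, the way you propose to close the multiplicity argument is more complicated than necessary and, as you yourself note, gets stuck in the neutral case.

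The paper does \emph{not} try to bound the multiplicity uniformly for all primes $p$, and in particular never passes to higher-order Taylor terms. Instead it uses the following trick. For each root $\alpha$ of $\phi_1$ one writes $\bfH^2(\bfP)=\bfP+(\varepsilon,\zeta)$ with $e:=\ord_\alpha(\varepsilon,\zeta)\ge 1$, and then, expanding the \emph{spatial} map $\bfH^2$ around $\bfP$ (not in the parameter $t$), obtains
\[
\bfH^{2p}(\bfP)=\bfP+(\varepsilon,\zeta)\sum_{i=0}^{p-1}\Psi_2(t)^{\,i}+\bfr_p,\qquad \ord_\alpha(\bfr_p)\ge 2e,
\]
where $\Psi_2(t)=J_{\bfH^2}(\bfP)$ is the Jacobian of $\bfH^2$ at $\bfP$. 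Now let $\xi_1,\xi_2$ be the eigenvalues of $\Psi_2(\alpha)$. Define $D_2$ as the product, over the finitely many roots $\alpha$ of $\phi_1$, of the multiplicative orders of $\xi_1,\xi_2$ (taking order $1$ if not a root of unity), multiplied by $\operatorname{char}K$ if positive. For any prime $p$ coprime to $D_2$, the matrix $\sum_{i=0}^{p-1}\Psi_2(\alpha)^i$ has determinant $\prod_j(1+\xi_j+\cdots+\xi_j^{p-1})\neq 0$, hence is a unit in $\overline{K}[t]_{(t-\alpha)}$, and one gets the exact equality
\[
\ord_\alpha\bigl(\bfH^{2p}(\bfP)-\bfP\bigr)=\ord_\alpha\bigl(\bfH^2(\bfP)-\bfP\bigr).
\]
This immediately gives that $\phi_1$ divides $\phi_p$ to \emph{exactly} the first power, so the quotient $W_{2p}:=\phi_p/\phi_1$ is a polynomial coprime to $\phi_1$ of positive degree (by the degree growth), and any root $\beta$ of $W_{2p}$ furnishes a parameter where $P_\beta$ has period $p$ or $2p$. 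Varying $p$ over primes coprime to $D_2$ yields infinitely many distinct periods, hence infinitely many distinct $\beta$.

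So the gap in your argument is not a missing higher-order analysis; it is the observation that you may restrict to primes $p$ avoiding a single fixed modulus, at which point the first-order (linearized) expansion is already nondegenerate and no neutral-case analysis is needed.
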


We remark that, for 
the quadratic H\'enon maps we consider in Figure~1, Theorem~\ref{thm:intro:infiniteness} implies that $\Sigma((0, 0))$ is an infinite set. 
It is natural to ask if $\Sigma((a, b))$ is a finite set when $a + b \neq 0$. 
In this opposite direction, we obtain, among other things, the following 
result on {\em emptiness} of $\Sigma(\bfP)$. 

\begin{Proposition}[$=$ Proposition~\ref{prop:resultant:is:monic}]
\label{prop:intro:finiteness}
Let $\bfH\colon (x, y) \mapsto (y + x + t^2, x)$ be the family of quadratic H\'enon maps  over $\QQ$ (or any field of characteristic zero). 
Then, for any 
$b \notin \overline{\ZZ}$ we have $\Sigma((0, b)) = \emptyset$ where 
$\overline{\ZZ}$ denotes the ring of algebraic integers.
\end{Proposition}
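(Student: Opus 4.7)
The plan is to reduce the question to a resultant computation and then to establish that this resultant is monic over $\ZZ$. Setting $u = t^2$, define polynomials $x_k(u, b) \in \ZZ[u, b]$ by the recurrence $x_{-1} = b$, $x_0 = 0$, $x_{k+1} = x_{k-1} + x_k^2 + u$. An easy induction shows that for $k \geq 1$ the polynomial $x_k$ is monic in $u$ of degree $2^{k-1}$, and that $H_t^k(0, b) = (x_k(t^2, b),\; x_{k-1}(t^2, b))$. Consequently $\Sigma((0, b)) \neq \emptyset$ if and only if there exist $n \geq 1$ and $u_0 \in \overline{\QQ}$ with $x_n(u_0, b) = 0$ and $x_{n-1}(u_0, b) = b$ (since $\overline{\QQ}$ is algebraically closed, one then picks $t_0$ with $t_0^2 = u_0$).

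Form the resultant
\[
  R_n(b) \;:=\; \Res_u\bigl(x_n(u, b),\; x_{n-1}(u, b) - b\bigr) \;\in\; \ZZ[b].
\]
A common zero in $u$ as above forces $R_n(b) = 0$. The crux of the argument, and the source of the proposition's label, is the following monicity claim: \emph{for every $n \geq 1$, $R_n(b)$ is monic in $b$ (of degree $2^{n-1}$).} Granting this, any $b \in \overline{\QQ}$ with $\Sigma((0, b)) \neq \emptyset$ is a root of the monic integer polynomial $R_n$, whence $b \in \overline{\ZZ}$; this is the contrapositive of the proposition.

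To prove monicity, I would perform a weighted asymptotic analysis. Change variables via $v := u + b$ and set $P_k(b, v) := x_k(v - b, b)$, so that $P_{-1} = b$, $P_0 = 0$, $P_1 = v$, and $P_{k+1} = P_{k-1} + P_k^2 + (v - b)$. An induction on $k \geq 2$ shows that, under the weighting $w(b) = 2$, $w(v) = 1$, the top-weight part (of weight $2^{k-1}$) of $P_k$ equals $(v^2 - b)^{2^{k-2}}$. Now write the resultant as a product over roots,
\[
  R_n(b) \;=\; \prod_{j=1}^{2^{n-2}} P_n(b, v_j),
\]
where $v_1, \ldots, v_{2^{n-2}}$ are the roots of $P_{n-1}(b, v) - b = 0$ in $\overline{\QQ(b)}$. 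Using the recurrence at $P_{n-1}(b, v_j) = b$ gives the identity $P_n(b, v_j) = P_{n-2}(b, v_j) + b^2 + (v_j - b)$. Asymptotically as $b \to \infty$, the top-weight analysis pins $v_j^2 = b + \delta_j$ with $\delta_j$ of lower weight; each factor $P_n(b, v_j)$ is then $b^2 +$ terms of lower $b$-degree, and the product acquires leading $b$-coefficient equal to $1$.

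The main obstacle is the rigorous proof of monicity. Because both top-weight forms $(v^2 - b)^{2^{n-3}}$ (of $P_{n-1} - b$) and $(v^2 - b)^{2^{n-2}}$ (of $P_n$) vanish at the approximate roots $v^2 = b$, the leading $b$-coefficient of $R_n$ cannot be read off from top-weight data alone and must instead be tracked through the next-order expansion $v_j^2 = b + \delta_j$, together with Vieta-type identities for the symmetric functions of $\{v_j\}$, inductively confirming that the accumulated lower-order contributions multiply to exactly $1$ (rather than some non-trivial integer).
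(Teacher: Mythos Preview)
Your reduction to the resultant is correct and matches the paper: if $\Sigma((0,b)) \neq \emptyset$ then for some $n \geq 2$ the polynomials $A_n(t)$ and $A_{n-1}(t) - b$ share a root, so $R_n(b) := \Res_t(A_n, A_{n-1}-b) = 0$; hence it suffices to show each $R_n$ is (up to sign) monic in $\ZZ[b]$. (A small remark: the displayed map contains a typo --- it should be $(x,y)\mapsto(y+x^2+t,x)$ --- so your substitution $u=t^2$ is unnecessary; one works directly with $t$.)

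The gap you flag in the monicity argument is genuine, and your outline does not close it. Your identity $P_n(b,v_j)=P_{n-2}(b,v_j)+b^2+(v_j-b)$ is correct, and for small $n$ one can check directly that each factor is $b^2+(\text{lower})$. But for general $n$ the term $P_{n-2}(b,v_j)$ has, a~priori, $b$-growth of order $|b|^{2^{n-4}}$, which swamps $b^2$ once $n\geq 6$. To suppress it you must exploit that the constraint $P_{n-1}(b,v_j)=b$ forces $v_j^2-b$ to be of lower weight, and then feed this back into $P_{n-2}$; this cascades into a multi-scale Puiseux expansion whose bookkeeping you have not carried out. Even if completed, it is not clear from your sketch why the accumulated leading coefficient is exactly $\pm 1$ rather than some other integer.

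The paper avoids this entirely by a folding trick. Since $H_t^{2m}(P)=P$ is equivalent to $H_t^{m}(P)=H_t^{-m}(P)$, repeated use of the recursions gives
\[
\Res_t\bigl(A_{2m},\,A_{2m-1}-b\bigr)=\pm\Res_t\bigl(C_m,\,D_m\bigr),\qquad C_m:=A_m-B_{m-1},\quad D_m:=A_{m-1}-B_m,
\]
where $B_k$ is the backward orbit ($B_0=b$, $B_{k+1}=B_{k-1}-B_k^2-t$). Both $C_m,D_m$ are monic in $t$ of degree $2^{m-1}$, but their coefficients in $\ZZ[b]$ have \emph{different} growth: writing $C_m=\sum\gamma_i(b)t^{2^{m-1}-i}$ and $D_m=\sum\delta_i(b)t^{2^{m-1}-i}$, one finds $\deg_b\gamma_i\le i$ while $\deg_b\delta_i\le 2i$, with $\delta_{2^{m-1}}(b)=b^{2^m}+(\text{lower})$. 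With these bounds, elementary column operations on the Sylvester matrix reduce $\Res_t(C_m,D_m)$ to a $2^{m-1}\times2^{m-1}$ determinant whose diagonal entries are each $b^{2^m}+(\text{lower})$ and whose off-diagonal entries have strictly smaller $b$-degree in a graded sense; the determinant is then $b^{2^{2m-1}}+(\text{lower})$. The odd case $n=2m-1$ is handled by the analogous equivalence $H_t^{m}(P)=H_t^{-(m-1)}(P)$. The key point your approach misses is that introducing the backward iterates $B_k$ creates a factor-of-two disparity in the $b$-weights of the two polynomials, and it is precisely this asymmetry that makes the leading term of the resultant visible without any subleading analysis.
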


\medskip
We say that a H\'enon map $H$ is {\em reversible} if there exist an involution $\sigma$ of affine plane 
and a positive integer $m$ such that $\sigma \circ H^m \circ \sigma = H^{-m}$. 
Based on Theorem~\ref{thm:intro:infiniteness}, the dynamical Manin-Mumford conjecture 
for affine plane automorphisms in \cite{DF}, we ask the following. 

\begin{Question}
\label{q:henon dynamical manin mumford}
Let $K$ be an algebraically closed field of characteristic zero.  
Let $\bfH\colon \Aff^{2} \to \Aff^{2}$ be a H\'enon map over $\Aff^1 = \Spec(K[t])$, 
which we regard as a family $(H_t)_{t \in K}$ parametrized by $t$. 
Put $\per_{\Aff^{2} \times \Aff^1}(\bfH) := \{(P, t) \in \Aff^{2}(K) \times \Aff^1(K) \mid \text{$P$ is periodic with respect to $H_t$}\}$. Let $\Ccal$ be an integral curve in $\Aff^{2} \times \Aff^1$, and we 
assume that $\Ccal(K)\cap \per_{\Aff^{2} \times \Aff^1}(\bfH)$ is infinite. 
Is it true that one of the following conditions must hold?
\begin{enumerate}
\item[(i)]   $\Ccal \subseteq \Aff^{2}_{t}=\Aff^{2}\times \{t\}$ for some $t\in \Aff^1(K)$, 
and there exists an involution $\sigma_t$ of $\Aff^{2}$ over $K$ such that 
$\sigma_t \circ H_t^{m} \circ \sigma_t = H_t^{-m}$ for some $m\ge 1$.  
Furthermore, $\Ccal$ is contained in the set of fixed points of $\s_{t}$; 
\item[(ii)]   There exists a positive integer $m$ such that 
$\Ccal$ is an irreducible component of 
$\{(P, t) \in \Aff^{2}(K) \times \Aff^1(K) \mid H_t^m(P) = P\}$; 
\item[(iii)]  There exists an involution ${\boldsymbol \sigma}$ of $\Aff^{2}$ over $K[t]$ such that ${\boldsymbol \sigma} \circ \bfH^{m} \circ {\boldsymbol \sigma} = \bfH^{-m}$ for some $m\ge 1$ and $\Ccal$ is contained in 
the set of fixed points of ${\boldsymbol \sigma}$. 
\end{enumerate}
\end{Question}

\subsection{Unlikely intersections of 
periodic parameter values}
\label{subsec:intro:unlikely}
By Theorem~\ref{thm:intro:infiniteness}, we have instances that $\Sigma(\bfP)$ is infinite. 
Let $\bfP, \bfQ \in \Aff^2(K[t])$ be two initial points with $\widetilde{h}_{\bfH}(\bfP) \neq 0$ and 
$\widetilde{h}_{\bfH}(\bfQ) \neq 0$ such that 
both $\Sigma(\bfP)$ and $\Sigma(\bfQ)$ are infinite. Now we consider the problem of unlikely intersections, i.e., 
what happens if $\Sigma(\bfP) \cap \Sigma(\bfQ)$ is infinite. 

Theorem~\ref{thm:main:A} precisely allows us to use an arithmetic equidistribution theorem 
(see \cite{autissier, Baker-Rumely06, CL, fr04, fr06, Th} for curves, and \cite{Yuan} for varieties of arbitrary dimension). We obtain the following result. 

\begin{Theorem}[$=$ Theorem~\ref{thm:unlikely intersection thm}]
\label{thm:intro:unlikely:intersection}
Let $K$ be a number field, 
and let $\bfP, \bfQ \in \Aff^2(K[t])$ be as above. Then the following is equivalent. 
\begin{enumerate}
\item[(i)]
$\Sigma(\bfP) \cap \Sigma(\bfQ)$ is infinite. 
\item[(ii)]
$\Sigma(\bfP) = \Sigma(\bfQ)$. 
\item[(iii)]
$G_{\bfP, v} = G_{\bfQ, v}$ for any $v \in M_K$. 
\item[(iv)]
$h_{\bfP} = h_{\bfQ}$. 
\end{enumerate}
\end{Theorem}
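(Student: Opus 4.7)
The plan is to close the cycle of implications (i) $\Rightarrow$ (iii) $\Rightarrow$ (iv) $\Rightarrow$ (ii) $\Rightarrow$ (i). Three of these are essentially formal. Namely, (ii) $\Rightarrow$ (i) is immediate from the standing assumption that $\Sigma(\bfP)$ is infinite; (iv) $\Rightarrow$ (ii) follows because over a number field the Northcott property identifies $\Sigma(\bfP)$ with the zero locus $\{h_{\bfP}=0\}$ and likewise for $\bfQ$ (as recalled in \S\ref{subsec:intro:periodic:para}); and (iii) $\Rightarrow$ (iv) is obtained by summing the local identities $G_{\bfP,v}=G_{\bfQ,v}$ against the coefficients $n_v/\widetilde h_{\bfH}(\bfP)$ appearing in the decomposition of Theorem~\ref{thm:intro:local}(1), noting that $\widetilde h_{\bfH}(\bfP)=\widetilde h_{\bfH}(\bfQ)$ follows from (iii) by comparing the logarithmic leading terms at $\infty$.

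The heart of the argument is (i) $\Rightarrow$ (iii), which we plan to establish via arithmetic equidistribution. By Theorem~\ref{thm:main:A}, $h_{\bfP}$ and $h_{\bfQ}$ are the height functions associated to semipositive adelically metrized line bundles $\Lcalbar_{\bfP}$ and $\Lcalbar_{\bfQ}$, both built on $\Ocal_{\PP^1}(1)$. The hypothesis that $\Sigma(\bfP)\cap\Sigma(\bfQ)$ is infinite produces an infinite sequence of $\overline K$-points on $\PP^1$ at which both $h_{\Lcalbar_{\bfP}}$ and $h_{\Lcalbar_{\bfQ}}$ vanish, i.e., a generic sequence of common small points for the two metrized line bundles. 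Applying arithmetic equidistribution on curves (Baker--Rumely, Chambert-Loir, Favre--Rivera-Letelier, Thuillier, Autissier; or Yuan's higher-dimensional theorem) at each place $v\in M_K$ to each line bundle in turn, we conclude that the Galois orbits equidistribute with respect to \emph{both} curvature measures, forcing $\mu_{\Lcalbar_{\bfP},v}=\mu_{\Lcalbar_{\bfQ},v}$ on $\PP^{1,\mathrm{an}}_v$ for every $v$. Two continuous semipositive metrics on the same line bundle with equal curvature differ by an additive constant, so $G_{\bfP,v}-G_{\bfQ,v}\equiv a_v\in\RR$ at each $v$. Evaluating at any $t_0\in\Sigma(\bfP)\cap\Sigma(\bfQ)$ yields $a_v=0$, since the orbits $\{H_{t_0}^n(P_{t_0})\}$ and $\{H_{t_0}^n(Q_{t_0})\}$ are finite and hence bounded, placing $t_0\in K_{\bfP,v}\cap K_{\bfQ,v}$, where the Green functions vanish by Theorem~\ref{thm:intro:local}(2). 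This gives (iii).

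The principal technical obstacle is the rigorous invocation of equidistribution. We must verify that the sequence of $\overline K$-points arising from $\Sigma(\bfP)\cap\Sigma(\bfQ)$ is genuinely generic (its Galois orbits are not eventually confined to a proper closed subset of $\PP^1$), which on a one-dimensional variety reduces to having infinitely many distinct closed points, as guaranteed; and we must confirm that the semipositive adelic structure furnished by Theorem~\ref{thm:main:A} satisfies the continuity/integrability hypotheses of the equidistribution theorem at every place, including the non-archimedean ones. A secondary subtlety is the passage from equality of curvature measures to equality of Green functions on Berkovich $\PP^{1,\mathrm{an}}_v$: the unknown constant $a_v$ needs to be pinned down, and for this the prescribed logarithmic behavior at $\infty$ in Theorem~\ref{thm:intro:local}(1) together with the joint vanishing of $G_{\bfP,v}$ and $G_{\bfQ,v}$ at a common periodic parameter is what makes the argument work.
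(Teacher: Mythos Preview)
Your proposal is correct and follows essentially the same route as the paper. The paper packages the equidistribution step by quoting \cite[Corollary~4.3]{ght2} (restated as Proposition~\ref{prop:to:use}) together with Zhang's fundamental inequality to get $h_{\Lcalbar_{\bfP}}(\PP^1)=h_{\Lcalbar_{\bfQ}}(\PP^1)=0$ and hence $h_{\bfP}=h_{\bfQ}$ directly, and then separately unpacks that argument to obtain $G_{\bfP,v}=G_{\bfQ,v}+c_v$ and kills $c_v$ exactly as you do; your ordering (i)$\Rightarrow$(iii)$\Rightarrow$(iv) and your explicit recovery of $\widetilde h_{\bfH}(\bfP)=\widetilde h_{\bfH}(\bfQ)$ from the asymptotics at $\infty$ are minor rearrangements of the same ideas.
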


Note that in the case of a family of one-variable polynomial dynamics parametrized by points of the affine line, it is proved that, for two given polynomial families of initial points to share infinite periodic parameter values, they must satisfy some orbital relations. The proof relies on the B\"ottcher coordinate in one-variable polynomial dynamics. Although the B\"ottcher coordinates $\varphi^+, \varphi^-$ (each for one direction defined in a different region) also exist for H\'enon maps (see \cite{ho}), the techniques in the one-variable case seem not easily extended to the present situation. We would like to pose the following question (see also Proposition~\ref{prop:converse:q:intro:1}). 

\begin{Question}
\label{q:intro:1}
\begin{parts}
\Part{(1)}
Let $H\colon \Aff^2 \to \Aff^2$ be a H\'enon map over $\CC$, and let $P, Q \in \Aff^2(\CC)$. 
Suppose that $\{H^n(P)\}_{n \in \ZZ}$ and $\{H^n(Q)\}_{n \in \ZZ}$ are both unbounded, 
and that $G(H^n(P)) = G(H^n(Q))$ holds for any sufficiently large $n$, 
where $G$ is the complex Green function defined in \eqref{eqn:intro:Green}. Is it true that 
there exists an invertible affine map $\sigma\colon \Aff^2 \to \Aff^2$ over $\CC$ such that 
$\sigma^{-1} \circ H^m \circ \sigma = H^{m}$ or $\sigma^{-1} \circ H^m \circ \sigma = H^{-m}$ for some $m \geq 1$ and $P = \sigma(Q)$?
\Part{(2)}
Let $K$ be a number field. 
Let $\bfP, \bfQ \in \Aff^2(K[t])$ be two initial points with $\widetilde{h}_{\bfH}(\bfP) \neq 0$ and 
$\widetilde{h}_{\bfH}(\bfQ) \neq 0$. Suppose that $h_{\bfP} = h_{\bfQ}$.
Is it true that there exists an invertible affine map ${\boldsymbol \sigma}\colon \Aff^2 \to \Aff^2$ over $\overline{K}[t]$ with ${\boldsymbol \sigma}^{-1} \circ \bfH^m \circ {\boldsymbol \sigma} = \bfH^{m}$ or ${\boldsymbol \sigma}^{-1} \circ \bfH^m \circ {\boldsymbol \sigma} = \bfH^{-m}$ for some $m \geq 1$ and $\bfP = \bfH^n({\boldsymbol \sigma}(\bfQ))$ for some $n \in \ZZ$?
\end{parts}
\end{Question}

In this paper, we aim to explore some fundamental arithmetic properties of families of H\'enon maps. 
We believe that the methods and techniques used in this paper will be useful in studying further arithmetic properties, for example, in the case where the base parameter space is a punctured disk (in a suitable setting) or a higher dimensional space, or families of higher dimensional H\'enon maps (i.e., affine space regular polynomial automorphisms). 
See also Remark~\ref{rem:primitive divisors} for a question about primitive prime divisors in a family of H\'enon maps.

\medskip
We briefly describe the organization of this paper. 
In Section~\ref{sec:preliminaries} we set up the notation and recall some facts from the theory of canonical heights for H\'enon maps that will be used in the paper.
Section~\ref{sec:valued fields} and Section~\ref{sect:heights and metrized line bundles} are devoted to the
construction of the height function $h_{\bfP}$ and the proof of Theorem~\ref{thm:main:A}.  
In Section~\ref{sec:valued fields}, we consider $v$-adic settings. As in \cite{ght2}, we divide the parameter space $\Aff^1(\KK_v)$ into a bounded region and an unbounded region, and using various properties of H\'enon maps,  we show that certain $v$-adic metrics converge uniformly 
in each region. In Section~\ref{sect:heights and metrized line bundles}, we prove Theorem~\ref{thm:main:A}. In Section~\ref{sec:local:parameters:height}, we study $v$-adic properties of $h_{\bfP}$ and prove Theorem~\ref{thm:intro:local}. In Section~\ref{sec:set:periodic:parameters}, 
we prove Theorem~\ref{thm:intro:infiniteness}, by estimating multiplicities 
of some periodic points. Then in Section~\ref{sec:unlikely intersection} we prove Theorem~\ref{thm:intro:unlikely:intersection} on unlikely intersections of periodic parameter values. Here we use Yuan's equidistribution theorem \cite{Yuan}, in hoping that our methods will be useful when one considers a family of H\'enon maps over a higher-dimensional parameter space. 
In Section~\ref{sec:finiteness:per:param}, we show some results towards emptiness of $\Sigma(\bfP)$, 
including Proposition~\ref{prop:intro:finiteness}. 

\bigskip
\noindent{\sl Acknowledgement.}\quad
We presented an earlier content of this paper at 
the BIRS Workshop on Arithmetic and Complex Dynamics in November, 2017. 
We thank the organizers of the workshop, 
and Eric Bedford, Patrick Ingram and Thomas Gautier for helpful comments at that occasion. 
We also thank Charles Favre for helpful discussions,  
and to Hiroyuki Inou for figures in Introduction and Example~\ref{eg:figures:inou} drawn by {\em Qfract}. 
We thank Dragos Ghioca and Joe Silverman for helpful comments. 
The present paper grew out from our discussions at  the Academia Scinica and 
Kyoto University, and we thank 
Julie Tzu-Yueh Wang and Kazuhiko Yamaki for warm hospitality. 
Part of this work was done during the second named author's stay at the Mathematical Institute of Oxford, and he thanks the institute and Damian R\"ossler for warm hospitality. 

\setcounter{equation}{0}
\section{Preliminaries}
\label{sec:preliminaries}
\renewcommand{\theTheorem}{\arabic{section}.\arabic{Theorem}}

In this section, we recall some basic properties of H\'{e}non maps that will be used in this paper.

\subsection{Notation and terminology}
We list some of the notation and terminology that we use throughout this paper. 
\begin{itemize}
\item
For a field $K$, we denote by $\overline{K}$ an algebraic closure of $K$. 
\item
If $K$ is a number field, we denote by $O_K$ the ring of integers of $K$. 
\item 
If $K$ is equipped with a set $M_{K}$ of inequivalent, non-trivial absolute values (places), we denote by 
$M_K^{\rm fin}$ the subset of $M_K$ of all nonarchimedean absolute values, 
and by $M_K^\infty$ the subset of $M_K$ of all archimedean absolute values. 
\item 
For $v \in M_K$, the corresponding absolute value is denoted by $|\cdot |_v.$ 
We let $K_v$ be the completion of $K$ with respect to $|\cdot|_v$, 
and let $\KK_v$ be the completion of an algebraic closure of $K_{v}$.  
\item 
The (unique) extension of the absolute value $|\cdot |_v$ of $K_{v}$ to $\KK_{v}$ will still be denoted by $|\cdot|_{v}$. 
\item
Let $|\cdot|$ be an absolute value of a field $K$. For $(x_1, \ldots, x_n) \in K^n$, we write  
\[
\Vert (x_1, \ldots, x_n) \Vert 
:= \max\left\{|x_1|, \ldots, |x_n|\right\}.
\] 
\item
For $r \in \RR$, we set $\log^+ r := \log \max\{r, 1\}$. 
\item
Let $f\colon S \to S$ be a bijective self-map of a set $S$. We denote by $f^n$ the $n$-th iterate of 
$f$ under compositions of maps for $n \in \ZZ$. A point $P \in S$ 
is {\em periodic} for $f$ if there there exists an integer $n \geq 1$ such that
$f^n(P) = P$.  The least positive integer with $f^n(P) = P$ is 
called the {\em period} of $P$.  
\end{itemize}

\subsection{H\'{e}non maps}
\label{subsec:henon:maps}
Let $K$ be a field. A {\em H\'{e}non map} 
 is a polynomial automorphism of the affine plane defined over $K$ of the form
\begin{equation}
\label{eqn:Henon:map}
H \colon \Aff^2 \to \Aff^2 \quad  (x,y) \mapsto  \left( \d y + f(x), x\right), 
\end{equation}
where $ f(x) \in K[x]$ is a polynomial of degree $d\ge 2$ and $\d \in K\setminus\{0\}.$  The inverse
$H^{-1}$ is given by
\[
H^{-1}\colon
\Aff^2 \to \Aff^2, \quad
(x, y)
\mapsto
\left(y, \frac{1}{\d} \left(x - f(y)\right)\right).
\]
By taking a suitable finite extension $L$ of $K$ and conjugating by $(x, y) \mapsto (\varepsilon x, \varepsilon y)$ for suitable $\varepsilon \in L$, $f(x)$ becomes a monic polynomial. In this paper, we consider 
a H\'{e}non map such that $f(x)$ is monic.

It is also common to consider a H\'enon map $H^\prime$ of the form
\[
H^\prime\colon
\Aff^2 \to \Aff^2, \quad
(x, y) \mapsto
\left(y , \d x + f(y)\right),
\]
in place of $H\colon (x, y) \mapsto \left(\d y + f(x), x\right)$.
We remark that there is no real difference in choosing $H$ or $H^\prime$ concerning their dynamical properties.
Indeed, let $\jmath\colon \Aff^2 \to \Aff^2$ be the involution given by $(x, y) \mapsto (y, x)$. Then we have
\[
 \jmath \circ H \circ \jmath \, (x, y) = (y,\d x + f(y)) = H^\prime(x, y),
\]
so that $H^{\prime\, n} = \jmath \circ H^n \circ \jmath$ for $n \in \ZZ$. 
Thus dynamical properties of $H^\prime$ are deduced from those of $H$, and {\em vice versa}.

\subsection{Canonical heights for H\'{e}non maps}
\label{subsec: henon canonical height}

We  say a  field $K$ is a {\em product formula field}
if it  is equipped with a set $M_K$ of inequivalent absolute values (places) 
with the following property: 
The set $\{|\cdot|_v \in M_K \mid |\a|_v \neq 1\}$ is finite for any $\a \in K\setminus\{0\}$;   
Further, for each $v\in M_K$ there exists a positive integer $n_v$ such that  the product formula
$\prod_{v \in M_K}\,|\a|_v^{n_v} = 1$ holds for any $\a \in K\setminus\{0\}.$ 
If $K$ is a product formula field, and $L$ is a finite extension field of $K$, 
then $L$ is naturally equipped with a set $M_L$ so that $L$ is also a product formula field (see 
\cite[Proposition~1.4.2]{BG}). 

While some statements in this paper hold for any product formula field, 
we will assume that a product formula field is one of the following two types of fields (see for example, \cite[\S~2.3]{dio-geo} and \cite[\S~1.4.6]{BG} for more details).

\begin{Example}[number field]
\label{eg:prod:nf}
If $K$ is a number field, then we take $M_K$ as the set of all inequivalent absolute 
values of $K$. If $v \in M_K^{\rm fin}$ lies over a prime $p \in \QQ$, 
then $|p|_v = 1/p$. If $v \in M_K^\infty$, then $|\cdot|_v$ is the extension of 
the usual absolute value of $\QQ$.  For $v \in M_K^{\rm fin}$, 
the normalizing constant 
$n_{v}$ is defined to be $[K_{v} : \QQ_{p}]/[K:\QQ]$ if $v$ lies over $p$.
For $v \in M_K^\infty$, the normalizing constant $n_{v}$ is defined to be $1/[K:\QQ]$ 
if $v$ is a real place, and $2/[K:\QQ]$ if $v$ is a complex place. Then $K$ is a product formula field. 
\end{Example}

\begin{Example}[function field]
\label{eg:prod:ft}
Let $K = F(B)$ be the function field of an integral projective variety $B$ 
over a field $F$ such that 
$B$ is regular in codimension one. In this case, we take $M_K$ as the set of all 
prime divisors of $B$. The valuation $v_Y$ associated to a prime divisor $Y$ is defined to be 
the vanishing order along $Y$, and the corresponding absolute value is defined by~${|\cdot|_{v}} :=  \exp (-\ord_{v}(\cdot))$.  In particular, $M_K = M_K^{\rm fin}$. 
We fix an ample class $c \in \Pic(B)$. 
The normalizing constant $n_{v} $ is defined to be $\deg_c(Y)$. 
Then $K$ is a product formula field. 

When we regard the rational function field $F(t)$ over a field $F$ 
as a product formula field, we always regard $F(t)$ as the function field of $F(\PP^1)$, 
and we take an ample class $c \in \Pic(\PP^1)$ as a divisor class of degree $1$ above. 
\end{Example}

\medskip
Let $K$ be a product formula field and let $v\in M_K$. Let $H \colon \Aff^2 \to \Aff^2$ be a H\'{e}non map of degree $d\ge 2$ defined over $\KK_v$ 
as in~\eqref{eqn:Henon:map}. Let $P \in \Aff^2(\KK_v)$. The {\em Green functions} (also called {\em local 
canonical height functions}) for $H$ are defined by 
\begin{equation}
\label{eqn:Green:function}
G_v^+ (P) := \lim_{n\to \infty}\,\frac{1}{d^n} \log^+ \norm{H^n(P)}_v 
\quad \text{and}\quad
G_v^-(P) := \lim_{n\to \infty}\,\frac{1}{d^n} \log^+ \norm{H^{-n}(P)}_v. 
\end{equation}
The limits exist for any $P\in \Aff^2(\KK_v)$, see, for example \cite[Proposition~5.5]{ho} and \cite[\S3]{bs}  
when $v$ is archimedean, 
and \cite[Theorem~A]{Ka} when $v$ is nonarchimedean. 

Let $h \colon \Aff^2(\overline{K}) \to \RR$ be the (usual) Weil height function on $\Aff^2$ 
defined by 
\begin{equation}
\label{eqn:standard:height:function}
  h(P) = \frac{1}{[K(P):K]}\sum_{\sigma:K(P)\to\overline{K}}\sum_{v\in M_K}\, n_{v} 
  \log^+\Vert P^\sigma\Vert_v
\end{equation}
for $P \in \Aff^2(\overline{K})$. 
The {\em height functions}  $\hhat_{H}^+$, $\hhat_H^-$ and $\hhat_{H}$ for $H$ are defined by 
\begin{align}
\label{eqn:can:height:pm}
& \hhat_H^{+}(P)  = \lim_{n\to \infty}\frac{1}{d^n} h(H^{n}(P)), 
\quad 
\hhat_H^{-}(P)  = \lim_{n\to \infty}\frac{1}{d^n} h(H^{-n}(P)), \\
\notag 
& \hhat_{H}(P) = \hhat_{H}^+(P) + \hhat_H^{-}(P) 
\end{align}
for  $P \in \Aff^2(\overline{K})$. It follows from \eqref{eqn:Green:function} 
and \eqref{eqn:standard:height:function} that 
\begin{align} 
\hhat_H^{\pm}(P)
 & = \frac{1}{[K(P):K]}\sum_{\sigma:K(P)\to \overline{K}}\sum_{v\in M_K}\,n_v G_{v}^{\pm}\left(P^\sigma\right)
\\
\notag
& =  \sum_{v\in M_K} n_v \left(\frac{1}{[K(P):K]}\sum_{\sigma:K(P)\to \KK_{v}}\,G_{v}^{\pm}\left(P^\sigma\right)\right). 
\end{align}

We consider a variant of  $\hhat_H^{\pm}$. Set  
\begin{equation}
\label{eqn:def:G:intro}
G_v(P) = \max\{
G_v^+(P), G_v^-(P)
\}
\end{equation}
for $P \in \Aff^1(\KK_v)$. 
As in \cite[Equation (6-5)]{Ka}, 
we define 
\begin{align}
\label{eqn:can:height:tilde}
\htilde_{H}(P) & 
= \frac{1}{[K(P):K]}\sum_{\sigma:K(P)\to\overline{K}}\sum_{v\in M_K}\,
n_v G_v(P^\sigma)  \\
\notag
& = \sum_{v\in M_K} n_v \left(\frac{1}{[K(P):K]}\sum_{\sigma:K(P)\to\KK_{v}}\,G_v(P^\sigma)\right)
\end{align}
for $P \in \Aff^2(\overline{K})$. 

In this paper, we mainly study $\htilde_{H}$ rather than $\hhat_H$. For convenience, 
we call $\htilde_H$ the {\em canonical height} associated to the H\'enon map $H$. 
(Note the difference of terminologies used in \cite{Ka06}, where the``canonical height'' refers 
to $\hhat_H$.) 
As we see in Proposition~\ref{prop:northcott}~(1)~(2) below,  
$\htilde_{H}$ and $\hhat_H$ are comparable, and one advantage of $\htilde_{H}$ is that 
$\htilde_{H}$ 
differs from the (usual) Weil height by a bounded function.

\begin{Proposition}
\label{prop:northcott}
\begin{parts}
\Part{(1)}
We have $(1/2) \hhat_H \leq \htilde_{H} \leq  \hhat_{H}$. 
\Part{(2)}
We have $\htilde_{H} = h + O(1)$.
\Part{(3)}
Let $K$ be a number field. Then, for $P\in \Aff^2(\overline{K})$, 
$\htilde_{H}(P) = 0$ if and only if $P$ is periodic with respect $H$. 
\end{parts}
\end{Proposition}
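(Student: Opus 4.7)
For part (1), I would invoke only the elementary inequality $\max(a,b) \le a + b \le 2\max(a,b)$ for $a,b \ge 0$. Since $G_v^+$ and $G_v^-$ are nonnegative by construction in \eqref{eqn:Green:function}, applying this pointwise with $a = G_v^+(P^\sigma)$ and $b = G_v^-(P^\sigma)$ and then summing with the weights $n_v$ over Galois conjugates and places yields
\[
\htilde_{H}(P) \,\le\, \hhat_H^+(P) + \hhat_H^-(P) \,=\, \hhat_{H}(P) \,\le\, 2\,\htilde_{H}(P),
\]
which is (1).

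For (2), I would combine (1) with the following local comparison, which is standard in the theory of H\'enon maps: for each $v \in M_K$ there exists a constant $C_v \ge 0$, with $C_v = 0$ for all but finitely many $v$, such that
\[
\bigl| \max\{G_v^+(P), G_v^-(P)\} - \log^+\Vert P\Vert_v \bigr| \,\le\, C_v \qquad \text{for every } P \in \Aff^2(\KK_v).
\]
The estimate is a consequence of the behavior of $H$ at infinity (see \cite{bs, ho} in the archimedean case and \cite{Ka} in the nonarchimedean case, where at places of good reduction one may take $C_v = 0$): $\Aff^2(\KK_v)$ decomposes into an $x$-dominant forward-escaping region on which $G_v^+(P) = \log^+|x|_v + O_v(1)$, a $y$-dominant backward-escaping region on which $G_v^-(P) = \log^+|y|_v + O_v(1)$, and the filled Julia set on which every quantity is $O_v(1)$. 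Weighting by $n_v$ and summing the inequality over $\sigma$ and $v$, the right-hand side contributes only a bounded total, giving $\htilde_H = h + O(1)$.

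For (3), one direction is immediate: if $H^m(P) = P$, then the orbit $\{H^n(P^\sigma)\}_{n\in\ZZ}$ is finite for every Galois conjugate $P^\sigma$, so $\Vert H^{\pm n}(P^\sigma)\Vert_v$ is bounded uniformly in $n$, whence $G_v^\pm(P^\sigma) = 0$ by \eqref{eqn:Green:function} and $\htilde_H(P) = 0$ by \eqref{eqn:can:height:tilde}. For the converse, assume $K$ is a number field and $\htilde_H(P) = 0$. By part (1), $\hhat_H(P) = 0$, and since $\hhat_H^\pm \ge 0$ individually, both vanish. The functional equations $\hhat_H^+(H(P)) = d\,\hhat_H^+(P)$ and $\hhat_H^-(H(P)) = \hhat_H^-(P)/d$ follow from \eqref{eqn:can:height:pm} by a simple re-indexing of the defining limits, so $\hhat_H^+$ and $\hhat_H^-$ vanish along the entire orbit $\{H^n(P)\}_{n\in\ZZ}$. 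By (1), $\htilde_H$ also vanishes on this orbit, and then by (2) the Weil heights $h(H^n(P))$ are bounded uniformly in $n$. Since $[K(H^n(P)):K] = [K(P):K]$ is constant, Northcott's theorem forces $\{H^n(P)\}_{n\in\ZZ}$ to be finite, so $P$ is preperiodic; as $H$ is an automorphism, $P$ is periodic.

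The only nontrivial ingredient is the local comparison in (2), which is where the specific dynamics of H\'enon maps enters in an essential way; however, it is well documented in the cited references and may be quoted without re-derivation. Parts (1) and (3) are then purely formal consequences together with Northcott's theorem.
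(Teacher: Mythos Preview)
Your proof is correct and follows the same approach the paper intends: the paper's own proof is a terse appeal to the definitions for (1) and to \cite[Proposition~6.5 and Theorem~6.3(5)]{Ka} for (2) and (3), and you have simply unpacked those citations. Your derivation of (3) via the functional equations $\hhat_H^\pm(H(P)) = d^{\pm 1}\hhat_H^\pm(P)$, the bound from (2), and Northcott is exactly the standard argument behind the cited result.
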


\Proof
The assertion (1) is obvious from the definitions of $\hhat_H $ and $\htilde_{H}$. 
For (2), see \cite[Proposition~6.5]{Ka}, and 
for (3), see \cite[Theorem~6.3~(5) and Proposition~6.5]{Ka}. 
\QED

\begin{Remark}
For Proposition~\ref{prop:northcott}~(2), Lee~\cite[Theorem~6.5]{Lee} proved a stronger result: 
If $K$ is a number field, then the canonical height function $\htilde_{H}$ is a height function associated to 
$\Ocal_{\PP^2}(1)$ with semipositive adelic metrics. (For a brief review of semipositive adelically metrized line bundles, see \S~\ref{subsec:function field henon}.)
\end{Remark}

\begin{Remark}
\label{rmk:isotriviality}
Let $K$ be a product formula field such that any $v \in M_K$ is nonarchimedean. 
Assume that $\delta = 1$ in  equation~\eqref{eqn:Henon:map} of the H\'enon map $H$. 
Ingram~\cite[Theorem~1.2]{ing} showed that, if $H$ is not  isotrivial over $\overline{K}$, then 
the conclusion of Proposition~\ref{prop:northcott} (3) still holds: 
For $P\in \Aff^2(\overline{K})$, $\htilde_{H}(P) = 0$ if and only if $P$ is periodic with respect $H$. 
(For the definition of isotriviality, see \cite[p.~787]{ing}.)
It would be interesting to know if the same conclusion still holds when $\d\ne 1$. 
\end{Remark}

\subsection{One-parameter families of H\'enon maps: setting}
\label{sec:family of Henon}
Let $K$ be a field. We are interested in algebraic families of H\'enon maps defined over $K$. 
We give our setting that will be fixed throughout this paper. 

Let $f_t(x) \in K[t, x]$ be a polynomial such that, as a polynomial in $x$, $f_t(x)$ is monic and 
of degree $d \geq 2$. We write 
\begin{equation}
\label{eqn:f_{t} defn}
f_t(x) 
= x^{d} + \sum_{i=1}^{d} c_i(t) \, x^{d-i}  
= x^{d} + c_1(t) x^{d-1} + \cdots + c_d(t), 
\end{equation} 
where 
$c_i(t)\in K[t]$ for $i=1, \ldots, d$. 
Let $\delta \in K \setminus\{0\}$. 
We consider the one-parameter family of H\'enon maps $H_{t}$ in \eqref{eqn:Henon:map} parametrized by $t$: 
\begin{equation}
\label{eqn:Henon maps}
\bfH = (H_t)_{t \in \overline{K}}\colon
\Aff^2 \to \Aff^2, \quad
(x, y)
\mapsto
\left(\d y + f_t(x), x\right). 
\end{equation}
We also fix an initial point $\bfP = \left(a(t), b(t)\right) \in \Aff^2(K[t])$, which we regard 
as a family of initial points $(P_t)_{t \in \overline{K}}$. 

We remark on our symbols $\bfH$, $\bfP$ and $(H_t)_t$, $(P_t)_t$. 
We use the symbols $\bfH$ and $\bfP$ when they are viewed as a 
H\'{e}non map and a point defined over $K(t)$ (and also over $K[t]$). 
When we emphasize that they are algebraic families of H\'enon maps and points 
parametrized by $t\in \overline{K}$, we use the symbols $(H_t)_t$ and $(P_t)_t$. 

\begin{Example}
\label{eg:quad:Henon:map}
Let $H_t\colon \Aff^2 \to \Aff^2$ be the H\'enon map 
over a field $K$ defined by 
\begin{equation*}
  H_{t}\colon
\Aff^2 \to \Aff^2, \quad
(x, y)
\mapsto
\left(y + x^2 + t, x\right). 
\end{equation*}
Then $\bfH = (H_t)_{t \in \overline{K}}$ is an example of a one-parameter family of H\'enon maps 
of \eqref{eqn:Henon maps} with $\delta = 1$ and $f_t(x) = x^2 + t$. 
Some properties of this family are discussed in Introduction, Example~\ref{eg:quad:Henon:map:2}, Example~\ref{eg:figures:inou}, Example~\ref{eg:quad:Henon:map:3}, and 
Section~\ref{sec:finiteness:per:param}. 
\end{Example}

\subsection{One-parameter families of H\'enon maps: further setting}
\label{sec:family of Henon:continued}
For $n \geq 1$, we set
\begin{equation}
\label{eqn:Phi}
\Phi_n\colon \Aff^1 \to \Aff^4, \quad t \mapsto (H_{t}^n(P_t), H_{t}^{-n}(P_t)).
\end{equation}
We embed $\Aff^m$ into $\PP^m$ by $x \mapsto (x:1)$. We denote  
the extension of $\Phi_n$ to projective spaces by
\begin{equation}
\label{eqn:Phi:bar}
\overline{\Phi}_n\colon \PP^1 \to \PP^4. 
\end{equation}
Note that $\overline{\Phi}_n$ is a morphism for any $n \geq 1$, because $\PP^1$ is a smooth curve.  

To fix the notation, we introduce the following polynomials $A_n(t)$ and $B_n(t)$ associated to 
the H\'{e}non maps $\bfH^{n} = (H_t^n)_{t}$ and the initial point $\bfP = (P_t)_t$. 
Recall that the inverse $H_{t}^{-1}$ is given by
\begin{equation}
\label{eqn:Henon maps:inverse}
H_{t}^{-1}\colon
\Aff^2 \to \Aff^2, \quad
(x, y)
\mapsto
\left(y, \frac{1}{\d} \left(x - f_t(y)\right)\right). 
\end{equation}
 
\begin{Definition}[$A_{n}(t), B_{n}(t)$]
\label{def:An:Bn}
We define $A_n(t), B_n(t)  \in K[t]$  for $n \geq 0$ by the following recursive formulae
\begin{align}
\label{eqn:def:An}
A_{n+1}(t) & = \d A_{n-1}(t) + f_t\left(A_{n}(t)\right) \; \text{for $n\ge 1$}, 
 \quad A_1(t) = \d b(t) + f_t(a(t)),  \; A_0(t) = a(t),   \\
\label{eqn:def:Bn}
B_{n+1}(t) & = \frac{1}{\d}\left( B_{n-1}(t) - f_t\left(B_{n}(t)\right)\right)\; \text{for $n\ge 1$},  
\quad B_1(t) = \frac{1}{\d}\left(a(t) -f_t(b(t))\right), \; B_0(t) = b(t).
\end{align}
\end{Definition}
\noindent It follows from \eqref{eqn:Henon maps} and \eqref{eqn:Henon maps:inverse} that   
\begin{equation}
\label{eqn:H:n:A:B}
 H_t^{n}(P_t) = \left(A_{n}(t), A_{n-1}(t)\right) 
 \quad{\text{and}}\quad
 H_t^{-n}(P_t) = \left(B_{n-1}(t), B_{n}(t)\right)
\end{equation}
for any $n \geq 1$. We set 
\begin{equation}
\label{eqn:defn of elln}
\ell_n := \deg \overline{\Phi}_n = \max\left\{\deg A_n(t), \deg A_{n-1}(t), \deg B_n(t), \deg B_{n-1}(t)\right\}.
\end{equation}

We consider the following assumption on $\bfP$. 
\begin{Assumption}
\label{assumption}
The sequence $(\deg A_{n}(t))_{n \geq 0}$ or $(\deg B_{n}(t))_{n \geq 0}$ is unbounded. 
\end{Assumption}
\noindent 
Note that Assumption~\ref{assumption}  is mild. 
See Example~\ref{eg:quad:Henon:map:2} below, for example. 
As we see in Proposition~\ref{prop:on:assumption}, 
Assumption~\ref{assumption} is equivalent to $\tilde{h}_{\bfH}(\bfP) \neq 0$. 

\begin{Example}
\label{eg:quad:Henon:map:2}
Let $H_t(x, y) = (y + x^2 + t, x)$ as in Example~\ref{eg:quad:Henon:map}. Then 
it is easy to see that, for any $a, b \in K$, $\deg(A_n(t)) = 2^{n-1}$ and 
$\deg(B_n(t)) = 2^{n-1}$. Thus, for any constant initial point $\bfP = (a, b)$, 
Assumption~\ref{assumption} is satisfied. 
\end{Example}

\begin{Proposition}
\label{prop:on:assumption} 
\begin{parts}
\Part{(1)}
If $(\deg A_{n}(t))_{n \geq 0}$ is unbounded, 
then there exists $N \geq 1$ such that 
\begin{equation}
\label{eqn:on:assumption:1}
\deg A_{N+j}(t) = d^j \deg A_{N}(t)
\end{equation}
 for all $j \geq 0$. 
In particular, there exists a rational number $\ell >0$ such that $\deg A_n(t) = d^n \ell$  for any sufficiently large $n$. 
\Part{(2)}
If $(\deg B_{n}(t))_{n \geq 0}$ is unbounded, 
then there exists $N^\prime \geq 1$ such that 
\begin{equation*}
\deg B_{N^\prime +j}(t) = d^j \deg B_{N^\prime}(t) 
\end{equation*}
for all $j \geq 0$. 
In particular, there exists a rational number $\ell^\prime >0$ such that $\deg B_n(t) = d^n \ell^\prime$  for any sufficiently large $n$. 
\Part{(3)}
Assumption~\ref{assumption} is equivalent to $\tilde{h}_{\bfH}(\bfP) > 0$. 
\Part{(4)}
Under Assumption~\ref{assumption}, $\bfP$ is not periodic with respect $\bfH$. 
Further, there exists a rational number $\ell \geq 1$ such that, 
for any sufficiently large $n$, we have 
\begin{equation}
\label{eqn:elln}
 \ell_n = d^n \ell. 
\end{equation}
In particular,  
$\overline{\Phi}_n$ is not a constant map for any sufficiently large $n$.
\end{parts}
\end{Proposition}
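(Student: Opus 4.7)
The plan is to track the degrees $a_n := \deg A_n(t)$ and $b_n := \deg B_n(t)$ (with the convention $\deg 0 = -\infty$) through the recursions of Definition~\ref{def:An:Bn}. Set $\gamma := \max_{1\leq i\leq d}(\deg c_i(t))/i$. The crucial observation for (1) is that because $f_t(x)$ is monic in $x$, the leading coefficient of $A_n(t)^d$ lies in $K^\times$ and cannot vanish; whenever $a_n>\gamma$, each non-leading term $c_i(t)A_n(t)^{d-i}$ satisfies $\deg c_i(t)+(d-i)a_n<i a_n+(d-i)a_n=d\,a_n$, strictly less than $\deg A_n(t)^d$. Hence no cancellation occurs with the leading term and $\deg f_t(A_n(t))=d\,a_n$. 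Combined with $A_{n+1}=\delta A_{n-1}+f_t(A_n)$, this yields the implication
\[
a_n>\max(\gamma,a_{n-1})\;\Longrightarrow\;a_{n+1}=\max(a_{n-1},d\,a_n)=d\,a_n.
\]
Since then $a_{n+1}=d\,a_n>a_n\geq\max(\gamma,a_n)$, the hypothesis propagates and $a_{N+j}=d^j a_N$ for every $j\geq 0$, once a valid starting index $N$ is found. Unboundedness of $(a_n)$ supplies one: pick $M>\max(\gamma,a_0)$ and let $N\geq 1$ be minimal with $a_N>M$; automatically $a_{N-1}\leq M<a_N$ and $a_N>\gamma$. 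Setting $\ell:=a_N/d^N$, a positive rational number, proves (1). The identical analysis, applied to $B_{n+1}=\delta^{-1}(B_{n-1}-f_t(B_n))$, proves (2).

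For (3), I use the product formula on $K(t)=F(\PP^1)$ of Example~\ref{eg:prod:ft}: the places are closed points of $\PP^1$, each with normalizing constant $n_v=1$. Since $\delta\in K^\times$ and $a(t),b(t)\in K[t]$, the recursions imply $A_n,B_n\in K[t]$ for every $n$, so at each finite place $v_\alpha$ ($\alpha\in K$) the iterates of $\bfP$ have $v_\alpha$-norm $\leq 1$, giving $G_{v_\alpha}^\pm(\bfP)=0$. At the infinite place, $|p(t)|_{v_\infty}=e^{\deg p(t)}$, so $\log^+\norm{H_t^n(P_t)}_{v_\infty}=\max(a_n,a_{n-1},0)$ and by (1),
\[
G_{v_\infty}^+(\bfP)=\lim_{n\to\infty}\frac{1}{d^n}\max(a_n,a_{n-1},0)=\begin{cases}\ell & \text{if $(a_n)$ is unbounded,}\\ 0 & \text{otherwise;}\end{cases}
\]
symmetrically $G_{v_\infty}^-(\bfP)\in\{\ell^\prime,0\}$. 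Hence $\htilde_{\bfH}(\bfP)=G_{v_\infty}(\bfP)=\max(\ell,\ell^\prime)$, which is positive if and only if Assumption~\ref{assumption} holds.

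For (4), non-periodicity of $\bfP$ is immediate: a relation $\bfH^m(\bfP)=\bfP$ for some $m\geq 1$ would force $(A_n)_n$ and $(B_n)_n$ to be periodic in $n$, hence $(a_n)$ and $(b_n)$ both bounded, contradicting the Assumption. Combining (1) and (2) (with the convention $\ell=0$, resp.\ $\ell^\prime=0$, when $(a_n)$, resp.\ $(b_n)$, is bounded), for every sufficiently large $n$ one has $a_n=d^n\ell$ and $b_n=d^n\ell^\prime$, whence
\[
\ell_n=\max(a_n,a_{n-1},b_n,b_{n-1})=d^n\max(\ell,\ell^\prime),
\]
because the ``shifted'' terms $a_{n-1},b_{n-1}$ contribute only $d^{n-1}\ell,d^{n-1}\ell^\prime$. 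Renaming $\ell:=\max(\ell,\ell^\prime)>0$ gives $\ell_n=d^n\ell$, which in particular forces $\overline{\Phi}_n$ to be non-constant for all large $n$. The only technical subtlety in the whole argument is the no-cancellation step in (1); it is clean thanks to monicness of $f_t$ in $x$, which places the leading coefficient of $A_n^d$ in $K^\times$ rather than $K[t]$, so I do not anticipate any serious obstacle.
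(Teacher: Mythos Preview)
Your proof is correct and follows essentially the same route as the paper's. Both arguments hinge on the observation that once $\deg A_n(t)$ exceeds a threshold controlled by $\max_i\deg c_i(t)$ and by $\deg A_{n-1}(t)$, monicness of $f_t$ in $x$ prevents cancellation in the leading term of $A_{n+1}$, forcing $a_{n+1}=d\,a_n$; the paper uses the cruder threshold $D>\max_i\deg c_i(t)$ where you use the sharper $\gamma=\max_i\deg c_i(t)/i$, but the induction is identical. Part~(3) is likewise the same computation at the place at infinity of $K(t)$.

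Two very minor remarks. First, to make the propagation step $d\,a_N>a_N$ airtight you want $a_N\geq 1$, so it is cleanest to take $M\geq\max(\gamma,a_0,0)$; unboundedness of $(a_n)$ guarantees such an $N$ exists. Second, in (4) the convention ``$b_n=d^n\ell'$ with $\ell'=0$'' is not literally true when $(b_n)$ is bounded but nonzero; what you actually use (and what is correct) is that the bounded sequence is eventually dominated by $a_n=d^n\ell\to\infty$, so $\ell_n=d^n\ell$ for large $n$. Finally, you prove $\ell>0$ rather than the stated $\ell\geq 1$; this is the right conclusion, since already for $H_t(x,y)=(y+x^2+t,x)$ with $\bfP=(0,0)$ one has $\ell_n=2^{n-1}$ and hence $\ell=1/2$.
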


\Proof
{\bf (1)} 
Fix a positive integer $D$ such that $D > \max\{\deg c_{1}(t), \ldots, \deg c_{d}(t), \deg A_{0}(t)\}.$  Since $(\deg A_{n}(t))_{n \geq 0}$ is unbounded, there exists an $n$ such that $\deg A_{n}(t) \ge D.$ Let $N$ be the smallest positive integer satisfying this property. Thus $\deg A_{N}(t) \ge D$ and $\deg A_{i}(t) < D$ for $ 0 \le i \le N-1.$ 
We prove \eqref{eqn:on:assumption:1} by induction on $n\ge N.$ We write $n = N+j$ with integer $j\ge 0.$ 

Clearly,  \eqref{eqn:on:assumption:1}  is true for $j=0.$ Let $j\ge 0$ and assume that $\deg A_{N+i}(t) = d^{i} \deg A_{N}(t)$ for integer $0\le i\le j.$ From the recursive relation for polynomials $A_{n}(t)$ we have  
\[
A_{N+j+1}(t) = \d A_{N+j-1}(t) + f_{t}(A_{N+j}(t)) = \d A_{N+j-1}(t) + A_{N+j}(t)^{d} + \sum_{i=1}^{d} c_{i}(t) A_{N+j}(t)^{d-i}.
\]
Note that, by induction, if $j \geq 1$, then  
$\deg (\d  A_{N+j-1}(t)) = d^{j-1} \deg A_{N}(t) < d^{j+1} \deg A_{N}(t)$. 
For $j = 0$, we still have $\deg (\d  A_{N-1}(t))  < D \leq \deg A_{N}(t) < d \deg A_N(t) = d^{j+1} \deg A_{N}(t)$. 
Further, we have 
\begin{align*}
\deg\left( c_{i}(t) A_{N+j}(t)^{d-i}\right)
& < D + (d-i) \deg A_{N+j}(t)  \\
& = D + (d-i) d^j \deg A_{N}(t) < d^{j+1}  \deg A_{N}(t ). 
\end{align*}
Since $\deg A_{N+j}(t)^{d} =  d \deg A_{N+j}(t) = d^{j+1} \deg A_{N}(t)$, we have 
$\deg A_{N+j+1}(t) =  d^{j+1} \deg A_{N}(t)$.  
This completes the induction steps  and hence the proof of \eqref{eqn:on:assumption:1}.

\medskip
{\bf (2)}
One can prove (2) by the same argument as (1). 

\medskip
{\bf (3)}
Note that the H\'enon map $\bfH$ and the point $\bfP$ are defined over the rational function field 
$K(t)$ of $\PP^1$ over $K$. As in Example~\ref{eg:prod:ft}, we regard $K(t)$ as a product formula field equipped with the set of absolute values $M_{K(t)}$. We denote by $\infty$ the point 
of $\PP^1(K)$ at which the function $t$ has a pole. 

Since $A_{n}(t)$ and $B_{n}(t)$ belong to $K[t]$, 
we have, for any $v \in M_{K(t)}$ with $v \neq \infty$ and for any $n \geq 1$,
 \[
 \norm{\bfH^{n}(\bfP)}_{v} \le 1\quad\text{and}\quad \norm{\bfH^{-n}(\bfP)}_{v} \le 1 . 
 \]
It follows that $G_{v}^{\pm}(\bfP) = 0$ if $v \ne \infty.$ 
Then, the canonical height of $\bfP$ is
\[
\htilde_{\bfH}(\bfP) = \sum_{v\in M_{K(t)}}\, n_{v} \max\{G^+_{v}(\bfP), G^-_{v}(\bfP)\} 
   = \max\{G^+_{\infty}(\bfP), G^-_{\infty}(\bfP)\} = \lim_{n\to\infty}\, \frac{\deg\overline{\Phi}_{n}}{d^{n}}. 
\]
Applying~(1) and~(2), we see that Assumption~\ref{assumption} is equivalent to $\htilde_{\bfH}(\bfP) > 0.$ 

\medskip
{\bf (4)}
The first assertion is obvious. The second assertion \eqref{eqn:elln} follows from~(1) and~(2). 
The last assertion is obvious from the second assertion. 
\QED

\begin{Remark}
\label{rem:alphaD-betaD}
\begin{parts}
\Part{(1)}
For later reference, we note that 
in Proposition~\ref{prop:on:assumption}~(1), if we write 
$A_N(t) = \a_{D} t^{D} + \cdots + \a_1 t + \a_0$ ($\a_0, \ldots, \a_{D}\in K$, $\a_D \neq 0$), 
then 
\begin{equation}
\label{eq:An leading coefficient}
A_{N+j}(t) = \left(\a_{D} t^{D} \right)^{d^j} + \;\text{(lower order terms)}.
\end{equation}
\Part{(2)}
Similarly, in Proposition~\ref{prop:on:assumption}~(2), if we write 
$
B_{N'}(t) = \b_{D'} t^{D'} + \cdots + \b_1 t + \b_0$ 
($\b_0, \ldots, \b_{D'}\in K$,  $\b_{D'} \neq 0$), 
then 
\begin{equation}
\label{eq:Bn leading coefficient}
B_{N'+j}(t) = \left(\b_{D'} t^{D'} \right)^{d^j} + \;\text{(lower order terms)}. 
\end{equation}
\end{parts}
\end{Remark}

\begin{Remark}
As we see in Proposition~\ref{prop:on:assumption}~(4), 
Assumption~\ref{assumption} implies that $\bfP$ is not periodic with respect $\bfH$. 
If $\delta = 1$ and $\bfH$ is not isotrivial, 
then it follows from 
Proposition~\ref{prop:on:assumption}~(3) and Remark~\ref{rmk:isotriviality} that 
Assumption~\ref{assumption} is indeed equivalent to 
the assumption that 
$\bfP$ is not periodic with respect $\bfH$. 
\end{Remark}

\subsection{Heights associated to semipositive adelically metrized line bundles}
\label{subsec:function field henon}

In this subsection, following Zhang \cite{zhang}, we recall the notion of semipositive adelically 
metrized line bundles over number fields and function fields. 
For details, see \cite{zhang, Gubler, Yuan}. 

Let $K$ be a number field (resp. a function field $F(B)$ of an integral projective variety 
$B$ over a field $F$ such that $B$ is regular in codimension one). 
In the following, we identify $M_K^{\rm fin}$ with 
$\Spec(O_K) \setminus \{(0)\}$ (resp. the $M_K$ with the set of all generic points of 
prime divisors of $B$). 

Let $X$ be an integral projective variety. 
An {\em adelically metrized line bundle} on $X$, denoted by $\Lcalbar := \left(L, \{\norm{\cdot}_v\}_{v\in M_K}\right)$, 
is a pair consisting of a line bundle $L$ on $X$ and a collection of metrics $\norm{\cdot} := \{\norm{\cdot}_v \mid v \in M_K\}$ 
such that the following conditions are satisfied: 
\begin{enumerate}
\item[(i)]
Each $\norm{\cdot}_v$ is a continuous and bounded metric of $L \otimes \KK_v$ that 
is $\Gal(\KK_v/K_v)$-invariant; 
\item[(ii)]
There exist a non-empty Zariski open subset $U$ of $\Spec(O_K)$ (resp. 
of $B$), a projective scheme $\mathscr{X}$ over $U$ with generic fiber $X$, 
a positive integer $e$,  and a line bundle $\mathscr{L}$ on $\mathscr{X}$ with
$L^{\otimes e} = \rest{\mathscr{L}}{X}$ such that 
$\norm{\cdot}_v^e$ is the metric induced from the model $\mathscr{L}_v := \mathscr{L} \otimes _U \KK_v^\circ$ 
at all $v \in M_K \cap U$. 
(When $K = F(B)$, the condition $v \in M_K \cap U$ means that 
$v \in M_K$ and the prime divisor corresponding to $v$ intersects with $U$.)
\end{enumerate}

A sequence $\{\norm{\cdot}_n\}_{n \geq 1}$ of adelic metrics on $L$ is said to 
{\em converge} to an adelic metric $\norm{\cdot}$ if there exists 
a non-empty Zariski open subset $U$ of $\Spec(O_K)$ (resp. 
of $B$) such that $\norm{\cdot}_{n, v} = \norm{\cdot}_{v}$ for all $n \geq 1$ and all $v \in M_K \cap U$, 
and $\log \left(\frac{\norm{\cdot}_{n, v}}{ \norm{\cdot}_v}\right)$ converges to $0$ uniformly on $X(\KK_v)$ 
for all places $v \in M_K \setminus U$. 
 
The adelically metrized line bundle $\Lcalbar = \left(L, \{\norm{\cdot}_v\}_{v\in M_K}\right)$ 
is said to be {\em semipositive} if there is a family $\Lcalbar_n = \left(L, \{\| \cdot \|_{n, v}\}_{v\in M_K} \right)$  
for $n \geq 1$ of adelically metrized line bundles satisfying the following conditions: 
\begin{enumerate}
\item[(a)] Metrics $\norm{\cdot}_n :=  \{\| \cdot \|_{n, v}\}$ converge 
to $\norm{\cdot} :=  \{\| \cdot \|_{v}\}$. 
\item[(b)] For any $v \in M_K^\infty$, the metric $\| \cdot  \|_{n, v}$ is smooth and
  the curvature $c_1(\Lcalbar)$ of $\|  \cdot \|_{n, v}$ is semipositive for each $n$; 
\item[(c)] 
For each $n \geq 1$, 
there exist a projective scheme $\mathscr{X}_n$ over $\Spec(O_K)$ (resp. of $B$) 
with generic fiber $X$, 
a positive integer $e_n$, and a line bundle $\mathscr{L}_n$ on $\mathscr{X}_n$ with 
$L^{\otimes e_n} = \rest{\mathscr{L}_n}{X}$ such that $\mathscr{L}_n$ is relatively nef  
and that $\norm{\cdot}_{n, v}$ is induced from $(\mathscr{X}_n, \mathscr{L}_n)$
for any $v \in M_K^{\rm fin}$.
\end{enumerate}

For any given semipositive adelically metrized line bundle $\Lcalbar = \left(L, \{\norm{\cdot}\}_{v\in M_K}\right)$ on
$X$, one can associate a height function $h_{\Lcalbar}$ on $X(\overline{K})$ as
follows:  For any $P \in X(\overline{K})$, we take a nonzero rational section $\eta$ of $L$ 
whose support is disjoint from the Galois conjugates of $P$ over $K$; Then we set
\begin{align*}
\label{eqn:ht of metrized line bundle}
h_{\Lcalbar}(P) 
& = \frac{1}{\left[K(P) : K\right]} \sum_{\sigma:K(P)\to \overline{K}}\sum_{v\in M_K}-
n_v \log \norm{\eta(P^{\sigma})}_v \\
\notag
& = 
\sum_{v \in M_K} n_v \left(
 \frac{1}{\left[K(P) : K\right]} \sum_{\sigma:K(P)\to \KK_v} - \log \norm{\eta(P^{\sigma})}_v
\right). 
\end{align*}
Note that, since $K$ satisfies the product formula, 
the value $h_{\Lcalbar}(P)$ does not depend on the choice of a nonzero rational section $\eta$. 
We say that $h_{\Lcalbar}$ is a {\em height associated to a semipositive adelically metrized line bundle}. 

\subsection{Variation of canonical heights in families}
\label{subsec: height specialization}

Let $K$ be a number field, and let  
$\bfH = (H_{t})_{t \in \overline{K}}$ be a H\'enon map over $K$ defined in~\eqref{eqn:Henon maps}. 

Let $\bfP\in\Aff^2(K[T])$. 
In this setting, we have the (function field) height $\hhat_{\bfH}(\bfP)$ defined in~\eqref{eqn:can:height:pm}, and for each $t \in \overline{K}$, we have the specialized 
height $\hhat_{H_t}(P_t)$ defined in~\eqref{eqn:can:height:pm}. 
Ingram~\cite{ing} studied variation of $\hhat_{H_t}(P_t)$  
and obtained the following theorem. 

\begin{Theorem}[{\cite[Theorem~1.1]{ing}}]
\label{thm:Ingram:variation}
Let $K$ be a number field. 
We have, for $t \in \overline{K}$, 
\begin{equation}
\label{eqn:ingram:variation}
\hhat_{H_t}(P_t) =
\hhat_{\bfH}(\bfP) h(t) + O(1), 
\end{equation}
where $h(t)$ is the (usual)  Weil height function 
on the parameter space $t \in \Aff^1(\overline{K})$. 
\end{Theorem}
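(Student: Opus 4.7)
The plan is to decompose the claim into local statements at each place of $K$ and show each is uniformly $O(1)$. By~\eqref{eqn:can:height:pm} and the decomposition $\hhat_{H_t}(P_t) = \hhat^{+}_{H_t}(P_t) + \hhat^{-}_{H_t}(P_t)$, together with the computation in the proof of Proposition~\ref{prop:on:assumption}~(3) identifying $\hhat^{\pm}_{\bfH}(\bfP) = \ell^{\pm}$ (with $\ell^{+} := \lim_n d^{-n} \max(\deg A_n, \deg A_{n-1})$ and $\ell^{-}$ the analogue for $B_n$), it suffices to prove, for each $v \in M_K$, the local estimate
\[
  G^{+}_{H_s, v}(P_s) - \ell^{+}\log^+|s|_v = O_v(1) \quad \text{uniformly in } s \in \KK_v,
\]
together with the analogue for $G^{-}$ and $\ell^{-}$, and to verify that $O_v(1) = 0$ for all but finitely many $v$. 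Summing $n_v$ times these estimates over $v$ and averaging over Galois conjugates of $t$ then identifies the global difference as $\hhat_{H_t}(P_t) - (\ell^{+} + \ell^{-}) h(t)$, which is therefore $O(1)$ uniformly in $t$.

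\textbf{Local analysis.} Fix $v$ and split $\KK_v$ into a bounded region $\{|s|_v \le R_v\}$ and an escape region $\{|s|_v > R_v\}$ for a sufficiently large threshold $R_v$. In the escape region, Remark~\ref{rem:alphaD-betaD}(1) gives $A_{N+j}(s) = \alpha_D^{d^j} s^{Dd^j} + (\text{lower order terms in } s)$ with $D = \deg A_N$, so for $|s|_v$ large the leading term dominates $v$-adically and
\[
  d^{-(N+j)}\log^+|A_{N+j}(s)|_v = \ell^{+}\log|s|_v + d^{-N}\log|\alpha_D|_v + o_v(1) \quad (j\to\infty),
\]
using $D = \ell^{+} d^N$ from Proposition~\ref{prop:on:assumption}(1). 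Passing to the limit gives $G^{+}_{H_s, v}(P_s) = \ell^{+}\log|s|_v + c_v$ on the escape region, with $c_v$ independent of $s$. In the bounded region, the recursion~\eqref{eqn:def:An} and submultiplicativity of the $v$-adic Gauss norm give $\log\|A_{n+1}\|_v \le d\log\|A_n\|_v + O_v(n)$ (the $O_v(n)$ absorbs polynomial-in-$\deg A_n$ convolution factors at archimedean places), which telescopes to $\log\|A_n\|_v = O_v(d^n)$ uniformly in $n$ since $\sum_k k/d^k < \infty$. Evaluating at $s$ with $|s|_v \le R_v$ yields $\log^+|A_n(s)|_v = O_v(d^n)$ uniformly in $s$, and therefore $G^{+}_{H_s, v}(P_s) = O_v(1)$ on the bounded region. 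Combining, $G^{+}_{H_s, v}(P_s) - \ell^{+}\log^+|s|_v = O_v(1)$ uniformly in $s$.

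\textbf{Finitely many bad places, and the main obstacle.} Let $S \subset M_K$ be the finite set consisting of $M_K^\infty$ and those nonarchimedean $v$ at which $\delta$, $\alpha_D$, or any coefficient of $f_t$, $a(t)$, or $b(t)$ fails to be a $v$-adic unit. At each $v \notin S$, induction on~\eqref{eqn:def:An} shows that the coefficients of $A_n$ remain $v$-adic units for all $n \ge N$, so the ultrametric property forces $|A_n(s)|_v = \max(1,|s|_v)^{\deg A_n}$ for every $s \in \KK_v$; hence $G^{+}_{H_s, v}(P_s) = \ell^{+}\log^+|s|_v$ on the nose and the corresponding local contribution vanishes. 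The parallel argument using~\eqref{eqn:def:Bn} and Remark~\ref{rem:alphaD-betaD}(2) handles the $-$ direction, and summing the finitely many nontrivial $O_v(1)$ terms yields the theorem. The main obstacle is the archimedean estimate across the boundary $|s|_v \approx R_v$, where the leading-term dominance used in the escape region degrades precisely where the coefficient-norm bound in the bounded region is weakest; closing this gap requires either a uniform continuity argument for the family Green function (cf.\ the normal-families approach of~\cite{bs}) or an explicit telescoping comparison of consecutive iterates. At nonarchimedean places the transition is sharp and the analysis is purely algebraic, which is why only archimedean places contribute a genuinely analytic difficulty.
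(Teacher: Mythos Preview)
The paper does not itself prove this statement; it is quoted as Ingram's result from \cite{ing}. However, Section~\ref{sec:valued fields} of the paper develops exactly the machinery your approach describes, in the course of proving the sharper Theorem~\ref{thm:main:A}, so it is meaningful to compare your sketch against that.

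Your overall strategy matches the paper's: decompose into local statements, split $\KK_v$ into a bounded region and an escape region, and observe that all but finitely many places contribute zero. Your bounded-region estimate via Gauss norms corresponds to Lemma~\ref{lem:estimate:Mn:upper} and Proposition~\ref{prop:estimate:Mn:lower}; your escape-region analysis via the leading term of $A_n$ corresponds to Proposition~\ref{prop:unbounded estimate}; the finitely-many-bad-places reduction is the argument in the proof of Proposition~\ref{prop:line bundle LP}.

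The gap you flag is real, and your escape-region argument as written does not close it. Knowing only that $A_{N+j}(s) = \alpha_D^{d^j} s^{Dd^j} + (\text{lower order})$ does not by itself guarantee that a \emph{single} threshold $R_v$ makes the leading term dominate for all $j$: the lower-order coefficients can grow like $C^{d^j}$ (your own Gauss-norm bound), and if $C > |\alpha_D|_v$ the ratio of the tail to the leading term blows up as $j \to \infty$ for fixed $|s|_v$. This afflicts not only archimedean places but also the bad nonarchimedean places in your set $S$, so your final sentence is slightly too optimistic. The paper closes this gap by precisely the second option you name: rather than comparing the leading term to all the rest at once, Lemma~\ref{lem:unbounded estimate} and Proposition~\ref{prop:unbounded estimate} compare $|A_{n+1}(t)|$ to $|A_n(t)|^d$ step by step, proving by induction that once $|A_N(t)| \ge |t|^{m+1}$ and $|A_{N-1}(t)/A_N(t)^d| \le |t|^{-d}$ hold for a single $L$, these inequalities propagate to all later $n$ with the same $L$ (see Remark~\ref{rem:condition for L}). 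With that telescoping argument supplied, your proof is complete and coincides with the paper's method.
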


\begin{Remark}
In fact, Ingram~\cite{ing} treated a more general case, where $K(T)$ is replaced by 
the function field of a smooth projective curve $C$ defined over a number field $K$, and in this case, 
$h$ is replaced by a height $h_\eta$ associated to a degree one divisor $\eta\in \Pic(C)$ 
and~$O(1)$ is replaced by $O(h_{\eta}(t)^{1/2})$.
\end{Remark}

These types of results go back to the study of 
variation of canonical heights of abelian varieties in ~\cite{silverman, tate}. 
See Introduction for more accounts. 
As we discuss in Introduction, just as a N\'eron-Tate height is important 
in the study of the arithmetic of abelian varieties, to study arithmetic properties of 
families of H\'enon maps, a key ingredient is 
the existence of a ``nice'' height function on the parameter space 
that encodes  arithmetic information of the periodic parameters $\Sigma(\bfP).$ 

\medskip
One of the main results in this paper is to show that for families of H\'enon maps such a 
 height function indeed exists on the parameter space (for $\htilde$ in place of $\hhat$). To be precise, 
Theorem~\ref{thm:main:A} in Introduction asserts that, over a number field or a function field as in Example~\ref{eg:prod:ft}, if we put 
$h_{\bfP}(t) := \htilde_{H_t}(P_t)/\htilde_{\bfH}(\bfP)$, then 
$h_{\bfP}$ is the restriction of the height function associated to a semipositive adelically metrized
line bundle $\Lcalbar_{\bfP}= \left(\Ocal_{\PP^{1}}(1), \{\norm{\cdot}_{v}\}_{v\in M_{K}}\right)$ on~$\PP^{1}$. 

The proof of Theorem~\ref{thm:main:A} will be given in Section~\ref{sect:heights and metrized line bundles} after establishing all technical results in the next section.

\setcounter{equation}{0}
\section{Construction of a metrized line bundle over a valued field}
\label{sec:valued fields}

In this section, we construct a metrized line bundle on $\PP^{1}$ 
over an algebraically closed valued field, which will be used 
to prove Theorem~\ref{thm:main:A}. 

Let $\Omega$ be an algebraically closed field that is complete with respect to 
an absolute value $|\cdot|$. Let $\bfH = (H_t)_{t \in \Omega}$ and $\bfP = (a(t), b(t)) = ((P_t)_{t \in \Omega})\in \Aff^2(\Omega[t])$ be as in \S~\ref{sec:family of Henon} such that $\bfP$ satisfies Assumption~\ref{assumption}, where the field $K$ in \S~\ref{sec:family of Henon} is $\Omega$ in this section. 

For later use, we set 
\begin{equation}
\label{eqn:def:m}
m  := \max \{\deg(c_{1}(t)),\ldots, \deg(c_d(t)), \deg(a(t)), \deg(b(t))\}.
\end{equation}
\noindent We also put 
\begin{equation*}
\Delta := \max\left(|\d|, 1/|\d|\right) \geq 1. 
\end{equation*}

We use the following convenient notation: 
for a positive real number $r$, we set 
\[
 [r]
 := 
 \begin{cases}
 r & \quad \text{(if $|\cdot|$ is archimedean),} \\
 1 &  \quad \text{(if $|\cdot|$ is nonarchimedean)}. 
 \end{cases}
\]

\subsection{Statement of the result}

Let $P = (x_0: x_1: \ldots: x_4) \in \PP^4(\Omega)$.  
Let $X_0, \ldots, X_4$ be homogeneous coordinates of $\PP^4$. 
Then for any $\xi = c_0 X_0 + \cdots  + c_4 X_4 \in H^0(\OO_{\PP^4}(1))$, 
\[
  \Vert \xi \Vert_{\mathrm{st}}(P)
  := \frac{|c_0 x_0 + \cdots  + c_4 x_4|}{\norm{(x_0, x_1, \ldots, x_4)}}
\]
is well-defined, thus defining a metric $\Vert \cdot \Vert_{\mathrm{st}}$ (called the standard metric)  on $\OO_{\PP^4}(1)$. 

Recall that, for each positive integer $n$, we have defined a morphism
\begin{align}
\label{eqn:Phi:bar:2}
  & \overline{\Phi}_n\colon \PP^1 \to \PP^4, \\
  \notag
  & \qquad (t:1) \mapsto (H_{t}^n(P_t): H_{t}^{-n}(P_t): 1) 
  = (A_n(t): A_{n-1}(t): B_{n-1}(t): B_n(t):1)
\end{align}
in \eqref{eqn:Phi} and \eqref{eqn:Phi:bar}. 
We write $\ell_n := \deg(\overline{\Phi}_n)$ as before. 
We remark that by Proposition~\ref{prop:on:assumption}~(4), $\ell_n = d^n \ell > 0$ for sufficiently large $n$, so that $\overline{\Phi}_n$ is not a constant morphism.

Now let $n$ be a large integer such that $\ell_{n} > 0.$ 
We define a metric $\Vert \cdot \Vert_{n}$ on $\OO_{\PP^1}(1)$ by pulling back
$\Vert \cdot \Vert_{\mathrm{st}}$ via  $\overline{\Phi}_n.$  To do this, 
we take a lift $\Aff^2 \to \Aff^5$ of $\overline{\Phi}_n$ given by 
$(t, 1) \mapsto (H_{t}^n(P_t), H_{t}^{-n}(P_t), 1)$ which determines 
the isomorphism $\alpha_n\colon \overline{\Phi}_n^*(\OO_{\PP^4}(1)) \cong \OO_{\PP^1}(1)^{\otimes \ell_n}$.
Then we define 
\[
\Vert\cdot\Vert_{n} := \left(\alpha_n (\overline{\Phi}_n^* \Vert\cdot \Vert_{{\rm st}})\right)^{1/\ell_n}. 
\]
Concretely, for a section $\eta = a_0 X_0 + a_1 X_1$ of $H^0(\OO_{\PP^1}(1))$, we have 
\begin{equation}
\label{eqn:def:norm:n}
  \Vert \eta \Vert_{n}(t:1) = \frac{|a_0 t + a_1|}{\norm{(A_n(t), A_{n-1}(t), B_{n-1}(t), B_n(t), 1)}^{1/\ell_n}}. 
\end{equation}

The purpose of this section is to prove the following theorem.

\begin{Theorem}
\label{thm:main:Mn}
The metrics $\{\log \Vert\cdot\Vert_{n}\}$ uniformly converge on
$\PP^{1}(\Omega)$ as $n \to \infty$.
\end{Theorem}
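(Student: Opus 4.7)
The plan is to reduce the theorem to an explicit scalar statement and then treat the bounded and unbounded parts of $\PP^1(\Omega)$ by different techniques. Using the sections $X_1$ and $X_0 = tX_1$ of $\OO_{\PP^1}(1)$, formula~\eqref{eqn:def:norm:n} gives
$$\phi_n(t) := -\log\norm{X_1}_n(t:1) = \frac{1}{\ell_n}\log\norm{(A_n(t), A_{n-1}(t), B_{n-1}(t), B_n(t), 1)},$$
while $-\log\norm{X_0}_n(t:1) = -\log|t| + \phi_n(t)$. Since by Proposition~\ref{prop:on:assumption}~(4) we have $\ell_n = d^n\ell$ for all $n$ large, the theorem reduces to showing (i) $\phi_n$ converges uniformly on $\{|t| \le R\}$ for each $R>0$, and (ii) $\phi_n(t) - \log|t|$ converges uniformly on some $\{|t| \ge R^*\}$. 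Together these give uniform convergence of the metric on the two trivializing opens $\{|t|\le R^*\}$ and $\{|t|\ge R\}$ covering $\PP^1(\Omega)$, and hence on all of $\PP^1(\Omega)$.

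\textbf{Bounded region.} Fix $R > 0$. The polynomials $c_i(t), a(t), b(t)$ are uniformly bounded on $\{|t| \le R\}$, so the standard one-step growth estimate for H\'enon maps (\cite{Ka} in the nonarchimedean case, \cite{bs, ho} in the archimedean case) yields a constant $M_R > 0$ with
$$\bigl|\log^+\norm{H_t(Q)} - d\log^+\norm{Q}\bigr| \le M_R, \qquad \bigl|\log^+\norm{H_t^{-1}(Q)} - d\log^+\norm{Q}\bigr| \le M_R$$
for all $|t| \le R$ and $Q \in \Aff^2(\Omega)$. Iterating and dividing by $d^{n+1}$, consecutive terms of $d^{-n}\log^+\norm{H_t^n(P_t)}$ differ by at most $M_R/d^{n+1}$ uniformly in $t$, so this sequence is uniformly Cauchy on $\{|t|\le R\}$, and likewise for $d^{-n}\log^+\norm{H_t^{-n}(P_t)}$. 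Combined with~\eqref{eqn:H:n:A:B}, this gives uniform convergence of $\phi_n$ on $\{|t|\le R\}$ to $(1/\ell)\max\bigl(G_{H_t}^+(P_t), G_{H_t}^-(P_t)\bigr)$, where $G_{H_t}^{\pm}$ are the Green functions of $H_t$ on $\Aff^2(\Omega)$.

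\textbf{Region near infinity.} From Remark~\ref{rem:alphaD-betaD}, for $n$ sufficiently large,
$$A_n(t) = (\alpha_D t^D)^{d^{n-N}} + (\text{lower order in } t), \qquad B_n(t) = (\beta_{D'} t^{D'})^{d^{n-N'}} + (\text{lower order in } t).$$
Factoring out the leading monomial, the ratio (leading)/(full) tends to $1$ uniformly on $\{|t|\ge R^*\}$ for all $n$ large, once $R^*$ is chosen sufficiently large. Since $\ell = \max(D/d^N, D'/d^{N'})$, a direct computation yields
$$\phi_n(t) - \log|t| \longrightarrow \max\!\left(\frac{\log|\alpha_D|}{d^N\ell},\; \frac{\log|\beta_{D'}|}{d^{N'}\ell}\right)$$
uniformly on $\{|t|\ge R^*\}$, the maximum taken over the indices achieving $\ell$. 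The two limit functions produced in (i) and (ii) agree on the overlap $\{R \le |t| \le R^*\}$ since both describe the same pointwise limit of $\phi_n(t)$; gluing the two local uniform estimates yields uniform convergence of $\log\norm{\cdot}_n$ on all of $\PP^1(\Omega)$.

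\textbf{Main obstacle.} The technical heart is the uniform one-step estimate in the bounded region: the constants appearing in~\cite{Ka, bs, ho} depend only on sup-norms of the coefficients $c_i(t)$ and on $\delta$, so restricting to $|t| \le R$ makes them uniform in $t$; verifying this carefully for both $H_t$ and $H_t^{-1}$, especially in the nonarchimedean setting, is the main bookkeeping step. The near-infinity analysis, while requiring a mild case split according to whether $A_n$, $B_n$, or both achieve the maximal growth rate $\ell$, is a straightforward polynomial estimate once Remark~\ref{rem:alphaD-betaD} is in hand.
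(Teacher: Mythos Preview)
Your overall architecture (split into a bounded disk and a neighborhood of $\infty$, control $\phi_n$ on each piece) matches the paper's. The substantive gap is in the bounded region.

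\textbf{The claimed one-step estimate is false.} You assert that for $|t|\le R$ there is $M_R$ with
\[
\bigl|\log^+\norm{H_t(Q)}-d\log^+\norm{Q}\bigr|\le M_R\qquad\text{for all }Q\in\Aff^2(\Omega),
\]
and similarly for $H_t^{-1}$. The upper bound is fine, but the lower bound fails: take $Q=(0,y)$ with $|y|$ large, so $H_t(Q)=(\delta y+f_t(0),0)$ and $\log^+\norm{H_t(Q)}-d\log^+\norm{Q}\approx -(d-1)\log|y|\to-\infty$. This is exactly the obstruction created by the indeterminacy of the extension of $H_t$ to $\PP^2$; it is what distinguishes H\'enon maps from polynomial endomorphisms of projective space, and it is \emph{not} what is proved in \cite{Ka,bs,ho}. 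Those references establish the two-sided bound only on a forward-invariant region $V^+$ (roughly, where $|x|$ dominates), not on all of $\Aff^2$. Consequently your telescoping argument for $d^{-n}\log^+\norm{H_t^n(P_t)}$ does not go through as written.

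\textbf{How the paper repairs this, and what you are missing.} The paper's Lemma~\ref{lem:property:Henon} constructs explicit regions $V_L^+,V_L^-$ with $H_t(V_L^+)\subset V_L^+$, $H_t^{-1}(V_L^-)\subset V_L^-$, $\Aff^2=V_L^+\cup V_L^-$, and the good lower bound holding on each. Because $P_t$ is bounded for $t\in\Bcal_L$, one has $P_t\in V_L^+\cap V_L^-$, so the entire forward orbit stays in $V_L^+$ and the entire backward orbit stays in $V_L^-$; \emph{along these specific orbits} the two-sided estimate does hold, and Proposition~\ref{prop:estimate:Mn:lower} follows. This filtration step is the crux of the bounded-region argument, not the bookkeeping you flag; once you insert it, your approach and the paper's coincide.

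\textbf{Near infinity.} Your sketch here is in the right spirit, but ``the ratio (leading)/(full) tends to $1$ uniformly on $\{|t|\ge R^*\}$ for all $n$ large'' needs $R^*$ to be chosen \emph{independently of $n$}, and this is not automatic: the lower-order coefficients of $A_n,B_n$ grow with $n$. The paper secures this via an induction (Lemma~\ref{lem:unbounded estimate} and Proposition~\ref{prop:unbounded estimate}) propagating the estimates $|A_{n+1}|\asymp|A_n|^d$ and $|A_n/A_{n+1}^d|\le|t|^{-d}$ from one step to the next with a single fixed $L$. You should make that induction explicit.
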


\noindent The proof of Theorem~\ref{thm:main:Mn} will be given in subsequent subsections.
We first introduce a quantity related
to the denominator of~\eqref{eqn:def:norm:n}.

\begin{Definition}[$M_{n}(t)$]
\label{def:M:n:t}
Let $n \geq 1$.
For $t \in \Aff^1(\Omega)$, we set
\[
  M_{n}(t)  
   := \max\left\{|A_n(t)|,  |A_{n-1}(t)|, |B_{n-1}(t)|, |B_n(t)|, 1\right\}. 
\]
\end{Definition}

For a positive constant $L$,  we denote by  $\mathcal{B}_L$ the bounded region in the parameter space $\Aff^1(\Omega)$ defined 
by
\[
 \mathcal{B}_L = \{t \in \Omega \mid |t| \leq L\}.
\]
If we take any sufficiently large $L$,  we then have, for any $t \in \mathcal{B}_L$,  
\begin{equation}
\label{eq:large L}
|c_1(t)| \leq L^{m+1}, \; \ldots, \; |c_d(t)| \leq L^{m+1}, \;
|a(t)| \leq L^{m+1}, \; |b(t)| \leq L^{m+1}, \; [3d] \D \leq L^{m+1}. 
\end{equation}
In what follows, we fix $L \geq 1$ that satisfies \eqref{eq:large L}.

\subsection{Uniform convergence on a bounded region of $\Aff^1$}
\label{subsec:bounded region}

In this subsection, we compare $M_{n+1}(t)$ with $M_{n}(t)^{d}$ 
on $\mathcal{B}_L$. 

\begin{Lemma}
\label{lem:estimate:Mn:upper}
Let $n \geq 1$ be any integer. Then
for any $t \in \mathcal{B}_L$, we have
\[
M_{n+1}(t) \leq [d+2] \,\D \,L^{m+1}\, M_n(t)^{d}.
\]
\end{Lemma}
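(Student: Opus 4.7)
The plan is to bound each of the five quantities whose maximum defines $M_{n+1}(t)$ separately, and to show each is $\le [d+2]\D L^{m+1} M_n(t)^d$. Three of them, namely $|A_n(t)|$, $|B_n(t)|$, and $1$, are already at most $M_n(t)$, hence at most $M_n(t)^d$ since $M_n(t)\ge 1$; the coefficient $[d+2]\D L^{m+1}\ge 1$ makes the inequality trivial for these terms. So the real work is to estimate $|A_{n+1}(t)|$ and $|B_{n+1}(t)|$ from the recursions \eqref{eqn:def:An} and \eqref{eqn:def:Bn}.

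For $|A_{n+1}(t)|=|\d A_{n-1}(t)+f_t(A_n(t))|$, I would first bound $|f_t(A_n(t))|$ by writing $f_t(x)=x^d+c_1(t)x^{d-1}+\cdots+c_d(t)$ and using $|c_i(t)|\le L^{m+1}$ on $\mathcal B_L$ together with $|A_n(t)|\le M_n(t)$ and $M_n(t)\ge 1$. In the archimedean case this gives
\[
|f_t(A_n(t))|\le |A_n(t)|^d+\sum_{i=1}^d |c_i(t)||A_n(t)|^{d-i}\le (1+dL^{m+1})M_n(t)^d,
\]
while the nonarchimedean (ultrametric) version gives $|f_t(A_n(t))|\le \max(1,L^{m+1})M_n(t)^d = L^{m+1}M_n(t)^d$. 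Adding $|\d A_{n-1}(t)|\le \D M_n(t)\le \D M_n(t)^d$ and bundling, the archimedean estimate becomes
\[
|A_{n+1}(t)|\le(\D+1+dL^{m+1})M_n(t)^d,
\]
and in the nonarchimedean case one gets $|A_{n+1}(t)|\le \max(\D,L^{m+1})M_n(t)^d$. Since $\D\ge 1$ and $\D\le \D L^{m+1}$, each summand on the right is at most $\D L^{m+1}M_n(t)^d$, so in the archimedean case the bound is $(d+2)\D L^{m+1}M_n(t)^d$ and in the nonarchimedean case it is $\D L^{m+1}M_n(t)^d$, both matching $[d+2]\D L^{m+1}M_n(t)^d$.

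The estimate for $|B_{n+1}(t)|=\frac{1}{|\d|}|B_{n-1}(t)-f_t(B_n(t))|$ proceeds identically, using $1/|\d|\le \D$; the factor of $\D$ simply multiplies through the bounds, and the inequalities $\D\le \D L^{m+1}$ and $L^{m+1}\ge [3d]\D$ absorb the resulting constants into $[d+2]\D L^{m+1}$ in both the archimedean and nonarchimedean cases. Taking the maximum over the five terms then gives the claimed bound $M_{n+1}(t)\le [d+2]\D L^{m+1} M_n(t)^d$.

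There is no real obstacle here; the only care is bookkeeping the archimedean versus nonarchimedean constants uniformly through the bracket $[\cdot]$ and verifying that the hypotheses \eqref{eq:large L} on $L$, together with $\D\ge 1$, make all of $\D$, $1$, $L^{m+1}$, and $dL^{m+1}$ absorbable into $[d+2]\D L^{m+1}$. The proof is a bounded-region estimate only, so no dynamical input beyond the recursions and the standing bound $m\ge\deg c_i$ is needed.
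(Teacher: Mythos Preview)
Your proof is correct and follows essentially the same approach as the paper: bound $|A_{n+1}|$ and $|B_{n+1}|$ via the recursions using $|c_i(t)|\le L^{m+1}$, $|A_{n-1}|,|A_n|,|B_{n-1}|,|B_n|\le M_n$, and $M_n\ge 1$, then note the remaining three terms are trivially dominated. The only cosmetic difference is that the paper handles the archimedean and nonarchimedean cases simultaneously via the bracket notation $[d+2]$ applied to a maximum, whereas you split cases and then verify the resulting constants are absorbed into $[d+2]\D L^{m+1}$; your invocation of $L^{m+1}\ge [3d]\D$ for the $B_{n+1}$ bound is harmless but unnecessary, since $L^{m+1}\ge 1$ already suffices there.
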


\Proof
To simplify the notation, we write $M_n, A_n, B_n$ for $M_n(t), A_n(t), B_n(t)$.
By \eqref{eqn:def:An}, we have
\begin{align*}
  |A_{n+1}| & = \left|\d \, A_{n-1}+A_{n}^{d} + \sum_{i=1}^{d} c_i(t) A_n^{d-i}\right| \\
  & \leq [d+2] \max\left\{|\d \,A_{n-1}|, |A_{n}|^{d}, |c_1(t)A_n^{d-1}|, \ldots, |c_{d-1}(t)A_n|,  |c_d(t)| \right\}
\\
  & \leq [d+2] \max\left\{\D\, |A_{n-1}|, |A_{n}|^{d}, L^{m+1} |A_n|^{d-1}, \ldots, L^{m+1} |A_n|, L^{m+1}
\right\} \\
  &  \leq [d+2] \,\D\,  L^{m+1} \max\left\{|A_{n-1}|, |A_{n}|^{d}, |A_{n}|^{d-1},\ldots, 1\right\}  \qquad \text{(since
$L \ge 1$ and $\Delta \ge 1$)} \\
 &   \leq [d+2]\,\D\,  L^{m+1}\, M_n^{d} .
\end{align*}
Similarly, using \eqref{eqn:def:Bn} we have
\begin{align*}
|B_{n+1}| 
& = \left| \frac{1}{\d}\left(B_{n-1} - B_{n}^{d} - \sum_{i=1}^{d} c_{i} (t) B_{n}^{d-i} \right)\right| \\
& \leq \D \, \left| B_{n-1} - B_{n}^{d} - \sum_{i=1}^{d} c_{i} (t) B_{n}^{d-i} \right| \\
& \leq [d+2]\,\D\,  \max\left\{ |B_{n-1}|, |B_{n}|^{d}, L^{m+1} |B_n|^{d-1}, \ldots, L^{m+1} |B_n|,
L^{m+1} \right\} \\
& \leq [d+2] \,\D\,  L^{m+1} \max\left\{|B_{n-1}|, |B_{n}|^{d}, |B_{n}|^{d-1},\ldots, 1\right\}  \qquad \text{(since
$L \ge 1$)} \\
              & \leq [d+2] \, \D\,L^{m+1} M_n^{d}. 
\end{align*}
We also have
$$|A_n| \leq \max\{|A_n|, 1\} \leq [d+2] \,\D\,L^{m+1} \max\{|A_n|, 1\}^{d} \leq [d+2]\,\D\, L^{m+1}
M_n^{d},$$
and $|B_n| \leq [d+2]\,\D\, L^{m+1} M_n^{d}$ as well.
Combining these estimates,  we get
\[
  M_{n+1}
  = \max\left\{|A_{n+1}|,  |A_{n}|,
     |B_{n}|,   |B_{n+1}|, 1\right\}  \leq [d+2]\,\D\,  L^{m+1} M_n^{d}.
\]
This completes the proof.
\QED

\noindent For the other direction, we have the following estimate.

\begin{Proposition}
\label{prop:estimate:Mn:lower}
Let $n \geq 1$ be any integer. Then
for any $t\in \mathcal{B}_L$, we have
\[
M_{n+1}(t) \geq \frac{ M_n(t)^{d}}{([3d]\,\D\, L^{m+1})^d}.
\]
\end{Proposition}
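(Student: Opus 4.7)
The plan is to mirror the upper-bound Lemma~\ref{lem:estimate:Mn:upper}, splitting according to which of $|A_n(t)|, |A_{n-1}(t)|, |B_{n-1}(t)|, |B_n(t)|$ realizes the maximum $M := M_n(t)$, and treating separately the ranges $M \leq C$ and $M > C$, where $C := [3d]\Delta L^{m+1}$. When $M \leq C$ the asserted bound is trivial, since $M^d/C^d \leq 1 \leq M_{n+1}(t)$. So from now on I focus on the nontrivial range $M > C$.

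The principal case is $M = |A_n(t)|$ (the case $M = |B_n(t)|$ being symmetric, modulo the factor $1/\delta$ absorbed by $\Delta \leq C/([3d]L^{m+1})$). My idea is to rearrange the recursion \eqref{eqn:def:An} into
\[
A_n(t)^d \;=\; A_{n+1}(t) \,-\, \delta A_{n-1}(t) \,-\, \sum_{i=1}^{d} c_i(t)\, A_n(t)^{d-i},
\]
and then apply the triangle inequality, unified via the $[\cdot]$-notation, together with $|A_{n-1}(t)| \leq M$, $|c_i(t)| \leq L^{m+1}$, and $|A_n(t)| = M$, to obtain
\[
M^d \;\leq\; [3d]\, \max\bigl(|A_{n+1}(t)|,\ \Delta M,\ L^{m+1} M^{d-1}\bigr).
\]
The hypothesis $M > C = [3d]\Delta L^{m+1}$ will force $\Delta M < M^d/[3d]$ and $L^{m+1}M^{d-1} < M^d/[3d]$, so the maximum must be realized by $|A_{n+1}(t)|$. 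This gives $|A_{n+1}(t)| \geq M^d/[3d] \geq M^d/C^d$, and hence $M_{n+1}(t) \geq M^d/C^d$.

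The remaining case is $M = |A_{n-1}(t)|$ with $|A_{n-1}(t)| > |A_n(t)|$ (and the symmetric $|B_{n-1}(t)|$ version), which I plan to rule out entirely when $M > C$. Solving \eqref{eqn:def:An} in the reverse direction as $\delta A_{k-1}(t) = A_{k+1}(t) - f_t(A_k(t))$, and using that $|f_t(x)| \geq |x|^d/2$ (archimedean) or $|f_t(x)| = |x|^d$ (nonarchimedean) for $|x|$ above the threshold $\sim L^{m+1}$, a backward induction on $j = 0, 1, \ldots, n-1$ should yield
\[
|A_{n-1-j}(t)| \;\geq\; \frac{M^{d^j}}{(2\Delta)^{(d^j-1)/(d-1)}}.
\]
Setting $j = n-1$ and combining with the constraint $|a(t)| \leq L^{m+1}$ from \eqref{eq:large L} produces $M \leq (L^{m+1})^{1/d^{n-1}} (2\Delta)^{1/(d-1)} \leq 2\Delta L^{m+1} \leq C$, contradicting $M > C$ (the final step uses $[3d] \geq 2$ in the archimedean case and the sharper equality $|A_{k-1}(t)| = |A_k(t)|^d/|\delta|$ coming from the ultrametric in the nonarchimedean case). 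The symmetric case for $|B_{n-1}(t)|$ is handled by the analogous induction on \eqref{eqn:def:Bn} to bound $|b(t)|$ from below.

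The hard part will be executing this backward induction cleanly. At each step $j \mapsto j+1$, I must verify that $|A_{n-j}(t)|$ is small enough compared to $|f_t(A_{n-1-j}(t))|$ for the triangle inequality to produce a bound of order $|A_{n-1-j}(t)|^d$ on $|A_{n-2-j}(t)|$, and that the threshold condition $|A_{n-1-j}(t)| > 2dL^{m+1}$ guaranteeing $|f_t(x)| \geq |x|^d/2$ is preserved as the induction progresses. Because the inductive lower bound on $|A_{n-1-j}(t)|$ grows super-exponentially in $j$ while $|A_{n-j}(t)|$ only satisfies the weaker bound from the previous induction step, both conditions do propagate once $M > C$; however, the archimedean-versus-nonarchimedean distinction and the precise accumulation of the multiplicative $(2\Delta)$-losses need to be tracked uniformly via the $[\cdot]$-notation as in Lemma~\ref{lem:estimate:Mn:upper}.
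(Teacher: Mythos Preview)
Your plan is sound and can be carried out, but it takes a genuinely different route from the paper. The paper does not split on which of $|A_n|, |A_{n-1}|, |B_{n-1}|, |B_n|$ realizes $M_n(t)$, nor does it run a backward induction. Instead it proves a structural lemma (Lemma~\ref{lem:property:Henon}) about a filtration $V_L^{\pm} \subset \Aff^2(\Omega)$: the set $V_L^+$ is $H_t$-forward-invariant, and on $V_L^+$ the defining inequality directly yields $\max(\|H_t(x,y)\|,1) \geq \max(\|(x,y)\|,1)^d / C^d$. Since $\|P_t\| \leq L^{m+1} \leq C$, the initial point lies in $V_L^+$, hence so does every forward iterate $H_t^n(P_t) = (A_n, A_{n-1})$; the bound on $\max(|A_{n+1}|,|A_n|,1)$ then drops out immediately, with the $B$-side handled symmetrically via $V_L^-$. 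The forward-invariance of $V_L^+$ is precisely the statement that your problematic case $|A_{n-1}| > \max(|A_n|, C)$ never occurs along the forward orbit of $P_t$, so your backward induction is reproving this invariance by hand for one specific orbit. The paper's route is cleaner and isolates a reusable dynamical fact about H\'enon maps; yours is more elementary and avoids introducing the auxiliary sets $V_L^{\pm}$. Incidentally, your backward induction is easier than you fear: taking as inductive hypothesis simply that $|A_{n-1-j}| > |A_{n-j}|$ and $|A_{n-1-j}| > C$, the step is routine (one gets $|A_{n-2-j}| \geq |A_{n-1-j}|^d/([3]\Delta) > |A_{n-1-j}| > C$), and at $j = n-1$ the hypothesis $|A_0| = |a(t)| > C$ already contradicts \eqref{eq:large L} without any need to track the super-exponential growth precisely.
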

\medskip

To prove Proposition~\ref{prop:estimate:Mn:lower}, 
we use some properties of H\'enon maps.
Set
\begin{align}
\label{eqn:VL}
V_L^+
& =
\left\{
(x, y) \in \Aff^2(\Omega) \;\left|\; \Vert (x, y)\Vert  \leq [3d] \,\D\, L^{m+1} \quad\text{or}\quad
\Vert H_{t}(x, y) \Vert \geq \frac{1}{L^{m+1}} \Vert (x, y)\Vert^{d}
\right.\right\},
\\
\notag
V_L^-
& =
\left\{
(x, y) \in \Aff^2(\Omega) \;\left|\; \Vert (x, y)\Vert  \leq [3d] \,\D\, L^{m+1} \quad\text{or}\quad
\Vert H_{t}^{-1}(x, y) \Vert \geq \frac{1}{L^{m+1}} \Vert (x, y)\Vert^{d}
\right.\right\}.
\end{align}

\noindent 
Before we prove Proposition~\ref{prop:estimate:Mn:lower}, 
we show the following lemma, which gives a filtration  according to 
behaviour of the orbits of points under the action of the H\'{e}non maps.
\begin{Lemma}
\label{lem:property:Henon}
Let $t \in \mathcal{B}_L$.
Then the following hold.
\begin{parts}
\Part{(1)}
$H_{t}(V_L^+ ) \subseteq V_L^+$ and
$H_{t}^{-1}(V_L^-) \subseteq V_L^-$. 
\Part{(2)}
For any $(x, y) \in V_L^+$,
we have
$$\max\left\{\Vert H_{t}(x, y) \Vert , 1\right\} \geq \frac{\max\{\Vert (x, y)\Vert, 1\}^{d}}
{([3d]\D\,L^{m+1})^d} .$$
\Part{(3)}
For any $(x, y) \in V_L^-$, we have $$\max\left\{\Vert H_{t}^{-1}(x, y) \Vert , 1\right\} \geq \frac{\max\{\Vert
(x, y)\Vert, 1\}^{d}}{([3d]\,\D\,L^{m+1})^d} .$$
\Part{(4)}
$\Aff^2(\Omega)  = V_L^+ \cup V_L^-$.
\end{parts}
\end{Lemma}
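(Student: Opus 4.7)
The approach is to exploit the explicit form of $H_t(x,y) = (\delta y + f_t(x), x)$ together with the bound $t \in \mathcal{B}_L$, which, by~\eqref{eq:large L}, gives $|c_i(t)| \leq L^{m+1}$ for each coefficient of $f_t$ and $[3d]\Delta \leq L^{m+1}$. The heart of the argument is a ``dominance lemma'' that I would prove first: for $t \in \mathcal{B}_L$, if $|x| \geq |y|$ and $|x| > [3d]\Delta L^{m+1}$, then the leading term $x^d$ of $f_t(x)$ swamps both the lower-degree contributions $\sum_{i=1}^d c_i(t) x^{d-i}$ and the perturbation $\delta y$, yielding $|\delta y + f_t(x)| \geq |x|^d/L^{m+1}$. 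In the nonarchimedean case the ultrametric inequality gives the estimate immediately, while in the archimedean case the triangle inequality produces an intermediate factor of $1/3$ which is then absorbed by the defining inequality of $L$. A symmetric statement, using $H_t^{-1}(x,y) = (y,(x-f_t(y))/\delta)$ and $|1/\delta| \leq \Delta$, furnishes the analogous bound for $H_t^{-1}$ when $|y| \geq |x|$ and $|y|$ is large.

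Granted this lemma, parts (2), (3) and (4) are short. For (4): if $\|(x,y)\| \leq [3d]\Delta L^{m+1}$ then $(x,y) \in V_L^+ \cap V_L^-$ by the first clause of each definition; otherwise one of $|x| \geq |y|$ or $|y| \geq |x|$ must hold, and the dominance lemma (or its symmetric version) places $(x,y)$ into $V_L^+$ or $V_L^-$ accordingly. For (2): when $\|(x,y)\| \leq [3d]\Delta L^{m+1}$ the right-hand side of the stated inequality is at most $1$, so the bound is trivial; when $\|(x,y)\| > [3d]\Delta L^{m+1}$ the second clause in the definition of $V_L^+$ directly gives $\|H_t(x,y)\| \geq \|(x,y)\|^d/L^{m+1}$, which is stronger than what is required since $([3d]\Delta L^{m+1})^d \geq L^{m+1}$. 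Part~(3) is obtained by replacing $H_t$ with $H_t^{-1}$.

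The main obstacle is the forward invariance in~(1). Writing $(x',y') = H_t(x,y)$ with $y'=x$, I want $(x',y') \in V_L^+$. The sub-case $\|(x',y')\| \leq [3d]\Delta L^{m+1}$ is immediate; otherwise I must verify $\|H_t(x',y')\| \geq \|(x',y')\|^d/L^{m+1}$. The difficulty is a potential ``coordinate swap'': if $|x'| < |y'| = |x|$, one cannot directly apply the dominance lemma to $(x',y')$ because its first coordinate is not the dominant one. My plan is to rule out this swap by contradiction: if $|x'| < |y'| = |x|$ and $|x| > [3d]\Delta L^{m+1}$, then $\|(x,y)\| \geq |x| > [3d]\Delta L^{m+1}$, so the hypothesis $(x,y) \in V_L^+$ forces $|x| = \|H_t(x,y)\| \geq \|(x,y)\|^d/L^{m+1} \geq |x|^d/L^{m+1}$, giving $|x|^{d-1} \leq L^{m+1}$ and contradicting $|x| > L^{m+1}$ (since $d \geq 2$). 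Hence necessarily $|x'| \geq |y'| = |x|$, and the dominance lemma applied to $(x',y')$ supplies the required estimate. The invariance $H_t^{-1}(V_L^-) \subseteq V_L^-$ is handled identically, with $H_t$ replaced by $H_t^{-1}$ throughout.
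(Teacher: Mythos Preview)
Your argument is correct, but it proceeds by a different route than the paper's. The paper first establishes a single two-sided inequality
\[
\|(x,y)\|^d \leq [3d]\,\Delta\,\max\left\{\|H_t^{-1}(x,y)\|,\ \|H_t(x,y)\|,\ L^{m+1}\|(x,y)\|^{d-1},\ L^{m+1}\right\},
\]
obtained by writing $x^d$ and $y^d$ in terms of $H_t(x,y)$ and $H_t^{-1}(x,y)$ respectively. Part~(4) then follows immediately by contradiction from this inequality, and Part~(1) is proved by \emph{contrapositive}: one assumes $(x,y)\notin V_L^+$ and shows $H_t^{-1}(x,y)\notin V_L^+$, so that the complement of $V_L^+$ is backward-invariant. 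Your approach instead isolates a one-sided ``dominance lemma'' (if $|x|\geq |y|$ is large then the first coordinate of $H_t(x,y)$ is at least $|x|^d/L^{m+1}$) and argues forward: the dichotomy $|x|\geq |y|$ versus $|y|\geq |x|$ gives~(4) directly, and for~(1) you handle the potential coordinate swap at the image point by a short contradiction using membership in $V_L^+$. Your route is slightly more hands-on but perfectly valid; the paper's master inequality has the advantage of treating~(1) and~(4) uniformly and making the contrapositive proof of~(1) a clean two-line computation, while your dominance lemma is arguably the more transparent reason \emph{why} the filtration exists.
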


\Proof
This is essentially shown in \cite[Lemmas~2.6, 2.7, 2.9]{Ka} with slightly different $V_L^+$ and $V_L^-$.
Here we give an explicit proof. We first give useful estimates.

Let $(x, y) \in \Aff^2(\Omega)$ be any point.
Noting that
\[
H_{t}^{-1}(x, y) = \left(y, \frac{1}{\d}\left(x- y^{d} - \sum_{i=1}^{d}c_i(t) y^{d-i}\right)\right),
\]
we have
\begin{align}
\label{eqn:property:Henon:1}
|y^{d}| & = \left|\left(x- y^{d} - \sum_{i=1}^{d}c_i(t) y^{d-i}\right) + \left(-x  + \sum_{i=1}^{d}c_i(t) y^{d-i}\right)
\right| \\ \notag
& \leq [3] \max\left\{|\d|\,\left|\frac{1}{\d}(x- y^{d} - \sum_{i=1}^{d}c_i(t) y^{d-i})\right|,\, |x|,\,  \left|\sum_{i=1}^{d}
c_i(t) y^{d-i}\right|\right\}  \\ \notag
& \leq [3] \,\max\{\D\,\Vert H_{t}^{-1}(x, y) \Vert, |x|, [d]  L^{m+1} \Vert(y, 1)\Vert^{d-1} \} \\ \notag
& \leq [3d]\,\D\, \max\left\{\Vert H_{t}^{-1}(x, y) \Vert, L^{m+1}   \Vert (x, y, 1) \Vert^{d-1}\right\} \\
\notag
& \leq [3d]\,\D\, \max\left\{\Vert H_{t}^{-1}(x, y) \Vert, L^{m+1}   \Vert (x, y) \Vert^{d-1}, L^{m+1} \right\}.
\end{align}
Similarly, since
\[
H_{t}(x, y) = \left(\d y + x^{d}  + \sum_{i=1}^{d}c_i(t)x^{d-i}, x\right)
\]
and  $x^{d} = (\d y +x^{d}  + \sum_{i=1}^{d}c_i(t)x^{d-i}) - (\d y  + \sum_{i=1}^{d}c_i(t)x^{d-i})$, 
we have
\begin{equation}
\label{eqn:property:Henon:2}
  |x^{d}| \leq [3d]\,\D\, \max\left\{\Vert H_{t}(x, y) \Vert, L^{m+1}   \Vert (x, y) \Vert^{d-1}, L^{m+1}
\right\} .
\end{equation}
In conclusion,
for any $(x, y) \in \Aff^2(\Omega)$, we obtain from \eqref{eqn:property:Henon:1} and \eqref{eqn:property:Henon:2}
that
\begin{equation}
\label{eqn:property:Henon:3}
 \Vert (x, y) \Vert^{d} \leq [3d]\,\D\, \max\left\{\Vert H_{t}^{-1}(x, y) \Vert, \Vert H_{t}(x, y) \Vert, L^{m+1}
\Vert (x, y) \Vert^{d-1}, L^{m+1}  \right\}.
\end{equation}
\smallskip
Now we prove the statements (1)--(4) of the lemma.
\medskip
\\
\noindent {\bf (1)} We first show the first assertion.

Suppose that $(x, y) \not\in V_L^+$. We will show that $H_{t}^{-1}(x, y) \not\in V_L^+$, which
gives $\Aff^2(\Omega) \setminus V_L^+ \supseteq H_{t}^{-1}\left(\Aff^2(\Omega) \setminus V_L^+\right)$,
and thus  $H_{t}(V_L^+ ) \subseteq V_L^+$. In other words, we assume
\begin{equation}
\label{eqn:property:Henon:known}
  \Vert (x, y) \Vert > [3d] \,\D\, L^{m+1}  \quad\text{and}\quad \Vert H_{t}(x, y) \Vert < \frac{1}{L^{m+1} }
\Vert (x, y)\Vert^{d},
\end{equation}
and we will show
\begin{equation}
\label{eqn:property:Henon:goal}
\Vert H_{t}^{-1}(x, y) \Vert > [3d]\,\D \, L^{m+1}  \quad\text{and}\quad
\Vert (x, y)\Vert < \frac{1}{ L^{m+1} } \Vert H_{t}^{-1}(x, y) \Vert^{d}.
\end{equation}
Observe that \eqref{eqn:property:Henon:3}, \eqref{eqn:property:Henon:known}  and the choice of $L$  
give 
\begin{equation}
\label{eqn:conseq:Henon:1}
\Vert (x, y) \Vert^{d} \leq [3d] \,\D\, \Vert H_{t}^{-1}(x, y) \Vert.
\end{equation}
It follows from \eqref{eqn:conseq:Henon:1}, \eqref{eqn:property:Henon:known} and $d \geq 2$ that
\[
\Vert H_{t}^{-1}(x, y) \Vert \geq \frac{\Vert (x, y) \Vert^{d}}{[3d] \,\D} > \frac{([3d]\,\D\,L^{m+1})^{d}}
{[3d] \,\D} \geq  [3d] \,\D\, L^{m+1}.
\]
This gives the first part of~\eqref{eqn:property:Henon:goal}.

For the second part of~\eqref{eqn:property:Henon:goal}, 
using~\eqref{eqn:conseq:Henon:1}, \eqref{eqn:property:Henon:known} and~$d \geq 2,$ 
the same estimate as above gives
\[
\norm{H_t^{-1}(x,y)}  \geq \frac{\norm{(x,y)}^{d}}{[3d]\,\D} > \frac{([3d]\,\D L^{m+1})^{d-1}}{[3d]\,\D}
\, \norm{(x,y)} \geq \norm{(x,y)}.
\]
Thus
\[
\frac{1}{L^{m+1} } \Vert H_{ t}^{-1}(x, y) \Vert^{d}
> \frac{1}{L^{m+1} } \norm{(x,y)}^{d}  
> \frac{([3d] \, \D \, L^{m+1})^{d-1}}{L^{m+1}} \norm{(x, y)} \geq \Vert (x, y)\Vert.  
\]
This completes the proof of the first assertion of (1). 
Similar arguments also show that $H_{t}^{-1}(V_L^-) \subseteq V_L^-$ which gives the second assertion of (1).
\medskip
\\
{\bf (2)}  Let $(x,y)\in V_L^+$ be given.  By the definition of $V_L^+$ (see \eqref{eqn:VL}), we have
\[
\Vert (x, y)\Vert  \leq [3d] \,\D\, L^{m+1} \quad\text{or}\quad
\Vert H_{t}(x, y) \Vert \geq \frac{1}{L^{m+1}} \Vert (x, y)\Vert^{d} > \frac{\Vert (x, y)\Vert^{d}}
{([3d]\D\,L^{m+1})^d}.
\]
If the latter inequality holds, then 
noting that $1 \geq 1/([3d]\D\,L^{m+1})^d$ 
we obtain the assertion. Suppose that the former inequality holds. Then  $\Vert (x,
y)\Vert^{d}  \leq \left([3d]\,\D \, L^{(m+1)}\right)^d$.  Consequently,
\[
\max\left\{\Vert H_{t}(x, y) \Vert , 1\right\} \geq
1 \geq
\frac{1}{([3d]\,\D\,L^{(m+1)})^d} \max\left\{\Vert (x, y)\Vert^{d}, 1\right\},
\]
and we also obtain the assertion.
\medskip
\\
{\bf (3)}  
By symmetry, one can show (3) by the same arguments as (2). 
\medskip
\\
{\bf (4)}  
We claim that 
there does not exist $(x, y) \in \Aff^2(\Omega)$ such that
\begin{equation}
\label{eqn:property:Henon}
\Vert (x, y)\Vert  >[3d]\,\D\, L^{m+1}  , \quad
\Vert H_{t}(x, y) \Vert < \frac{1}{L^{m+1}  } \Vert (x, y)\Vert^{d} \quad\text{and}\quad
\Vert H_{t}^{-1}(x, y) \Vert < \frac{1}{L^{m+1} } \Vert (x, y)\Vert^{d}.
\end{equation}

Indeed, suppose that there exists $(x, y) \in  \Aff^2(\Omega)$ satisfying \eqref{eqn:property:Henon}. 
It follows from the first condition that 
\begin{equation}
\label{eqn:x:y:1}
\Vert (x, y)\Vert^d > [3d]\,\D\, L^{m+1} \Vert (x, y)\Vert^{d-1}
\quad\text{and}\quad
\Vert (x, y)\Vert^d > [3d]\,\D\, L^{m+1}. 
\end{equation}
Since $L^{m+1} > [3d]\,\D$ by our choice of $L$ (see \eqref{eq:large L}), 
the second condition in~\eqref{eqn:property:Henon} gives 
\begin{equation}
\label{eqn:x:y:2}
\Vert (x, y)\Vert^d > 
 [3d]\,\D\,  \Vert H_t(x, y)\Vert. 
\end{equation}
Similarly, the third condition gives 
\begin{equation}
\label{eqn:x:y:3}
\Vert (x, y)\Vert^d > [3d]\,\D\,  \Vert H^{-1}_t(x, y)\Vert. 
\end{equation}
These estimates \eqref{eqn:x:y:1},  \eqref{eqn:x:y:2},  \eqref{eqn:x:y:3} contradict~\eqref{eqn:property:Henon:3}. 

Thus we have shown the claim. Taking the converse, we obtain (4). 
\QED

\medskip
\noindent \textsl{Proof of Proposition~\ref{prop:estimate:Mn:lower}.}\quad
As in the proof of Lemma~\ref{lem:estimate:Mn:upper}, 
we write $M_n$, $A_n$, $B_n$ for $M_n(t)$, $A_n(t)$, $B_n(t)$.

Let $t \in \Bcal_L$. Let $P_t = (a(t), b(t))$ be the initial point. 
Since 
$\Vert P_t \Vert  \leq  L^{m+1} \leq [3d] \Delta L^{m+1}$ (see~\eqref{eq:large L}), we have
$P_t \in V_L^+$. It follows from  Lemma~\ref{lem:property:Henon}~(1) that
$H_{t}^n\left(P_t\right) \in V_L^+$ for any $n \geq 1$ and Lemma~\ref{lem:property:Henon}~(2) says that
\[
\max\left\{\Vert H_{t}^{n+1}\left(P_t\right) \Vert , 1\right\} \geq \frac{1}{([3d]\,\D\,L^{m+1})^d}
\max\left\{\Vert H_{t}^n\left(P_t\right)\Vert^{d}, 1\right\}.
\]
Since  $H_{t}^n\left(P_t\right) = \left(A_n, A_{n-1}\right)$, we obtain
\[
\max\left\{|A_{n+1}|, |A_n|, 1\right\} \geq \frac{1}{([3d]\,\D\,L^{m+1})^d} \max\left\{|A_{n}|^{d}, |A_{n-1}|
^{d}, 1\right\}.
\]
Similarly, using Lemma~\ref{lem:property:Henon}~(1)~(3), we have 
\[
\max\left\{|B_{n+1}|, |B_n|, 1\right\} \geq \frac{1}{([3d]\,\D\,L^{m+1})^d} \max\left\{|B_{n}|^{d}, |B_{n-1}|
^{d}, 1\right\}.
\]
Thus
\[
\max\left\{|A_{n+1}|, |A_n|, |B_{n+1}|, |B_n|,1\right\} \geq \frac{1}{([3d]\,\D\,L^{m+1})^d} \max\left\{|A_{n}|, |
A_{n-1}|, |B_{n}|, |B_{n-1}|, 1\right\}^{d},
\]
which completes the proof of Proposition~\ref{prop:estimate:Mn:lower}.
\QED

\subsection{Uniform convergence on an unbounded region of $\Aff^1$}
\label{subsec:uniform:convergence:unbounded}
In this subsection we compare $M_{n+1}(t)$ with $M_n^{d}(t)$ on $\Aff^1(\Omega)\setminus \Bcal_L$. 
We begin with the following lemma.
\begin{Lemma}
\label{lem:unbounded estimate}
Let $s \geq 2$ be an integer. 
Let 
\[
 \vp(x,y) = \g y + c x^s + \sum_{i=1}^s c_i(t) x^{s-i} \in (\Omega[t])[x, y],
\] 
where  $\g, c\in \Omega\setminus\{0\}$ and $c_i(t)\in
\Omega[t]$ for $i =1,\ldots, s$. We take an integer $m \geq 1$ with $m \geq  \max_i\{\deg c_i(t)\}$. 
Let $x(t), y(t) \in \Omega[t]$ be polynomials  
such that there exists an $L^\prime \geq 1$ with the following properties:
\begin{enumerate}
\item[(i)]
$|x(t)| \geq |t|^{m+1}$ for any $t \in \Omega$ with $|t| >L^\prime$; 
\item[(ii)]
$|y(t)/x(t)^s| \leq 1/|t|^s$ for any $t \in \Omega$ with $|t| >L^\prime$. 
\end{enumerate}
Then, for any $\k > 1$, there exists an $L \geq 1$ with the following properties: 
\begin{enumerate}
\item[(a)]
$L^{m+1} \geq \sqrt[s-1]{\k/|c|}$;  
\smallskip
\item[(b)]
$\displaystyle \frac{|c|\,|x(t)|^s}{\k} \leq |\vp(x(t),y(t))| \leq \k \,|c|\,|x(t)|^s$  
for any $t \in \Omega$ with $|t| >L$;  
\smallskip
\item[(c)]
$\displaystyle |\vp(x(t),y(t))| \geq |t|^{m+1}$ for any $t \in \Omega$ with $|t| >L$;  \\
\smallskip
\item[(d)]
$\displaystyle \left|\frac{x(t)}{\vp(x(t),y(t))^s}\right| \leq \frac{1}{|t|^s}$ 
for any $t \in \Omega$ with $|t| >L$. 
\end{enumerate}
\end{Lemma}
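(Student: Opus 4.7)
The plan is to show that $c\,x(t)^s$ is the dominant term of $\varphi(x(t),y(t))$ when $|t|$ is large, and then to derive (a)--(d) as essentially algebraic consequences of this domination together with the standing hypotheses (i) and (ii).

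\textbf{Step 1 (dominant term).} Expand
\[
 \varphi(x(t),y(t)) = c\,x(t)^s + \g\,y(t) + \sum_{i=1}^{s} c_i(t)\, x(t)^{s-i},
\]
and estimate each non-dominant term relative to $|c\,x(t)^s|$. Using hypothesis~(ii),
\[
 \frac{|\g\,y(t)|}{|c\,x(t)^s|} \le \frac{|\g|}{|c|\,|t|^s}.
\]
Using hypothesis~(i) and the bound $|c_i(t)| \le C\,|t|^{m}$ (for some constant $C$ and all sufficiently large $|t|$, since $\deg c_i(t)\le m$), I get
\[
 \frac{|c_i(t)\,x(t)^{s-i}|}{|c\,x(t)^s|} \le \frac{C\,|t|^{m}}{|c|\,|x(t)|^{i}} \le \frac{C}{|c|\,|t|^{i(m+1)-m}},
\]
and each exponent $i(m+1)-m \ge 1$ for $i\ge 1$. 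Thus every non-dominant term is $o(1)$ relative to $|c\,x(t)^s|$ as $|t|\to\infty$.

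\textbf{Step 2 (proof of (b)).} In the archimedean case, use the triangle inequality: choose $L$ large enough that the sum of all non-dominant terms is at most $\min\{1-1/\k,\,\k-1\}\,|c\,x(t)^s|$ for $|t|>L$; this yields the two-sided bound in (b). In the nonarchimedean case, enlarge $L$ so that each non-dominant term is strictly smaller in absolute value than $|c\,x(t)^s|$; then the ultrametric inequality gives the stronger equality $|\varphi(x(t),y(t))| = |c|\,|x(t)|^s$, which trivially satisfies (b) for any $\k>1$. Finally, enlarge $L$ once more so that (a) holds.

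\textbf{Step 3 (deriving (c) and (d) from (b) and (a)).} From the lower bound in (b) and hypothesis~(i),
\[
 |\varphi(x(t),y(t))| \ge \frac{|c|\,|x(t)|^s}{\k} \ge \frac{|c|\,|t|^{(m+1)s}}{\k} = \frac{|c|\,|t|^{(m+1)(s-1)}}{\k}\cdot |t|^{m+1},
\]
so (c) holds whenever $|t|^{(m+1)(s-1)} \ge \k/|c|$, which is exactly condition (a) (and automatic for $|t|>L$). For (d), combine the lower bound in (b) with (i):
\[
 \frac{|x(t)|}{|\varphi(x(t),y(t))|^{s}} \le \frac{\k^s}{|c|^{s}\,|x(t)|^{s-1}} \cdot \frac{1}{|x(t)|^{s(s-1)}} \cdot |x(t)|^s \cdot |x(t)|^{-1}\cdot\ldots
\]
— more cleanly: $|\varphi|^s \ge |c|^s|x(t)|^{s^2}/\k^s$, and using $|x(t)|\ge |t|^{m+1}$,
\[
 \frac{|x(t)|}{|\varphi|^{s}} \le \frac{\k^s}{|c|^{s}\,|x(t)|^{s^2-s}} \le \frac{\k^s}{|c|^{s}\,|t|^{(m+1)s(s-1)}},
\]
and this is $\le 1/|t|^s$ provided $|t|^{(m+1)s(s-1)-s}\ge \k^s/|c|^s$, which (since $(m+1)s(s-1)-s\ge s$ when $s\ge 2$) is ensured by enlarging $L$ one last time.

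The whole proof is a sequence of elementary estimates; the only point requiring care is handling the archimedean and nonarchimedean cases uniformly, which is why the bracket notation $[\cdot]$ in the paper is convenient but does not appear explicitly here --- in both cases the same choice of $L$ works, and the main obstacle is simply to enlarge $L$ finitely many times to accommodate each of (a)--(d) simultaneously.
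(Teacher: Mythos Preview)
Your proof is correct and follows essentially the same approach as the paper's: factor out the dominant term $c\,x(t)^s$, show the remaining terms are $o(1)$ as $|t|\to\infty$ using hypotheses (i) and (ii), deduce (b), then derive (c) and (d) algebraically from (b), (a), and (i). One minor slip: in Step~3 the exponent should be $s^2-1$ rather than $s^2-s$ (since $|x(t)|/|\varphi|^s \le \k^s/(|c|^s|x(t)|^{s^2-1})$), but as $s^2-1\ge s^2-s$ your weaker bound is still valid and the argument goes through unchanged.
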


\Proof
First, we note that if we take any sufficiently large $L \geq L^\prime$, then (a) holds. 

We write
\[
\vp(x,y) = c x^s\left(1 + \frac{1}{c}\left(\sum_{i=1}^s \frac{c_i(t)}{x^i} + \frac{\g\,y}{x^s}\right)\right).
\]

For any $t \in \Omega$ with $|t| \geq L^\prime$,  conditions~(i) and~(ii) give 
\[
\left| \sum_{i=1}^s \frac{c_i(t)}{x(t)^i} + \frac{\g\,y(t)}{x(t)^s} \right| 
\leq [s+1] \max\left\{\frac{|c_1(t)|}{|t|^{m+1}}, \, 
\ldots, \, \frac{|c_s(t)|}{|t|^{s(m+1)}} , \,  \frac{|\g|}{|t|^s} \right\}. 
\]
Since $\deg c_i(t) \le m$ for $i = 1, \ldots, s$, we have 
\[
 [s+1]  \lim_{|t|\to\infty}\,
\max\left\{\frac{|c_1(t)|}{|t|^{m+1}}, \, 
\ldots, \, \frac{|c_s(t)|}{|t|^{s(m+1)}} ,  \,  \frac{|\g|}{|t|^s} \right\}  = 0, 
\]
hence 
\[
\lim_{|t|\to\infty}\, \left| \sum_{i=1}^s \frac{c_i(t)}{x(t)^i} + \frac{\g\,y(t)}{x(t)^s}\right| = 0.
\]

We set $\a  := 1 - 1/\k$. Since $\kappa > 1$, we have $0 < \alpha < 1$. 
We note that $1 + \alpha = 2 - 1/\k <  \k$.  
If necessary, we replace $L$ by a larger number, and 
we may assume that 
\[
 \left| \sum_{i=1}^s \frac{c_i(t)}{x(t)^i} + \frac{\g\,y(t)}{x(t)^s}\right|  
 \leq |c| \a  \quad \text{for all $t \in \Omega$ with $|t|> L$.}
\]
Then, for any $t \in \Omega$ with  $|t| > L$, we have 
\[
(1 - \a)\,|c| \,|x(t)|^s \le |\vp(x(t),y(t))| \le (1+\a)\,|c|\,|x(t)|^s .
\]
By the definition of $\a$, we conclude that  $|c |\, |x(t)|^s/\k \leq |\vp(x(t),y(t))| \leq \k |c|\,|x(t)|^s.$ 
This proves (b).

For (c), by  condition~(i) and $L \geq L^\prime$, we have 
$|x(t)| \geq |t|^{m+1}$ for any $t \in \Omega$ with $|t| > L$. Then 
\[
|\vp(x(t),y(t))|  \geq \frac{|c|\,|x(t)|^s}{\k}
   \geq \frac{|c| \, |t|^{(s-1)(m+1)}}{\k} |t|^{m+1} 
   \geq \frac{L^{(s-1)(m+1)}|c|}{\k} |t|^{m+1}  \geq |t|^{m+1}, 
\]
where we use the property (a) in the last inequality. 
For (d), observe that
\begin{align*}
\left|\frac{x(t)}{\vp(x(t),y(t))^s}\right| & \le \frac{\left(\k/|c|\right)^s |x(t)|}{|x(t)|^{s^2}} \le  \frac{\left(\k/|c|\right)^s }{|x(t)|
^{s^2-1}} \\
& \leq \frac{\left(\k/|c|\right)^s}{|t|^{(s^2-1)(m+1)}} \leq \frac{\left(\k/|c|\right)^s}{L^{((s^2-1)(m+1)-s)}} \frac{1}{|t|
^s}.
\end{align*}
Since $s \geq 2$ and $m \geq 1$, we have $(s^2-1)(m+1)-s \ge s(s-1)(m+1)$.  
If follows from (a) that
$$L^{(s^2-1)(m+1)-s} \ge L^{s(s-1)(m+1)} \geq \left(\frac{\k}{|c|}\right)^s$$
and thus $|x(t)/\vp(x(t),y(t))^s| \leq 1/|t|^s$ for all $t \in \Omega$ with $|t| \geq L$.
\QED

\begin{Remark}
\label{rem:condition for L}
The proof of Lemma~\ref{lem:unbounded estimate} shows that, 
if a number $L$ satisfies  
$L \geq L^\prime$, $L^{m+1} \geq \sqrt[s-1]{\k/|c|}$, and 
\begin{equation}
\label{remark:choice of L}
\left|\sum_{i=1}^s  \frac{c_i(t)}{t^{i(m+1)}} + \frac{\g}{t^s}\right| \leq |c|\,\left(  1 - \frac{1}{\k}\right)\;
\;\text{for all  $t \in \Omega$ with $|t| > L$}, 
\end{equation}
then we have (a)--(d) for such $L$. 
\end{Remark}

Applying Lemma~\ref{lem:unbounded estimate} to the family of H\'{e}non maps $H_t$ with initial point $P_t =
\left(a(t),b(t)\right)$ we have the following.

\begin{Proposition}
\label{prop:unbounded estimate}
Let $\bfH = (H_t)_{t \in \Omega}$ be the family of H\'enon maps, and let 
$\bfP = (P_t)_{t \in \Omega}$ be the family of initial points 
as in the beginning of \S~\ref{sec:valued fields}. 
Fix a constant $\k > 1.$  Then, the following statements hold.
\smallskip

\begin{parts}
\Part{(1)} 
If $(\deg A_n(t))_{n \geq 0}$ is unbounded, then there exist an  $L_{A} \geq 1$ and an
integer $N_{A} \ge 1$  such that for all $n\ge N_{A}$  
\begin{align*}
& \norm{H_t^{n}(P_t)} = |A_n(t)| \quad\text{and}\quad
\frac{|A_n(t)|^{d}}{\k}  \le \norm{H_t^{n+1}(P_t)} \le \k |A_n(t)|^{d} \\
& \hspace{15eM}\text{for all $t \in \Omega$ with $|t| > L_{A}$.}
\end{align*}
\Part{(2)} 
If $(\deg B_n(t))_{n \geq 0}$ is unbounded, then there exists an $L_{B} \geq 1$ and an
integer $N_{B} \ge 1$  such that for all $n\ge N_{B}$
\begin{align*}
& \norm{H_t^{-n}(P_t)} = |B_n(t)| \quad \text{and}\quad
 \frac{|B_n(t)|^{d}}{|\d| \,\k}  \le \norm{H_t^{-n-1}(P_t)} \le \frac{\k\,|B_n(t)|^{d}}{|\d|} \\
& \hspace{15eM}\text{for all $t \in \Omega$ with $|t| > L_{B}$.}
\end{align*}
\end{parts}
\end{Proposition}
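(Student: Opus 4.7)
The proof of both parts will be carried out by iterating Lemma~\ref{lem:unbounded estimate}. For part~(1), I will write the recursion~\eqref{eqn:def:An} as $A_{n+1}(t) = \varphi(A_n(t), A_{n-1}(t))$, where
\[
\varphi(x, y) := \delta\, y + x^d + \sum_{i=1}^d c_i(t)\, x^{d-i},
\]
so that the lemma applies with $s = d$, $c = 1$, $\gamma = \delta$, and with the integer $m$ of~\eqref{eqn:def:m}. For part~(2), the recursion~\eqref{eqn:def:Bn} takes the form $B_{n+1}(t) = \psi(B_n(t), B_{n-1}(t))$ with
\[
\psi(x, y) := \tfrac{1}{\delta}\, y + \bigl(-\tfrac{1}{\delta}\bigr) x^d + \sum_{i=1}^d \bigl(-\tfrac{c_i(t)}{\delta}\bigr) x^{d-i},
\]
so that the lemma applies with $c = -1/\delta$ and $\gamma = 1/\delta$, and estimate~(b) automatically produces the extra factor of $|\delta|^{-1}$ that appears in the claimed bound for $|B_{n+1}(t)|$.

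\textbf{Initialization.} The next step is to verify hypotheses~(i) and~(ii) of Lemma~\ref{lem:unbounded estimate} for a pair of the form $(A_{N_A}, A_{N_A - 1})$. By Proposition~\ref{prop:on:assumption}(1) together with Remark~\ref{rem:alphaD-betaD}(1), there is a fixed integer $N$, a positive integer $D$ and a nonzero $\alpha_D \in \Omega$ such that $A_{N+j}(t) = (\alpha_D t^D)^{d^j} + (\text{lower order terms})$ for every $j \geq 0$. For $j$ large enough that $D d^j \geq m+1$, an appropriate choice of $L^{(0)}$ yields $|A_{N+j}(t)| \geq |t|^{m+1}$ for $|t| > L^{(0)}$; moreover, since
\[
\deg\bigl(A_{N+j-1}\bigr) - d\,\deg\bigl(A_{N+j}\bigr) \,=\, -(d^2 - 1)\, D\, d^{j-1} \,\leq\, -d
\]
for all sufficiently large $j$, we also get $|A_{N+j-1}(t)/A_{N+j}(t)^d| \leq 1/|t|^d$ for $|t| > L^{(0)}$. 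The initialization for the $B_n$ case is analogous, using Remark~\ref{rem:alphaD-betaD}(2).

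\textbf{Bootstrap and conclusion.} Setting $N_A := N + j$ and applying Lemma~\ref{lem:unbounded estimate} to $(A_{N_A}, A_{N_A - 1})$ produces an $L = L_A \geq L^{(0)}$ for which (a)--(d) hold. Properties~(c) and~(d) are then precisely the hypotheses~(i) and~(ii) for the shifted pair $(A_{N_A + 1}, A_{N_A})$ at the \emph{same} $L_A$, and here Remark~\ref{rem:condition for L} is essential: the constraints that determine whether a given $L$ is admissible (namely $L \geq L^{(0)}$, $L^{m+1} \geq \sqrt[d-1]{\kappa/|c|}$, and the estimate~\eqref{remark:choice of L}) depend only on $\varphi$, $m$, $s$, $c$, $\kappa$, and on the preceding threshold, but not on the specific polynomial pair being fed in. Hence a straightforward induction shows that (a)--(d) hold at the single $L_A$ for every pair $(A_n, A_{n-1})$ with $n \geq N_A$, and estimate~(b) is the claimed two-sided bound on $|A_{n+1}(t)|$. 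Finally, by enlarging $L_A$ once more so that $L_A^{m+1} \geq \kappa^{1/(d-1)}$ and $|A_{N_A}(t)| \geq |A_{N_A - 1}(t)|$ for $|t| > L_A$ (both achievable since $\deg A_{N_A} > \deg A_{N_A-1}$), the lower bound in~(b) propagates inductively as $|A_n(t)| \geq |A_{n-1}(t)|^d / \kappa \geq |A_{n-1}(t)|$ for $n \geq N_A$, which gives $\|H_t^n(P_t)\| = |A_n(t)|$ via~\eqref{eqn:H:n:A:B}. Part~(2) follows by the same iterative scheme applied to $\psi$. The main obstacle in this plan is precisely the bookkeeping behind the bootstrap --- ensuring that a single $L$ suffices across all iterations --- and it is exactly this that Remark~\ref{rem:condition for L} is engineered to resolve.
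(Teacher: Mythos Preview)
Your proposal is correct and follows essentially the same approach as the paper: both apply Lemma~\ref{lem:unbounded estimate} with the identical choices of $\varphi$, $c$, $\gamma$, $s$, initialize via the degree growth from Proposition~\ref{prop:on:assumption}, and bootstrap using Remark~\ref{rem:condition for L} to ensure a single $L$ works for all iterates. The only cosmetic difference is that the paper bundles the norm equality $\|H_t^n(P_t)\| = |A_n(t)|$ into the inductive hypothesis (as~\eqref{ineq:1}), whereas you establish it separately at the end; both routes are valid.
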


\Proof
We start with the proof of (1). Assume that $\left(\deg A_n(t)\right)_{n\geq 0}$ is unbounded.  
By Proposition~\ref{prop:on:assumption},  
there exists an integer $N$ such that 
$\deg A_{n+1}(t) = d \deg A_{n}(t) \;(> 0)$ for all $n\ge N$. 
Let $m$ be the integer defined in \eqref{eqn:def:m}. 
Replacing $N$ by a larger number if necessary, we may assume that  
$\deg A_{N-1}(t) >  m+1$  and $\deg A_{N}(t) - \deg A_{N-1}(t) > d$. 

We claim that there exists an $L \geq 1$ such that, for all $t\in \Omega$ with $|t| > L$ 
and for all $n \geq N$, 
the following inequalities hold: 
\begin{align}
  |A_n(t)| \geq |t|^{m+1}\; & \text{and} \;   \norm{H_t^n(P_t)}   = |A_n(t)|, \label{ineq:1}\\
\frac{|A_n(t)|^{d}}{\k} & \leq |A_{n+1}(t)| \leq \k |A_n(t)|^{d}, \label{ineq:2}\\
 \left|\frac{A_{n}(t)}{A_{n+1}(t)^{d}}\right| & \leq \frac{1}{|t|^{d}} \label{ineq:3}
\end{align}
We prove the claim by induction on $n\ge N$. 

The  recursive relation~\eqref{eqn:def:An} of $A_n(t)$  gives
\[
A_{N+1}(t) = \d \,A_{N-1}(t) + A_N(t)^d + \sum_{i=1}^d \, c_{i}(t) A_{N}(t)^{d-i}.
\]
By our choice of $N$, we can find an $L^\prime \geq 1$ such that, for all  $|t| > L^\prime$,  (i) $|A_N(t)| \geq  |t|^{m+1}$ and (ii) $|A_{N-1}(t)/A_N(t)| \leq 1/|t|^{d}$. In particular, $|A_{N-1}(t)/A_{N}(t)| \leq 1$. 
Hence, for any $t \in \Omega$ with $|t| > L^\prime$, we have \eqref{ineq:1} with $n = N$. 

Take $\vp(x,y) :=\d y + f_t(x)$, $\g := \d$,  $c:=1$,  $s := \deg_x f_t(x) = d$, 
$x  := A_N(t)$ and $y  := A_{N-1}(t)$ in Lemma~\ref{lem:unbounded estimate}. 
We note that 
\[
|A_{N-1}(t)/A_N(t)^d| = |A_{N-1}(t)/A_N(t)|\, |1/A_N(t)^{d-1}| \leq |A_{N-1}(t)/A_N(t)| \leq 1/|t|^d .
\]
We take $L \geq L^\prime$ such that  $L^{m+1} \ge \sqrt[d-1]{\k}$ 
and such that~\eqref{remark:choice of L} holds. 
Then by Remark~\ref{rem:condition for L} and Lemma~\ref{lem:unbounded estimate}~(b)~(d), 
we have~\eqref{ineq:2} and ~\eqref{ineq:3} with $n= N$. 

By induction on $n$, we assume that \eqref{ineq:1}, \eqref{ineq:2} and \eqref{ineq:3} hold 
for all $t \in \Omega$ with $|t| > L$ for $n$, and we are going to deduce them for $n+1$. 

Let $t \in \Omega$ with $|t| > L$. Since $|A_n(t) \geq |t|^{m+1}  \geq L^{m+1}$ by \eqref{ineq:1}, 
it follows from 
\eqref{ineq:2} that 
\begin{equation}
|A_{n+1}(t)| \geq \left(\frac{|A_n(t)|^{d-1}}{\k}\right) |A_n(t)| 
   \geq \left(\frac{(L^{m+1})^{d-1}}{\k}\right) |A_n(t)| 
   \geq  |A_n(t)| 
   \geq |t|^{m+1} \label{ineq:4}
\end{equation}
and
\[
\left|\frac{A_n(t)}{A_{n+1}(t)}\right|  
\leq \frac{\k|A_n(t)|}{|A_n(t)|^{d}}   = \frac{\k}{|A_n(t)|^{d-1}}
\leq \frac{\k}{L^{(d-1)(m+1)}}  
\leq 1.
\]

Thus $\norm{H_t^{n+1}(P_t) }= |A_{n+1}(t)|$ and~ \eqref{ineq:1} is satisfied for $n+1.$  Note that
 \eqref{ineq:3} and \eqref{ineq:4} show that conditions (i) and (ii) in Lemma~\ref{lem:unbounded estimate} are
satisfied for $s = d$, $x(t) := A_{n+1}(t)$ and $y(t) := A_n(t)$.  Now \eqref{ineq:2} and \eqref{ineq:3}  for $ n+1$ are simply the conclusions of Remark~\ref{rem:condition for L} and Lemma~\ref{lem:unbounded estimate}~(b)~(d). 
Thus the claim is proved and by setting $N_{A} =  N$ and $L_{A} = L$, we obtain~(1). 
\smallskip

The proof of (2) is similar to that of (1), so that we only give a sketch of it. 
We choose an integer $N_{B}$ such that 
$\deg B_{n+1}(t) = d \deg B_{n}(t) \;(> 0)$ for all $n\ge N_{B}$, 
$\deg B_{N_{B} -1}(t) >  m+1$  and $\deg B_{N_{B}}(t) - \deg B_{N_{B} -1}(t) > d$. 

We take the polynomial $\vp(x,y) := (1/\d) y - (1/\d) f_t(x)$, $\g := 1/\d$, $c =  -1/\d$, $s: = d$, $x(t) = B_n(t)$ and $y(t)= B_{n-1}(t)$ in~Lemma~\ref{lem:unbounded estimate}. We take $L_{B} \geq 1$ such that, for all $t \in \Omega$ with $|t| > L_{B}$, 
we have (i) $|B_{N}(t) | \geq |t|^{m+1}$ and (ii) $|B_{N-1}(t)/B_{N}(t)^d| \leq 1/|t|^d$ and such that 
$(L_{B})^{m+1} \geq \sqrt[d-1]{|\d|\,\k}$ and \eqref{remark:choice of L} holds. 
Then by a similar argument as in (1), one can show that 
 for all $t\in \Omega$ with $|t| > L_{B}$  and for all  $n \ge N_{B}$, 
the following equalities hold: 
\begin{align}
  |B_n(t)| \geq |t|^{m+1}\; & \text{and} \;   \norm{H_t^{-n}(P_t)}   = |B_n(t)|, \label{ineq:1 for B}
  \\
\frac{|B_n(t)|^{d}}{|\d|\,\k} & \leq |B_{n+1}(t)| \leq \frac{\k |B_n(t)|^{d}}{|\d|}, \label{ineq:2 for B}\\
 \left|\frac{B_{n}(t)}{B_{n+1}(t)^{d}}\right| & \leq \frac{1}{|t|^{d}}. 
 \notag
\end{align}
Then \eqref{ineq:1 for B} and \eqref{ineq:2 for B} give the assertion (2).
\QED

As a corollary to Proposition~\ref{prop:unbounded estimate}, we have the main result of this subsection.

\begin{Proposition}
\label{prop:Mn:uniform bound}
Fix a constant $\k > 1$. 
Then, under Assumption~\ref{assumption}, there exist an $L \geq 1$ and $N \in \ZZ_{>0}$ such that
\[
\frac{M_n^{d}(t)}{\Delta\,\k} \leq  M_{n+1}(t) \leq \Delta\,\k M_n^{d}(t)
\]
holds for all $t\in \Omega$ with $|t| > L$ and all $n \ge N. $
\end{Proposition}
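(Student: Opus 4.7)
The plan is to prove the bound by reducing $M_n(t)$ to $\max\{|A_n(t)|,|B_n(t)|\}$ for $|t|$ large and $n$ large, and then invoking the multiplicative recursions supplied by Proposition~\ref{prop:unbounded estimate}. Under Assumption~\ref{assumption}, at least one of the sequences $(\deg A_n)$ and $(\deg B_n)$ is unbounded. I first treat the principal case in which both are unbounded, then sketch the ancillary case.

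Assume both sequences are unbounded. Let $L_A, N_A$ and $L_B, N_B$ be as supplied by Proposition~\ref{prop:unbounded estimate}~(1) and~(2) for the chosen $\kappa$, and set $L_0 := \max(L_A, L_B)$ and $N_0 := \max(N_A, N_B)$. For every $t \in \Omega$ with $|t| > L_0$ and every $n \geq N_0$, the proposition gives $\norm{H_t^n(P_t)} = |A_n(t)|$ and $\norm{H_t^{-n}(P_t)} = |B_n(t)|$; in particular $|A_n(t)| \geq \max(|A_{n-1}(t)|, 1)$ and $|B_n(t)| \geq \max(|B_{n-1}(t)|, 1)$, so that
\[
M_n(t) \;=\; \max\{|A_n(t)|,\, |B_n(t)|\}
\]
(and likewise for $M_{n+1}(t)$). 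The upper bound then follows immediately from $|A_{n+1}(t)| \leq \kappa |A_n(t)|^d \leq \Delta\kappa\, M_n(t)^d$ and $|B_{n+1}(t)| \leq (\kappa/|\delta|) |B_n(t)|^d \leq \Delta\kappa\, M_n(t)^d$. For the lower bound, split according to which of $|A_n(t)|$, $|B_n(t)|$ realizes $M_n(t)$: if $M_n(t) = |A_n(t)|$ then $M_{n+1}(t) \geq |A_{n+1}(t)| \geq |A_n(t)|^d/\kappa \geq M_n(t)^d/(\Delta\kappa)$, and if $M_n(t) = |B_n(t)|$ then $M_{n+1}(t) \geq |B_{n+1}(t)| \geq |B_n(t)|^d/(|\delta|\kappa) \geq M_n(t)^d/(\Delta\kappa)$.

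In the ancillary case where only one sequence, say $(\deg A_n)$, is unbounded, only Proposition~\ref{prop:unbounded estimate}~(1) is directly applicable. The main obstacle here is to show that $|B_n(t)|$ is dominated by $|A_n(t)|$ uniformly in $n$ for $|t|$ larger than some threshold $L$ \emph{independent of $n$}; once this domination is secured, $M_n(t) = |A_n(t)|$ in the desired range and the argument reduces to the $A$-part of the principal case. One obtains such an $L$ by comparing the doubly exponential lower bound $|A_{N+j}(t)| \sim |\alpha_D t^D|^{d^j}$ visible in Remark~\ref{rem:alphaD-betaD} against an at-most-exponential-in-$n$ growth bound for $|B_n(t)|$ derived by iterating the recursion~\eqref{eqn:def:Bn} together with Lemma~\ref{lem:estimate:Mn:upper} (applied on a suitable bounded region). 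With the leading-term comparison in hand, the remainder of the proof mirrors the principal case.
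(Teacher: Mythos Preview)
Your treatment of the principal case (both degree sequences unbounded) is correct and matches the paper's Case~1 essentially verbatim.

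The ancillary case, however, has a genuine gap. Your claim that $|B_n(t)|$ satisfies an ``at-most-exponential-in-$n$'' growth bound is false: even when $(\deg_t B_n)_n$ stays bounded, the recursion $B_{n+1} = \delta^{-1}(B_{n-1} - f_t(B_n))$ involves $B_n^d$, so for a fixed $t$ the values $|B_n(t)|$ can still grow doubly exponentially in $n$ (bounded degree in $t$ says nothing about the size of the coefficients of $B_n$, nor about its values). The ``doubly exponential versus exponential'' comparison you propose therefore does not separate $|A_n(t)|$ from $|B_n(t)|$. The appeal to Lemma~\ref{lem:estimate:Mn:upper} is also misplaced: that lemma is stated and proved on the bounded parameter region $\mathcal{B}_L$, whereas here you are working on $|t| > L$.

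The paper's Case~2 avoids bounding $|B_n(t)|$ on its own and instead runs an induction directly on the \emph{ratios}: it shows that once $|1/A_n(t)|,\ |B_{n-1}(t)/A_n(t)|,\ |B_n(t)/A_n(t)|,\ |c_i(t)/A_n(t)|$ are all $\le \varepsilon$ at $n=N$ for $|t|>L$ --- arranged by taking $\deg A_N$ larger than the uniform bound on $\deg B_n$ and $\deg c_i$, then enlarging $L$ --- they remain $\le \varepsilon$ for every $n\ge N$ (the inductive step combines the recursion for $B_{n+1}$ with the lower bound $|A_{n+1}|\ge |A_n|^d/\kappa$ from Proposition~\ref{prop:unbounded estimate}). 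This gives $M_n(t)=|A_n(t)|$ in the desired range, after which the $A$-estimates alone suffice. Your idea can be repaired, but only by comparing the \emph{bases} of two doubly exponential growths (choosing $N$ with $\deg A_N$ exceeding $\sup_n\deg B_n$ by enough to absorb the factor $|t|^{m/(d-1)}$ that the $B$-recursion picks up); carrying that out cleanly is more work than the paper's ratio induction.
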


\Proof
{\bf Case 1.}\quad 
Suppose that both $(\deg A_n(t))_{n \geq 0}$ and $(\deg B_n(t))_{n \geq 0}$ are unbounded. 
Then we set $L := \max\{L_{A}, L_{B}\}$ and $N:= \max\{N_{A}, N_{B}\}$ in Proposition~\ref{prop:unbounded estimate}, 
and then 
the estimates in (1) and (2) in Proposition~\ref{prop:unbounded estimate} hold for $A_n(t)$ and $B_n(t)$ for all $n\ge N$ and all $|t| > L$. We note that $\Delta := \max(|\d|, 1/|\d|) \ge 1$. 
It follows from Proposition~\ref{prop:unbounded estimate}~(1)~(2) that 
 \begin{align*}
 \frac{\norm{H_t^{n}(P)}^{d}}{\Delta\,\k} \leq  &  \norm{H_t^{n+1}(P)} \leq  \Delta\,\k \norm{H_t^{n}(P)}^{d}, \\
 \frac{\norm{H_t^{-n}(P)}^{d}}{\Delta\,\k} \leq & \norm{H_t^{-n-1}(P)} \leq \Delta\,\k \norm{H_t^{-n}(P)}^{d}
 \end{align*}
 for all $n \ge N$ and all $|t| > L.$ Our assertion now follows from the definition of $M_n(t)$ (see 
 Definition~\ref{def:M:n:t}).

\smallskip
{\bf Case 2.}\quad 
Suppose that only one of the sequences $\left(\deg A_{n}(t)\right)_{n \geq 0}$ and $\left(\deg B_{n}(t)\right)_{n \geq 0}$ is unbounded. 
We will only give a proof for the case where $\left(\deg A_{n}(t)\right)_{n \geq 0}$ is unbounded. The case where $\left(\deg B_{n}(t)\right)_{n \geq 0}$ is unbounded can be proved similarly with the same choice of $\ve$  below.  
 
 We fix an integer $D$ such that $D > \deg B_n(t)$ for any $n \geq 0$ and 
 $D > \deg c_i(t)$ for any $i = 1, \ldots, d$ (cf.~\eqref{eqn:f_{t} defn}). 
 We fix an integer $N$ with $\deg A_{N}(t) \ge D$, whose existence is assured by 
 the unbounded assumption of $(\deg A_n(t))_{n \geq 0}$, and 
 such that the conclusion of Proposition~\ref{prop:unbounded estimate}~(1) 
 and equations~\eqref{ineq:1}, \eqref{ineq:2} and~\eqref{ineq:3} 
 hold with some $L \geq 1$. 
 
We fix any $0 < \ve < 1$ satisfying $\left([d+2] \k \Delta^{2}\right) \ve^{d-1} \le 1.$ Note that,
for such a $\ve$, we have $\max\{\k \ve^{d-1} , \left(\k [d+2]/|\d|\right)\ve^{d-1} \} \leq 1.$  
We replace $L$ by a larger number if necessary, we may assume that, 
for all $t \in \Omega$ with $|t| > L$, we have 
 \[
 \left| \frac{1}{A_{N}(t)}\right| \leq \ve, \quad
 \left| \frac{B_{N-1}(t)}{A_{N}(t)}\right| \leq \ve, 
 \quad  \left| \frac{B_{N}(t)}{A_{N}(t)}\right| \leq \ve 
 \quad\text{and}\quad 
 \left| \frac{c_{i}(t)}{A_{N}(t)}\right| \leq \ve\; \;\text{for any} \; i = 1, \ldots, d,
 \]
where the $c_i(t)$ are coefficients of $f_t(x)$ (see \eqref{eqn:f_{t} defn}). 
With this setting, we claim the following. 

\begin{Claim}
\label{claim:ratio of An and Bn}
For  all $t\in \Omega$ with  $|t| > L$ and all $n \ge N,$ the following inequalities hold: 
\begin{equation}
\label{eqn:ratio of An and Bn}
 \left| \frac{1}{A_{n}(t)}\right| \leq \ve, \quad
 \left| \frac{B_{n-1}(t)}{A_{n}(t)}\right| \leq \ve, \quad  
 \left| \frac{B_{n}(t)}{A_{n}(t)}\right| \leq \ve 
 \quad\text{and}\quad 
 \left| \frac{c_{i}(t)}{A_{n}(t)}\right| \leq \ve\; \;\text{for} \; i = 1, \ldots, d. 
\end{equation}
\end{Claim}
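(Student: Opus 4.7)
The plan is to prove Claim~\ref{claim:ratio of An and Bn} by induction on $n \geq N$. The base case $n = N$ is exactly the set of four inequalities that were imposed on $L$ immediately before the statement of the claim. For the induction step, I would combine the inductive hypothesis at index $n$ with the sandwich
\[
\frac{|A_n(t)|^d}{\kappa} \leq |A_{n+1}(t)| \leq \kappa\, |A_n(t)|^d, \qquad |t| > L, \ n \geq N,
\]
supplied by Proposition~\ref{prop:unbounded estimate}(1), and make systematic use of the numerical condition $[d+2]\kappa\Delta^2\varepsilon^{d-1} \leq 1$ by which $\varepsilon$ was chosen (which in particular forces $\kappa\varepsilon^{d-1} \leq 1$).

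For the three ``linear'' ratios $|1/A_{n+1}(t)|$, $|c_i(t)/A_{n+1}(t)|$, and $|B_n(t)/A_{n+1}(t)|$, the approach is uniform: bound each by $\kappa$ times the corresponding ratio over $|A_n(t)|^d$, and then write $|A_n(t)|^d = |A_n(t)| \cdot |A_n(t)|^{d-1}$. The first factor of $|1/A_n(t)|$ absorbs the numerator via the inductive bound ($\leq \varepsilon$), while each of the remaining $d-1$ factors of $|1/A_n(t)|$ is $\leq \varepsilon$ as well. This gives the estimate $\kappa \varepsilon^d \leq \varepsilon$, the final inequality being $\kappa \varepsilon^{d-1} \leq 1$.

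The only real obstacle is the fourth ratio $|B_{n+1}(t)/A_{n+1}(t)|$, because the recursion
\[
B_{n+1}(t) = \frac{1}{\delta}\Bigl(B_{n-1}(t) - B_n(t)^d - \sum_{i=1}^d c_i(t)\, B_n(t)^{d-i}\Bigr)
\]
reintroduces the older term $B_{n-1}(t)$. My plan is to apply the (ultra)triangle inequality to bound $|B_{n+1}(t)|$ by $[d+2]\Delta$ times the largest of $|B_{n-1}(t)|$, $|B_n(t)|^d$, and the $|c_i(t)||B_n(t)|^{d-i}$, then to divide by $|A_{n+1}(t)| \geq |A_n(t)|^d/\kappa$. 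Each of the resulting expressions $|B_{n-1}|/|A_n|^d$, $|B_n|^d/|A_n|^d$, and $|c_i||B_n|^{d-i}/|A_n|^d$ factors as a product of exactly $d$ quantities drawn from the set $\{|1/A_n|, |c_i/A_n|, |B_n/A_n|, |B_{n-1}/A_n|\}$, and by the inductive hypothesis at index $n$ every such factor is $\leq \varepsilon$; note that the delicate quantity $|B_{n-1}(t)/A_n(t)|$ is precisely the second inequality of \eqref{eqn:ratio of An and Bn} at index $n$, so the recursion's backward reference is absorbed at no extra cost. The overall bound becomes
\[
[d+2]\kappa\Delta\,\varepsilon^d = \bigl([d+2]\kappa\Delta^2 \varepsilon^{d-1}\bigr)\cdot \frac{\varepsilon}{\Delta} \leq \varepsilon,
\]
with the ``spare'' factor $1/\Delta$ reflecting exactly why the condition defining $\varepsilon$ was taken with $\Delta^2$ rather than $\Delta$. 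This closes the induction and establishes the claim.
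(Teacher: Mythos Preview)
Your proposal is correct and follows essentially the same route as the paper: an induction on $n\ge N$ with the base case given by the choice of $L$, the three ``linear'' ratios handled via $|A_{n+1}(t)|\ge |A_n(t)|^d/\kappa$ and the bound $\kappa\ve^{d-1}\le 1$, and the ratio $|B_{n+1}/A_{n+1}|$ handled by expanding the recursion for $B_{n+1}$, bounding term-by-term, and invoking the full condition $[d+2]\kappa\Delta^{2}\ve^{d-1}\le 1$. The only cosmetic difference is that the paper carries the exact factor $1/|\delta|$ from the recursion (using $(\kappa[d+2]/|\delta|)\ve^{d-1}\le 1$) rather than your slightly cruder $\Delta\ge 1/|\delta|$, but both are covered by the chosen $\ve$.
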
 
We prove the claim by induction on integers $n\ge N.$ Write $n = N + j$, $j\ge 0$ for  $n\ge N.$ Then the claimed inequalities hold for the case $j = 0$ by our choice of $N$ and $L.$ Assume that~\eqref{eqn:ratio of An and Bn} hold for integer $j \ge 0.$ It follows from the recursive relation \eqref{eqn:def:Bn} for $B_{n}(t)$ and \eqref{ineq:2} that 
\begin{align*}
\left|\frac{B_{N+j+1}(t)}{A_{N+j+1}(t)}\right| 
& \le \frac{ \k [d+2]}{|\d|} \max\left(\frac{|B_{N+j-1}(t)|}{|A_{N+j}(t)|^{d}}, 
\frac{|B_{N+j}(t)|^d}{|A_{N+j}(t)|^d},
\ldots, 
\frac{|c_{i}(t)B_{N+j}^{d-i}(t)|}{|A_{N+j}(t)|^{d}}, 
\ldots ,  
\frac{|c_{d}(t)|}{|A_{N+j}(t)|^{d}}\right) 
\\
&  \leq \frac{ \k [d+2]}{|\d|}  \ve^{d} \hspace{10eM} \text{(by induction hypothesis)},\\
& \leq \ve  \hspace{14eM} \text{(since $\left(\k [d+2]/|\d|\right) \ve^{d-1} \leq 1$)}.
\end{align*}
 The induction hypothesis together with \eqref{ineq:2} also ensure that 
 \begin{align*}
& \left| \frac{1}{A_{N+j+1}(t)}\right| 
 \leq \frac{\kappa}{|A_{N+j}(t)|^d} 
 \leq \kappa \ve^d \leq (\kappa \ve^{d-1}) \ve \leq \ve, 
 \\
& \left| \frac{B_{N+j}(t)}{A_{N+j+1}(t)}\right| 
\leq \frac{\kappa |B_{N+j}(t)|}{|A_{N+j}(t)|^d} 
\leq  (\kappa \ve^{d-1}) \ve \leq \ve,
\\
& \left| \frac{c_{i}(t)}{A_{N+j+1}(t)}\right| 
\leq \frac{\kappa |c_i(t)|}{|A_{N+j}(t)|^d} \leq
  (\kappa \ve^{d-1}) \ve \leq \ve \quad \text{for} \; i = 1, \ldots, d. 
\end{align*}
This finishes the induction steps and hence the claim is proved. 
 
 Claim~\ref{claim:ratio of An and Bn} and 
 Proposition~\ref{prop:unbounded estimate}~(1) imply that $M_{n}(t) = |A_{n}(t)|$ for all $t\in \Omega$ with 
 $|t| > L$ and all $n \ge N.$ Noting that $\Delta := \max\{|\delta|, |1/\delta|\} \geq 1$ and 
 applying Proposition~\ref{prop:unbounded estimate}~(1),  we have 
\[
\frac{M_n^{d}(t)}{\Delta\,\k} \leq  M_{n+1}(t) \leq \Delta\,\k M_n^{d}(t). 
\]
(We note that $\Delta$ is needed when we treat the case where $\left(\deg B_{n}(t)\right)_{n \geq 0}$ is unbounded.)
\QED

\subsection{Proof of Theorem~\ref{thm:main:Mn}}

Combining the previous two subsections, we now give a proof for our main result in this section.

\medskip

\begin{proof}[Proof of Theorem~\ref{thm:main:Mn}] \quad
Recall that 
\[
\overline{\Phi}_n  \colon  \PP^1  \to \PP^4\quad \text{is given by} \quad 
 (t:1)  \mapsto (A_n(t): A_{n-1}(t): B_{n-1}(t): B_n(t): 1)
\]
and that  $\ell_n = \deg(\overline{\Phi}_n)$. 
By Proposition~\ref{prop:on:assumption}, there exist a positive rational number $\ell$ and a positive integer $N$ such that 
$
\ell_n = d^{n} \ell
$
for all $n \geq N$. We fix such an~$N$. 

For $n \geq N$, we take the lift $\Aff^2 \to \Aff^5$ of $\overline{\Phi}_n$ given by 
$(t, 1) \mapsto (A_n(t), A_{n-1}(t), B_{n-1}(t), B_n(t), 1)$. Then the lift gives rise to 
an isomorphism $\alpha_n\colon \overline{\Phi}_n^*(\OO_{\PP^4}(1)) \overset{\sim}{\longrightarrow}
\OO_{\PP^1}(\ell_n)$. Let $\Vert \cdot \Vert_n$ be the metric 
on $\OO_{\PP^1}(1)$ defined by $\Vert \cdot \Vert_n
:= (\alpha_n(\overline{\Phi}_n^*(\Vert\cdot\Vert_{\mathrm{st}})))^{1/\ell_n}$. 

Our goal is to show that
$
  \log (\Vert\cdot\Vert_{n+1}/\Vert\cdot\Vert_{n})
$
converges uniformly on $\PP^1(\Omega)$ as $n$ tends to~$\infty$. 

For $(t^\prime: t^{\prime\prime}) \in \PP^1(\Omega)$, we have
\[
\log\frac{\Vert\cdot\Vert_{n+1}}{\Vert\cdot\Vert_{n}}(t^\prime: t^{\prime\prime}) = \log \frac{\norm{\Phibar_{n}(t^\prime:t^{\prime\prime})}^{1/\ell_{n}}}
{\norm{\Phibar_{n+1}(t^\prime:t^{\prime\prime})}^{1/\ell_{n+1}}}  
= \begin{cases} 
\log {\displaystyle \frac{M_{n}\left(t^\prime/t^{\prime\prime}\right)^{1/\ell_{n}}}{M_{n+1}\left(t^\prime/
t^{\prime\prime}\right)^{1/\ell_{n+1}}}}  & \text{(if $t^{\prime\prime}\ne 0$),} \\  
\smallskip
\log {\displaystyle \frac{\norm{\Phibar_{n}(1:0)}^{1/\ell_{n}}}{\norm{\Phibar_{n+1}(1:0)}^{1/\ell_{n+1}}}}
    & \text{(if $t^{\prime\prime} =0$), }
  \end{cases}
\]
see \eqref{eqn:def:norm:n} and Definition~\ref{def:M:n:t}.

We set $\k := \max\{ [3d]\D,  2\}.$ 
Let $m$ be as in \eqref{eqn:def:m}. 
Choose $L \geq 1$ such that $L^{m+1} \geq \sqrt[d-1]{\k\D}$   and
that $\norm{P_{t}} \leq L^{m+1} $  for  $t \in \Bcal_{L}.$ 
We replace $L$ and $N$ by lager numbers if necessary, 
so that Lemma~\ref{lem:estimate:Mn:upper}, Proposition~\ref{prop:estimate:Mn:lower} and Proposition~\ref{prop:Mn:uniform bound} hold. 

We claim that, for any $ n \ge N$ and for any $(t^\prime: t^{\prime\prime}) \in \PP^1(\Omega)$, we have
\begin{equation}
\label{eqn:claim:uniform:conv}
\left|\log\frac{\Vert\cdot\Vert_{n+1}}{\Vert\cdot\Vert_{n}}(t^\prime: t^{\prime\prime})\right|
\leq \frac{1}{\ell d^{n}} \log (\k L^{m+1}). 
\end{equation}

\smallskip
{\bf Case 1.}\quad 
Suppose that $t^{\prime\prime} \neq 0$ and $t :=t^\prime/t^{\prime\prime}$ is in the bounded region 
$\mathcal{B}_{L}$. Then, for all $n \geq N$, we have 
\begin{align*}
\frac{M_n(t)^d}{(\k L^{m+1})^d} 
& \le \frac{M_n(t)^d}{([3d]\D L^{m+1})^d}  \\
& \le M_{n+1}(t)  \hspace{15eM} (\text{by Proposition~\ref{prop:estimate:Mn:lower}})\\
& \le [d+2]\D L^{m+1} M_n(t)^d \hspace{11.3eM} (\text{by Lemma~\ref{lem:estimate:Mn:upper}})  \\
& \leq [3d] \Delta L^{m+1} M_n(t)^d 
\le (\k L^{m+1}) M_n(t)^d 
\le (\k L^{m+1})^d M_n(t)^d.  
\end{align*}
Thus
\begin{align*}
\left|\log\frac{\Vert\cdot\Vert_{n+1}}{\Vert\cdot\Vert_{n}}(t:1)\right|
& = \left|\log\frac{M_{n}(t)^{1/\ell_{n}}}{M_{n+1}(t)^{1/\ell_{n+1}}} \right|\\
& = \frac{1}{\ell d^{n+1}}\left| \log M_{n+1}(t) - \log M_{n}(t)^d\right|
\leq  \frac{1}{\ell d^{n}} \log(\k L^{m+1}).
\end{align*}
The claim now follows in this case. 

\smallskip
{\bf Case 2.}\quad 
Suppose that 
$t^{\prime\prime} \neq 0$ and $t := t^\prime/t^{\prime\prime}$ is outside $\mathcal{B}_{L}$.  
By Proposition~\ref{prop:Mn:uniform bound}, for all $n \geq N$, we then have 
\begin{align*}
\left|\log\frac{\Vert\cdot\Vert_{n+1}}{\Vert\cdot\Vert_{n}}(t:1)\right|
& = \frac{1}{\ell d^{n+1}}\left| \log M_{n+1}(t) - \log M_{n}(t)^d\right| \\
& \leq  \frac{1}{\ell d^{n+1}} \log(\k\D) \leq  \frac{1}{\ell d^{n}} \log(\k L^{m+1}), 
\end{align*}
where in the last inequality we use 
$(\kappa L^{m+1})^d = (L^{m+1})^{d-1} (\kappa^d L^{m+1}) \geq (L^{m+1})^{d-1}  \geq \kappa \Delta$. 
We  obtain the claim in these case as well. 

\smallskip
{\bf Case 3.}\quad 
Suppose that $t^{\prime\prime} = 0$. In this case, $(t^\prime: t^{\prime\prime}) = (1:0)$.

\smallskip
{\sl Subcase 3-1.}\quad
Suppose that $(\deg(A_n(t)))_{n \geq 0}$ is unbounded, but 
 $(\deg(B_n(t)))_{n \geq 0}$ is bounded. 
It follows from~\eqref{eq:An leading coefficient}
that for all $n \ge N$, 
\[
\log \norm{\Phibar_n(1, 0)}^{1/\ell_n} 
= 
\frac{1}{d^n \ell} \log |\alpha_D|^{d^{n-N}}  
= \frac{1}{d^{N} \ell} \log |\alpha_D|  
= \norm{\Phibar_{n+1}(1, 0)}^{1/\ell_{n+1}}. 
\]
Thus, for all $n\ge N$, we have 
\begin{equation}
\label{eqn:case:3}
\left|\log\frac{\Vert\cdot\Vert_{n+1}}{\Vert\cdot\Vert_{n}}(1:0)\right|  
= \log \frac{\norm{\Phibar_{n}(1, 0)}^{1/\ell_{n}}}
{\norm{\Phibar_{n+1}(1, 0)}^{1/\ell_{n+1}}} = 0. 
\end{equation}

\smallskip
{\sl Subcase 3-2.}\quad
Suppose that $(\deg(B_n(t)))_{n \geq 0}$ is unbounded, but 
$(\deg(A_n(t)))_{n \geq 0}$ is bounded. 
Then using \eqref{eq:Bn leading coefficient}, we similarly 
have \eqref{eqn:case:3}. 

\smallskip
{\sl Subcase 3-3.}\quad
Suppose that both $(\deg(A_n(t)))_{n \geq 0}$ and 
$(\deg(B_n(t)))_{n \geq 0}$ are unbounded. 
We may take $N = N^\prime$ in \eqref{eq:An leading coefficient} and 
\eqref{eq:Bn leading coefficient}. 
If $D > D'$, then as in Subcase 3-1, we have  \eqref{eqn:case:3}. 
If $D < D'$, then as in Subcase 3-2, we have  \eqref{eqn:case:3}. 
If $D = D'$, then we have, for any $n \geq N$, 
\begin{align*}
\log \norm{\Phibar_n(1, 0)}^{1/\ell_n} 
& = 
\frac{1}{d^n \ell} \log \max\{ |\alpha_D|^{d^{n-N}},  |\beta_D|^{d^{n-N}}  \}\\
& = \frac{1}{d^{N} \ell} \log \max\{|\alpha_D|,    |\beta_D|\} 
= \norm{\Phibar_{n+1}(1, 0)}^{1/\ell_{n+1}}, 
\end{align*} 
and again we have  \eqref{eqn:case:3}. 

\smallskip
We have obtained the claim. 
By \eqref{eqn:claim:uniform:conv}, 
$\{\log \Vert\cdot\Vert_{n}\}_{n \geq N}$ converges uniformly on
the parameter space $\PP^1(\Omega)$. 
This completes the proof of Theorem~\ref{thm:main:Mn}.
\end{proof}

\setcounter{equation}{0}
\section{Height associated to a semipositive adelically metrized line bundle on a parameter space}
\label{sect:heights and metrized line bundles}

Let $K$ be number field or a function field $K = F(B)$ of an integral projective variety $B$ over a field $F$ that is regular in codimension one. 
Let the family of H\'enon maps $\bfH = (H_t)_{t \in \overline{K}}$ and the family of initial points 
$\bfP = (a(t), b(t)) = (P_t)_{t \in\overline{K}}\in \Aff^2(K[t])$ be as in \S~\ref{sec:family of Henon}. 
By Theorem~\ref{thm:main:Mn},  associated to $\bfH$ and $\bfP$, there exists 
a sequence of adelic metrics $\{\norm{\cdot}_{n, v}\}_{v\in M_{K}}$ of $\Ocal_{\PP^{1}}(1)$ such that 
$\log \norm{\cdot}_{n, v}$ converges uniformly to $\log \norm{\cdot}_{v}$ as $n \to \infty$ for every $v\in M_{K}.$

\begin{Proposition}
\label{prop:line bundle LP}
The pair $\Lcalbar_{\bfP} = (\Ocal_{\PP^1}(1),\{\norm{\cdot}_v\}_{v\in M_K})$ is a semipositive adelically metrized line bundle
over $\PP^1$ over $K$. 
\end{Proposition}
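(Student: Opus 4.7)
The plan is to realize $\Lcalbar_{\bfP}$ as the limit of an explicit sequence $\{\Lcalbar_n\}_{n \ge N_0}$ of semipositive adelically metrized line bundles, each obtained by taking the $\ell_n$-th root of the pullback of the standard metric on $\Ocal_{\PP^4}(1)$ along $\overline{\Phi}_n$. Fix $N_0$ large enough that Proposition~\ref{prop:on:assumption} and Remark~\ref{rem:alphaD-betaD} apply for all $n \ge N_0$, and set $\Lcalbar_n := (\Ocal_{\PP^1}(1), \{\norm{\cdot}_{n,v}\}_{v \in M_K})$ with $\norm{\cdot}_{n,v}$ defined by~\eqref{eqn:def:norm:n}.

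\emph{Semipositivity of each $\Lcalbar_n$.} Take $\mathscr{X}_n$ to be the graph of $\overline{\Phi}_n$ inside $\PP^1_{O_K} \times_{O_K} \PP^4_{O_K}$ (resp.\ inside $\PP^1_B \times_B \PP^4_B$ in the function-field case). This is a projective scheme with generic fiber $\PP^1_K$, the second projection gives a morphism $\widetilde{\Phi}_n \colon \mathscr{X}_n \to \PP^4_{O_K}$ extending $\overline{\Phi}_n$, and the line bundle $\mathscr{L}_n := \widetilde{\Phi}_n^* \Ocal_{\PP^4}(1)$ is relatively nef with $\mathscr{L}_n|_{\PP^1_K} = \Ocal_{\PP^1}(\ell_n) = \Ocal_{\PP^1}(1)^{\otimes \ell_n}$. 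By construction, $\norm{\cdot}_{n,v}$ is the $\ell_n$-th root of the metric induced by $(\mathscr{X}_n, \mathscr{L}_n)$ at each $v \in M_K^{\rm fin}$, and at each $v \in M_K^{\infty}$ it is the $\ell_n$-th root of the pullback of the Fubini--Study metric, which is smooth with semipositive curvature. Hence each $\Lcalbar_n$ is semipositive.

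\emph{Convergence $\Lcalbar_n \to \Lcalbar_{\bfP}$.} Let $U \subseteq \Spec(O_K)$ (resp.\ $\subseteq B$) be the complement of the finite set of places $v$ at which $\delta$ fails to be a $v$-unit, some coefficient of $f_t(x), a(t), b(t)$ fails to be $v$-integral, or one of the leading coefficients $\alpha_D, \beta_{D'}$ of Remark~\ref{rem:alphaD-betaD} fails to be a $v$-unit. For $v \in U$ and $n \ge N_0$, an induction on $n$ using the recursions in Definition~\ref{def:An:Bn} together with the ultrametric inequality shows that $A_n(t), B_n(t) \in O_{K,v}[t]$ have $v$-unit leading coefficients, so that
\[
|A_n(t)|_v \le \max(1, |t|_v)^{\deg A_n}, \qquad |B_n(t)|_v \le \max(1, |t|_v)^{\deg B_n},
\]
with equality whenever $|t|_v > 1$. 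Consequently $M_n(t) = \max(1, |t|_v)^{\ell_n}$ and $\norm{\cdot}_{n,v}$ coincides with the standard metric on $\Ocal_{\PP^1}(1)$ for every $n \ge N_0$ and every $v \in U$. Combining this stabilization with the uniform convergence of $\log\norm{\cdot}_{n,v}$ supplied by Theorem~\ref{thm:main:Mn} at the finitely many places outside $U$ yields $\Lcalbar_n \to \Lcalbar_{\bfP}$ in the sense of \S\ref{subsec:function field henon}, and together with the previous paragraph this proves that $\Lcalbar_{\bfP}$ is semipositive.

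\emph{Main obstacle.} The delicate step is the precise identification $M_n(t)^{1/\ell_n} = \max(1, |t|_v)$ at $v \in U$, uniformly in $n$ and $t$; equivalently, the identity $M_{n+1}(t) = M_n(t)^d$ at those places. This rests on propagating the ``$v$-integral with $v$-unit leading coefficient'' property of $A_n(t)$ and $B_n(t)$ through the H\'enon recursion, which works cleanly in the ultrametric setting at places of good reduction but still needs to be verified.
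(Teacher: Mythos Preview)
Your approach is essentially the paper's: approximate $\Lcalbar_{\bfP}$ by the sequence $\Lcalbar_n$ obtained from $\overline{\Phi}_n^*\Ocal_{\PP^4}(1)$, build integral models at finite places, and verify that the metrics stabilize to the standard metric on $\Ocal_{\PP^1}(1)$ on a cofinite set $U$ of good places. Your graph-closure construction of $\mathscr{X}_n$ is a harmless variant of the paper's normalization, and your verification at places $v\in U$ matches the paper's.

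There is, however, one genuine slip at the archimedean places. The metric $\norm{\cdot}_{n,v}$ of~\eqref{eqn:def:norm:n} is by definition the $\ell_n$-th root of the pullback of the \emph{standard} (sup-norm) metric $\Vert\cdot\Vert_{\rm st}$ on $\Ocal_{\PP^4}(1)$, not of the Fubini--Study metric; the standard metric is continuous but not smooth, so condition~(b) (smooth with semipositive curvature) fails for the $\Lcalbar_n$ you wrote down. The paper fixes this by introducing a parallel approximating family $\{\norm{\cdot}_{n,v}'\}$ which uses the Fubini--Study pullback at archimedean $v$ and the model metric at finite $v$, and then observes that $(1/5)\Vert\cdot\Vert_{\rm st}^2 \le \Vert\cdot\Vert_{\rm FS}^2 \le \Vert\cdot\Vert_{\rm st}^2$ on $\PP^4$, so that $\bigl|\log(\norm{\cdot}_{n,v}'/\norm{\cdot}_{n,v})\bigr| \le \tfrac{1}{2\ell_n}\log 5 \to 0$; combined with Theorem~\ref{thm:main:Mn} this yields the required uniform convergence of $\norm{\cdot}_{n,v}'$ to $\norm{\cdot}_v$. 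Once you insert this Fubini--Study correction, your argument is complete.

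As a side remark, the step you flag as the ``main obstacle''---propagating the property ``$v$-integral with $v$-unit leading coefficient'' for $A_n(t)$ and $B_n(t)$ through the recursions of Definition~\ref{def:An:Bn} at $v\in U$---is a routine ultrametric induction (using Remark~\ref{rem:alphaD-betaD} for the leading coefficients) and is exactly what the paper does in the last paragraph of its proof; it is not where the difficulty lies.
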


\Proof
For brevity, in what follows, we will write $B$ for both $\Spec(O_K)$ if $K$  is a number field and   
the underlying projective variety if $K = F(B)$ is a function field over another field $F.$
As in \S~\ref{subsec:function field henon}, 
we identify $M^{\rm fin}_K$ with $\Spec(O_K) \setminus \{(0)\}$ 
if $K$ is a number field, and with the set of all generic points of prime divisors of $B$ if 
$K = F(B)$.

Let $U$ be a non-empty Zariski open subset of $B$ with the following properties:
\begin{itemize}
\item
We regard $U$ as a subset of $M^{\rm fin}_K$. 
(When $K = F(B)$, a place $v \in M^{\rm fin}_K$ belongs to $U$ if 
the prime divisor corresponding to $v$ intersects with $U$.)
Then for each $v \in U$, 
all the coefficients of $c_i(t)$ ($i = 1, 2, \ldots, d$) have 
absolute value $1$ with respect to $|\cdot|_v$ (cf. \eqref{eqn:f_{t} defn}); 
\item
For each $v \in U$, 
all the coefficients of $a(t)$ and $b(t)$ have 
absolute value $1$ with respect to $|\cdot|_v$; 
\item 
For each $v \in U$, 
if $(\deg A_n(t))_{n \geq 0}$ is unbounded, then the leading coefficients 
of $A_{n}(t)$ has absolute value $1$ with respect to~$|\cdot|_v$ for sufficiently large $n$. 
For each $v \in U$, 
if $(\deg B_n(t))_{n \geq 0}$ is unbounded, then then the leading coefficients 
 of $B_{n}(t)$ has absolute value $1$ with respect to~$|\cdot|_v$ for sufficiently large $n$. 
(See Remark~\ref{rem:alphaD-betaD}.)
\item
For any sufficiently large $n$, the non-constant morphism $\Phibar_n\colon   \PP^1 \to \PP^4$ over $K$ extends to a morphism, denoted by $\Phibar_{n, U},$
over $U$.
\end{itemize}
Then $M_K \setminus U$ is a finite set of places. 
In the following, we assume that $n$ is sufficiently large 
such that  $\Phibar_n\colon   \PP^1 \to \PP^4$  is a non-constant morphism defined over $K$.  
Then the morphism $\Phibar_n$ induces a rational map, still denoted by the same $\Phibar_n$, over $B$.  
\begin{equation}
\label{eqn:Phibar:revisited:2}
  \Phibar_n\colon 
  \PP^1_{B} \dasharrow \PP^4_{B}. 
\end{equation}

Let $\PP_B^{1\; \prime}$ be the normalization of the map 
$\PP^1_{U} \overset{\Phibar_{\small{n,U}}}{\rightarrow} \PP^4_{U} \hookrightarrow \PP^4_{B}$ (cf. \cite[II.~Ex.~3.8]{Ha}). 
Then we have a morphism $\widetilde{\Phi}_n\colon \PP_B^{1\; \prime} \to \PP^4_{B}$. We set $\mathscr{L}_n :={\widetilde{\Phi}_n}^*(\OO_{\PP^4_B}(1)).$ 
Note that, since $\OO_{\PP^4_B}(1)$ is relatively ample and $\widetilde{\Phi}_n$ is the normalization, $\mathscr{L}_n$ is relatively ample, and in particular relatively nef. 

For $v \in M_K^{\rm fin}$, we endow $\OO_{\PP^1}(1) \otimes_{B} \KK_v$ with the metric 
$\Vert\cdot\Vert_{n, v}^{\prime}$ induced from 
the model $(\PP_B^{1\; \prime}, \mathscr{L}_n)$. For $v \in  M_K^\infty$, we endow 
$\OO_{\PP^1}(1) \otimes_{B} \KK_v$ with the metric $\Vert\cdot\Vert_{n, v}^{\prime}$ 
by pulling back the Fubini-Study metric $\Vert\cdot\Vert_{{\rm FS}, v}$ on $\OO_{\PP^4}(1)$. 

We show that $(\OO_{\PP^1}(1), \{\Vert\cdot\Vert_{n, v}^{\prime}\})$ 
satisfies the conditions (a)(b)(c) in \S~\ref{subsec:function field henon} and 
converges to $\Lcalbar_{\bfP} := (\Ocal_{\PP^1}(1),\{\norm{\cdot}_v\}_{v\in M_K})$. Then by definition, 
 $\Lcalbar_{\bfP}$ is a semipositive adelically metrized line bundle. 

We check (a). Let $v \in M_K^{\rm fin}$. Note that the standard metric $\Vert \cdot \Vert_{{\rm st}, v}$ is 
the metric induced from the model $\OO_{\PP^4_B}(1)$. 
Then $\Vert\cdot\Vert_{n, v}^{\prime}$ is the 
pull-back  of $\Vert \cdot \Vert_{{\rm st}, v}$, and the convergence 
$\log (\Vert\cdot\Vert_{n, v}^{\prime}/{\Vert\cdot\Vert_{v}})$ follows from Theorem~\ref{thm:main:Mn}. 
Let $v \in M_K^\infty$. Since 
\[
(1/5)\Vert\cdot\Vert_{{\rm st}, v}^2 \leq \Vert\cdot\Vert_{{\rm FS}, v}^2 \leq  \Vert\cdot\Vert^2_{{\rm st}, v}
\]
and $5^{1/\ell_n} \to 1$ as $n \to +\infty$, the convergence of 
$\log (\Vert\cdot\Vert_{n, v}^{\prime}/\Vert\cdot\Vert_{v})$ 
also follows from Theorem~\ref{thm:main:Mn}. Thus we have checked (a). 

Condition (b) is obvious, because the Fubini-study metric is smooth and has positive curvature. 
Condition (c) follows from the definition of $(\OO_{\PP^1}(1), \{\Vert\cdot\Vert_{n, v}^{\prime}\})$ 
where  $e_n := \ell_n$ (see \eqref{eqn:defn of elln}).

It remains to show that $(\OO_{\PP^1}(1), \{\Vert\cdot\Vert_{n, v}^{\prime}\})$ converges to $\Lcalbar_{\bfP}$. 
Since we have shown that $\log (\Vert\cdot\Vert_{n, v}^{\prime}/\Vert\cdot\Vert_{v})$ uniformly converges to $0$, 
it suffices to show that $\Vert\cdot\Vert_{n, v}^{\prime} = \Vert\cdot\Vert_v$ for all sufficiently large  
$n$ and for all $v \in U$. 
Let $v \in U$, and we take any $(a:1) \in \PP^1(\KK_v)$. Note that 
all the coefficients of $A_n(t), B_{n}(t)$ have absolute value at most $1$ with respect to $|\cdot|_v$, 
and that the leading coefficients of $A_n(t)$ (or $B_n(t)$) have absolute value $1$
 if the degrees of $A_n(t)$ (respectively $B_n(t)$)  are unbounded as $n$ runs through all positive integers. 
Then, we have
\[
  \Vert (A_n(a), A_{n-1}(a), B_{n-1}(a), B_n(a), 1) \Vert_v 
  = 
  \begin{cases}
  1 & \quad \text{(if $|a|_v \leq 1$)}, \\
   |a|_v^{\ell_n} & \quad \text{(if $|a|_v > 1$)}. 
  \end{cases}
\]
It follows that $\Vert\cdot\Vert_{n, v}^{\prime}$ coincides with the standard metric $\Vert\cdot\Vert_{{\rm st}, v}$ 
on $\OO_{\PP^1}(1)$, and thus $\Vert\cdot\Vert_{n, v}^{\prime} = \Vert\cdot\Vert_{v} \, (= \Vert\cdot\Vert_{{\rm st}, v})$ 
for all $v \in U$ and any sufficiently large $n$. 
\QED

\begin{Definition}
We denote by $h_{\Lcalbar_\bfP}\colon \PP^1(\overline{K}) \to \RR_{\geq 0}$ the height function associated to $\Lcalbar_{\bfP}$.
\end{Definition}

Before we prove Theorem~\ref{thm:main:A}, we introduce the function 
$G_{\bfP, v}\colon \Aff^1(\KK_v) \to \RR_{\geq 0}$ defined by 
\begin{align}
\label{eqn:def:G}
G_{\bfP, v}(t) := G_v(P_t) =  \max\left\{G^{+}_v(P_{t}), G^{-}_v(P_{t})\right\} \quad \text{for $v\in M_{K}$,}
\end{align}
where $G_v(P_{t})$ is the $v$-adic Green function as in~\eqref{eqn:def:G:intro}.
Recall from Definition~\ref{def:M:n:t} that $M_n(t) = \Vert (H^n_t(P_t), H^{-n}_t(P_t), 1) \Vert_v$. 

\begin{Lemma}
\label{lem:thm:main:Mn}
We have 
$\displaystyle G_{\bfP, v}(t) = \lim_{n\to \infty} \frac{1}{d^n} \log M_{n}(t)$ 
for all $t \in \Aff^1(\KK_v)$. 
\end{Lemma}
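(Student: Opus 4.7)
The plan is to observe that $M_n(t)$ is, up to the obvious augmentation by $1$, just the maximum of the $v$-adic norms of the forward and backward orbits of $P_t$, so that $\frac{1}{d^n}\log M_n(t)$ is exactly the maximum of the two sequences whose limits define $G_v^{\pm}(P_t)$. The whole proof then reduces to the trivial fact that the maximum commutes with limits.

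First I would recall from \eqref{eqn:H:n:A:B} that
\[
H_t^{n}(P_t) = (A_n(t), A_{n-1}(t)), \qquad H_t^{-n}(P_t) = (B_{n-1}(t), B_n(t)),
\]
so that $\norm{H_t^n(P_t)}_v = \max\{|A_n(t)|_v, |A_{n-1}(t)|_v\}$ and similarly for $H_t^{-n}(P_t)$. Comparing with Definition~\ref{def:M:n:t} gives
\[
M_n(t) = \max\bigl\{\norm{H_t^n(P_t)}_v,\ \norm{H_t^{-n}(P_t)}_v,\ 1\bigr\}.
\]
Since $\log^+ r = \log\max\{r,1\}$ and $\max$ distributes over $\log$ when applied together with $1$, this yields
\[
\log M_n(t) = \max\bigl\{\log^+\norm{H_t^n(P_t)}_v,\ \log^+\norm{H_t^{-n}(P_t)}_v\bigr\}.
\]

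Next I would divide by $d^n$ (which is positive, so it pulls inside the $\max$) to obtain
\[
\frac{1}{d^n}\log M_n(t) = \max\!\left\{\frac{1}{d^n}\log^+\norm{H_t^n(P_t)}_v,\ \frac{1}{d^n}\log^+\norm{H_t^{-n}(P_t)}_v\right\}.
\]
The two terms on the right converge, as $n\to\infty$, to $G_v^+(P_t)$ and $G_v^-(P_t)$ respectively, by the definition of the local Green functions in \eqref{eqn:Green:function} (their existence for each $P_t \in \Aff^2(\KK_v)$ being recalled in \S~\ref{subsec: henon canonical height}). Since $\max\colon \RR^2 \to \RR$ is continuous, passing to the limit gives
\[
\lim_{n\to\infty}\frac{1}{d^n}\log M_n(t) = \max\bigl\{G_v^+(P_t), G_v^-(P_t)\bigr\} = G_v(P_t) = G_{\bfP,v}(t),
\]
where the last two equalities are just \eqref{eqn:def:G:intro} and \eqref{eqn:def:G}. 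This is the desired identity. There is no real obstacle here: the statement is essentially a repackaging of the definitions, and the only thing to check beyond symbol-pushing is that the maximum commutes with the limit, which is immediate from continuity.
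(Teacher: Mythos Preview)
Your proof is correct and follows essentially the same approach as the paper: both reduce the statement to the elementary fact that if $a_n \to \alpha$ and $b_n \to \beta$ then $\max\{a_n,b_n\} \to \max\{\alpha,\beta\}$, applied to $a_n = \frac{1}{d^n}\log^+\norm{H_t^n(P_t)}_v$ and $b_n = \frac{1}{d^n}\log^+\norm{H_t^{-n}(P_t)}_v$. You have simply made the identification $\log M_n(t) = \max\{\log^+\norm{H_t^n(P_t)}_v, \log^+\norm{H_t^{-n}(P_t)}_v\}$ a bit more explicit than the paper does.
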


\Proof
In general, suppose that $\{a_n\}_{n \geq 1}$ and $\{b_n\}_{n \geq 1}$ are convergent sequences of real numbers, and we write  $\lim_{n\to\infty} a_n = \alpha$ and $\lim_{n\to\infty} b_n = \beta$. We set $c_n = \max\{a_n, b_n\}$. Then we see that $\{c_n\}_{n \geq 1}$ is also a convergent sequence, and that 
$\lim_{n\to\infty} c_n = \max\{\alpha, \beta\}$. 

We apply this observation to $a_n = (\log^+ \norm{H_t^n(P_t)})/d^n$ and 
$b_n = (\log^+ \norm{H_t^{-n}(P_t)})/d^n$. Then we obtain the assertion. 
\QED

Now we prove Theorem~\ref{thm:main:A} by showing 
the following Theorem. 

\begin{Theorem}
\label{thm:main2:revisited}
With the notation and assumption as in Theorem~\ref{thm:main:A}, 
we have  $h_{\bfP} = h_{\Lcalbar_\bfP}$. 
\end{Theorem}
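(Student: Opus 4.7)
The plan is to unwind the height $h_{\Lcalbar_\bfP}$ directly against its construction in Proposition~\ref{prop:line bundle LP} and match the resulting expression with $\widetilde{h}_{H_t}(P_t)/\widetilde{h}_\bfH(\bfP)$. The key identification is that the pointwise limit metric $\|\cdot\|_v$ encodes the $v$-adic Green function $G_{\bfP,v}$ divided by $\widetilde{h}_\bfH(\bfP)$, after which the product formula kills the ``polynomial'' part and leaves exactly the specialized canonical height.

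First I would identify the limit metric pointwise. For a linear section $\eta = a_0 X_0 + a_1 X_1$ of $\Ocal_{\PP^1}(1)$, formula~\eqref{eqn:def:norm:n} gives $\|\eta\|_{n,v}(t:1) = |a_0 t + a_1|_v / M_n(t)^{1/\ell_n}$, and by Proposition~\ref{prop:on:assumption} we have $\ell_n = d^n \ell$ for all large $n$ with $\ell = \widetilde{h}_\bfH(\bfP)$. Combining the uniform convergence of Theorem~\ref{thm:main:Mn} with Lemma~\ref{lem:thm:main:Mn} then yields, for every place $v$ and every $t \in \Aff^1(\KK_v)$,
\[
-\log \|\eta\|_v(t:1) \;=\; -\log|a_0 t + a_1|_v \;+\; \frac{G_{\bfP, v}(t)}{\widetilde{h}_\bfH(\bfP)}.
\]

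Next I would substitute this into the definition of $h_{\Lcalbar_\bfP}$. Fixing $t \in \Aff^1(\overline{K})$ and choosing $\eta$ so that its zero locus is disjoint from the Galois orbit of $t$, this gives
\[
h_{\Lcalbar_\bfP}(t) \;=\; \frac{1}{[K(t):K]} \sum_{\sigma: K(t) \hookrightarrow \overline{K}} \sum_{v \in M_K} n_v \left(-\log|a_0 t^\sigma + a_1|_v + \frac{G_{\bfP, v}(t^\sigma)}{\widetilde{h}_\bfH(\bfP)}\right).
\]
For each Galois conjugate $t^\sigma$, the product formula applied to $a_0 t^\sigma + a_1 \in \overline{K}^*$ makes $\sum_v n_v \log|a_0 t^\sigma + a_1|_v = 0$, so the polynomial term contributes nothing. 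The remaining term is
\[
\frac{1}{\widetilde{h}_\bfH(\bfP)} \cdot \frac{1}{[K(t):K]} \sum_{\sigma} \sum_v n_v G_{\bfP, v}(t^\sigma),
\]
which, after the routine reindexing between embeddings of $K(t)$ and of $K(P_t)$ (the factor $[K(t):K(P_t)]$ cancels cleanly) and the Galois equivariance $G_v(P_t^\sigma) = G_{\bfP,v}(t^\sigma)$ under simultaneous conjugation of $H_t$ and $P_t$, is exactly $\widetilde{h}_{H_t}(P_t)/\widetilde{h}_\bfH(\bfP) = h_\bfP(t)$ by the definition~\eqref{eqn:can:height:tilde}.

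No individual step is a genuine obstacle; the points requiring care are that the passage $\|\cdot\|_{n,v} \to \|\cdot\|_v$ must be carried out simultaneously at every place, which is handled by the uniform convergence of Theorem~\ref{thm:main:Mn} together with the fact (established during the proof of Proposition~\ref{prop:line bundle LP}) that $\|\cdot\|_{n,v}$ already agrees with the standard metric outside a finite set of places once $n$ is large, and that one should verify the bookkeeping between the indexing of Galois embeddings of $K(t)$ and of $K(P_t)$, which is entirely formal once $P_t = (a(t), b(t))$ is used.
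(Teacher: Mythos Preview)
Your proposal is correct and follows essentially the same route as the paper: identify the limit metric pointwise as $-\log\|\eta\|_v(t:1) = -\log|a_0 t + a_1|_v + G_{\bfP,v}(t)/\ell$ via Theorem~\ref{thm:main:Mn}, Lemma~\ref{lem:thm:main:Mn} and $\ell_n = d^n\ell$ with $\ell = \widetilde{h}_\bfH(\bfP)$, then sum over places and Galois conjugates and apply the product formula. Your remark about the bookkeeping between embeddings of $K(t)$ and of $K(P_t)$ is a point the paper leaves implicit, but as you note it is entirely formal since $P_t \in \Aff^2(K(t))$.
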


\Proof
For any  $Q = (t:1) \in \PP^1(\overline{K})$ in the parameter space, 
we take a nonzero rational section $\eta$ of $\OO_{\PP^1}(1)$ 
whose  support is disjoint from the Galois conjugates of $Q$ over $K$. 

Let $v \in M_K$. Setting $\Omega = \KK_v$ in Section~\ref{sec:valued fields}, 
we denote $M_n$ in Definition~\ref{def:M:n:t} by $M_{n, v}$. 
Further, for each place $v\in M_{K}$ we fix an embedding 
$\overline{K} \hookrightarrow \KK_{v}.$

Then for any embedding $\sigma: K(Q) \to \overline{K}$, 
it follows from \eqref{eqn:elln}, \eqref{eqn:def:norm:n}, Definition~\ref{def:M:n:t} 
and Lemma~\ref{lem:thm:main:Mn} that 
\begin{align*}
\log \norm{\eta(Q^\sigma)}_v & =  \log |\eta(Q^\sigma)|_v - \lim_{n\to \infty}\frac{1}{\ell_{n}}  \log M_{n, v}(t^\sigma)\\
& = \log |\eta(Q^\sigma)|_v - \lim_{n\to \infty}\frac{1}{\ell \, d^{n}}  \log M_{n, v}(t^\sigma)\\
& = \log |\eta(Q^\sigma)|_v  - \frac{G_{\bfP, v}(t^\sigma)}{\ell}. 
\end{align*}

Then, we have 
\begin{align}
\label{eqn:thm:main2:revisited:1}
h_{\Lcalbar_\bfP}(Q) & = \frac{1}{[K(Q):K]}\sum_{\sigma:K(Q)\to \overline{K}}
\sum_{v\in M_K}- n_v \,\left(\log |\eta(Q^{\s})|_v  - \frac{G_{\bfP, v}(t^{\s})}{\ell}\right) \\
\notag
&  = \frac{1}{\ell}\,\left(\frac{1}{[K(t):K]} \sum_{v\in M_K} \sum_{\sigma:K(t)\to \KK_{v}} \, n_{v} G_{\bfP, v}(t^{\s})\right)
= \frac{1}{\ell}\, \htilde_{H_t}(P_{t}). 
\end{align}

Since $\bfP \in \Aff^2(K[t])$, 
Proposition~\ref{prop:on:assumption}~(4) gives 
\begin{equation}
\label{eqn:can:height:tilde:2}
\htilde_{\bfH}(\bfP) = \lim_{n\to\infty}\, \frac{\deg \overline{\Phi}_{n}}{d^{n}} = \ell.  
\end{equation}
We obtain
\begin{equation}
\label{eqn:thm:main2:revisited:2}
h_{\bfP}(t) := \frac{\htilde_{H_t}(P_{t})}{\htilde_{\bfH}(\bfP)} =  \frac{\htilde_{H_t}(P_{t})}{\ell}.
\end{equation}
Comparing \eqref{eqn:thm:main2:revisited:1} and \eqref{eqn:thm:main2:revisited:2}, 
we obtain the assertion. 
\QED

\setcounter{equation}{0}
\section{Local properties of parameter space height $h_{\bfP}$: a first application} 
\label{sec:local:parameters:height}

Let $K$ be a product formula field. 
Let  $\bfH = (H_t)_{t \in \overline{K}}$ be a family of H\'enon maps and 
let $\bfP = (a(t), b(t)) = (P_t)_{t \in\overline{K}}$ be a family of initial points 
satisfying Assumption~\ref{assumption}  as in \S~\ref{sec:family of Henon}. 
Recall from Introduction that 
\begin{equation}
\label{eqn:Sigma:bfP}
\Sigma(\bfP) := \{t \in \Aff^1(\overline{K}) \mid \;\text{$P_t$ is periodic with respect to $H_t$}\}.
\end{equation}
In this section, we establish some properties of $h_{\bfP}$ on the parameter space. 

\begin{Proposition}
\label{prop:periodic parameter}
Suppose that  $K$ is a number field. 
Then, for $t \in \Aff^1(\overline{K})$, 
 $P_{t}$ is periodic with respect to $H_t$ if and only if $h_\bfP (t) = 0.$ 
In other words, we have
\[
\Sigma(\bfP) = 
\{
t \in \Aff^1(\overline{K}) \mid h_\bfP (t) = 0
\}. 
\]
\end{Proposition}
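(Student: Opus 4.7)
The plan is to reduce the statement to a direct application of the Northcott-type result in Proposition~\ref{prop:northcott}~(3). Recall that $h_{\bfP}$ is defined by
\[
 h_{\bfP}(t) \;=\; \frac{\widetilde{h}_{H_t}(P_t)}{\widetilde{h}_{\bfH}(\bfP)},
\]
so the denominator is a fixed positive constant. Specifically, by Proposition~\ref{prop:on:assumption}~(3), Assumption~\ref{assumption} on $\bfP$ forces $\widetilde{h}_{\bfH}(\bfP) > 0$; hence $h_{\bfP}(t) = 0$ if and only if $\widetilde{h}_{H_t}(P_t) = 0$.

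Now, for any $t \in \Aff^1(\overline{K})$, the specialized H\'enon map $H_t$ is defined over the number field $K(t)$, and the specialized point $P_t = (a(t),b(t))$ lies in $\Aff^2(K(t)) \subseteq \Aff^2(\overline{K})$. Since $K$ is a number field, Proposition~\ref{prop:northcott}~(3), applied to $H_t$ over the number field $K(t)$, gives the equivalence
\[
 \widetilde{h}_{H_t}(P_t) = 0 \quad\Longleftrightarrow\quad \text{$P_t$ is periodic with respect to $H_t$}.
\]
Combining the two equivalences yields $h_{\bfP}(t) = 0 \Leftrightarrow P_t \in \operatorname{Per}(H_t)$, which is exactly the description of $\Sigma(\bfP)$.

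There is essentially no obstacle here: the statement is a bookkeeping consequence of the definition of $h_{\bfP}$ together with the positivity of $\widetilde{h}_{\bfH}(\bfP)$ (from Assumption~\ref{assumption} via Proposition~\ref{prop:on:assumption}~(3)) and the known characterization of points of canonical height zero for a single H\'enon map over a number field (Proposition~\ref{prop:northcott}~(3), due to \cite{Ka}). No use of Theorem~\ref{thm:main:A} or of the semipositive adelic metric $\Lcalbar_{\bfP}$ is needed for this proposition; those will become relevant later for equidistribution arguments, but here the content is merely a translation between the normalized parameter-space height and the fiberwise canonical height.
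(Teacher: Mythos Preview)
Your proof is correct and follows essentially the same approach as the paper: unwind the definition $h_{\bfP}(t) = \widetilde{h}_{H_t}(P_t)/\widetilde{h}_{\bfH}(\bfP)$, use positivity of the denominator, and invoke the Northcott-type characterization of periodic points. In fact you are slightly more careful than the paper in two respects: you make explicit why $\widetilde{h}_{\bfH}(\bfP) > 0$ (via Proposition~\ref{prop:on:assumption}~(3)), and you correctly cite Proposition~\ref{prop:northcott}~(3), whereas the paper's proof cites part~(2), which appears to be a typographical slip.
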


\Proof
Since $h_{\bfP}(t) := \htilde_{H_t}(P_{t}) / \htilde_{\bfP}(t)$ by definition, 
$h_\bfP (t) = 0$ is equivalent to $\htilde_{H_t}(P_{t}) = 0$. 
The assertion follows from Proposition~\ref{prop:northcott}~(2). 
\QED

Next, we decompose the parameter space height $h_{\bfP}$ into the sum of 
$v$-adic functions. Note that $h_{\bfP}(t) := \widetilde{h}_{H_t}(P_t)/ \widetilde{h}_{\bfH}(\bfP)$ 
is defined over any product formula field $K$ and that the function $G_{\bfP, v}$ 
in~\eqref{eqn:def:G} is defined for any place $v \in M_K.$

\begin{Proposition}
\label{prop:decomposition}
Let $K$ be a product formula field. 
Then for any $t \in \Aff^1(\overline{K})$, we have
\begin{equation}
\label{eqn:can:height:tilde:3}
  h_{\bfP}(t) = 
  \frac{1}{[K(t):K]}
  \sum_{v \in M_K} 
  \sum_{\sigma\colon K(t) \to \KK_v} n_v 
  \frac{G_{\bfP, v}(t^\sigma)}{\ell}. 
\end{equation}
\end{Proposition}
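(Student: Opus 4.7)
The plan is to unwind the definitions, using work already done in the proof of Theorem~\ref{thm:main2:revisited}. Since by definition $h_{\bfP}(t) = \htilde_{H_t}(P_t)/\htilde_{\bfH}(\bfP)$, the main input is just to compute the numerator and denominator separately and observe that Galois conjugates behave well with respect to the family parameter.

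First I would handle the denominator: by Proposition~\ref{prop:on:assumption}~(3) combined with the computation in its proof, we have $\htilde_{\bfH}(\bfP) = \lim_{n\to\infty} \deg\overline{\Phi}_n/d^n = \ell$, which is exactly the value recorded in \eqref{eqn:can:height:tilde:2}. So the denominator contributes the factor $1/\ell$ appearing on the right-hand side.

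For the numerator, I would apply the definition \eqref{eqn:can:height:tilde} of $\htilde_{H_t}$ to write
\[
\htilde_{H_t}(P_t) = \frac{1}{[K(t):K]} \sum_{v \in M_K} \sum_{\sigma\colon K(t) \to \KK_v} n_v \, G_v(P_t^\sigma).
\]
The key (essentially trivial) observation is that since $\bfP = (a(t), b(t)) \in \Aff^2(K[t])$ and $\bfH$ is defined by polynomials with coefficients in $K$, an embedding $\sigma\colon K(t) \to \KK_v$ acts on the parameter $t$ and satisfies $P_t^\sigma = (a(t^\sigma), b(t^\sigma)) = P_{t^\sigma}$ and $H_t^\sigma = H_{t^\sigma}$. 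Consequently $G_v(P_t^\sigma) = G_v(P_{t^\sigma}) = G_{\bfP, v}(t^\sigma)$ by the definition \eqref{eqn:def:G} of $G_{\bfP, v}$.

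Combining these two steps yields the desired formula after dividing. There is no real obstacle here; the statement is essentially a bookkeeping identity once one has Proposition~\ref{prop:on:assumption}~(3) in hand, and it has already appeared implicitly inside \eqref{eqn:thm:main2:revisited:1}--\eqref{eqn:thm:main2:revisited:2} in the proof of Theorem~\ref{thm:main2:revisited}. The only thing to be mildly careful about is the interchange of the two summations over $v$ and $\sigma$, which is legitimate because for each $P \in \Aff^2(\overline{K})$ the set $\{v \in M_K : G_v(P^\sigma) \neq 0 \text{ for some } \sigma\}$ is finite, so the double sum is absolutely convergent.
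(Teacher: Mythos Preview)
Your proposal is correct and takes essentially the same approach as the paper: unwind the definition of $\htilde_{H_t}(P_t)$ via \eqref{eqn:can:height:tilde}, identify $G_v(P_t^\sigma)$ with $G_{\bfP,v}(t^\sigma)$, and divide by $\htilde_{\bfH}(\bfP) = \ell$ from \eqref{eqn:can:height:tilde:2}. The paper's own proof is in fact terser than yours, simply pointing back to \eqref{eqn:thm:main2:revisited:1}--\eqref{eqn:thm:main2:revisited:2}; your added remarks on the Galois action and on absolute convergence are fine elaborations but not new ingredients.
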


\Proof
As we saw in the proof of Theorem~\ref{thm:main2:revisited},  
the definition of $\widetilde{h}_{H_t}(P_t)$ gives 
\[
  \widetilde{h}_{H_t}(P_t) = 
  \frac{1}{[K(t):K]}
  \sum_{v \in M_K} 
   \sum_{\sigma\colon K(t) \to \KK_v} n_v G_{\bfP, v}(t^\sigma). 
\]
Also we have $h_{\bfP}(t) = \widetilde{h}_{H_t}(P_t)/\ell$ by \eqref{eqn:thm:main2:revisited:2}, which 
holds for any product formula field. 
Thus we obtain the assertion. 
\QED

As an application of $h_{\bfP}$, we show that 
the set of parameter values $t$ where $\{\Vert H_{t}^n(P_t)\Vert_{v}\}_{n \in \ZZ}$ is bounded 
is characterized as the zero set of $G_{\bfP, v}$. 

Let $\bfH$ and $\bfP$ be as in Theorem~\ref{thm:main:A}. 
In particular, we assume that $\bfP$ satisfies Assumption~\ref{assumption}. 
Let $v \in M_K$. 
We set 
\begin{align*}
W_{\bfP, v}  &:= \{t \in \Aff^1(\KK_v) \mid \text{$\Vert 
(H_t^n(P_t), H_t^{-n}(P_t)) \Vert_v \to +\infty$ as $n \to +\infty$}
\}, \\
K_{\bfP, v}  &:=
 \{t \in \Aff^1(\KK_v) \mid \text{$\{\Vert H_t^n(P_t)\Vert_{v}\}_{n \in \ZZ}$ is bounded}\}. 
\end{align*}

\begin{Theorem}
\label{thm:K:P:v:characterization}
We have $\Aff^1(\KK_v) = W_{\bfP, v} \,\amalg\, K_{\bfP, v}$ (disjoint union), and 
$K_{\bfP, v}$ is exactly the set of points where $G_{\bfP, v}$ vanish:
\[
  K_{\bfP, v}  = \{t \in \Aff^1(\KK_v) \mid G_{\bfP, v}(t) = 0\}. 
\]
\end{Theorem}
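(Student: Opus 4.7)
The plan is to deduce both assertions from the geometric filtration $\Aff^2(\KK_v) = V_L^+ \cup V_L^-$ of Lemma~\ref{lem:property:Henon}. Disjointness of the union $W_{\bfP,v} \sqcup K_{\bfP,v}$ is immediate, since a bounded orbit $\{H_t^n(P_t)\}_{n \in \ZZ}$ forbids $\|(H_t^n(P_t), H_t^{-n}(P_t))\|_v \to +\infty$. For the covering $\Aff^1(\KK_v) \subseteq W_{\bfP,v} \cup K_{\bfP,v}$, fix $t \in \Aff^1(\KK_v)$ and choose $L = L(t) \geq 1$ large enough that $|c_i(t)|_v$, $|a(t)|_v$, $|b(t)|_v$, $[3d]\Delta$, and $|t|_v$ are each at most $L^{m+1}$. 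Then $t \in \Bcal_L$, so Lemma~\ref{lem:property:Henon} applies to the single H\'enon map $H_t$, and its part~(4) places $P_t$ in $V_L^+ \cup V_L^-$.

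The central technical claim is: if $P \in V_L^+$ and the forward orbit $\{H_t^n(P)\}_{n \geq 0}$ is unbounded, then $G_v^+(P) > 0$, and in particular $\|H_t^n(P)\|_v \to +\infty$. By Lemma~\ref{lem:property:Henon}(1) the orbit stays in $V_L^+$, and by part (2) it satisfies
\[
\max\{\|H_t^{n+1}(P)\|_v, 1\} \geq \max\{\|H_t^n(P)\|_v, 1\}^{d} / C, \qquad C := ([3d]\Delta L^{m+1})^{d}.
\]
Unboundedness yields some $n_0$ with $\|H_t^{n_0}(P)\|_v > C^{1/(d-1)}$; iterating the inequality from $n_0$ gives $\|H_t^{n_0+k}(P)\|_v \geq C^{-(d^k-1)/(d-1)}\,\|H_t^{n_0}(P)\|_v^{d^k}$, so $G_v^+(H_t^{n_0}(P)) \geq \log\|H_t^{n_0}(P)\|_v - \log C/(d-1) > 0$, and the functional equation $G_v^+ \circ H_t = d\cdot G_v^+$ delivers $G_v^+(P) > 0$. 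The analogous statement for $V_L^-$, backward orbit, and $G_v^-$ is proved identically from Lemma~\ref{lem:property:Henon}(1)(3).

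With these tools the theorem assembles quickly. Suppose $t \notin K_{\bfP,v}$. If $P_t \notin V_L^+$, then $\|P_t\|_v > [3d]\Delta L^{m+1}$ and, taking the contrapositive of $H_t(V_L^+) \subseteq V_L^+$, the complement $\Aff^2 \setminus V_L^+$ is closed under $H_t^{-1}$; hence every backward iterate of $P_t$ lies in $V_L^-$, and iterating Lemma~\ref{lem:property:Henon}(3) (the backward analogue of the central claim) gives $\|H_t^{-n}(P_t)\|_v \to +\infty$, so $t \in W_{\bfP,v}$. The symmetric case $P_t \notin V_L^-$ is identical, and if $P_t \in V_L^+ \cap V_L^-$ then unboundedness of the two-sided orbit forces at least one of the forward or backward orbits to be unbounded, and the central claim again puts $t \in W_{\bfP,v}$. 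For the second assertion, $K_{\bfP,v} \subseteq \{G_{\bfP,v} = 0\}$ is immediate from boundedness; conversely, if $G_{\bfP,v}(t) = 0$ but $t \notin K_{\bfP,v}$, the dichotomy forces $t \in W_{\bfP,v}$ and the case analysis just performed produces $G_v^+(P_t) > 0$ or $G_v^-(P_t) > 0$, contradicting $\max\{G_v^+(P_t), G_v^-(P_t)\} = 0$. The main obstacle is the quantitative estimate in the central claim, where one must verify that the orbit reaches the expansion threshold $C^{1/(d-1)}$ and that the geometric sum of the per-step loss of a factor $C$ remains outweighed by the $d$-th power growth; the rest is bookkeeping with the filtration.
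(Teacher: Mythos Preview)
Your argument is correct and, in substance, recovers exactly the dichotomy the paper invokes; the difference is one of packaging. The paper's proof is a three-line citation: for the first assertion it quotes \cite[Theorem~3.1(2)]{Ka} (if one half-orbit is unbounded then its norm tends to infinity), and for the second it quotes \cite[Theorem~3.1(1)]{Ka} together with Lemma~\ref{lem:thm:main:Mn}. You instead re-derive those cited facts internally from Lemma~\ref{lem:property:Henon}, via your ``central claim'' that an unbounded forward orbit inside $V_L^+$ forces $G_v^+>0$. Since the proof in \cite{Ka} is itself a filtration argument of exactly this shape, you are not taking a different route so much as making the paper self-contained at this point.

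Two small remarks. First, your choice of $L$ should require $|t|_v \leq L$ (so that $t \in \Bcal_L$ as Lemma~\ref{lem:property:Henon} demands), not merely $|t|_v \leq L^{m+1}$; this is a trivial adjustment. Second, with your choice of $L$ you automatically have $\|P_t\|_v \leq L^{m+1} \leq [3d]\Delta L^{m+1}$, so $P_t \in V_L^+ \cap V_L^-$ always. The first two branches of your case split (``$P_t \notin V_L^+$'' and ``$P_t \notin V_L^-$'') are therefore vacuous, and only the third branch is ever used---which is fortunate, since in those first two branches you assert $\|H_t^{-n}(P_t)\|_v \to +\infty$ without having first established that the backward orbit is unbounded (your central claim needs unboundedness as a hypothesis). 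None of this affects correctness, but the proof streamlines once you drop the redundant cases.
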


\Proof
For the first assertion, let $t \in \Aff^1(\KK_v)$ and suppose that 
$\{\Vert H_t^n(P_t)\Vert_{v}\}_{n \in \ZZ}$ is not bounded. 
Then $\{\Vert H_t^n(P_t)\Vert_{v}\}_{n \geq 0}$ is not bounded, or 
$\{\Vert H_t^n(P_t)\Vert_{v}\}_{n \leq 0}$ is not bounded. 
In the former case, it follows from \cite[Theorem~3.1~(2)]{Ka} that 
$\lim_{n \to +\infty} \Vert  H_t^n(P_t) \Vert_v = +\infty$. In the latter case, 
we similarly have $\lim_{n \to -\infty} \Vert  H_t^n(P_t) \Vert_v = +\infty$ (see \cite[p.~1237]{Ka}). 
Thus $\Aff^1(\KK_v) = W_{\bfP, v} \,\amalg\, K_{\bfP, v}$ (disjoint union). 

We show the second assertion. 
Let $t \in \Aff^1(\KK_v)$. 
Suppose that $t \in K_{\bfP, v}$. This means that  
$\left\{\left\Vert A_n(t), A_{n-1}(t), B_{n-1}(t), B_n(t), 1 \right\Vert_v\right\}_{n \geq 1}$ is 
bounded, because $H_t^n(P_t) = (A_n(t), A_{n-1}(t))$ and $H_t^{-n}(P_t) = (B_{n-1}(t), B_{n}(t))$ for 
any $n \geq 1$. Then Lemma~\ref{lem:thm:main:Mn} 
gives $G_{\bfP, v}(t) = 0$. 

Next suppose that $t \not\in K_{\bfP, v}$. 
By \cite[Theorem~3.1~(1) and p.~1237]{Ka} and Lemma~\ref{lem:thm:main:Mn}, 
we then have $G_{\bfP, v}(t) > 0$.  
This completes the proof. 
\QED

\begin{Example}
\label{eg:figures:inou}
Let $\bfH = \{H_t\}_t$ with $H_t(x, y) = (y + x^2 + t, x)$ be as 
in Example~\ref{eg:quad:Henon:map} where 
$\bfH$ is regarded as a H\'enon map defined over $\QQ(t)$. 
We start from a constant initial point $\bfP = (a, b)$ with 
$a, b \in \QQ$, and we consider the archimedean place $v_\infty \in M_\QQ$. 

The following figures 
(Figure~2 and Figure~3), drawn with Qfract (by Hiroyuki Inou), 
are for $\bfP = (0, 1/2)$  (centered around $t=0.1$) and for $\bfP = (1, -1)$ (centered around $t=-1$). 
Shades depend on how fast the orbit $\{H^n_t((a, b))\}$ escapes as $|n|$ becomes large (the escaping rate is very small in the black region). See also the figure in Introduction for $\bfP = (0, 0)$. 

\begin{figure}[htb!]
  \includegraphics[width=7.0cm, bb=0 0 640 640]{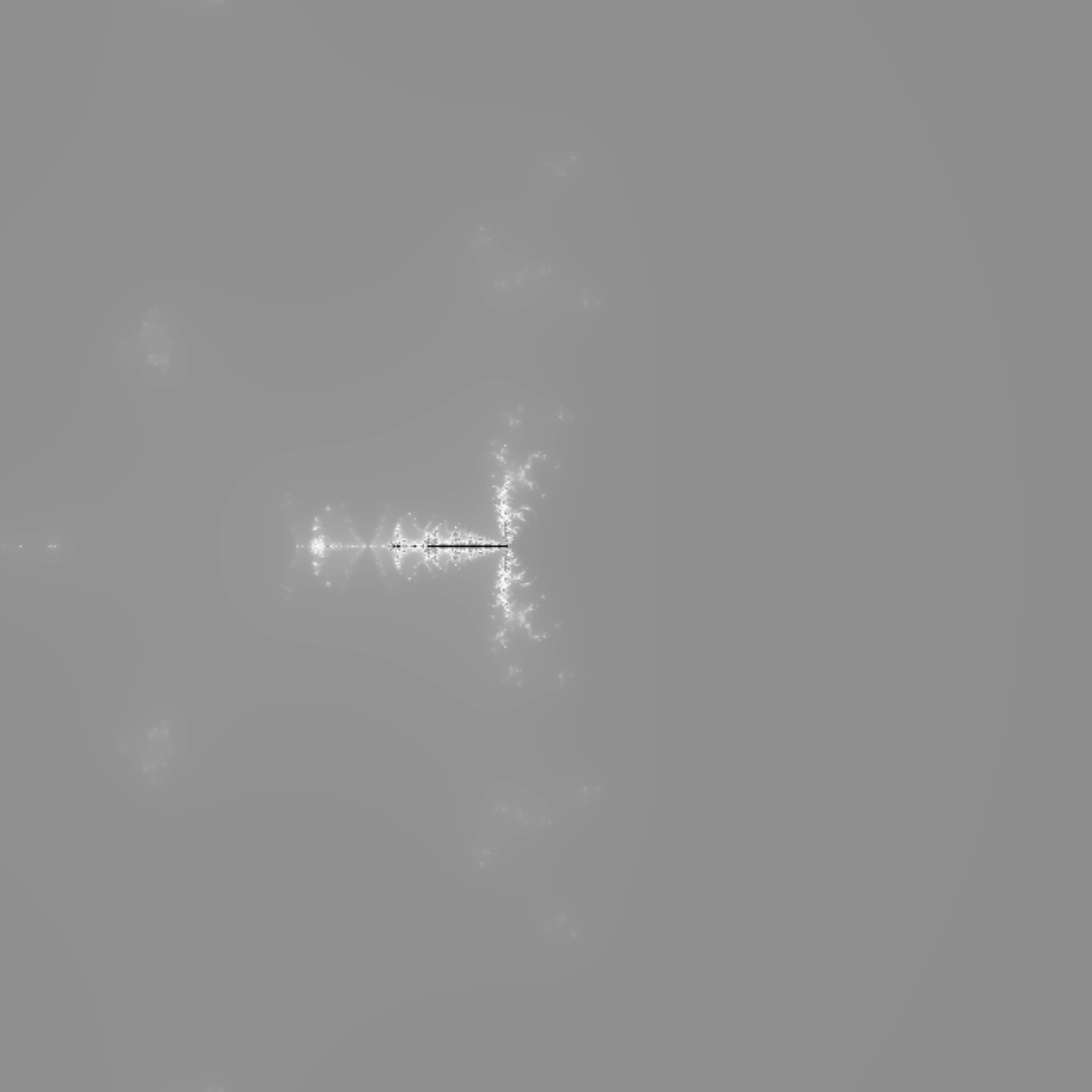}\hspace{2eM}
  \includegraphics[width=7.0cm, bb=0 0 640 640]{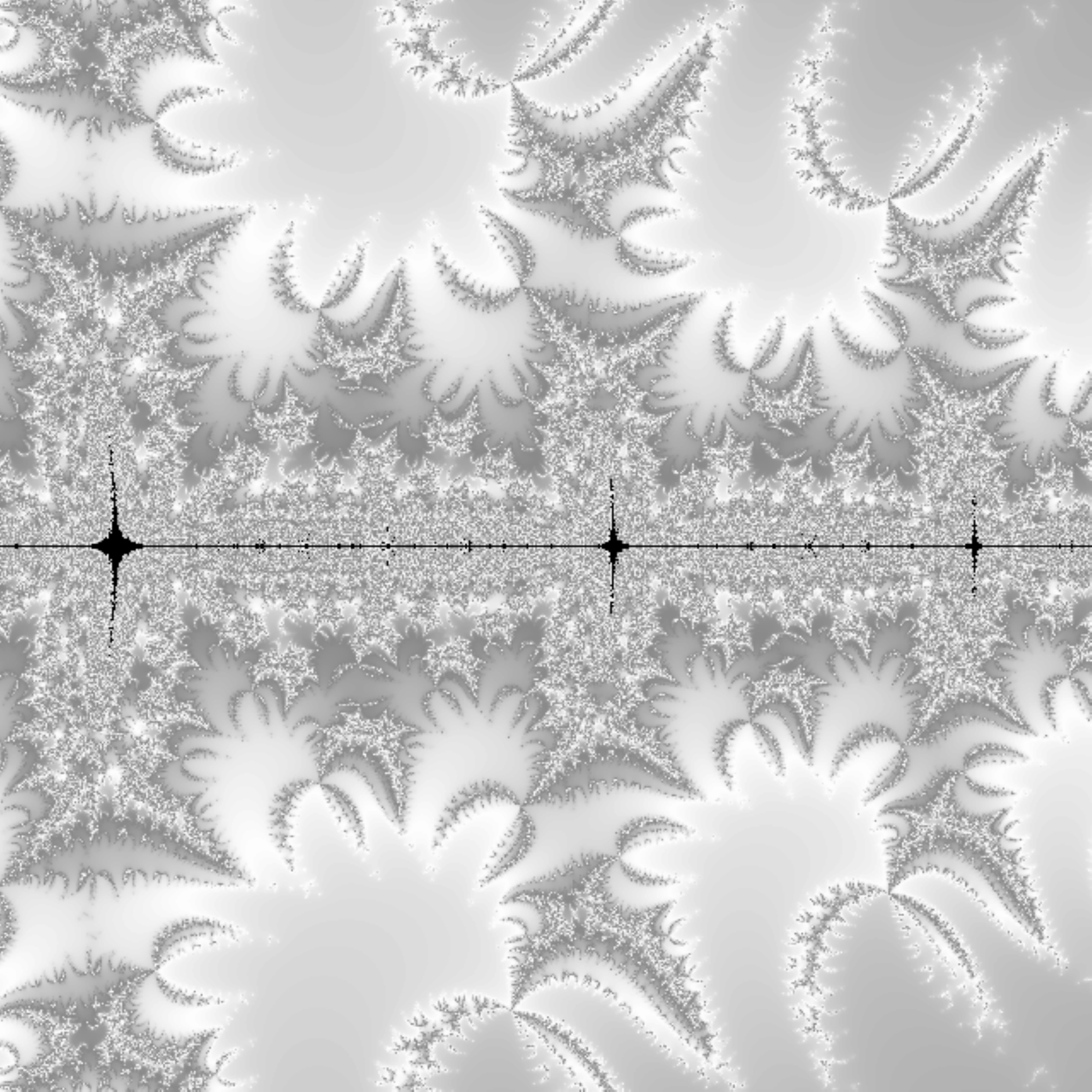}
  \caption{Figures for $\bfP = (0, 1/2)$. The region on the left is 
 $\{t \in \CC \mid |{\rm Re}(t) -0.1| \leq 1, \, 
   |{\rm Im}(t)| \leq 1\}$, 
  and that on the right is $\{t \in \CC \mid |{\rm Re}(t) -0.1| \leq 0.01, \, |{\rm Im}(t)| \leq 0.01\}$. 
  Proposition~\ref{prop:intro:finiteness} says that $\Sigma((0, 1/2)) = \emptyset$.
  }
\end{figure}

\begin{figure}[htb!]
  \includegraphics[width=7.0cm, bb=0 0 640 640]{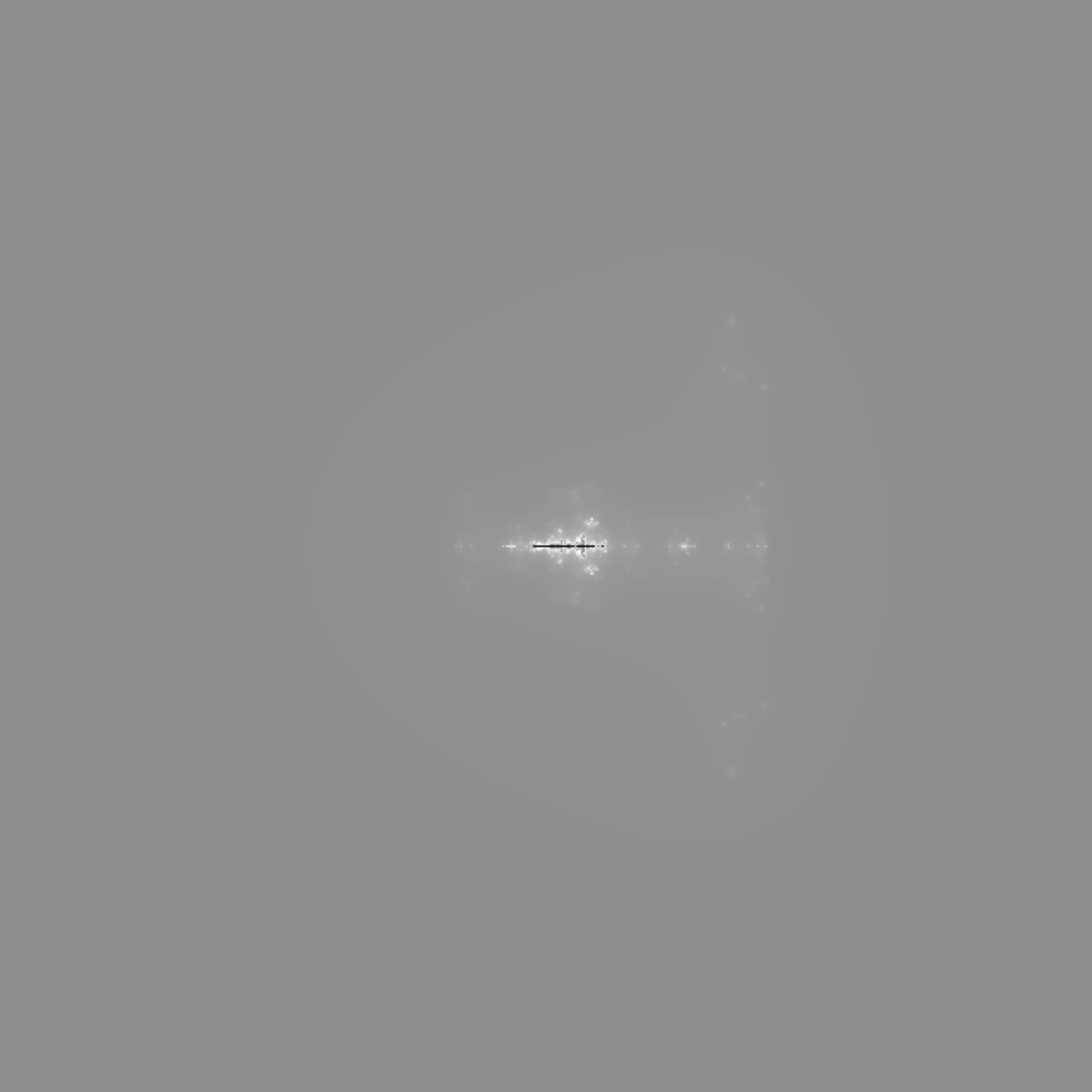}\hspace{2eM}
  \includegraphics[width=7.0cm, bb=0 0 640 640]{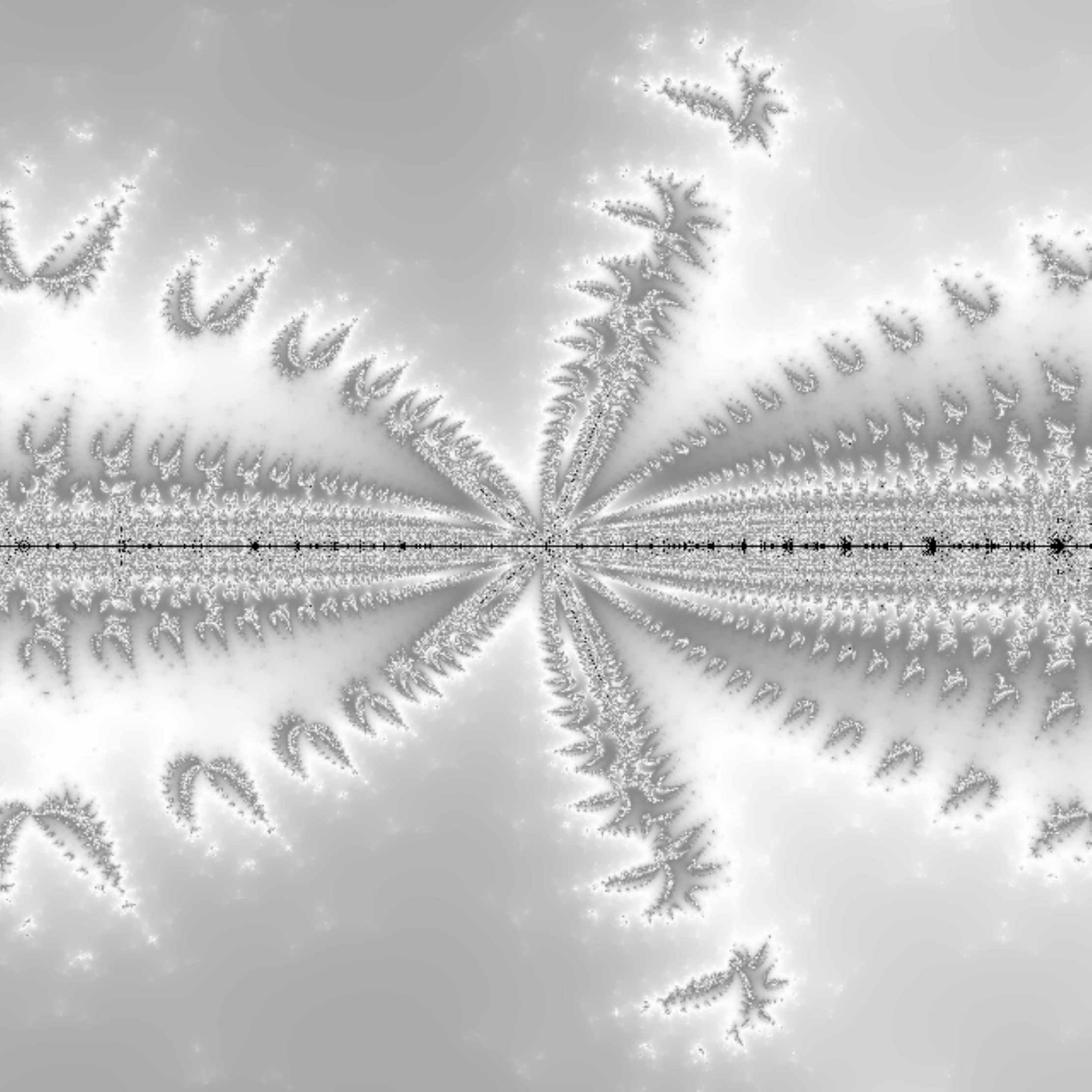}
  \caption{Figures for $\bfP = (-1, 1)$. The region on the left is 
  $\{t \in \CC \mid |{\rm Re}(t) +1| \leq 1, \, |{\rm Im}(t)| \leq 1\}$, 
  and that on the right is $\{t \in \CC \mid |{\rm Re}(t) +1| \leq 0.01, \, |{\rm Im}(t)| \leq 0.01\}$. 
  Note that, at the center $t = -1$ of the images, the point $(-1, 1)$ is periodic with respect to $H_{-1}$ 
  with period $2$. Theorem~\ref{thm:intro:infiniteness} says that $\Sigma((-1, 1))$ is an infinite set. 
  }
\end{figure}
\end{Example}

\goodbreak

The following result,  as a corollary of Theorem~\ref{thm:main:Mn},  gives the asymptotic behavior of $G_{\bfP, v}(t)$ as $|t|_{v} \to \infty$.

\begin{Corollary}
\label{cor:thm:main:Mn}
For any $v \in M_K$, $G_{\bfP, v}(t) - \widetilde{h}_{\bfH}(\bfP)\log|t|_v$ converges as $|t |_{v}\to\infty$. 
\end{Corollary}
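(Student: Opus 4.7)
The plan is to prove the corollary by applying Theorem~\ref{thm:main:Mn} to the tautological section $\eta = X_0$ of $\Ocal_{\PP^1}(1)$, i.e., the one corresponding to $a_0 = 1$ and $a_1 = 0$ in~\eqref{eqn:def:norm:n}. This section is nowhere vanishing on the affine chart $\{X_0 \neq 0\}$, which contains the point at infinity $(1:0)$; on the complementary chart it corresponds to the affine parameter $t$ itself. The strategy is to identify $G_{\bfP,v}(t) - \widetilde{h}_{\bfH}(\bfP)\log|t|_v$, up to a sign and the scalar $\ell$, with the pointwise limit of $\log\norm{\eta}_n$, and then invoke the uniform convergence supplied by Theorem~\ref{thm:main:Mn} to extend this limit continuously across $(1:0)$.

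For the explicit calculation, substituting $\eta = X_0$ into \eqref{eqn:def:norm:n} yields, for any $t \in \KK_v \setminus \{0\}$,
\[
  \log\norm{\eta}_n(t:1) \;=\; \log|t|_v - \frac{1}{\ell_n}\log M_n(t).
\]
By Proposition~\ref{prop:on:assumption}~(4) we have $\ell_n = d^n \ell$ for all sufficiently large $n$, and by Lemma~\ref{lem:thm:main:Mn} we have $\frac{1}{d^n}\log M_n(t) \to G_{\bfP,v}(t)$. Recalling that $\widetilde{h}_{\bfH}(\bfP) = \ell$ by \eqref{eqn:can:height:tilde:2}, multiplying by $-\ell$ and passing to the pointwise limit gives
\[
  -\ell\,\log\norm{\eta}_n(t:1) \;\xrightarrow[n\to\infty]{}\; G_{\bfP,v}(t) - \widetilde{h}_{\bfH}(\bfP)\log|t|_v
\]
on $\Aff^1(\KK_v) \setminus \{0\}$. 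Writing $\norm{\cdot}_\infty$ for the limiting metric, the function we wish to study therefore equals $-\ell\,\log\norm{\eta}_\infty$ on this locus.

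To conclude, Theorem~\ref{thm:main:Mn} gives uniform convergence of $\log\norm{\cdot}_n$ on $\PP^1(\KK_v)$, so the sequence $\log\norm{\eta}_n$ is uniformly Cauchy on the affine chart $\{X_0 \neq 0\}$ (where $\eta$ is a nowhere-vanishing trivialization), and consequently $\log\norm{\eta}_\infty$ is a \emph{continuous} function there. Since $(1:0)$ lies in this chart with $\eta(1:0) \neq 0$, the value $\log\norm{\eta}_\infty(1:0)$ is finite, and continuity yields
\[
  \lim_{|t|_v\to\infty}\bigl(G_{\bfP,v}(t) - \widetilde{h}_{\bfH}(\bfP)\log|t|_v\bigr) \;=\; -\ell\,\log\norm{\eta}_\infty(1:0),
\]
proving the corollary. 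The only delicate step is checking that the uniform convergence of the metrics on $\PP^1(\KK_v)$ translates to continuity of $\log\norm{\eta}_\infty$ at the specific boundary point $(1:0)$, but this is immediate once we restrict to the chart where $\eta$ is nowhere zero, as there the norms are uniformly bounded away from $0$ and the uniform limit of continuous functions is continuous.
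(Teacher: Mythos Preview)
Your proof is correct and follows essentially the same approach as the paper: both take the section $X_0$ of $\Ocal_{\PP^1}(1)$, identify $G_{\bfP,v}(t) - \ell\log|t|_v$ with $-\ell$ times the limit of $\log\Vert X_0\Vert_n$, and invoke the uniform convergence of Theorem~\ref{thm:main:Mn} together with Lemma~\ref{lem:thm:main:Mn} and \eqref{eqn:can:height:tilde:2}. Your version is in fact more carefully written, making explicit why uniform convergence on $\PP^1(\KK_v)$ yields continuity of the limiting metric at $(1:0)$ on the chart $\{X_0\neq 0\}$.
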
 

\Proof
Let $X_0, X_1$ be the homogeneous coordinates of $\PP^1$, and we take 
the global section $X_0$ of $\OO_{\PP^1}(1)$. It follows from Theorem~\ref{thm:main:Mn} 
that 
\[
 \Vert X_0 \Vert_v(t) = \lim_{n\to +\infty} \log \frac{|t|_v}{M_n(t)^{1/d^n \ell}}
\]
uniformly converges around $t = \infty$. Since $G_{\bfP}(t) =  \lim_{n\to +\infty} \frac{1}{d^n} M_n(t)$ by Lemma~\ref{lem:thm:main:Mn} and 
$\ell = \widetilde{h}_{\bfH}(\bfP)$ by \eqref{eqn:can:height:tilde:2}, 
we obtain that $G_{\bfP, v}(t) - \widetilde{h}_{\bfH}(\bfP) \log|t|_v$ converges as $t \to\infty$. 
\QED

\begin{Remark}
\label{rmk:cor:thm:main:Mn}
It follows from the proof of Theorem~\ref{thm:main:Mn} that, with the notation therein, 
the limit in Corollary~\ref{cor:thm:main:Mn} 
is given by  $-(1/d^N) \log\max\{|\alpha_{D^{\prime}}|_v, 1\}$,  $-(1/d^N) \log\max\{|\beta_{D^{\prime}}|_v, 1\}$, or 
$-(1/d^N) \log\max\{|\alpha_D|_v, |\beta_D|_v, 1\}$, according to Subcases 3-1, 3-2, 3-3 of the 
proof of Theorem~\ref{thm:main:Mn}. 
\end{Remark}

\setcounter{equation}{0}
\section{Set of periodic parameter values: result on infiniteness} 
\label{sec:set:periodic:parameters}

Now we would like to consider another application of $h_{\bfP}$, 
i.e., an application toward unlikely intersections of the sets of periodic parameter values of 
two different families of initial points. 

As before, let $K$ be a field, and we let $\bfH = (H_t)_{t \in \overline{K}}$ be a family of H\'enon map as in~\eqref{eqn:Henon maps} and $\bfP \in \Aff^2(K[t])$ be a family of  initial points.
Let $\bfQ \in \Aff^2(K[t])$ be another family of initial points. We are interested in what will happen 
if the intersection of periodic parameters $\Sigma(\bfP) \cap \Sigma(\bfQ)$ is infinite. 
As we discussed in 
\S~\ref{subsec:intro:periodic:para} that $\Sigma(\bfP)$ (or $\Sigma(\bfQ)$) may not be an infinite set for a 
family of H\'enon maps.  

The purpose of this section is to prove Theorem~\ref{thm:intro:infiniteness}, 
which gives a sufficient condition for $\Sigma(\bfP)$ being infinite. 

As we explain in Introduction, 
we assume that there exists an involution $\iota\colon \Aff^{2} \to\Aff^{2}$ over $K$ such that 
$\iota \circ \bfH \circ \iota = \bfH^{-1}$. Then necessarily, $\delta \in \{1, -1\}$. 
If $\delta = 1$, we assume that $f_t(-x) = f(x)$ in $K[t, x]$. Then one can check that 
the involution $\iota_\delta\colon (x, y) \mapsto (-\d y, -\d x)$  reverses $\bfH$ 
(see \S~\ref{subsec:intro:periodic:para} for more discussions). 
We note that the set of fixed points of $\iota_{\d}$ is given by the line 
\[
\Ccal_{\d}\colon  \d x + y = 0. 
\]
We recall Theorem~\ref{thm:intro:infiniteness}, which 
we prove at the end of this section. 

\begin{Theorem}
\label{thm:infiniteness:Sigma:P}
Let $K$ be a field (of any characteristic).
Let $\bfH$ be the family of H\'enon maps 
in\eqref{eqn:Henon maps}  
such that $\delta \in \{1, -1\}$. 
If $\delta = -1$, we assume that $f_t(x)$ is an even polynomial in $x$. 
Let $\bfP = (a(t), b(t))\in \Aff^2(K[t])$ satisfy Assumption~\ref{assumption}. 
Then, if $\bfP$ lies in $C_\delta$, i.e., $\d a(t) + b(t) = 0$ in $K[t]$, then 
$\Sigma(\bfP)$ is an infinite set.
\end{Theorem}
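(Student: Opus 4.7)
The plan is to use the reversibility $\iota_\delta \circ \bfH \circ \iota_\delta = \bfH^{-1}$ to convert the periodicity question into a statement about roots of an explicit sequence $F_n(t) \in K[t]$ whose degrees grow like $d^n$, and then rule out, via a multiplicity estimate, the possibility that all roots of all the $F_n$ concentrate at a finite set.

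First I would verify by direct computation, using $\delta^2=1$ and (when $\delta=1$) the evenness of $f_t(x)$, that $\iota_\delta\colon(x,y)\mapsto(-\delta y,-\delta x)$ satisfies $\iota_\delta\circ\bfH\circ\iota_\delta=\bfH^{-1}$, whence $\iota_\delta\circ\bfH^n=\bfH^{-n}\circ\iota_\delta$ for every $n\in\ZZ$. Since $\bfP$ lies on $C_\delta=\operatorname{Fix}(\iota_\delta)$, every specialization $P_{t_0}$ is $\iota_\delta$-fixed, so $\iota_\delta(H_{t_0}^n(P_{t_0}))=H_{t_0}^{-n}(P_{t_0})$, and hence $H_{t_0}^n(P_{t_0})\in C_\delta$ if and only if $H_{t_0}^{2n}(P_{t_0})=P_{t_0}$; in that case $t_0\in\Sigma(\bfP)$. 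Defining $F_n(t):=\delta A_n(t)+A_{n-1}(t)\in K[t]$ and using $H_t^n(P_t)=(A_n(t),A_{n-1}(t))$ from~\eqref{eqn:H:n:A:B}, this gives $\bigcup_{n\ge 1}V(F_n)\subseteq\Sigma(\bfP)$, so it suffices to show the union on the left is infinite.

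Each $F_n$ is a non-zero polynomial of exponentially growing degree. Non-vanishing: if $F_n\equiv 0$ then the involution argument applied at the generic point gives $\bfH^{2n}(\bfP)=\bfP$, forcing $\htilde_{\bfH}(\bfP)=0$ and contradicting Assumption~\ref{assumption} via Proposition~\ref{prop:on:assumption}~(3). Degree growth: Proposition~\ref{prop:on:assumption}~(1) gives $\deg F_n=\deg A_n=d^{n-N}\deg A_N$ for $n\ge N$. Now suppose for contradiction that $\bigcup_n V(F_n)\subseteq S=\{s_1,\ldots,s_k\}$ is finite, and factor $F_n(t)=c_n\prod_i(t-s_i)^{\mu_n(s_i)}$ with $\sum_i\mu_n(s_i)=\deg F_n$. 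Letting $m_i$ be the $H_{s_i}$-period of $P_{s_i}$ and $m_i':=m_i/\gcd(m_i,2)$, one has $s_i\in V(F_n)$ iff $m_i'\mid n$. If every $m_i'\ge 2$, choosing a prime $p$ larger than $\max_i m_i'$ yields $V(F_p)=\emptyset$, contradicting $\deg F_p>0$; hence some $P_{s_i}$ has period at most $2$ and the corresponding $s_i$ lies in every $V(F_n)$.

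The main obstacle is this remaining case, where one must prevent a single multiplicity $\mu_n(s_i)$ from absorbing the entire exponential degree $d^n$. Writing $u:=t-s_i$ and $\Psi_n(u):=H_{s_i+u}^n(P_{s_i+u})$, the condition $\mu_n(s_i)\ge\mu$ translates into $(\delta,1)\cdot R_n^{(k)}=0$ for $0\le k<\mu$, where $R_n^{(k)}$ is the $k$-th Taylor coefficient of $\Psi_n$ at $u=0$. These vectors satisfy linear variational recursions whose coefficients are periodic in $n$ along the periodic orbit $\{Q_n=H_{s_i}^n(P_{s_i})\}$ and are governed by the Jacobian $DH_{s_i}$. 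A Floquet-type analysis of the monodromy $M=\prod_j DH_{s_i}|_{Q_j}$, which satisfies $\det M=\pm 1$ thanks to the H\'enon structure, together with the fact that the deformation of $H_t$ across $t=s_i$ is transverse to $C_\delta$, should yield a polynomial bound $\mu_n(s_i)=O(n^c)$ for some $c=c(S,\{m_i\})$. Summing over the finite set $S$ gives $\deg F_n=O(n^c)$, contradicting the exponential growth and completing the proof. Establishing this multiplicity estimate precisely---by exploiting the explicit form of the H\'enon Jacobian and the reversibility---is the technical heart of the argument.
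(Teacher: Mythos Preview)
Your setup matches the paper exactly: the reversibility argument, the definition of $F_n(t)=\delta A_n(t)+A_{n-1}(t)$, the reduction of $\Sigma(\bfP)$ to $\bigcup_n V(F_n)$, the non-vanishing and the exponential degree growth are all correct and coincide with Lemma~\ref{lemma:A:f}, Corollary~\ref{cor:A:f}, and Proposition~\ref{prop:on:assumption}. Your reduction to the case of a single parameter $s_i$ with $m_i\in\{1,2\}$ lying in every $V(F_n)$ is also fine.

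The gap is precisely where you say it is: the multiplicity bound $\mu_n(s_i)=O(n^c)$ is asserted but not proved, and the sketch (``Floquet-type analysis'', ``transverse to $C_\delta$'') is too vague to be an argument, particularly in positive characteristic. The paper avoids this difficulty by a different and sharper device. Rather than bounding $\mu_n(s)$ asymptotically, it proves an \emph{equality} of multiplicities for carefully chosen $n$. Concretely (Proposition~\ref{prop:multiplicity}): writing $\bfH^2(\bfP)-\bfP=(\varepsilon(t),\zeta(t))$ and $\Psi_2(t)=\bfJ_{\bfH^2}(\bfP)$, one has the Taylor-type expansion
\[
\bfH^{2k}(\bfP)=\bfP+(\varepsilon,\zeta)\sum_{i=0}^{k-1}\Psi_2(t)^i+\bfr_k,\qquad \bfr_k\in(\varepsilon^2,\varepsilon\zeta,\zeta^2)^2,
\]
so $\ord_\alpha(\bfr_k)\ge 2\,\ord_\alpha(\varepsilon,\zeta)$. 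If $k$ is coprime to the orders of the eigenvalues of $\Psi_2(\alpha)$ (and to $\operatorname{char}K$ when positive), then $\sum_{i=0}^{k-1}\Psi_2(\alpha)^i$ is invertible over the local ring at $\alpha$, and Lemma~\ref{lem: equality of order} gives
\[
\ord_\alpha\bigl(\bfH^{2k}(\bfP)-\bfP\bigr)=\ord_\alpha\bigl(\bfH^{2}(\bfP)-\bfP\bigr).
\]
Via \eqref{eqn: order for 2m} this says $\ord_\alpha(F_k)=\ord_\alpha(F_1)$ for every root $\alpha$ of $F_1$, provided $k$ is coprime to a single integer $D_2$ depending only on the roots of $F_1$. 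Hence $F_1\mid F_p$ with quotient $W_{2p}$ coprime to $F_1$, and $\deg W_{2p}>0$ because $\deg F_p>\deg F_1$. Any root $\beta_p$ of $W_{2p}$ has $H_{\beta_p}$-period $p$ or $2p$, so distinct primes $p\nmid D_2$ give distinct $\beta_p\in\Sigma(\bfP)$.

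This is strictly stronger than what you aimed for: it pins the multiplicity exactly rather than bounding it, and it works uniformly in all characteristics without any transversality hypothesis. If you want to salvage your route, the Jacobian expansion above is exactly the ``variational recursion'' you were reaching for; the point is that you do not need an $O(n^c)$ bound for all $n$, only the equality for infinitely many well-chosen $n$.
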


\begin{Example}
\label{eg:quad:Henon:map:3}
Let $H_t(x, y) = (y + x^2 + t, x)$ be as in Example~\ref{eg:quad:Henon:map}, 
which is a family of H\'enon maps that satisfies the assumption 
of 
Theorem~\ref{thm:infiniteness:Sigma:P} 
with $\delta = 1$ and 
$f_t(x) = x^2+t$. We take $\bfP = (a, b) \in K^2$. As we see 
in Example~\ref{eg:quad:Henon:map:2}, $\bfP$ satisfies Assumption~\ref{assumption}. 
Theorem~\ref{thm:infiniteness:Sigma:P} says that, 
if $a + b = 0$, then 
$\Sigma((a, b))$ is an infinite set.
\end{Example}

First, we prove several lemmas. 
For an ideal $\ga$  of $\overline{K}[t]$, let 
\[
  {}^{\sim}\colon K[t] \to \overline{K}[t]/\ga
\]
be composition of the inclusion map and the quotient map. 
We denote the base-changes by the morphism ${}^{\sim}$ of $\bfH$ (over $\Spec(K[t])$), 
$\bfP = (a(t), b(t))\in \Aff^2(K[t])$, $\bfH^n(\bfP) = (A_n(t), A_{n-1}(t))$ 
by $\widetilde{\bfH}$, $\widetilde{\bfP} =( \widetilde{a(t)}, \widetilde{b(t)})$, 
and $\widetilde{\bfH}^n(\widetilde{\bfP}) = (\widetilde{A_n(t)}, \widetilde{A_{n-1}(t)})$. 

\begin{Lemma}
\label{lemma:A:f}
Assume that $\bfP$ lies in $C_\delta$. 
Let $m \geq 1$ be a positive integer. 
Then the following are equivalent.
\begin{enumerate}
\item[(i)]
$\d \widetilde{{A}_m(t)} +\widetilde{{A}_{m-1}(t)} = 0$ in $\overline{K}[t]/\ga$.
\item[(ii)]
$\widetilde{\bfH}^{2m}(\widetilde{\bfP}) = \widetilde{\bfP}$ in $\Aff^2(\overline{K}[t]/\ga)$. 
\end{enumerate}
\end{Lemma}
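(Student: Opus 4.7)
The plan is to reformulate condition (i) geometrically as the statement that $\widetilde{\bfH}^m(\widetilde{\bfP})$ lies on $\widetilde{C_\delta}$, the base change of the set of $\iota_\delta$-fixed points. Indeed, by \eqref{eqn:H:n:A:B} we have $\widetilde{\bfH}^m(\widetilde{\bfP}) = (\widetilde{A_m(t)}, \widetilde{A_{m-1}(t)})$ and $C_\delta$ is cut out by $\delta x + y = 0$, so (i) is equivalent to $\widetilde{\iota_\delta}\bigl(\widetilde{\bfH}^m(\widetilde{\bfP})\bigr) = \widetilde{\bfH}^m(\widetilde{\bfP})$ in $\Aff^2(\overline{K}[t]/\ga)$. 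Similarly, the assumption $\bfP \in C_\delta$ gives $\widetilde{\iota_\delta}(\widetilde{\bfP}) = \widetilde{\bfP}$.

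The essential input is the reversing identity $\iota_\delta \circ \bfH \circ \iota_\delta = \bfH^{-1}$, which holds as an identity of automorphisms over $K[t]$ under the hypotheses on $\delta$ and $f_t$. Iterating gives $\iota_\delta \circ \bfH^m \circ \iota_\delta = \bfH^{-m}$, and since polynomial identities are preserved under base change, the same relation holds over $\overline{K}[t]/\ga$. Combining with $\widetilde{\iota_\delta}(\widetilde{\bfP}) = \widetilde{\bfP}$, I obtain the key equality
\[
  \widetilde{\iota_\delta}\bigl(\widetilde{\bfH}^m(\widetilde{\bfP})\bigr)
  \;=\; \widetilde{\bfH}^{-m}\bigl(\widetilde{\iota_\delta}(\widetilde{\bfP})\bigr)
  \;=\; \widetilde{\bfH}^{-m}(\widetilde{\bfP}).
\]

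Both implications then follow. For (i) $\Rightarrow$ (ii), combining (i) with the displayed equality yields $\widetilde{\bfH}^m(\widetilde{\bfP}) = \widetilde{\bfH}^{-m}(\widetilde{\bfP})$, and applying $\widetilde{\bfH}^m$ gives $\widetilde{\bfH}^{2m}(\widetilde{\bfP}) = \widetilde{\bfP}$. Conversely, if $\widetilde{\bfH}^{2m}(\widetilde{\bfP}) = \widetilde{\bfP}$, then $\widetilde{\bfH}^m(\widetilde{\bfP}) = \widetilde{\bfH}^{-m}(\widetilde{\bfP}) = \widetilde{\iota_\delta}(\widetilde{\bfH}^m(\widetilde{\bfP}))$, so $\widetilde{\bfH}^m(\widetilde{\bfP})$ is $\widetilde{\iota_\delta}$-fixed and lies on $\widetilde{C_\delta}$, proving (i). The proof presents no real obstacle: it is a clean consequence of the reversibility, and the ideal $\ga$ enters only insofar as all polynomial identities involved pass to the quotient.
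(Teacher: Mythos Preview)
Your proof is correct and follows essentially the same approach as the paper's: both use the reversing relation $\iota_\delta \circ \bfH^m \circ \iota_\delta = \bfH^{-m}$ together with $\iota_\delta(\bfP) = \bfP$ to identify condition (i) with the statement that $\widetilde{\bfH}^m(\widetilde{\bfP})$ is $\iota_\delta$-fixed, and then derive both implications via $\widetilde{\bfH}^m(\widetilde{\bfP}) = \widetilde{\bfH}^{-m}(\widetilde{\bfP})$. Your organization is slightly cleaner in that you isolate the identity $\widetilde{\iota_\delta}\bigl(\widetilde{\bfH}^m(\widetilde{\bfP})\bigr) = \widetilde{\bfH}^{-m}(\widetilde{\bfP})$ once and reuse it for both directions, but the mathematical content is identical.
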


\Proof
We show that (i) implies (ii). Indeed,
condition (i) means that 
$\widetilde{\bfH}^{m}(\widetilde{\bfP}) \in C_{\delta}(\overline{K}[t]/\ga)$. 
Since $\bfP$ lies in $C_\delta$, we have 
$\widetilde{\bfP} \in C_{\delta}(\overline{K}[t]/\ga)$ as well. 
Thus $\iota(\widetilde{\bfP}) = \widetilde{\bfP}$ and 
$\iota\left(\widetilde{\bfH}^{m}(\widetilde{\bfP})\right) = \widetilde{\bfH}^{m}(\widetilde{\bfP})$. 
We compute
\[
  \widetilde{\bfH}^{-m}(\widetilde{\bfP}) = \iota \circ \widetilde{\bfH}^m \circ \iota(\widetilde{\bfP}) =  \iota \circ
\widetilde{\bfH}^m(\widetilde{\bfP}) =  \widetilde{\bfH}^m(\widetilde{\bfP}),
\]
so that $\widetilde{\bfH}^{2m}(\widetilde{\bfP}) = \widetilde{\bfP}$. 

\smallskip
Next we show that (ii) implies (i).
Assume that $\widetilde{\bfH}^{2m}(\widetilde{\bfP}) = \widetilde{\bfP},$ then
\[
\widetilde{\bfH}^m(\widetilde{\bfP}) = \widetilde{\bfH}^{-m}(\widetilde{\bfP}) = \iota \circ \widetilde{\bfH}^m
\circ \iota(\widetilde{\bfP}) =  \iota \circ \widetilde{\bfH}^m(\widetilde{\bfP}).
\]
Thus $\widetilde{\bfH}^m(\widetilde{\bfP}) \in \Ccal_{\d}(\overline{K}[t]/\ga)$, 
which amounts to $\d \widetilde{{A}_m(t)} +\widetilde{{A}_m(t)} = 0$ in $\overline{K}[t]/\ga$.
\QED

Applying Lemma~\ref{lemma:A:f} to the ideal $\ga = \left((t-\alpha)^e\right)$ generated by the
polynomial $(t-\alpha)^e$ for $\alpha\in \overline{K}$ and $e\geq 1$, 
we have the following. 

\begin{Corollary}
\label{cor:A:f}
Assume that $\bfP$ lies in $C_\delta$. 
Let $m \geq 1$ be a positive integer. 
Then the following are equivalent.
\begin{enumerate}
\item[(i)]
$\d A_m(t) + A_{m-1}(t) \equiv 0 \pmod{(t-\alpha)^e}$.
\item[(ii)]
$\bfH^{2m}(\bfP) \equiv  \bfP\pmod{(t-\alpha)^e}$.
\end{enumerate}
\end{Corollary}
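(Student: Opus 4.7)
The plan is to simply specialize Lemma~\ref{lemma:A:f} to the ideal $\ga = \left((t-\alpha)^e\right)$ of $\overline{K}[t]$ and translate the conditions stated in terms of the quotient ring $\overline{K}[t]/\ga$ into congruences modulo $(t-\alpha)^e$. Under the map ${}^{\sim}\colon K[t] \to \overline{K}[t]/\ga$, two elements $P, Q \in K[t]$ satisfy $\widetilde{P} = \widetilde{Q}$ in $\overline{K}[t]/\ga$ if and only if $P \equiv Q \pmod{(t-\alpha)^e}$; the analogous statement holds componentwise in $\Aff^2$.

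With this dictionary, condition~(i) of Lemma~\ref{lemma:A:f}, namely $\d \widetilde{A_m(t)} + \widetilde{A_{m-1}(t)} = 0$ in $\overline{K}[t]/\ga$, is exactly $\d A_m(t) + A_{m-1}(t) \equiv 0 \pmod{(t-\alpha)^e}$, which is condition~(i) of the corollary. For condition~(ii), I need to observe that since $\bfH$ is defined by polynomials with coefficients in $K[t]$, the reduction ${}^{\sim}$ is compatible with iteration of $\bfH$: we have $\widetilde{\bfH}^{2m}(\widetilde{\bfP}) = \widetilde{\bfH^{2m}(\bfP)}$ in $\Aff^2(\overline{K}[t]/\ga)$. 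Hence the equality $\widetilde{\bfH}^{2m}(\widetilde{\bfP}) = \widetilde{\bfP}$ translates precisely to $\bfH^{2m}(\bfP) \equiv \bfP \pmod{(t-\alpha)^e}$, giving condition~(ii) of the corollary.

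There is no real obstacle; the only point worth stating explicitly is the compatibility of iteration of polynomial maps with reduction modulo an ideal, which is immediate from functoriality of base-change. The equivalence (i)~$\Leftrightarrow$~(ii) of the corollary then follows directly from the already-proved equivalence in Lemma~\ref{lemma:A:f} applied to $\ga = ((t-\alpha)^e)$.
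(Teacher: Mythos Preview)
Your proof is correct and follows exactly the paper's approach: the paper simply states that the corollary is obtained by applying Lemma~\ref{lemma:A:f} to the ideal $\ga = \left((t-\alpha)^e\right)$, and you have spelled out precisely this specialization together with the (immediate) compatibility of reduction with iteration.
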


\begin{Remark}
\label{rem:odd period for -1delta}
In the case where $\d = -1,$ the same arguments also lead to similar results for odd period $n = 2 m +1$ where one considers the polynomial $\d A_{m+1}(t) + A_{m-1}(t)$ in place of $\d A_m(t) + A_{m-1}(t) $ in Lemma~\ref{lemma:A:f}  and Corollary~\ref{cor:A:f}. 
\end{Remark}

\medskip
Let $\alpha\in \overline{K}$. 
For $u(t) \in \overline{K}[t]$, we let 
$\ord_{\a}(u(t))$ be the multiplicity of the polynomial $
(t-\a)$ as a factor of $u(t)$. By convention, if $u(t)$ is the zero polynomial, 
then we set $\ord_{\a}(u(t)) = +\infty$. 
More generally, if $\bfr = (a_{1}(t), \ldots, a_{n}(t))\in \overline{K}[t]^{n}$, 
then 
\[
\ord_{\a}(\bfr) = \min\{\ord_{\a}(a_{i}(t)), \ldots, \ord_{\a}(a_{n}(t))\}.
\] 
Thus, $\ord_{\alpha}(\bfH^{2m}(\bfP) - \bfP)$ is 
the largest non-negative integer $e$ such that Corollary~\ref{cor:A:f}~(ii) holds.
By definition, the condition that  $P_\a$ is periodic with respect to $H_{\alpha}$ with period $n$ is equivalent to 
$\ord_{\alpha} (\bfH^n(\bfP)-\bfP) \geq 1$.  Further, Corollary~\ref{cor:A:f} says that
\begin{equation}
\label{eqn: order for 2m}
\ord_{\alpha}(\bfH^{2m}(\bfP) - \bfP) = \ord_{\alpha}\left(\d \, A_m(t) + A_{m-1}(t)\right).
\end{equation}

Suppose that $P_\a$ is periodic with respect to $H_\alpha$ with period $n \ge 1.$
Then $H_\alpha^{nk}(P_\alpha) = P_\alpha$
for any positive integer $k$.  Further, one has 
$$
\ord_\alpha(\bfH^{nk}(\bfP) - \bfP ) \geq \ord_\alpha(\bfH^{n}(\bfP) - \bfP)\;\text{for any $k \geq 1.$}
$$
Indeed, we put $e = \ord_\alpha(\bfH^{n}(\bfP) - \bfP)$. Then, we have
$\bfH^{n}(\bfP)\equiv \bfP \pmod{(t-\alpha)^e}.$ Consequently, $\bfH^{nk}(\bfP)\equiv \bfP \pmod{(t-\alpha)^e}
$ for any positive integer $k$. This is equivalent to saying that $\ord_\alpha(\bfH^{nk}(\bfP) - \bfP ) \geq e$. 

The following result gives a control on the growth of $\ord_{\alpha}(\bfH^{n}(\bfP) - \bfP )$, 
when $n$ ranges over a certain subset of positive integers.

\begin{Proposition}
\label{prop:multiplicity}
Let $q$ be a positive integer. 
Then there exists an integer $D_{q}\ge 1$ such that, 
for any $\alpha \in \overline{K}$ with $\ord_{\alpha}(\bfH^q(\bfP) - \bfP) \geq 1$ and 
for any positive integer $k$ relatively prime to $D_{q}$, we have 
\begin{equation}
\label{eqn:multiplicity}
\ord_{\alpha}(\bfH^{qk}(\bfP) - \bfP) = \ord_{\alpha}(\bfH^q(\bfP) - \bfP).
 \end{equation}
\end{Proposition}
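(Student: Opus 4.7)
The strategy is to linearize $F := \bfH^q$ about each periodic specialization and track the leading term of $F^k(\bfP) - \bfP$ modulo $(t-\alpha)^{e+1}$. First, Assumption~\ref{assumption} combined with Proposition~\ref{prop:on:assumption}~(4) shows that $\bfP$ is not periodic with respect to $\bfH$, hence $F(\bfP) - \bfP \in K[t]^2$ is a nonzero polynomial vector, and the common zero locus
\[
\Scal_q \;:=\; \{\alpha \in \overline{K} \mid \ord_\alpha(F(\bfP) - \bfP) \geq 1\}
\]
is finite. For each $\alpha \in \Scal_q$, set $e_\alpha := \ord_\alpha(F(\bfP) - \bfP) \geq 1$, write $F(\bfP) - \bfP = (t-\alpha)^{e_\alpha}\, \mathbf{V}_\alpha(t)$, and put $\mathbf{v}_\alpha := \mathbf{V}_\alpha(\alpha) \in \overline{K}^2 \setminus \{\bfzero\}$. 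Let $A_\alpha \in \GL_2(\overline{K})$ be the Jacobian of the specialized map $F_\alpha = H_\alpha^q$ at its fixed point $P_\alpha$; since $\det(DH_\alpha) = -\delta$, one has $\det(A_\alpha) = (-\delta)^q \neq 0$.

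The core technical step is the congruence
\[
F^k(\bfP) - \bfP \;\equiv\; (t-\alpha)^{e_\alpha}\, B_k(A_\alpha)\, \mathbf{v}_\alpha \pmod{(t-\alpha)^{e_\alpha+1}} \qquad (k \geq 1),
\]
where $B_k(X) := 1 + X + X^2 + \cdots + X^{k-1}$. I would prove this by induction on $k$: if $F^k(\bfP) = \bfP + (t-\alpha)^{e_\alpha}\, \mathbf{W}$ with $\mathbf{W}(\alpha) = B_k(A_\alpha)\, \mathbf{v}_\alpha$, Taylor expand
\[
F\bigl(\bfP + (t-\alpha)^{e_\alpha} \mathbf{W}\bigr) \;=\; F(\bfP) + (t-\alpha)^{e_\alpha}\, (DF)_\bfP\, \mathbf{W} + O\bigl((t-\alpha)^{2e_\alpha}\bigr),
\]
and use $2e_\alpha \geq e_\alpha+1$ to kill the quadratic remainder; specializing the Jacobian at $\alpha$ gives the congruence for $k+1$. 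The congruence implies $\ord_\alpha(F^k(\bfP) - \bfP) = e_\alpha$ precisely when $B_k(A_\alpha)\, \mathbf{v}_\alpha \neq \bfzero$.

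Thus the proposition reduces to choosing $D_q$ so that, for every $k$ coprime to $D_q$, $B_k(A_\alpha)\, \mathbf{v}_\alpha \neq \bfzero$ holds uniformly in $\alpha \in \Scal_q$. Putting $A_\alpha$ in Jordan canonical form over $\overline{K}$ with eigenvalues $\lambda_1, \lambda_2 \in \overline{K}^\times$, the matrix $B_k(A_\alpha)$ becomes upper triangular with diagonal entries $S_k(\lambda_i) := 1 + \lambda_i + \cdots + \lambda_i^{k-1}$, so it is invertible whenever both $S_k(\lambda_i) \neq 0$. Hence $B_k(A_\alpha)\, \mathbf{v}_\alpha = \bfzero$ with $\mathbf{v}_\alpha \neq \bfzero$ forces $S_k(\lambda) = 0$ for some eigenvalue $\lambda$ of $A_\alpha$. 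Such vanishing occurs only if either $\lambda = 1$ and $\ch(K) \mid k$, or $\lambda$ is a primitive $d$-th root of unity with $d > 1$ and $d \mid k$. Let $N_\alpha$ denote the least common multiple of the orders of those eigenvalues of $A_\alpha$ that are roots of unity (set $N_\alpha := 1$ if none are), and set
\[
D_q \;:=\; \operatorname{lcm}\Bigl(\{N_\alpha \mid \alpha \in \Scal_q\} \cup \{\ch(K)\}\Bigr),
\]
omitting $\ch(K)$ when it is zero. For any $k$ coprime to $D_q$, no $S_k(\lambda) = 0$ can occur for any eigenvalue of any $A_\alpha$, yielding \eqref{eqn:multiplicity}.

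The main subtlety is the non-diagonalizable Jordan block case, where $B_k(A_\alpha)$ picks up an off-diagonal entry proportional to $S_k'(\lambda)$; a direct computation verifies that invertibility of $B_k(A_\alpha)$ is still governed by the diagonal entry $S_k(\lambda)$, so no additional condition on $k$ beyond those above is needed.
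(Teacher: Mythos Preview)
Your proof is correct and follows essentially the same approach as the paper. Both arguments linearize $\bfH^q$ via Taylor expansion to obtain $\bfH^{qk}(\bfP)-\bfP$ as $(\varepsilon,\zeta)\bigl(\sum_{i=0}^{k-1}\Psi_q^i\bigr)$ plus a remainder of order $\geq 2e_q$, and then show the geometric sum of Jacobian powers at $\alpha$ is invertible whenever $k$ avoids the orders of the root-of-unity eigenvalues (and the characteristic); the only presentational difference is that the paper carries the polynomial Jacobian $\Psi_q(t)$ throughout and invokes a DVR lemma ($v(\bfr A)=v(\bfr)$ for $A\in\GL_2(R_\alpha)$), whereas you reduce modulo $(t-\alpha)^{e_\alpha+1}$ from the outset and work with the constant specialization $A_\alpha$.
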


Before we start the proof of Proposition~\ref{prop:multiplicity}, 
we show two lemmas. Let $\alpha\in \overline{K}$. 
We set $e_{q} := \ord_{\alpha}(\bfH^q(\bfP) - \bfP)$, 
and we assume that $e_q \geq 1$.  
We write
$\bfH^q(\bfP) - \bfP = (\varepsilon(t),\zeta(t))$, 
so that $e_q = \min\left(\ord_{\alpha}(\varepsilon(t)), \ord_{\alpha}(\zeta(t))\right)$. 
Then 
\[
\bfH^{q}(\bfP) = (A_{q}(t), \;  A_{q-1}(t)) = (a(t) + \varepsilon(t), \; b(t) +
\zeta(t)). 
\]
Set the Jacobian matrix of  $\bfH^{q}$ at the point $\bfP$ to be
\begin{align}
\label{eqn:Jacobian}
\Psi_{q} (t) 
& := \bfJ_{\bfH^{q}}(\bfP)  
= 
\bfJ_{\bfH}(\bfP)\cdots \bfJ_{\bfH}(\bfH^{q-2}(\bfP))\bfJ_{\bfH}(\bfH^{q-1}(\bfP))
 \\
\notag
& = 
\begin{pmatrix}  f^\prime_t(a(t)) & 1 \\ \delta & 0 \end{pmatrix}
\cdots
\begin{pmatrix}  f^\prime_t(A_{q-2}(t)) & 1 \\ \delta & 0 \end{pmatrix}
\begin{pmatrix}  f^\prime_t(A_{q-1}(t)) & 1 \\ \delta & 0 \end{pmatrix}. 
\end{align}
Let $\mathfrak{b} \subseteq K[t]$ denote the ideal  
generated by $\varepsilon(t)^2, \varepsilon(t) \zeta(t), \zeta(t)^2$. 

\begin{Lemma}
\label{lem: expansion at P}
Let $k \geq 1$. We define $\bfr_{k} \in \left(K[t]\right)^{2}$ by 
\begin{equation}
\label{eqn: expansion at P}
\bfH^{qk}(\bfP) = \bfP +(\varepsilon(t), \; \zeta(t))  \left(\sum_{i=0}^{k-1}\Psi_{q}(t)^{i}\right) + \bfr_{k}. 
\end{equation}
Then $\bfr_{k} \in \left(\mathfrak{b}\right)^2 \subseteq K[t]^2$. 
In particular, $\ord_{\alpha}(\bfr_{k}) \ge 2 e_{q}. $
\end{Lemma}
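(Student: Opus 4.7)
The plan is to prove Lemma~6.7 by induction on $k$, using an iterated algebraic Taylor expansion of the polynomial map $\bfH^q \colon \Aff^2 \to \Aff^2$ at the point $\bfP$. The base case $k = 1$ is immediate: take $\bfr_1 = \mathbf{0}$, since $\Psi_q(t)^0 = I$ and $\bfH^q(\bfP) - \bfP = (\varepsilon(t), \zeta(t))$ by the very definition of $\varepsilon$ and $\zeta$.

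The key tool for the inductive step will be the exact Taylor identity
$$\bfH^q(\bfP + \mathbf{h}) \;=\; \bfH^q(\bfP) + \mathbf{h}\,\Psi_q(t) + R(\mathbf{h}),$$
valid for any row vector $\mathbf{h} = (h_1, h_2) \in K[t]^2$, where each of the two components of the remainder $R(\mathbf{h})$ is a polynomial in $(h_1, h_2)$ with coefficients in $K[t]$ in which every monomial has total degree at least $2$. The row-vector convention here is precisely the one encoded in the paper's formula~\eqref{eqn:Jacobian}, where $\Psi_q$ is assembled as the transpose of the usual column-vector Jacobian. I will apply this identity with $\mathbf{h} = \mathbf{v}_k := \bfH^{qk}(\bfP) - \bfP$, so that $\bfH^{q(k+1)}(\bfP) = \bfH^q(\bfP + \mathbf{v}_k)$. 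Substituting the inductive expression $\mathbf{v}_k = (\varepsilon,\zeta)\sum_{i=0}^{k-1}\Psi_q(t)^i + \bfr_k$ and combining with $\bfH^q(\bfP) = \bfP + (\varepsilon,\zeta) = \bfP + (\varepsilon,\zeta)\Psi_q(t)^0$ yields
$$\bfH^{q(k+1)}(\bfP) \;=\; \bfP + (\varepsilon, \zeta)\sum_{i=0}^{k}\Psi_q(t)^i + \bfr_{k+1},$$
where the geometric-series piece arises by the elementary telescoping
$(\varepsilon,\zeta)\sum_{i=0}^{k-1}\Psi_q^i\cdot\Psi_q = (\varepsilon,\zeta)\sum_{i=1}^{k}\Psi_q^i$, and $\bfr_{k+1} := \bfr_k\,\Psi_q(t) + R(\mathbf{v}_k)$.

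The only point requiring actual verification is that both components of $\bfr_{k+1}$ again lie in the ideal $\mathfrak{b}$. For $\bfr_k\,\Psi_q(t)$ this is automatic: each component is a $K[t]$-linear combination of the components of $\bfr_k$, which belong to $\mathfrak{b}$ by the inductive hypothesis. For $R(\mathbf{v}_k)$, I will first observe that both components of $\mathbf{v}_k$ lie in the ideal $(\varepsilon(t),\zeta(t)) \subseteq K[t]$: the part $(\varepsilon,\zeta)\sum \Psi_q(t)^i$ manifestly does, and $\bfr_k$ lies in $\mathfrak{b} = (\varepsilon,\zeta)^2 \subseteq (\varepsilon,\zeta)$ by induction. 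Because every monomial of $R$ has degree at least $2$ in $(h_1, h_2)$, specialising $\mathbf{h} = \mathbf{v}_k$ places each component of $R(\mathbf{v}_k)$ in $(\varepsilon,\zeta)^2 = \mathfrak{b}$, closing the induction. The final order estimate $\ord_\alpha(\bfr_k) \geq 2 e_q$ then follows at once, since $\ord_\alpha(\varepsilon), \ord_\alpha(\zeta) \geq e_q$ forces every element of $\mathfrak{b} = (\varepsilon,\zeta)^2$ to vanish at $\alpha$ to order at least $2 e_q$. I do not anticipate any genuine obstacle; the argument is bookkeeping with the Taylor remainder and the ideal $\mathfrak{b}$, and the subtlest step is simply matching the paper's row-vector convention for $\Psi_q$ so that right-multiplication by $\Psi_q$ correctly generates the geometric series $\sum_{i=0}^{k-1}\Psi_q^i$.
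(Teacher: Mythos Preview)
Your proposal is correct and follows essentially the same approach as the paper: induction on $k$ with base case $\bfr_1 = \mathbf{0}$, and the inductive step via the Taylor expansion of $\bfH^q$ at $\bfP$ applied to $\bfH^{q(k+1)}(\bfP) = \bfH^q(\bfH^{qk}(\bfP))$, yielding $\bfr_{k+1} = \bfr_k\,\Psi_q(t) + (\text{Taylor remainder})$. Your treatment is in fact slightly more explicit than the paper's, which simply asserts that the remainder lies in $\mathfrak{b}^2$ ``as a consequence of the Taylor expansion''; you spell out the key observation that both components of $\mathbf{v}_k$ lie in the ideal $(\varepsilon,\zeta)$, so that the degree-$\ge 2$ remainder lands in $(\varepsilon,\zeta)^2 = \mathfrak{b}$.
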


\Proof
We prove the lemma by induction on $k$. 
For $k=1$, we have 
\[
\bfH^{q}(\bfP) = \bfP + (\varepsilon(t), \; \zeta(t)), 
\]
so that $ \bfr_{1} = (0, 0)$ and $\ord_{\alpha}(\bfr_{1}) = + \infty \geq 2 e_q$. 

We assume the case for $k$, and we write 
\begin{align}
\label{eqn:Jacobian:induction}
\bfH^{q(k+1)}(\bfP) & = \bfH^{q}\left(\bfH^{q k}(\bfP)\right) \\
\notag
    & = \bfH^{q}\left(\bfP+  (\varepsilon(t), \zeta(t)) \left(\sum_{i=0}^{k-1}\Psi_{q}(t)^{i}\right)
   + \bfr_{k}\right)\\
\notag
    & = \bfH^{q}(\bfP) +   (\varepsilon(t), \zeta(t)) \left( \left(\sum_{i=0}^{k-1}\Psi_{q}(t)^{i}\right)
   + \bfr_{k}\right)  \Psi_{q}(t) +  \mathbf{s},  
\end{align}
where the last inequality is a consequence of the Taylor expansion 
and $\mathbf{s} \in \left(\mathfrak{b}\right)^2  \subseteq K[t]^2$. 

We continue the computation: 
\begin{align*}
    & \text{The right-hand side of \eqref{eqn:Jacobian:induction}} \\
    & \qquad = \bfP  + (\varepsilon(t), \zeta(t))  
    +  (\varepsilon(t), \zeta(t)) \left(\sum_{i=1}^{k}\Psi_{q}(t)^{i}\right)
   + \bfr_{k}\Psi_{q}(t)  +  \mathbf{s} \\
    & \qquad = \bfP  + (\varepsilon(t), \zeta(t))  \left(\sum_{i=0}^{k}\Psi_{q}(t)^{i}\right)  +
      \left(\bfr_{k}\Psi_{q}(t)+  \mathbf{s} \right). 
\end{align*}
Thus $\bfr_{k+1} =  \bfr_{k}\Psi_{q}(t) +  \mathbf{s}$.
Since $\bfr_{k} \in \left(\mathfrak{b}\right)^2$  by induction hypothesis and that 
$\Psi_{q}(t)$ is a matrix with entries in  $K[t]$, we conclude that 
$\bfr_{k+1} \in \left(\mathfrak{b}\right)^2$, and thus 
$\ord_{\alpha}(\bfr_{k+1}) \ge 2 e_{q}$. 
We complete the induction step, and obtain the lemma. 
\QED

\begin{Lemma}
\label{lem: equality of order}
Let $R$ be a discrete valuation ring equipped with valuation $v.$ Let $\bfr = (a_{1},\ldots, a_{n}) $ be a non-zero
element of $R^{n}.$ Set $v(\bfr) = \min \{v(a_{1}), \ldots, v(a_{n})\}.$ Then, for every invertible matrix $A \in
\GL(n,R)$, we have $v\left(\bfr A\right) = v(\bfr).$
\end{Lemma}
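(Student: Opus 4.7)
The plan is to prove the equality by two inequalities, using the fact that $A \in \GL(n, R)$ means not only that $A$ has entries in $R$ but also that $A^{-1}$ has entries in $R$ (equivalently, $\det(A)$ is a unit of $R$).

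First I would establish the general one-sided inequality: for any matrix $B = (b_{ij}) \in M_n(R)$ and any $\bfr = (a_1, \ldots, a_n) \in R^n$, each component of the row vector $\bfr B$ is of the form $\sum_{i=1}^n a_i b_{ij}$. Applying the ultrametric inequality for the valuation $v$ together with $v(b_{ij}) \ge 0$, one gets
\[
v\!\left(\sum_{i=1}^n a_i b_{ij}\right) \;\ge\; \min_{1\le i \le n}\bigl(v(a_i) + v(b_{ij})\bigr) \;\ge\; \min_{1\le i\le n} v(a_i) \;=\; v(\bfr).
\]
Taking the minimum over $j$ gives $v(\bfr B) \ge v(\bfr)$.

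Next I would apply this observation twice. With $B = A$, we get $v(\bfr A) \ge v(\bfr)$. Since $A \in \GL(n, R)$, its inverse $A^{-1}$ also has entries in $R$, so the same observation with the row vector $\bfr A$ and the matrix $A^{-1}$ yields
\[
v(\bfr) \;=\; v\bigl((\bfr A) A^{-1}\bigr) \;\ge\; v(\bfr A).
\]
Combining these two inequalities gives $v(\bfr A) = v(\bfr)$, as required.

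There is no real obstacle here: the lemma is a formal consequence of the ultrametric property of $v$ together with the fact that both $A$ and $A^{-1}$ have $R$-integral entries. The only point worth emphasizing is the invertibility hypothesis, without which one only obtains $v(\bfr A) \ge v(\bfr)$ and the equality can genuinely fail.
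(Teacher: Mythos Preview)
Your proof is correct and follows essentially the same approach as the paper: establish the one-sided inequality $v(\bfr A)\ge v(\bfr)$ from the fact that the entries of $A$ lie in $R$, then apply the same inequality to $(\bfr A)A^{-1}$ using that $A^{-1}\in M_n(R)$ as well. The paper's write-up is slightly terser, but the argument is identical.
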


\Proof
It is an elementary exercise in algebra. Indeed, 
Let $\bfr$ and $A\in \GL(n,R)$ be as given. We note that  coordinates of
\[
\mathbf{s} := \bfr A = (b_{1}, \ldots, b_{n})
\]
are $R$-linear combinations of the coordinates $a_{1}, \ldots, a_{n}$. Thus $v(b_{i}) \ge v(\bfr)$ and we have
$v(\mathbf{s}) \ge v(\bfr).$ The same argument applies to the vector $\bfr = \mathbf{s} A^{-1}$ and note that $A^{-1}$ also
has entries in $R.$ We can therefore conclude that $v(\bfr)\ge v(\mathbf{s})$ and $v\left(\bfr A\right) = v(\mathbf{s}) = v(\bfr)
$. 
\QED

We are ready to prove Proposition~\ref{prop:multiplicity}.

\begin{proof}[Proof of Proposition~\ref{prop:multiplicity}]
For a positive integer $k$, we set
$
\Ecal_{q, k}(t) := \sum_{i=0}^{k-1}\,\Psi_{q}(t)^{i}
$, 
where $\Psi_{q}(t)$ is defined in \eqref{eqn:Jacobian}.  Then 
By Lemma~\ref{lem: expansion at P}, we have 
\[
\bfH^{qk}(\bfP) = \bfP +  (\varepsilon(t), \; \zeta(t)) \Ecal_{q, k}(t)+ \bfr_{k}.
\]

Let $\alpha\in \overline{K}$ such that $\ord_{\alpha}\left( \bfH^{q}(\bfP) - \bfP \right) \geq 1$. 
Let $\xi_{1}, \xi_{2}$ be the eigenvalues of $\Psi_{q}(\alpha).$ 
If the characteristic of $K$ is zero,   
we define the integer $D_{q, \alpha}$ to be the least
common multiple of the multiplicative orders of  $\xi_{1}$ and $\xi_{2}$, where by convention, 
we define the multiplicative order of $\xi_{i}\, (i=1, 2)$ to be $1$ if it is not a root of unity. 
If the characteristic of $K$ is positive, then 
we define the integer $D_{q, \alpha}$ to be the least
common multiple of the characteristic of $K$ and 
the multiplicative orders of  $\xi_{1}$ and $\xi_{2}$. 
We claim that, for any positive
integer $k$ relatively prime to $D_{q, \alpha}$, the equality
\[
\ord_{\alpha}\left( \bfH^{qk}(\bfP) - \bfP \right) = \ord_{\alpha}\left( \bfH^{q}(\bfP) - \bfP \right)
\] 
holds.  

Since $\Psi_{q}(\alpha)$ is similar to the matrix $\begin{pmatrix} \xi_1 & * \\ 0 & \xi_2\end{pmatrix}$, 
$\Ecal_{q, k}(t)$ is similar to $\begin{pmatrix}1 + \cdots +\xi_1^{k-1} & * \\ 0 & 1 + \cdots + \xi_2^{k-1} \end{pmatrix}$. As $k$ is relatively prime to $D_{q, \alpha}$, we have 
$1 + \cdots + \xi_i^{k-1} \neq 0$ for $i = 1, 2$, and thus $\det \Ecal_{q, k}(\alpha) \ne 0$. 
It follows that $\Ecal_{q, k}(t)\in \GL(2, R_{\alpha})$ where $R_{\alpha} = \overline{K}[t]_{(t-\alpha)}$ is 
the localization of $\overline{K}[t]$ at the prime ideal  $(t-\alpha).$  

By Lemma~\ref{lem: equality of order}, we have $
\ord_{\alpha} \left(\bfr\Ecal_{q, k}(t)\right) = \ord_{\alpha}(\bfr)$ for any non-zero row vector $\bfr\in
\overline{K}[t]^{2}.$ Thus 
\[
\ord_{\alpha} \left( (\varepsilon(t), \; \zeta(t)) \Ecal_{q, k}(t)  \right) = \ord_{\alpha}
(\varepsilon(t), \; \zeta(t))  = e_{q}.
\]
It follows from Lemma~\ref{lem: expansion at P} that  $\ord_{\alpha} \bfr_{k} \ge 2 e_{q}.$ Thus
\[
\ord_{\alpha}\left( \bfH^{qk}(\bfP) - \bfP \right)  = \ord_{\alpha} \left((\varepsilon(t), \; \zeta(t)) \Ecal_{q, k}(t) + \bfr_{k} \right)  = e_{q} = \ord_{\alpha}\left( \bfH^{q}(\bfP) - \bfP
\right). 
\]
Thus we obtain the claim. 

If $\bfH^{q}(\bfP) = \bfP$, then the assertion clearly holds because 
$\ord_{\alpha}(\bfH^{qk}(\bfP) - \bfP) = \ord_{\alpha}(\bfH^q(\bfP) - \bfP) = +\infty$. 
If $\bfH^{q}(\bfP) \neq \bfP$, then 
there are only finitely many $\alpha \in \overline{K}$ with 
$\ord_{\alpha}(\bfH^q(\bfP) - \bfP) \geq 1$. We consider 
the product of $ D_{q, \alpha}$ for all such $\alpha$, and put 
$D_q := \prod_{\alpha} D_{q, \alpha}$. 
Then $D_q$ gives the desired property that~\eqref{eqn:multiplicity}  holds for all positive integers $k$ which are relatively prime to~$D_{q}.$ 
\end{proof}

Finally, we prove Theorem~\ref{thm:infiniteness:Sigma:P}. 
For a positive integer $n$, 
we define the subset of $\Sigma(\bfP)$ in \eqref{eqn:Sigma:bfP}
by 
$$
\Sigma_n(\bfP) := \{ t \in \Aff^1(\overline{K}) \mid H_t^n(P_t) = P_t\}. 
$$

\begin{proof}[Proof of Theorem~\ref{thm:infiniteness:Sigma:P}]
Observe that $\Sigma\left(\bfH^{m}(\bfP)\right) = \Sigma\left(\bfP\right)$ for any integer $m$. 
Since $\iota \circ \bfH \circ \iota = \bfH^{-1}$ and $\iota(\bfP) = \bfP$, it follows from 
Assumption~\ref{assumption} hat both $(\deg A_n(t))_{n \geq 0}$ and 
$(\deg B_n(t))_{n \geq 0}$ are unbounded. 
We replace $\bfP$ with $\bfH^{m}(\bfP)$ for sufficiently large $m$ 
if necessary, and we may assume that the degree $\deg a(t)
> 0$ and that $\deg A_{n}(t) = d^{n} \deg a(t)$ for all $n\ge 0$ 
(see Proposition~\ref{prop:on:assumption}). 

Let $D_{2}$ be the integer as given in Proposition~\ref{prop:multiplicity} for $q =  2$ and let $p$ be an odd 
prime number such that $p\nmid D_{2}$.  We claim that there always exists a parameter 
$\beta \in \overline{K}$ 
such that $P_\b$ has period $p$ or $2p$ under $H_\beta.$  Indeed, by Lemma~\ref{lemma:A:f}  
\[
  \Sigma_{2p}(\bfP) = \{\a \in \Aff^1(\overline{K}) \mid \d \, A_p(\a) + A_{p-1}(\a) = 0\}.
\]
By definition, if $\alpha \in \Sigma_{2p}(\bfP)$ then $P_\a$ has period of $1, 2, p$ or $2p$ for $H_\alpha.$

Let $\alpha \in \overline{K}$ be a root of $\d\, A_{1}(t) + A_{0}(t) = 0$. 
It follows from Corollary~\ref{cor:A:f}  that $\ord_\alpha(\bfH^{2}(\bfP)-\bfP) \geq 1$. 
As a result, we have 
\begin{align*}
\ord_{\a} \left(\d\,A_{p}(t) + A_{p-1}(t)\right)  
& = \ord_\alpha\left(\bfH^{2p}(\bfP)-\bfP\right) & \text{by Corollary~\ref{cor:A:f}} & \\
& =  \ord_\alpha\left(\bfH^{2}(\bfP)-\bfP\right)  &  \text{by Proposition~\ref{prop:multiplicity}}  & \\
& = \ord_{\a} \left(\d\, A_{1}(t) + A_{0}(t)\right) &  \text{by Corollary~\ref{cor:A:f}} & .
\end{align*}
Thus there exists $W_{2p}(t) \in K[t]$ such that 
\[
\d \, A_{p}(t) + A_{p-1}(t) = \left(\d A_{1}(t) + A_{0}(t)\right) W_{2p}(t), 
\]
and $\d A_{1}(t) + A_{0}(t)$ and $W_{2p}(t)$ are relatively prime. 
Since 
\[
\deg \left(A_{p}(t) + A_{p-1}(t)\right) = 
\deg(A_p(t)) =  d^{p}\ell > d \ell =  \deg A_1(t) = \deg \left(A_{1}(t) + A_{0}(t)\right), 
\]
$W_{2p}(t)$ is a non-constant polynomial. 

We take a root  $\b\in \overline{K}$ of $W_{2p}(t)$. Then $\b \in \Sigma_{2p}(\bfP)$. 
Since $\d A_{1}(\beta) + A_{0}(\beta) \neq 0$, it follows that $H_{\b}^2(P_{\b}) \neq P_{\b}$. 
In particular, we also have $H_{\b}(P_{\b}) \neq P_{\b}$. Thus 
$P_\b$ has period $p$ or $2p$. 

We denote the above $\beta$ by $\beta_p$.
Then $\beta_p \in \Sigma\left(\bfP\right)$. Further, if $p$ and $p'$ are distinct odd prime numbers relatively prime
to $D_{2},$ then the period of $P$ under $H_{\beta_p}$ and that under $H_{\beta_{p'}}$ are different,  so
that $\beta_p \neq \beta_{p'}$. We conclude that $\Sigma\left(\bfP\right)$ is an infinite set as desired.
\end{proof}

\begin{Remark}
\label{rem:odd periodic parameters}
For $\delta = 1$ and $\delta = -1$, 
we actually show that the set $\Sigma_{{\rm even}}(\bfP) := \bigcup_{m\ge 1} \Sigma_{2m}(\bfP)$ is an infinite set. In the case where $\d = -1$, as indicated in Remark~\ref{rem:odd period for -1delta},  by applying similar arguments one can prove that $\Sigma_{{\rm odd}}(\bfP) := \bigcup_{m\ge 1} \Sigma_{2m+1}(\bfP)$ is also an infinite set. 
\end{Remark}

\begin{Remark}[Question about primitive prime divisors in a family of H\'enon maps]
\label{rem:primitive divisors}
Related to Theorem~\ref{thm:infiniteness:Sigma:P} is the question of {\em primitive prime divisors} in arithmetic 
dynamics which has been considered in the case of one-variable rational maps in recent years (see for 
instance~\cite{fabgran, gnt, ing-sil}). For families of H\'enon maps $\bfH$ and initial points
$\bfP$, we say that  $t-\a\in \overline{K}[t]$ is a  primitive prime divisor for 
$\bfH^{n}(\bfP) - \bfP$ if $H_{\a}^{n} (P_{\a}) = P_{\a}$ but $H_{\a}^{m}(P_{\a}) \ne P_{\a}$ 
for any $1\le m \le n-1.$ It would be interesting to investigate how often the sequence $\bfH^{n}(\bfP) - \bfP$ 
has a primitive prime divisor as $n$ runs through all positive integers. 
Let $\bfH$ and $\bfP$ be  given as in  Theorem~\ref{thm:infiniteness:Sigma:P}. 
We would like to ask the following question: Is it true that $\bfH^{2m}(\bfP) - \bfP$ has a primitive prime divisor for all but finitely many positive integer $m$? In the case of $\d = -1$ one can further ask  whether or not $\bfH^{n}(\bfP) - \bfP$ has a primitive prime divisor for all but finitely many positive integer $n$. An affirmative answer to these question strengthens Theorem~\ref{thm:infiniteness:Sigma:P}.
\end{Remark}
\setcounter{equation}{0}
\section{Unlikely intersection for one-parameter families of H\'enon maps}
\label{sec:unlikely intersection}

In this section, we prove Theorem~\ref{thm:intro:unlikely:intersection}, 
which addresses unlikely intersections for a one-parameter family 
H\'enon maps. As we discuss in Introduction, a key element for the proof is the equidistribution theorem for points of small height associated to a semipositive adelically metrized line bundle. 

Here we recall the following result as an application of Yuan's equidistribution theorem \cite{Yuan} which holds for varieties of any dimension. It is implicit in Section~3 of the arXiv version of \cite{YZ}, and is stated in~\cite{ght2}. 
We will apply it for $X = \PP^1$. 
We note that, as our parameter space is $\PP^1$, one could also apply the equidistribution theorem for curves as proven in~\cite{autissier, Baker-Rumely06, CL, fr04, fr06, Th}.

\begin{Proposition}[\protect{\cite[Corollary~4.3]{ght2}}]
\label{prop:to:use}
Let $K$ be a number field. 
Let $X$ be an integral projective variety over $K$, and let 
$L$ be an ample line bundle on $X$. 
Let $\Vert\cdot\Vert_{1} := \{\Vert\cdot\Vert_{1, v}\}_{v \in M_K}$ and 
$\Vert\cdot\Vert_{2} := \{\Vert\cdot\Vert_{2, v}\}_{v \in M_K})$ 
be adelic metrics on $L$ such that $\Lcalbar_1 := (L, \Vert\cdot\Vert_{1})$ and
$\Lcalbar_1 := (L, \Vert\cdot\Vert_{2})$ are semipositive adelically metrized line bundles. 
Assume that there exists a nonzero rational section $s$ and a point $P\in X(\overline{K})$ such that, for all but finitely many places $v \in M_K$,  $\|s(P) \|_{1, v}=\|s(P)\|_{2, v}$.   Let $\{x_m\}_{m \geq 1}$ be an infinite
sequence of algebraic points in $X ({\overline{K}})$ that are Zariski dense in $X$. Suppose that
  \begin{equation}
  \label{eqn:small}
 \lim_{m\to\infty} h_{{\Lcalbar}_1} (x_m) = \lim_{m\to\infty} h_{{\Lcalbar}_2}(x_m)
  = h_{{\Lcalbar}_1} (X) = h_{{\Lcalbar}_2}(X) = 0.
\end{equation}
Then $h_{{\Lcalbar}_1} = h_{{\Lcalbar}_2}$ on $X(\overline{K})$.
\end{Proposition}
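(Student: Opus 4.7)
The plan is to deduce this statement from Yuan's arithmetic equidistribution theorem \cite{Yuan} together with a uniqueness principle for the Monge--Amp\`ere equation applied to continuous semipositive metrics. First I would apply Yuan's theorem to the sequence $\{x_m\}$ with respect to each of the semipositive adelically metrized line bundles $\Lcalbar_1$ and $\Lcalbar_2$. The hypotheses, namely Zariski density of $\{x_m\}$ together with the small-height condition $\lim_m h_{\Lcalbar_i}(x_m) = h_{\Lcalbar_i}(X) = 0$, guarantee that at every place $v \in M_K$ the probability measures supported on the Galois orbits of $x_m$ converge weakly to $\mu_{i,v} := c_1(\Lcalbar_i)_v^{\dim X} / \deg_L(X)$ on the Berkovich analytic space $X_v^{\mathrm{an}}$. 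Since the Galois orbit measures depend only on the $x_m$ and not on the choice of metric, the two weak limits coincide, giving $\mu_{1,v} = \mu_{2,v}$ at every place~$v$.

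Next I would compare the two metrics place by place. The function $\varphi_v := \log\bigl(\|\cdot\|_{2,v}/\|\cdot\|_{1,v}\bigr)$ is well-defined and continuous on $X_v^{\mathrm{an}}$ because both metrics live on the same underlying line bundle $L$. The equality $\mu_{1,v} = \mu_{2,v}$ identifies the (normalized) Monge--Amp\`ere measures of the two continuous semipositive metrics, so the Calabi--Yau-type uniqueness principle for such metrics (via Chambert-Loir and Boucksom--Favre--Jonsson in the non-archimedean setting, and the classical pluripotential theory in the archimedean setting) forces $\varphi_v$ to be constant on $X_v^{\mathrm{an}}$, say $\varphi_v \equiv c_v \in \RR$.

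The compatibility hypothesis $\|s(P)\|_{1,v} = \|s(P)\|_{2,v}$ at cofinitely many $v$ immediately yields $c_v = 0$ for all but finitely many places, so the sum $\sum_v n_v c_v$ is well-defined. For any $x \in X(\overline{K})$ whose Galois conjugates avoid $|\mathrm{div}(s)|$, the place-wise relation $\|\cdot\|_{1,v} = e^{-c_v}\|\cdot\|_{2,v}$ applied to $s$ produces the clean identity
\[
  h_{\Lcalbar_1}(x) - h_{\Lcalbar_2}(x) \,=\, \sum_{v \in M_K} n_v \, c_v,
\]
so the difference of heights is a constant $C$ independent of $x$. Evaluating along the sequence $\{x_m\}$ and invoking $h_{\Lcalbar_i}(x_m) \to 0$ forces $C = 0$, which gives $h_{\Lcalbar_1} = h_{\Lcalbar_2}$ on $X(\overline{K})$.

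The main obstacle is the uniqueness step for continuous semipositive metrics with a prescribed Monge--Amp\`ere measure, as this rests on non-archimedean pluripotential theory. In the application of this paper one takes $X = \PP^1$, where the situation simplifies considerably: at each place the statement reduces to the fact that a continuous subharmonic function on the Berkovich projective line with vanishing Laplacian is locally constant (hence constant by connectedness), and in this curve setting one can alternatively appeal to the equidistribution theorems of \cite{autissier, Baker-Rumely06, CL, fr04, fr06, Th} cited just above.
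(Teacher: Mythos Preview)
The paper does not supply its own proof of this proposition: it is quoted as \cite[Corollary~4.3]{ght2}, with the remark that the argument is implicit in the arXiv version of \cite{YZ} as an application of Yuan's equidistribution theorem. Your sketch is correct and reproduces exactly the standard argument behind that citation: equidistribution forces $c_1(\Lcalbar_1)_v^{\dim X} = c_1(\Lcalbar_2)_v^{\dim X}$ at every place, the Calabi--Yau uniqueness theorem for continuous semipositive metrics (Yuan--Zhang in the non-archimedean case) makes $\log(\Vert\cdot\Vert_{2,v}/\Vert\cdot\Vert_{1,v})$ a constant $c_v$, and the compatibility hypothesis plus the small-height sequence pin down $\sum_v n_v c_v = 0$. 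The paper itself later invokes precisely this intermediate step (the existence of constants $c_v$) in the proof of Theorem~\ref{thm:unlikely intersection thm}, confirming that your reconstruction matches the intended argument.
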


For the convenience of the reader, we restate Theorem~\ref{thm:intro:unlikely:intersection} as follows. 

\begin{Theorem}
\label{thm:unlikely intersection thm}
Let $K$ be a number field. 
Let $\bfH = (H_t)_{t \in \overline{K}}$ be the one-parameter family 
of H\'enon maps as in \S~\ref{sec:family of Henon}. 
Let  $\bfP, \bfQ \in \Aff^2(K[t])$ be initial points 
satisfying Assumption~\ref{assumption} such that 
$\Sigma(\bfP)$ and $\Sigma(\bfQ)$ are infinite. 
Then the followings are equivalent: 
\begin{enumerate}
\item[(i)]
$\Sigma(\bfP)\cap \Sigma(\bfQ)$ is infinite; 
\item[(ii)]
$\Sigma(\bfP) = \Sigma(\bfQ)$; 
\item[(iii)]
$G_{\bfP, v} = G_{\bfQ, v}$ on the parameter space $\PP^1(\KK_v)$ for all $v \in M_K$. 
\item[(iv)]
$h_{\bfP} = h_{\bfQ}$ on the parameter space $\PP^1(\overline{K})$. 
\end{enumerate}
\end{Theorem}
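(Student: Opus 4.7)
The plan is to close an outer cycle of routine implications and isolate the real content in a single equidistribution step. Among the easy directions, $(\mathrm{ii}) \Rightarrow (\mathrm{i})$ is immediate from the standing assumption that $\Sigma(\bfP)$ is infinite; $(\mathrm{iv}) \Rightarrow (\mathrm{ii})$ is instant from Proposition~\ref{prop:periodic parameter}, which identifies $\Sigma(\bfP)$ and $\Sigma(\bfQ)$ with the zero loci of $h_{\bfP}$ and $h_{\bfQ}$; and $(\mathrm{iii}) \Rightarrow (\mathrm{iv})$ follows by summing place-by-place via the decomposition formula of Proposition~\ref{prop:decomposition}. What remains is the substantive implication $(\mathrm{i}) \Rightarrow (\mathrm{iii})$.

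For $(\mathrm{i}) \Rightarrow (\mathrm{iii})$, I would first pass through (iv) via Yuan's theorem in the form of Proposition~\ref{prop:to:use}, applied to the two semipositive adelically metrized line bundles $\Lcalbar_{\bfP}$ and $\Lcalbar_{\bfQ}$ on $\PP^1$ furnished by Theorem~\ref{thm:main2:revisited}. Any countably infinite subset of $\Sigma(\bfP) \cap \Sigma(\bfQ)$ is automatically Zariski dense in the curve $\PP^1$, and by Proposition~\ref{prop:periodic parameter} both $h_{\Lcalbar_{\bfP}}$ and $h_{\Lcalbar_{\bfQ}}$ vanish on it. The triviality of the self-intersection numbers $h_{\Lcalbar_{\bfP}}(\PP^1) = h_{\Lcalbar_{\bfQ}}(\PP^1) = 0$ follows from the dynamical nature of the construction, in which both metrics arise as uniform limits of pullbacks under iteration, and the hypothesis on $\|s(P)\|_v$ is verified by the proof of Proposition~\ref{prop:line bundle LP}, which shows that $\|\cdot\|_{\bfP,v}$ and $\|\cdot\|_{\bfQ,v}$ coincide with the standard metric at all but finitely many $v$, hence with each other. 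Proposition~\ref{prop:to:use} then delivers $h_{\bfP} = h_{\bfQ}$, i.e.\ (iv).

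To upgrade (iv) to the place-wise equality (iii), the plan is to invoke the curve-specific refinement of the arithmetic equidistribution theorem (Baker--Rumely, Chambert-Loir, Favre--Rivera-Letelier, Thuillier) applied to the same Zariski-dense small-height sequence: the Galois orbits equidistribute to the $v$-adic curvature $c_1(\Lcalbar_{\bullet})_v$ on the Berkovich analytic projective line $\PP^{1,\mathrm{an}}_v$, so equality of the two limits forces $c_1(\Lcalbar_{\bfP})_v = c_1(\Lcalbar_{\bfQ})_v$ at each $v$. Hence at every place the two semipositive continuous metrics on $\Ocal_{\PP^1}(1)$ can differ only by a global additive constant; the explicit description $-\log\|\eta\|_{\bfP,v}(t) = -\log|\eta(t)|_v + G_{\bfP,v}(t)/\widetilde{h}_{\bfH}(\bfP)$ extracted from the proof of Theorem~\ref{thm:main2:revisited}, combined with the asymptotic normalization at $t=\infty$ furnished by Corollary~\ref{cor:thm:main:Mn}, then pins down this constant and yields the desired equality of Green functions on $\PP^1(\KK_v)$ for every $v$. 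The main obstacle will be precisely this upgrade: the bare Proposition~\ref{prop:to:use} gives only the global equality (iv), so one must check carefully that $\Lcalbar_{\bfP}$ and $\Lcalbar_{\bfQ}$ fit into the hypotheses of the Berkovich-analytic equidistribution statements at each place and that the resulting constants can be matched through the asymptotic behavior at $\infty$.
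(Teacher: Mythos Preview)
Your overall logical scheme matches the paper's, and the easy implications $(\mathrm{ii}) \Rightarrow (\mathrm{i})$, $(\mathrm{iv}) \Rightarrow (\mathrm{ii})$, $(\mathrm{iii}) \Rightarrow (\mathrm{iv})$ are handled exactly as the paper handles them. There are two places where your argument diverges from the paper and where you should be more careful.

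First, in the step $(\mathrm{i}) \Rightarrow (\mathrm{iv})$, your justification that $h_{\Lcalbar_{\bfP}}(\PP^1) = h_{\Lcalbar_{\bfQ}}(\PP^1) = 0$ ``follows from the dynamical nature of the construction'' is too vague. Unlike the usual setup for canonical heights of endomorphisms, here there is no self-map of $\PP^1$ pulling $\Lcalbar_{\bfP}$ back to a power of itself, so there is no direct functoriality argument. The paper instead invokes Zhang's fundamental inequality: since $h_{\Lcalbar_{\bfP}} \ge 0$ and vanishes on the infinite set $\Sigma(\bfP) \cap \Sigma(\bfQ)$, both the essential minimum and the absolute minimum are $0$, forcing $h_{\Lcalbar_{\bfP}}(\PP^1) = 0$. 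This is a quick fix, but you should make it explicit.

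Second, and more seriously, your method for pinning down the place-wise constant $c_v$ in the upgrade to (iii) does not work as stated. You are right that equidistribution on $\PP^{1,\mathrm{an}}_v$ forces the curvature measures to agree, so the two metrics differ by a constant and hence $G_{\bfP,v}/\widetilde{h}_{\bfH}(\bfP) = G_{\bfQ,v}/\widetilde{h}_{\bfH}(\bfQ) + c_v$. But Corollary~\ref{cor:thm:main:Mn} only says that $G_{\bfP,v}(t) - \widetilde{h}_{\bfH}(\bfP)\log|t|_v$ tends to \emph{some} limit as $|t|_v \to \infty$; by Remark~\ref{rmk:cor:thm:main:Mn} that limit is expressed in terms of the leading coefficients of $A_N, B_N$ attached to $\bfP$, and there is no reason it should coincide with the corresponding limit for $\bfQ$. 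So the asymptotic at $\infty$ does not by itself force $c_v = 0$. The paper's device is much simpler: pick any $t_0 \in \Sigma(\bfP) \cap \Sigma(\bfQ)$. Since $P_{t_0}$ and $Q_{t_0}$ are periodic, their $H_{t_0}$-orbits are bounded at every place, so $t_0 \in K_{\bfP,v} \cap K_{\bfQ,v}$ and Theorem~\ref{thm:K:P:v:characterization} gives $G_{\bfP,v}(t_0) = G_{\bfQ,v}(t_0) = 0$, whence $c_v = 0$ for every $v$. This is the step you should substitute for your asymptotic argument.
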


\Proof
We first show ``(i) $\Longrightarrow$ (iv).'' 
Since $\Sigma(\bfP)\cap \Sigma(\bfQ)$ is infinite, we can take a sequence 
$\{t_m\}_{m \geq 1}$ of distinct points in $\Sigma(\bfP)\cap \Sigma(\bfQ)$. 
By the definition of $\Sigma(\bfP)$  
we have $\widetilde{h}_{H_{t_m}}(P_{t_m}) = 0$ 
and $\widetilde{h}_{H_{t_m}}(Q_{t_m}) = 0$. 
It follows from Theorem~\ref{thm:main2:revisited} (see also~\eqref{eqn:thm:main2:revisited:1}) that 
\begin{equation}
h_{{\Lcalbar_{\bfP}}} (t_m) = h_{{\Lcalbar}_{\bfQ}} (t_m) = 0. 
\end{equation}

Note that the underlying line bundle of the semipositive adelic metrics of ${\Lcalbar}_{\bfP}$ is
$\OO_{\PP^1}(1)$, and from the proof of Proposition~\ref{prop:line bundle LP}, 
it is the uniform limit of $\left(\mathscr{X}_n, (\mathscr{L}_n, \{\Vert\cdot\Vert^{\prime}_{n, v}\}\right)$, 
where $\mathscr{L}_n$ is relatively ample.   Zhang's fundamental inequality \cite[Theorem~(1.10)]{zhang} 
gives 
\[
\inf_{t \in \PP^1(\overline{K})} h_{{\Lcalbar}_{\bfP}}(t) 
\leq h_{{\Lcalbar}_{\bfP}} (\PP^1)
\leq \sup_{\dim Z = 0} \inf_{t \in (\PP^1\setminus Z)(\overline{K})} 
h_{{\Lcalbar}_{\bfP}}(t). 
\]
The existence of the infinite set $\{t_m\}_{m \geq 1}$ with $h_{{\Lcalbar}_1} (t_m) = 0$ 
implies that both the left-hand and right-hand sides are equal to $0$, and we obtain 
$h_{{\Lcalbar}_{\bfP}} (\PP^1) = 0$. Similarly, we have $h_{{\Lcalbar}_{\bfQ}} (\PP^1) = 0$. 
By Proposition~\ref{prop:to:use} and Theorem~\ref{thm:main2:revisited}, 
we have $h_{{\Lcalbar}_{\bfP}} = h_{{\Lcalbar}_{\bfQ}}$, so that $h_{\bfP} = h_{\bfQ}$. 

The implication ``(iv) $\Longrightarrow$ (ii)'' follows from Proposition~\ref{prop:periodic parameter}, 
and the implication ``(ii) $\Longrightarrow$ (i)'' is obvious. The implication ``(iii) $\Longrightarrow$ (iv)'' 
follows from Proposition~\ref{prop:decomposition}. 

Lastly we show  ``(i) $\Longrightarrow$ (iii).'' 
Let $v \in M_K$. The proof of \cite[Corollary~4.3]{ght2} shows that there exists a constant $c_v \in \RR$ such that 
$G_{\bfP, v} = G_{\bfQ, v} + c_v$. Take any $t \in \Sigma(\bfP) \cap \Sigma(\bfQ)$. 
Since $t \in  \Sigma(\bfP) \subseteq K_{\bfP, v}$, we have 
$G_{\bfP, v}(t) = 0$ by Theorem~\ref{thm:K:P:v:characterization}. 
Similarly, we have $G_{\bfQ, v}(t) = 0$. 
Thus $c_v = 0$, and we obtain (iii). 
\QED

Note that in the case of a family of one-variable polynomial dynamics over $K[t]$, 
using the B\"ottcher coordinate, it is proved that $\bfP$ and $\bfQ$ satisfy some orbital relations. 
For a family of H\'enon maps, the techniques using the B\"ottcher coordinates seem not easily extended. 
In the following, we would like to  point out that reversible H\'enon maps   play a role here. 
This is the easy direction, and we ask in Question~\ref{q:intro:1} in Introduction if the converse 
also holds. 

\begin{Proposition}
\label{prop:converse:q:intro:1}
\begin{parts}
\Part{(1)}
Let $\Omega$ be be an algebraically closed field that is complete with 
respect to an absolute value. 
Let $H\colon \Aff^2 \to \Aff^2$ be a H\'enon map over $\Omega$. We assume that 
there exists an invertible affine map $\sigma\colon \Aff^2 \to \Aff^2$ over $\Omega$ such that 
$\sigma^{-1} \circ H^m \circ \sigma = H^{m}$  or 
$\sigma^{-1} \circ H^m \circ \sigma = H^{-m}$ for some $m \geq 1$. 
Let $G$ be the Green function defined in \eqref{eqn:def:G:intro}. 
Then for any $P \in \Aff^2(\Omega)$, 
we have $G(\sigma(P)) = G(P)$. 
\Part{(2)}
Let $K, \bfH, \bfP$ be as in Theorem~\ref{thm:unlikely intersection thm} such that 
$\Sigma(\bfP)$ is infinite. Assume that 
there exists an invertible affine map ${\boldsymbol \sigma}\colon \Aff^2 \to \Aff^2$ over $\overline{K}[t]$ 
with ${\boldsymbol \sigma}^{-1} \circ \bfH^m \circ {\boldsymbol \sigma} = \bfH^{m}$ or 
${\boldsymbol \sigma}^{-1} \circ \bfH^m \circ {\boldsymbol \sigma} = \bfH^{-m}$ or some $m \geq 1$. 
Let $n \in \ZZ$. Then $h_{\bfH^n( {\boldsymbol \sigma}(\bfP))} = h_{\bfP}$. 
\end{parts}
\end{Proposition}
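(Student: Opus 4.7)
The plan is to exploit the transformation rules $G^{+}(H(P)) = d \cdot G^{+}(P)$ and $G^{-}(H(P)) = d^{-1} \cdot G^{-}(P)$ (which follow at once from~\eqref{eqn:Green:function}), together with the bounded multiplicative distortion of an invertible affine map: $\log^{+}\|\sigma(Q)\| \leq \log^{+}\|Q\| + c_{\sigma}$ for a constant $c_{\sigma}$ depending only on the coefficients of $\sigma$, and an analogous bound for $\sigma^{-1}$. Part~(1) then falls out of a direct calculation combining these ingredients, and part~(2) follows by reducing, via part~(1), to a situation where Theorem~\ref{thm:unlikely intersection thm} applies.

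For part~(1), I would iterate the hypothesis $\sigma^{-1} \circ H^{m} \circ \sigma = H^{\pm m}$ to obtain $H^{mk} \circ \sigma = \sigma \circ H^{\pm mk}$ for every $k \geq 1$. Applying this to a point $P$ and using the distortion bound gives
\[
  \frac{1}{d^{mk}}\log^{+}\|H^{mk}(\sigma(P))\|
  = \frac{1}{d^{mk}}\log^{+}\|\sigma(H^{\pm mk}(P))\|
  \leq \frac{1}{d^{mk}}\bigl(\log^{+}\|H^{\pm mk}(P)\| + c_{\sigma}\bigr);
\]
passing to the limit $k \to \infty$ yields $G^{+}(\sigma(P)) \leq G^{\pm}(P)$, with the upper sign in the commuting case and the lower in the reversing case. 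The parallel argument starting from $\log^{+}\|H^{-mk}(\sigma(P))\|$ gives $G^{-}(\sigma(P)) \leq G^{\mp}(P)$. Applying these inequalities with $\sigma^{-1}$ in place of $\sigma$ (noting that $\sigma^{-1}$ satisfies the same commuting or reversing relation with $H^{m}$, as is immediate from inverting $\sigma^{-1} H^{m} \sigma = H^{\pm m}$) and $\sigma(P)$ in place of $P$ yields the reverse inequalities $G^{\pm}(P) \leq G^{+}(\sigma(P))$ and $G^{\mp}(P) \leq G^{-}(\sigma(P))$. Hence in the commuting case $G^{\pm}(\sigma(P)) = G^{\pm}(P)$ for both signs, while in the reversing case $G^{+}$ and $G^{-}$ are swapped: $G^{+}(\sigma(P)) = G^{-}(P)$ and $G^{-}(\sigma(P)) = G^{+}(P)$. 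Taking the maximum of $G^{+}$ and $G^{-}$ gives $G(\sigma(P)) = G(P)$ in either case.

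For part~(2), set $\bfQ := \bfH^{n}({\boldsymbol \sigma}(\bfP))$. I would first verify that $\bfQ$ satisfies Assumption~\ref{assumption}, so that $h_{\bfQ}$ is defined and Theorem~\ref{thm:unlikely intersection thm} is applicable. Since $\bfQ$ has coordinates in $\overline{K}[t]$, its forward and backward iterates under $\bfH$ have integral coordinates at every finite place $v$ of $K(t)$, whence $G^{\pm}_{v}(\bfQ) = 0$ for every such $v$ and $\widetilde{h}_{\bfH}(\bfQ) = G_{\infty}(\bfQ)$, where $\infty$ denotes the place at infinity of $K(t)$. Applying part~(1) with $\Omega = \KK_{\infty}$ to ${\boldsymbol \sigma}$ and $\bfP$ gives the equality of unordered pairs
\[
  \{G^{+}_{\infty}({\boldsymbol \sigma}(\bfP)), G^{-}_{\infty}({\boldsymbol \sigma}(\bfP))\} = \{G^{+}_{\infty}(\bfP), G^{-}_{\infty}(\bfP)\},
\]
at least one of which is positive because $\widetilde{h}_{\bfH}(\bfP) > 0$. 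Therefore $G_{\infty}(\bfQ) = \max\{d^{n} G^{+}_{\infty}({\boldsymbol \sigma}(\bfP)),\, d^{-n} G^{-}_{\infty}({\boldsymbol \sigma}(\bfP))\} > 0$, and hence $\bfQ$ satisfies Assumption~\ref{assumption}. Next, I would show $\Sigma(\bfQ) = \Sigma(\bfP)$ by a direct algebraic argument: for any $t \in \Aff^{1}(\overline{K})$, the specialized relation $\sigma_{t}^{-1} H_{t}^{m} \sigma_{t} = H_{t}^{\pm m}$ iterates to $H_{t}^{mk} \sigma_{t} = \sigma_{t} H_{t}^{\pm mk}$, from which one sees that $P_{t}$ is periodic under $H_{t}$ if and only if $\sigma_{t}(P_{t})$ is; since applying $H_{t}^{n}$ preserves periodicity, the latter holds if and only if $Q_{t} = H_{t}^{n}(\sigma_{t}(P_{t}))$ is periodic. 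Hence $\Sigma(\bfQ) = \Sigma(\bfP)$, and by hypothesis $\Sigma(\bfP) \cap \Sigma(\bfQ) = \Sigma(\bfP)$ is infinite. Theorem~\ref{thm:unlikely intersection thm} applied to $\bfP$ and $\bfQ$ then yields $h_{\bfP} = h_{\bfQ}$, i.e., $h_{\bfH^{n}({\boldsymbol \sigma}(\bfP))} = h_{\bfP}$.

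The main obstacle is the bookkeeping in part~(1): one must correctly track which of $G^{+}$ or $G^{-}$ appears on the right-hand side when $\sigma$ reverses $H^{m}$, and verify that the additive distortion of the affine map is indeed annihilated by the factor $d^{-mk}$. Once part~(1) is in hand, part~(2) is essentially formal, the only subtle point being the verification of Assumption~\ref{assumption} for $\bfH^{n}({\boldsymbol \sigma}(\bfP))$, which is handled by applying part~(1) at the single place at infinity of $K(t)$.
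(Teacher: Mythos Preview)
Your proof is correct and follows essentially the same approach as the paper. For part~(1), both arguments use the bounded distortion of an invertible affine map to show that the limits defining $G^{\pm}$ are unchanged (or swapped) under $\sigma$; your formulation with $\log^{+}\|\sigma(Q)\| \leq \log^{+}\|Q\| + c_{\sigma}$ is in fact slightly more precise than the paper's multiplicative bound $C^{-1}\|Q\| \leq \|\sigma(Q)\| \leq C\|Q\|$, which as stated only holds for linear maps.

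For part~(2), the paper takes a marginally different route: it first deduces $h_{{\boldsymbol\sigma}(\bfP)} = h_{\bfP}$ \emph{directly} from part~(1) (applied at each place $v \in M_{K}$ to get $G_{{\boldsymbol\sigma}(\bfP),v} = G_{\bfP,v}$, and at the places of $K(t)$ to get $\widetilde{h}_{\bfH}({\boldsymbol\sigma}(\bfP)) = \widetilde{h}_{\bfH}(\bfP)$), and only then invokes Theorem~\ref{thm:unlikely intersection thm} to pass from ${\boldsymbol\sigma}(\bfP)$ to $\bfH^{n}({\boldsymbol\sigma}(\bfP))$. Your approach bundles everything into a single application of Theorem~\ref{thm:unlikely intersection thm} by directly showing $\Sigma(\bfQ) = \Sigma(\bfP)$; this is equally valid, and your explicit verification of Assumption~\ref{assumption} for $\bfQ$ is more careful than the paper's treatment. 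The one small point you should add is that, since ${\boldsymbol\sigma}$ is only given over $\overline{K}[t]$, one must first replace $K$ by a finite extension so that $\bfQ \in \Aff^{2}(K[t])$, as Theorem~\ref{thm:unlikely intersection thm} is stated for points over $K[t]$; the paper makes this replacement explicit.
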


\Proof
{\bf (1)} Since $\sigma$ is an invertible affine map of $\Aff^2(\Omega)$, there exists a constant $C \geq 1$ 
such that 
$
C^{-1} \Vert Q \Vert \leq \Vert \sigma(Q) \Vert \leq C \Vert Q \Vert
$ 
for any $Q \in \Aff^2(\Omega)$. 
In particular, we have 
$
C^{-1} \Vert H^n(P) \Vert \leq \Vert \sigma(H^n(P)) \Vert \leq C \Vert H^n(P) \Vert
$ 
for any $n \in \ZZ$. 

Suppose that $\sigma^{-1} \circ H^m \circ \sigma = H^{m}$. 
Then we have
\begin{align*}
 G^+(\sigma(P)) 
 & =  \lim_{n\to +\infty}\frac{1}{d^n} \log^+ \Vert H^n (\sigma(P))\Vert 
 =  \lim_{k\to +\infty}\frac{1}{d^{km}} \log^+ \Vert H^{km} (\sigma(P))\Vert \\
 & =  \lim_{k\to +\infty}\frac{1}{d^{km}} \log^+ \Vert \sigma^{-1} H^{km} (\sigma(P))\Vert
 = \lim_{k\to +\infty}\frac{1}{d^{km}} \log^+ \Vert H^{km}(P)\Vert \\
 & =  \lim_{n\to +\infty}\frac{1}{d^n} \log^+ \Vert H^n (P)\Vert  = G^+(P).  
\end{align*}
Similarly, we have  $G^-(\sigma(P))  = G^-(P)$. 
Since $G = \max\{G^+, G^-\}$,  we have $G(\sigma(P))= G(P)$. 

Next suppose that $\sigma^{-1} \circ H^m \circ \sigma = H^{-m}$. Then 
by similar arguments as above we have 
$G^+(\sigma(P))  = G^-(P)$ and $G^-(\sigma(P))  = G^+(P)$. 
Since $G = \max\{G^+, G^-\}$,  we again have $G(\sigma(P))= G(P)$. 
 
\smallskip
{\bf (2)} 
Since $h_{\bfP}$ does not change under a finite extension of fields $K^\prime/K$, 
replacing $K$ by a finite extension field, we may assume that ${\boldsymbol \sigma}$ is defined over $K[t]$. 
It then follows from (1) that $h_{{\boldsymbol \sigma}(\bfP)} = h_{\bfP}$.  
Suppose that $t \in \Sigma(\bfP)$, and we write $H_t^n(P_t) = P_t$. Then 
$H_t^{mn}(P_t) = P_t$, and we have $(\sigma_t \circ H_t^{mn}) (P_t) = \sigma_t(P_t)$, 
which implies $H_t^{mn}(\sigma_t(P_t)) = \sigma_t(P_t)$ if 
${\boldsymbol \sigma}^{-1} \circ \bfH^m \circ {\boldsymbol \sigma} = \bfH^{m}$, and 
$H_t^{-mn}(\sigma_t(P_t)) = \sigma_t(P_t)$ if ${\boldsymbol \sigma}^{-1} \circ \bfH^m \circ {\boldsymbol \sigma} = \bfH^{-m}$. In either case, we have $t \in \Sigma({\boldsymbol \sigma}(\bfP))$. If follows that $\Sigma({\boldsymbol \sigma}(\bfP))$ 
is infinite. Since $\Sigma({\boldsymbol \sigma}(\bfP)) = \Sigma(\bfH^n({\boldsymbol \sigma}(\bfP)))$, Theorem~\ref{thm:unlikely intersection thm} 
implies that $h_{\bfH^n({\boldsymbol \sigma}(\bfP))} = h_{{\boldsymbol \sigma}(\bfP)} = h_{\bfP}$. 
\QED

\setcounter{equation}{0}
\section{Set of periodic parameter values: result on emptiness}
\label{sec:finiteness:per:param}

This section is complementary to Section~\ref{sec:set:periodic:parameters}. 
Let $K$ be a field. 
We consider the family of quadratic H\'enon maps in Example~\ref{eg:quad:Henon:map}: 
\begin{equation}
\label{eqn:Henon:quadratic}
\bfH = (H_{t})_{t \in \overline{K}}\colon
\Aff^2 \to \Aff^2, \quad
(x, y)
\mapsto
\left(y + x^2 + t, x\right). 
\end{equation}
We take $a, b \in \overline{K}$, and 
consider a constant family $\bfP = (a, b)$ as an initial point.  
In the following, we will simply denote the specialization of $\bfP$ at $t$ 
by $P$ instead of $P_t$, because it is a constant family. 

As we see in Example~\ref{eg:quad:Henon:map:3}, if 
$a + b = 0$, then 
\[
  \# \Sigma((a, b)) 
  = \#\{t \in \Aff^1(\overline{K}) \mid \text{$(a, b)$ is periodic with respect to $H_t$}\}
  = \infty. 
\]

\medskip
In this section, we study if $\Sigma(\bfP) = \emptyset$. In other words, we study if, 
for some initial points $\bfP$ of some one-parameter families of H\'enon maps, 
there are no periodic parameter values. 

To begin with, we consider the condition $H_{t}^n(P) = P$ for any fixed $n \geq 2$. 
Recall that we write $H_t^n(P) = (A_n(t), A_{n-1}(t))$ (see \eqref{eqn:H:n:A:B}). 
If $a, b, t \in \overline{K}$ satisfy
$H_t^n(P) = P$, then, as polynomials in $t$, $A_n(t) -a = 0$ and $A_{n-1}(t) - b = 0$
have a common solution, so that the resultant $\Res_t(A_n(t)-a, A_{n-1}(t)-b) = 0$.

\begin{Lemma}
\label{lem:a:b:indep}
If $a, b \in K$ are algebraically independent over the prime subfield of $K$, 
then $\Sigma((a, b)) = \emptyset$.  
\end{Lemma}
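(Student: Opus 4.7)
The plan is to reduce $\Sigma((a,b)) = \emptyset$ to the non-vanishing of a single resultant, viewed as a polynomial in two variables with integer coefficients, and then invoke the algebraic independence of $a$ and $b$.

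First I would use $H_t^n(P) = (A_n(t), A_{n-1}(t))$ from \eqref{eqn:H:n:A:B} to translate the existence of a $t \in \overline{K}$ with $H_t^n(P) = P$ into the existence of a common root $t \in \overline{K}$ of the two polynomials $A_n(t) - a$ and $A_{n-1}(t) - b$ in $\overline{K}[t]$. Since $A_n \in \ZZ[a, b][t]$, this common root would force the resultant
\[
  R_n(a, b) \;:=\; \Res_t\bigl(A_n(t) - a,\; A_{n-1}(t) - b\bigr) \;\in\; \ZZ[a, b]
\]
to vanish when evaluated at the given $a, b \in K$. It therefore suffices to show that for every $n \geq 1$, $R_n$ is a nonzero element of $\Pi[a, b]$, where $\Pi \subseteq K$ is the prime subfield of $K$; for then algebraic independence of $a, b$ over $\Pi$ gives $R_n(a, b) \neq 0$ in $K$, contradicting the presumed periodicity.

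To establish the non-vanishing of $R_n$, I would compute its leading coefficient with respect to the variable $b$. A straightforward induction from the recursion $A_{n+1} = A_{n-1} + A_n^2 + t$ shows that, viewed over $\ZZ[a, b]$, the polynomial $A_n(t)$ is monic in $t$ of degree $d_n := 2^{n-1}$ for every $n \geq 1$ (the leading term coming from the $A_n^2$ contribution at each step). For $n \geq 2$, writing $\alpha_1, \dots, \alpha_{d_n}$ for the roots of $A_n(t) - a$ in an algebraic closure of $\QQ(a, b)$, the resultant formula for two monic polynomials gives
\[
  R_n(a, b) \;=\; \prod_{i=1}^{d_n} \bigl(A_{n-1}(\alpha_i) - b\bigr),
\]
which is a polynomial in $b$ of degree $d_n$ with leading coefficient $(-1)^{d_n} = 1$ (since $d_n$ is even for $n \geq 2$). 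For the excluded case $n = 1$, the polynomial $A_0(t) - b = a - b$ is a nonzero constant in $t$, so the standard convention $\Res_t(f,c) = c^{\deg_t f}$ yields $R_1 = a - b$, whose leading coefficient in $b$ is $-1$. In every case, the leading coefficient of $R_n$ as an element of $\ZZ[a][b]$ equals $\pm 1$.

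The argument is essentially elementary and I do not foresee any serious obstacle. The one mild point that must be attended to is that $R_n$ must be nonzero not only over $\QQ$ but also over $\FF_p$ for every prime $p$, since $K$ may have positive characteristic and $\Pi$ may be a finite prime field; this is precisely the reason for computing that the leading coefficient in $b$ is a \emph{unit} in $\ZZ$ rather than merely a nonzero integer, so that it survives reduction modulo every $p$. Once this is in place, $R_n$ is a nonzero element of $\Pi[a, b]$ for each $n \geq 1$, and the algebraic independence of $a, b$ over $\Pi$ forces $R_n(a, b) \neq 0$ in $K$, so no $t \in \overline{K}$ can make $H_t^n((a, b)) = (a, b)$, establishing $\Sigma((a, b)) = \emptyset$.
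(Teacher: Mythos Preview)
Your overall strategy---reduce to showing that the resultant $R_n(a,b) = \Res_t(A_n(t)-a,\,A_{n-1}(t)-b)$ is a nonzero element of $\Pi[a,b]$, and do this by exhibiting its leading coefficient in $b$ as a unit in $\ZZ$---is exactly the paper's strategy. The case $n=1$ is fine. The problem is your justification of the leading-coefficient claim for $n\ge 2$.

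You write $R_n = \prod_{i=1}^{d_n}\bigl(A_{n-1}(\alpha_i)-b\bigr)$ with $\alpha_i\in\overline{\QQ(a,b)}$ the roots of $A_n(t)-a$, and then read off ``degree $d_n$ in $b$ with leading coefficient $(-1)^{d_n}$.'' But the $\alpha_i$ themselves depend on $b$: already $A_1(t)=t+b+a^2$, so every $A_n(t)$ has coefficients in $\ZZ[a,b]$, not $\ZZ[a]$. Hence the factors $A_{n-1}(\alpha_i)-b$ are elements of $\overline{\QQ(a,b)}$ for which ``degree in $b$'' is not defined, and the product formula by itself says nothing about $\deg_b R_n$. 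To see that the reasoning can genuinely fail, take monic $f(t)=t^2-b^2$ with roots $\alpha=\pm b$ and $g(t)=t-b$; then $\Res_t(f,g)=\prod_\alpha(\alpha-b)=(b-b)(-b-b)=0$, not a monic polynomial of degree $2$ in $b$. Something specific to the H\'enon recursion must be used.

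The paper fills this gap by specializing $a=0$ and invoking Proposition~\ref{prop:resultant:is:monic:app}, which is precisely the statement that $R_n(0,b)=\pm b^{2^{n-1}}+(\text{lower order, integer coefficients})$; the paper itself flags in the remark following the lemma that this nonvanishing ``may not be straightforward to show,'' and the proof of that proposition is a substantial Sylvester-matrix computation. Once you have $R_n(0,b)$ monic up to sign, $R_n$ is nonzero over every prime field, and your argument goes through. So either cite Proposition~\ref{prop:resultant:is:monic:app} at this point, or supply an independent argument (e.g.\ a Newton-polygon analysis of $A_n(t)-a$ at $b=\infty$, using that the top $(t,b)$-homogeneous part of $A_n$ is $(t+b)^{d_n}$) that controls $\deg_b R_n$; the bare product formula does not.
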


\Proof
We observe that $\bfH$ is defined over the prime field. 
Since $a - b \neq 0$, there is no $t \in \overline{K}$ with $H_{t}(P) = P$. 
For $n \geq 2$, since $\Res_t(A_n(t)-a, A_{n-1}(t)-b)$ is not zero as a polynomial of $a, b$ (which follows, 
for example, from Proposition~\ref{prop:resultant:is:monic:app} below),  algebraic independence 
of $a, b$ implies that there is no $t \in \overline{K}$ with $H_{t}^n(P) = P$. 
Hence $\Sigma((a, b)) = \emptyset$. 
\QED

\begin{Remark}
In the proof of Lemma~\ref{lem:a:b:indep}, 
we use the fact that $\Res_t(A_n(t)-a, A_{n-1}(t)-b)$ is not zero as a polynomial of $a, b$ for any $n \geq 2$, 
which may not be straightforward to show. Our proof uses 
Proposition~\ref{prop:resultant:is:monic:app} below. 
\end{Remark}

From Lemma~\ref{lem:a:b:indep}, we are interested in the case where $K$ 
is the prime field $\FF_p$ when the characteristic of $K$ is $p \geq1$, 
and $\QQ$ when the  the characteristic of $K$ is zero. 

For the positive characteristic case, we have the following. 

\begin{Lemma}
\label{lem:a:b:finite}
Let $K = \FF_p$. 
Then for any $(a, b) \in \overline{\FF}_p$, we have 
$\Sigma((a, b)) = \overline{\FF}_p$.  
\end{Lemma}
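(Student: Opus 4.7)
The idea is completely elementary: over $\overline{\FF}_p$ everything is contained in some finite field, and a polynomial automorphism of $\Aff^2$ restricts to a bijection on the $\FF_q$-rational points, so every orbit must be periodic. I would carry this out as follows.

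Fix an arbitrary $t \in \Aff^1(\overline{\FF}_p) = \overline{\FF}_p$. The specialization $H_t\colon \Aff^2 \to \Aff^2$ is then defined over the finite field $L := \FF_p(a, b, t)$, and being a H\'enon map it is an automorphism of $\Aff^2$ with inverse also defined over $L$ (cf.\ \S~\ref{subsec:henon:maps}). Restricting to $L$-rational points gives a bijection $H_t\colon L^2 \to L^2$ on a finite set.

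Since $(a,b) \in L^2$ and $L^2$ is finite, the forward orbit $\{H_t^n(a,b)\}_{n \geq 0}$ is eventually periodic; and because $H_t$ is a bijection on $L^2$, eventual periodicity forces genuine periodicity. Hence there exists $n \geq 1$ with $H_t^n(a,b) = (a,b)$, so $t \in \Sigma((a,b))$. As $t$ was arbitrary, we conclude $\Sigma((a,b)) = \overline{\FF}_p$.

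There is essentially no obstacle here; the only thing to notice is that one must invoke bijectivity of $H_t$ on $L^2$ (rather than merely finiteness of $L^2$) in order to upgrade eventual periodicity to periodicity, which in turn uses the fact that $\bfH$ is a H\'enon map (so that $H_t^{-1}$ is also polynomial and defined over $L$).
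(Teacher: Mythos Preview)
Your proof is correct and follows essentially the same approach as the paper's: pick a finite subfield containing $a,b,t$, observe that $H_t$ is a bijection on its rational points, and conclude that every orbit is periodic. The only cosmetic difference is that you spell out the step from eventual periodicity to periodicity via bijectivity, whereas the paper simply notes that $H_t$ is an automorphism of $F^2$ and bounds the period by $((\#F)^2)!$.
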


\Proof
We take any $t \in \overline{\FF}_p$. We take a finite subfield $F$ of $\overline{\FF}_p$ 
such that $a, b, t \in F$. Then the map $H_t(x, y) = (y + x^2 + t, x)$ gives an 
automorphism of $F^2$. Thus $(a, b) \in F^2$ is periodic with respect to $H_t$ (with period dividing 
$((\#F)^2)!$). 
We obtain that $t \in \Sigma((a, b))$. Since $t$ is arbitrary, we have $\Sigma((a, b)) = \overline{\FF}_p$. 
\QED

Thus the most interesting case would be the case $K = \QQ$. 
Our main result in this section is the following proposition, 
giving many (constant) points $\bfP$ with empty periodic parameter values.  
Let $\overline{\ZZ}$ denote the ring of algebraic integers. 
 
\begin{Proposition}[$=$ Proposition~\ref{prop:intro:finiteness}]
\label{prop:resultant:is:monic}
Let $K = \QQ$ (or any field of characteristic zero).  Then for any $(0, b)$ with $b \notin \overline{\ZZ}$, 
we have $\Sigma((0, b)) = \emptyset$. 
\end{Proposition}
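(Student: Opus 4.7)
The plan is to obtain a contradiction via a $v$-adic analysis at a non-archimedean place where $b$ has absolute value greater than $1$.

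First I would exploit the hypothesis $b \notin \overline{\ZZ}$: this means there is a non-archimedean place $v$ of $\overline{\QQ}$ such that $\alpha := |b|_v > 1$. Suppose for contradiction that $H_t^n(0,b) = (0,b)$ for some $n \geq 1$ and some $t \in \overline{\QQ}$, and extend $v$ to $\QQ(b,t)$. Set $\beta := |t|_v$. Since the orbit of $(0,b)$ is finite, it is $v$-adically bounded, so both $|A_k(t)|_v$ and $|B_k(t)|_v$ (in the notation of Definition~\ref{def:An:Bn}) must remain bounded as $k \to \infty$. I will show this fails by splitting into two cases according as $\beta \geq \alpha^2$ or $\beta < \alpha^2$.

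In the case $\beta \geq \alpha^2$, the strict inequality $\beta > \alpha$ (which uses $\alpha > 1$) gives $|A_1(t)|_v = |b+t|_v = \beta > 1$ by ultrametric strict dominance. Then, using the recursion $A_{k+1}(t) = A_{k-1}(t) + A_k(t)^2 + t$, an induction on $k$ shows $|A_k(t)|_v = \beta^{2^{k-1}}$ for every $k \geq 1$, since $|A_k(t)^2|_v = \beta^{2^k}$ always strictly dominates $|A_{k-1}(t)|_v$ and $|t|_v$. In particular $A_n(t) \neq 0$. But periodicity $H_t^n(0,b) = (0,b)$ forces the $x$-coordinate of $H_t^n(0,b)$ to be $0$, that is, $A_n(t) = 0$, a contradiction.

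In the case $\beta < \alpha^2$, one uses the backward orbit instead. From $B_1(t) = -b^2 - t$ and $|b^2|_v = \alpha^2 > \beta$, we get $|B_1(t)|_v = \alpha^2 > 1$. The recursion $B_{k+1}(t) = B_{k-1}(t) - B_k(t)^2 - t$ then yields, by a parallel induction, $|B_k(t)|_v = \alpha^{2^k}$ for every $k \geq 1$, so $\|H_t^{-k}(0,b)\|_v \to \infty$, contradicting periodicity.

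The only delicate point is the borderline case $\beta = \alpha^2$, where one might worry about cancellations in $b+t$; however $\alpha > 1$ rules out $\alpha = \alpha^2$, so $|b|_v \neq |t|_v$ and the ultrametric dominance at step $1$ is strict, making the forward-orbit induction go through cleanly. Every other step of both inductions is governed by the square term in the recursion strictly dominating the linear term and the constant~$t$, which is immediate from $\alpha > 1$. No further ingredients beyond elementary non-archimedean arithmetic and the recursions of Definition~\ref{def:An:Bn} are needed.
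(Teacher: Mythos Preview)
Your proof is correct and takes a genuinely different route from the paper's. The paper argues via resultants: it establishes (in Proposition~\ref{prop:resultant:is:monic:app}) that for each $n\ge 2$ the resultant $\Res_t(A_n(t),A_{n-1}(t)-b)$ is, up to sign, a \emph{monic} polynomial in $b$ of degree $2^{n-1}$ with integer coefficients; periodicity would force this resultant to vanish, hence $b\in\overline{\ZZ}$. Proving that monicity requires a nontrivial computation with the Sylvester matrix (split into the cases $n$ even and $n$ odd, with a chain of elementary column operations and a careful degree count of the resulting entries). By contrast, your argument bypasses resultants entirely: you pick a non-archimedean place $v$ with $|b|_v>1$, split according to whether $|t|_v\ge |b|_v^2$ or $|t|_v<|b|_v^2$, and use only the ultrametric inequality together with the recursions for $A_k$ and $B_k$ to force unbounded growth of $|A_k|_v$ or $|B_k|_v$. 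This is shorter and more elementary; it also makes transparent why the threshold $|t|_v=|b|_v^2$ is the right one (it is exactly where the dominant term in $B_1=-b^2-t$ switches). What the paper's approach buys in exchange is a structural statement about the resultants themselves (Proposition~\ref{prop:resultant:is:monic:app}), which may be of independent interest, whereas your argument is tailored to the emptiness conclusion and does not yield that by-product.
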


\Proof
Let $\bfP = (0, b)$ with $b \in \overline{K} \setminus \overline{\ZZ}$. 
Since $b \neq 0$, we have $H_t(P) \neq P$ for any $t$.
Suppose that $\Sigma(P) \neq \emptyset$. Then there exist $n \geq 2$ and $t_0 \in \CC$ such that
$H_{t_0}^n(P) = P$. Since the equations $A_n(t) = 0$ and $A_{n-1}(t) - b = 0$
have the common solution $t_0$, we have $\Res_t(A_n(t), A_{n-1}(t)) = 0$.
By Proposition~\ref{prop:resultant:is:monic:app} below, we then have $b \in \overline{\ZZ}$, which contradicts with
our assumption.
\QED

\begin{Example}
Let $K= \QQ$. By Example~\ref{eg:quad:Henon:map:3}, 
we have $\# \Sigma((0, 0)) = \infty$ and $\# \Sigma((-1, 1)) = \infty$. 
By Proposition~\ref{prop:resultant:is:monic}, we have $\Sigma((0, 1/2)) = \emptyset$. 
On the other hand, Figure~1, 2, 3 depicting escaping rates 
of $\bfP$ each looks complicated with fractal structures. 
\end{Example}

We finish this paper by showing the following proposition, which was used to prove 
Proposition~\ref{prop:resultant:is:monic}. 

\begin{Proposition}
\label{prop:resultant:is:monic:app}
Let $n \geq 2$.
As a polynomial in $b$, we have
\begin{equation}
  \label{eqn:prop:resultant:is:monic}
\Res_t(A_n(t), A_{n-1}(t)-b)
= \pm b^{2^{n-1}} + c_{1} b^{2^{n-1} -1} + \cdots + c_{2^{n-1}}
\end{equation}
with $c_i \in \ZZ$ for $i = 1, \ldots, 2^{n-1}$.
\end{Proposition}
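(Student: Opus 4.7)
My approach combines a projective-geometric count (for the $b$-degree) with a Puiseux analysis at $b=\infty$ (for the leading coefficient). First, by induction on $n$ using the recursion $A_{n+1} = A_{n-1} + A_n^2 + t$, I would establish that as a polynomial in $\ZZ[t,b]$,
\[
A_n(t,b) = (t+b)^{2^{n-1}} + (\text{terms of total degree} < 2^{n-1}\text{ in }(t,b)).
\]
Consequently $A_n(t,b) = b^{2^{n-1}} + (\text{lower in }b)$ with leading coefficient $1$ independent of $t$, and likewise $A_{n-1}(t,b) - b = b^{2^{n-2}} + (\text{lower})$. Integrality $R_n(b) \in \ZZ[b]$ is immediate since $A_n, A_{n-1} - b \in \ZZ[t,b]$.

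For the $b$-degree, I would homogenize to plane curves of degrees $2^{n-1}$ and $2^{n-2}$ in $\PP^2_{[t:b:w]}$. Since at $w = 0$ both reduce to powers of $(t+b)$, the only possible intersection at infinity is $P_\infty := [1:-1:0]$, and Bezout gives total intersection number $2^{n-1} \cdot 2^{n-2} = 2^{2n-3}$. In local coordinates $(s,w) = (b+1,w)$ (with $t=1$), the local equations $F_n(s,w) := \bar{A}_n(1,s-1,w)$ and $G_n(s,w) := F_{n-1}(s,w) - (s-1)w^{2^{n-2}-1}$ satisfy the projective recursion
\[
F_{n+1} = F_n^2 + w^{3 \cdot 2^{n-2}} F_{n-1} + w^{2^n - 1}, \qquad F_2 = s^2 + w.
\]
Reducing $F_n$ modulo $G_n$ shows $F_n$ becomes $w^{\text{high}} \cdot (\text{unit})$ in the local ring $k[[s,w]]/(G_n)$, and a direct $k$-dimension count of $k[[s,w]]/(F_n,G_n)$ yields intersection multiplicity exactly $2^{2n-3} - 2^{n-1}$ at $P_\infty$. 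Subtracting from Bezout gives $\deg_b R_n(b) = 2^{n-1}$. (The case $n=2$, where the second curve $t=0$ does not pass through $P_\infty$, is checked by direct computation giving $R_2(b) = b^2$.)

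For the leading coefficient, write $R_n(b) = \prod_{i=1}^{2^{n-1}} (A_{n-1}(t_i,b) - b)$ where $t_i = t_i(b)$ are the Puiseux roots of $A_n(t,b) = 0$ in $t$ at $b = \infty$. Setting $v_i := t_i + b$ and $S_k := \tilde{A}_k(v_i,b)$ with $\tilde{A}_k(v,b) := A_k(v-b,b)$ satisfying $\tilde{A}_{k+1} = \tilde{A}_k^2 + \tilde{A}_{k-1} + v - b$, one gets the key identity $S_k^2 = S_{k+1} - S_{k-1} + (b - v_i)$. The weighted-homogeneous leading part of $\tilde{A}_n$ (with $\mathrm{wt}(v)=1$, $\mathrm{wt}(b)=2$) is $(v^2-b)^{2^{n-2}}$, so $v_i$ has Puiseux valuation $1/2$ at $b=\infty$ and $|b-v_i|=1$. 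Taking Puiseux valuations in the identity and using the boundary conditions $|S_0| = |S_n| = -\infty$ (from $\tilde{A}_0 = 0$ and $\tilde{A}_n(v_i) = 0$), one obtains
\[
|S_k| \;\leq\; \tfrac12 \max\bigl(|S_{k+1}|,\, |S_{k-1}|,\, 1\bigr) \qquad \text{for } 1 \leq k \leq n-1,
\]
and a simple maximum argument on the finite sequence $(|S_k|)_{k=0}^n$ forces $|S_k| \leq 1/2$ for $1 \leq k \leq n-1$. In particular $\tilde{A}_{n-1}(v_i, b) = O(\sqrt{b})$, so $A_{n-1}(t_i,b) - b = -b + O(\sqrt{b})$, and
\[
R_n(b) = \prod_{i=1}^{2^{n-1}}\bigl(-b + O(\sqrt{b})\bigr) = (-1)^{2^{n-1}} b^{2^{n-1}} + O(b^{2^{n-1}-1/2}) = +b^{2^{n-1}} + (\text{lower}),
\]
giving leading coefficient $+1$ (since $2^{n-1}$ is even for $n\ge 2$).

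The main obstacle is the intersection-multiplicity computation at $P_\infty$: organizing the dimension count for $k[[s,w]]/(F_n,G_n)$ via the recursion between the $F_n$'s is technical, although the expected reduction pattern $F_n \equiv w^{\text{high}}\cdot(\text{unit}) \pmod{G_n}$ is clearly visible already in the $n=3,4$ cases. Once the multiplicity count is established, the Puiseux argument for the leading coefficient is comparatively clean, requiring only the discrete maximum-argument inequality on valuations.
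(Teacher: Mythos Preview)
Your Puiseux argument is correct and in fact already proves the full statement by itself, so the B\'ezout/intersection-multiplicity part---which you flag as the main obstacle and do not fully carry out---is redundant. Once you show $|S_k|\le 1/2$ for $1\le k\le n-1$ via the maximum argument (which is clean: $M\le \tfrac12\max(M,1)$ forces $M\le 1/2$), you get $A_{n-1}(t_i,b)-b=-b+O(\sqrt b)$ for every Puiseux root $t_i$, and hence
\[
R_n(b)=\prod_{i=1}^{2^{n-1}}\bigl(-b+O(\sqrt b)\bigr)=b^{2^{n-1}}+O\bigl(b^{2^{n-1}-1/2}\bigr).
\]
Since $R_n\in\ZZ[b]$, this forces $R_n(b)=b^{2^{n-1}}+(\text{lower terms in }\ZZ[b])$, giving both the degree and the leading coefficient in one stroke. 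So you can simply delete the projective-geometric count and the multiplicity computation at $P_\infty$; nothing is lost.

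Your route is genuinely different from the paper's. The paper exploits the reversibility $\iota\circ\bfH\circ\iota=\bfH^{-1}$ (with $\iota(x,y)=(-y,-x)$) to rewrite $\Res_t(A_n,A_{n-1}-b)$ as $\pm\Res_t(A_m-B_{m-1},\,A_{m-1}-B_m)$ when $n=2m$ (and analogously for odd $n$), thereby halving the $t$-degrees involved; it then establishes coefficient-degree bounds for these shorter polynomials and performs explicit column operations on the Sylvester matrix to isolate the diagonal contribution $\prod_i f_{ii}=(b^{2^m})^{2^{m-1}}$. Your Puiseux/valuation argument is more conceptual, avoids the even/odd case split, does not use the reversibility at all, and pins down the sign as $+1$ rather than just $\pm 1$. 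The paper's approach, on the other hand, stays entirely within elementary polynomial algebra and makes the Sylvester structure transparent, which may generalize more readily to other one-parameter families where the Puiseux root structure is less explicit.
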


\Proof
We compute some Sylvester resultants to prove Proposition~\ref{prop:resultant:is:monic:app}. 
Computations are not difficult, but lengthy. We only sketch a proof, leaving  the details to 
the reader. 

We observe that the condition $H^n_t(P) = P$ is equivalent to $H^{n/2}_t(P) = H^{-n/2}_t(P)$ (resp. $H^{(n+1)/2}_t(P) = H^{-(n-1)/2}_t(P)$) for even $n$ (resp. odd $n$).  We divide the proof into two cases depending on whether $n$ is an even integer or an odd integer. We sketch a proof for the even case. (The odd case is shown similarly.) We set $n = 2m$. 

In the following, we treat $b$ as another variable which is independent of $t$ and regard $A_n(t)$ and $B_n(t)$ as polynomials in variables $b$ and $t$ for all integers $n\ge 0.$ 
By properties of resultants (cf. \cite[Chap.~3.~Sect.~1]{CLO}) and the recursive relations \eqref{eqn:def:An}, \eqref{eqn:def:Bn}, we have 
\begin{equation}
\label{eqn:CLO}
\Res_t(A_{2m}(t), A_{2m-1}(t)-b) = \pm 
\Res_t(A_{m}(t) - B_{m-1}(t), A_{m-1}(t)- B_m(t)).  
\end{equation}

We set $C_m(t) := A_{m}(t) - B_{m-1}(t)$ and $D_m(t) := A_{m-1}(t)- B_m(t)$. 
Then one can check that $\deg_t(C_m(t)) = \deg_t(D_m(t)) = 2^{m-1}$. We write 
\begin{align*}
C_m(t) & = \gamma_{m\, 0}(b) t^{2^{m-1}} + \gamma_{m\, 1}(b)  t^{2^{m-1}-1} + \cdots +  \gamma_{m\,
2^{m-1}}(b), \\
D_m(t) & := \delta_{m\, 0}(b) t^{2^{m-1}} + \delta_{m\, 1}(b)  t^{2^{m-1}-1} + \cdots +  \delta_{m\, 2^{m-1}}
(b)
\end{align*}
with $\gamma_{m\, i}(b), \delta_{m\, i}(b) \in \ZZ[b]$.

For $C_m(t)$, one can check that $\gamma_{m\, 0}(b) = 1$, 
and $\deg(\gamma_{m\, i}(b))  \le i$ for any $0 \leq i \leq 2^{m-1}$.
For $D_m(t)$, one can check $\delta_{m\, 0}(b) = 1$ and 
$\deg(\delta_{m\, i}(b))  \leq 2i$ 
for any $0 \leq i \leq 2^{m-1}$.
We also have
\[
\delta_{m\, 2^{m-1}}(b) = b^{2^{m}} + \text{(lower terms in $b$ with integer coefficients)}.
\]

Then, by repeating elementary transformations of the Sylvester matrix of the resultant 
$\Res_t(C_m(t), D_m(t))$, one can show that 
\begin{equation}
\label{eqn:resul:C:D}
  \Res_t(C_m(t), D_m(t))= b^{2^{2m-1}} + c_{1}^\prime b^{2^{2m-1}-1} + \cdots + c_{2^{2m-1}}^\prime
\end{equation}
with $c_i^\prime \in \ZZ$.

By \eqref{eqn:CLO} and \eqref{eqn:resul:C:D}, we obtain \eqref{eqn:prop:resultant:is:monic}. 
\QED

\begin{Example}
We treat the case of $n = 4$ (i.e. $m=2$) to illustrate 
how we repeat elementary transformations of the Sylvester matrix of the resultant 
$\Res_t(C_m(t), D_m(t))$ in the proof of Proposition~\ref{prop:resultant:is:monic:app}.  
Our goal is to verify that $\Res_t(A_4(t), A_{3}(t)-b)
= \pm b^{8} + c_{1} b^{7} + \cdots + c_{8}$ with $c_i \in \ZZ$ 
in a way that generalizes to any even $n$.

It suffices to show that $\Res_t(C_2(t), D_2(t))
= b^{8} + c_{1}^\prime b^{7} + \cdots + c_{8}^\prime$ with $c_i^\prime \in \ZZ$.
We have 
\begin{align}
\label{eqn:app:property:c2}
C_2(t) & = t^2 + \gamma_{2\, 1}(b) t + \gamma_{2\, 2}(b) \\
\notag
& \hspace{5eM} (\gamma_{2\, i}(b) \in \ZZ[b], \; \deg(\gamma_{2\, 1}(b)) \leq 1, \; \deg(\gamma_{2\, 2}(b)) \leq 2),
\\
\label{eqn:app:property:d2}
D_2(t) & = t^2 + \delta_{2\, 1}(b) t + \delta_{2\, 2}(b) \\
\notag
& \hspace{5eM} (\delta_{2\, i}(b) \in \ZZ[b], \; \deg(\delta_{2\, 1}(b)) \leq 2, \\
\notag
& \hspace{5eM} \; \delta_{2\, 2}(b) = b^4 + \text{(lower
terms in $b$ with integer coefficients)}).
\end{align}

To simplify the notation, we denote $\gamma_{2\, i}(b)$ and $\delta_{2\, i}(b)$ by
$\gamma_{i}$ and $\delta_{i}$ for $i = 1, 2$.
Then the resultant is given by the determinant of the Sylvester matrix 
{\allowdisplaybreaks
\begin{align*}
\Res_t(C_2(t), D_2(t))
&  =
\begin{vmatrix}
1 & \gamma_1 & \gamma_2 & 0 \\
0 & 1 & \gamma_1 & \gamma_2\\
1 & \delta_1 & \delta_2 & 0 \\
0 & 1 & \delta_1 & \delta_2\\
\end{vmatrix}
=
\begin{vmatrix}
1 & 0 & 0 & 0 \\
0 & 1 & \gamma_1 & \gamma_2\\
1 & \delta_1-\gamma_1  & \delta_2-\gamma_2 & 0 \\
0 & 1 & \delta_1 & \delta_2
\end{vmatrix}
\\
& =
\begin{vmatrix}
1 & 0 & 0 & 0 \\
0 & 1 & 0 & 0 \\
1 & \delta_1-\gamma_1  & - \gamma_1 (\delta_1 - \gamma_1) + (\delta_2-\gamma_2)&
- \gamma_2(\delta_1 - \gamma_1) \\
0 & 1 & \delta_1 -  \gamma_1 & \delta_2 - \gamma_2
\end{vmatrix}
\\
& =
\begin{vmatrix}
- \gamma_1 (\delta_1 - \gamma_1) + (\delta_2-\gamma_2)  & - \gamma_2(\delta_1 - \gamma_1) \\
\delta_1 -  \gamma_1 & \delta_2 - \gamma_2
\end{vmatrix},
\end{align*}
}%
where, for the second equality, the second (resp. third) column is subtracted by $\gamma_1$ (resp. $\gamma_2$)
times the first column, and, for the third equality, the third (resp. fourth) column is subtracted by $\gamma_1$ (resp.
$\gamma_2$) times the second column.

We write the right hand side of the above displayed equality as
\[
\begin{vmatrix}
f_{11} & f_{12} \\
f_{21} & f_{22}
\end{vmatrix},
\]
so that, for example, $f_{11} =  - \gamma_1 (\delta_1 - \gamma_1) + (\delta_2-\gamma_2)$. By
\eqref{eqn:app:property:c2} and \eqref{eqn:app:property:d2}, we have
\[
  f_{ii} = b^4 + \text{(lower terms in $b$ with integer coefficients)} \quad \text{for $i=1, 2$},
\]
and $\deg(f_{21}) \leq 2$ and $\deg(f_{12}) \leq 4$. Then
\[
  \Res_t(C_2(t), D_2(t)) = f_{11} f_{22} - f_{12} f_{21}
  = b^8 + \text{(lower terms in $b$ with integer coefficients)}, 
\]
as desired.
\end{Example}

\if0
\renewcommand{\theTheorem}{A.\arabic{Theorem}}
\renewcommand{\theClaim}{A.\arabic{Theorem}.\arabic{Claim}}
\renewcommand{\theequation}{A.\arabic{equation}}
\renewcommand{\thesubsection}{A.\arabic{subsection}}
\setcounter{Theorem}{0}
\setcounter{subsection}{0}
\setcounter{Claim}{0}
\setcounter{equation}{0}
\section*{Appendix}

The purpose of this appendix to prove Proposition~\ref{prop:resultant:is:monic:app}.
Observe that the condition $H^n_t(P) = P$ is equivalent to $H^{n/2}_t(P) = H^{-n/2}_t(P)$ (resp. $H^{(n+1)/2}
_t(P) = H^{-(n-1)/2}_t(P)$) for even $n$ (resp. odd $n$).  We divide the proof into two cases depending on whether
$n$ is an even integer or an odd integer.

\subsection{$A_n(t)$ and $B_n(t)$ as polynomials in $t$ and $b$}

In the following, we treat $b$ as another variable which is independent of $t$ and regard $A_n(t)$ and $B_n(t)$ as
polynomials in variables $b$ and $t$ for all integers $n\ge 0.$ First, we have

\begin{align}
\label{eqn:appendix:A0:A4}
& A_0(t) = 0, \qquad  A_1(t) = b + t, \quad\text{and}\\
\label{eqn:appendix:B0:B4}
 & B_0(t) = b, \qquad  B_1(t) = -t - b^2.
\end{align}

Thus, the total degree (in $b$ and $t$) of $A_1(t)$ is 1 and $B_1(t)$ is 2. Instead of $B_n(t)$, we consider
polynomials $B_n(t^2)$. Thus, both $A_1(t)$ and $B_1(t^2)$ are homogeneous polynomials in $b$ and $t$ of
degrees 1 and 2 respectively. The recursive relations satisfied by $A_n(t)$ and $B_n(t)$  for $n\ge 1$ reads as
follows:
\begin{align}
\label{eqn:appendix:An}
  A_{n+1}(t) &  = A_{n-1}(t) + A_n(t)^2 + t ,\quad \text{and} \\
 \label{eqn:appendix:Bn}    B_{n+1} (t^2) & = B_{n-1}(t^2) - B_n(t^2)^2 - t^2 .
\end{align}

A simple induction shows that the highest degree homogeneous part, which we will call the leading part, of
$A_n(t)$ is given by $A_1(t)^{2^{n-1}}$ which has degree $2^{n-1}$. Similarly, the  leading   part of  $B_n(t^2)$
is given by $- B_1(t^2)^{2^{n-1}}$ which has degree $2^n.$ Thus we can write
\begin{align}
\label{eqn:before:lemma:appendix:form:An:Bn}
 A_n(t) & = \alpha_{n\, 0}(b) t^{2^{n-1}} + \alpha_{n\, 1}(b)  t^{2^{n-1}-1} + \cdots +  \alpha_{n\, 2^{n-1}}(b),
\\
 \notag
 B_n(t) & = \beta_{n\, 0}(b) t^{2^{n-1}} + \beta_{n\, 1}(b)  t^{2^{n-1}-1} + \cdots +  \beta_{n\, 2^{n-1}}(b)
 \end{align}
 with $\alpha_{n\, i}(b), \beta_{n\, i}(b) \in \ZZ[b]$.

\begin{Lemma}
\label{lemma:appendix:form:An:Bn}
Let $n \geq 1$.
\begin{parts}
\Part{(1)}
We have $\deg(\alpha_{n\, i}(b)) = i$ for $0 \leq i \leq 2^{n-1}$. Further, $\alpha_{n\, 0}(b)  = 1$
and $\alpha_{n\, 2^{n-1}}(b) = b^{2^{n-1}} + \text{(lower terms in $b$ with integer coefficients)}$.
\Part{(2)}
We have $\deg(\beta_{n\, i}(b)) = 2i$ for $0 \leq i \leq 2^{n-1}$.
Further, $\beta_{n\, 0}(b)  = -1$ and
$\beta_{n\, 2^{n-1}}(b) = - b^{2^{n}} + \text{(lower terms in $b$ with integer coefficients)}$.
\end{parts}
\end{Lemma}

\Proof
The proof follows from the observation above. Namely, the leading part of $A_n(t)$ and $B_n(t^2)$ are given by
$A_1(t)^{2^{n-1}}$ and $-B_1(t^2)^{2^{n-1}}$ respectively. The assertions are now clear from the expansion of
these two homogeneous polynomials.
\QED

\subsection{The case for even $n$}
\label{subsec:finiteness:per:param:even:n}
In this subsection, we show \eqref{eqn:prop:resultant:is:monic} when $n$ is an
even positive integer. We write $n = 2m$ for $m \geq 1$.

As we remarked at the beginning of this appendix, the condition $H_t^{2m}(0, b) = (0, b)$ is equivalent to $H_t^{m}
(0, b) = H_t^{-m}(0, b)$. The former form gives $A_{2m}(t) = 0$ and $A_{2m-1}(t) - b= 0$,
while the latter form gives $A_m(t) - B_{m-1}(t) = 0$ and $A_{m-1}(t) - B_{m}(t) = 0$. This equivalence gives the
equality for resultants.

\begin{Lemma}
\label{lemma:appendix:A:to:B}
We have
\[
\Res_t(A_{2m}(t), A_{2m-1}(t) -b) = \pm \Res_t(A_m(t) - B_{m-1}(t), A_{m-1}(t) - B_{m}(t)).
\]
\end{Lemma}

\Proof
We put $B_{-1}(t) = 0$.
We are going to show that
$\Res_t(A_{2m}(t), A_{2m-1}(t) -b) = \pm \Res_t(A_{2m-i}(t) - B_{i-1}(t), A_{2m-1-i}(t) -B_{i}(t))$
for any $i = 0, \ldots, m$. . The case $i = 0$ is the obvious equality, and the case $i = m$ gives the desired result.
Let $0 \leq i \leq m-1$.
Using \eqref{eqn:appendix:An} and \eqref{eqn:appendix:Bn}, we get
{\allowdisplaybreaks
\begin{align*}
A_{2m-i}(t) - B_{i-1}(t)
& = (A_{2m-i-2}(t) + A_{2m-i-1}(t)^2 + t) - B_{i-1}(t) \\
& \equiv (A_{2m-i-2}(t) + B_{i}(t)^2 + t) - B_{i-1}(t)  \quad \text{(mod. $A_{2m-1-i}(t) -B_{i}(t)$)} \\
& = A_{2m-i-2}(t) - B_{i+1}(t).
\end{align*}
Since $0 \leq i \leq m-1$, we see that $A_{2m-i}(t) -B_{i-1}(t)$ is a monic polynomial
with degree $2^{2m-i}$. Then by \cite[Chap.~3.~Sect.~1]{CLO} we get
\begin{align*}
& \Res_t(A_{2m-i}(t) - B_{i-1}(t), A_{2m-1-i}(t) -B_{i}(t)) \\
& \quad = \Res_t(A_{2m-i-2}(t) - B_{i+1}(t), A_{2m-1-i}(t) -B_{i}(t)) \\
& \quad = \pm \Res_t(A_{2m-1-i}(t) -B_{i}(t), A_{2m-i-2}(t) - B_{i+1}(t)).
\end{align*}
}
By induction on $i$, we obtain the assertion.
\QED

We set
\[
C_m(t)  := A_m(t) - B_{m-1}(t), \qquad
D_m(t)  := A_{m-1}(t) - B_{m}(t),
\]
and we write
\begin{align*}
C_m(t) & = \gamma_{m\, 0}(b) t^{2^{m-1}} + \gamma_{m\, 1}(b)  t^{2^{m-1}-1} + \cdots +  \gamma_{m\,
2^{m-1}}(b), \\
D_m(t) & := \delta_{m\, 0}(b) t^{2^{m-1}} + \delta_{m\, 1}(b)  t^{2^{m-1}-1} + \cdots +  \delta_{m\, 2^{m-1}}
(b)
\end{align*}
with $\gamma_{m\, i}(b), \delta_{m\, i}(b) \in \ZZ[b]$.

For $C_m(t)$, Lemma~\ref{lemma:appendix:form:An:Bn} gives $\gamma_{m\, 0}(b) = 1$;
further, for $0 \leq i \leq 2^{m-2}-1$, we have
$\deg\left(\gamma_{m\, i}(b)\right) = \deg\left(\alpha_{m\, i}(b)\right) = i$.
For $2^{m-2} \leq i \leq 2^{m-1}$, we have
\[
 \deg\left(\gamma_{m\, i}(b)\right)
 \leq   \max\left\{
   \deg\left(\alpha_{m\, i}(b)\right), \deg\left(\beta_{m-1\, i-2^{m-2}}(b)\right)  \right\} 
 =  \max\left\{ i, 2 \left(i - 2^{m-2}\right) \right\} = i.
\]
Thus we have
\begin{equation}
\label{eqn:appendix:gamma:degree}
\deg(\gamma_{m\, i}(b))  \le i
\end{equation}
for any $0 \leq i \leq 2^{m-1}$.

For $D_m(t)$, Lemma~\ref{lemma:appendix:form:An:Bn} gives $\delta_{m\, 0}(b) = 1$;
Further, for $0 \leq i \leq 2^{m-2}-1$, we have $\deg\left(\delta_{m\, i}(b)\right)
= \deg\left(\beta_{m\, i}(b)\right) = 2 i$; For $2^{m-2} \leq i \leq 2^{m-1}$, we have
\[
 \deg\left(\delta_{m\, i}(b)\right)  \leq  \max\left\{  \deg\left(\alpha_{m-1\, i-2^{m-2}}(b)\right),
\deg\left(\beta_{m\, i}(b)\right)  \right\}
 \leq \max\left\{  i-2^{m-2}, 2i  \right\}  = 2i.
\]
Thus we have
\begin{equation}
\label{eqn:appendix:delta:degree}
\deg(\delta_{m\, i}(b))  \leq 2i
\end{equation}
for any $0 \leq i \leq 2^{m-1}$.
We also have
\[
\delta_{m\, 2^{m-1}}(b) = \alpha_{m-1\, 2^{m-2}}(b) - \beta_{m\, 2^{m-1}}(b)
= b^{2^{m}} + \text{(lower terms in $b$ with integer coefficients)}.
\]

\begin{Example}
Before we show \eqref{eqn:prop:resultant:is:monic} for even $n = 2m$, we illustrate a proof
in the case of $n = 4$ (i.e. $m=2$).  Our goal is to verify that $\Res_t(A_4(t), A_{3}(t)-b)
= \pm b^{8} + c_{1} b^{7} + \cdots + c_{8}$ with $c_i \in \ZZ$ 
in a way that generalizes to any even $n$.

By Lemma~\ref{lemma:appendix:A:to:B}, it suffices to show that
$\Res_t(C_2(t), D_2(t))
= b^{8} + c_{1}^\prime b^{7} + \cdots + c_{8}^\prime$ with $c_i^\prime \in \ZZ$.
As we have seen, we have
\begin{align}
\label{eqn:app:property:c2}
C_2(t) & = t^2 + \gamma_{2\, 1}(b) t + \gamma_{2\, 2}(b) \\
\notag
& \hspace{5eM} (\gamma_{2i}(b) \in \ZZ[b], \; \deg(\gamma_{2\, 1}(b)) \leq 1, \; \deg(\gamma_{2\, 2}(b)) \leq 2),
\\
\label{eqn:app:property:d2}
D_2(t) & = t^2 + \delta_{2\, 1}(b) t + \delta_{2\, 2}(b) \\
\notag
& \hspace{5eM} (\delta_{2i}(b) \in \ZZ[b], \; \deg(\delta_{2\, 1}(b)) \leq 2, \\
\notag
& \hspace{5eM} \; \delta_{2\, 2}(b) = b^4 + \text{(lower
terms in $b$ with integer coefficients)}).
\end{align}

To simplify the notation, we denote $\gamma_{2\, i}(b)$ and $\delta_{2\, i}(b)$ by
$\gamma_{i}$ and $\delta_{i}$ for $i = 1, 2$.
Then the resultant is given by
{\allowdisplaybreaks
\begin{align*}
\Res_t(C_2(t), D_2(t))
&  =
\begin{vmatrix}
1 & \gamma_1 & \gamma_2 & 0 \\
0 & 1 & \gamma_1 & \gamma_2\\
1 & \delta_1 & \delta_2 & 0 \\
0 & 1 & \delta_1 & \delta_2\\
\end{vmatrix}
=
\begin{vmatrix}
1 & 0 & 0 & 0 \\
0 & 1 & \gamma_1 & \gamma_2\\
1 & \delta_1-\gamma_1  & \delta_2-\gamma_2 & 0 \\
0 & 1 & \delta_1 & \delta_2
\end{vmatrix}
\\
& =
\begin{vmatrix}
1 & 0 & 0 & 0 \\
0 & 1 & 0 & 0 \\
1 & \delta_1-\gamma_1  & - \gamma_1 (\delta_1 - \gamma_1) + (\delta_2-\gamma_2)&
- \gamma_2(\delta_1 - \gamma_1) \\
0 & 1 & \delta_1 -  \gamma_1 & \delta_2 - \gamma_2
\end{vmatrix}
\\
& =
\begin{vmatrix}
- \gamma_1 (\delta_1 - \gamma_1) + (\delta_2-\gamma_2)  & - \gamma_2(\delta_1 - \gamma_1) \\
\delta_1 -  \gamma_1 & \delta_2 - \gamma_2
\end{vmatrix},
\end{align*}
}%
where, for the second equality, the second (resp. third) column is subtracted by $\gamma_1$ (resp. $\gamma_2$)
times the first column, and, for the third equality, the third (resp. fourth) column is subtracted by $\gamma_1$ (resp.
$\gamma_2$) times the second column.

We write the right hand side of the above displayed equality as
\[
\begin{vmatrix}
f_{11} & f_{12} \\
f_{21} & f_{22}
\end{vmatrix},
\]
so that, for example, $f_{11} =  - \gamma_1 (\delta_1 - \gamma_1) + (\delta_2-\gamma_2)$. By
\eqref{eqn:app:property:c2} and \eqref{eqn:app:property:d2}, we have
\[
  f_{ii} = b^4 + \text{(lower terms in $b$ with integer coefficients)} \quad \text{for $i=1, 2$},
\]
and $\deg(f_{21}) \leq 2$ and $\deg(f_{12}) \leq 4$. Then
\[
  \Res_t(C_2(t), D_2(t)) = f_{11} f_{22} - f_{12} f_{21}
  = b^8 + \text{(lower terms in $b$ with integer coefficients)}, 
\]
as desired.
\end{Example}

\begin{proof}[Proof of \protect{\eqref{eqn:prop:resultant:is:monic}} for even $n = 2m$.]
By Lemma~\ref{lemma:appendix:A:to:B}, it suffices to show that
\[
  \Res_t(C_m(t), D_m(t))= b^{2^{2m-1}} + c_{1} b^{2^{2m-1}-1} + \cdots + c_{2^{2m-1}}
  \]
with $c_i \in \ZZ$.

To simplify the notation, we denote $\gamma_{m\, i}(b)$ and $\delta_{m\, i}(b)$ by
$\gamma_{i}$ and $\delta_{i}$ for $1 \leq i \leq 2^{m-1}$.
By Sylvester's formula, $\Res_t(C_m(t), D_m(t))$ is equal to
the determinant of the $2^m \times 2^m$ matrix
\[
\begin{vmatrix}
1 & \gamma_{1} & \gamma_{2} & \cdots & \gamma_{2^{m-1}-1} & \gamma_{2^{m-1}} & & & &\\
& 1 & \gamma_{1} & \gamma_{2} & \cdots & \gamma_{2^{m-1}-1} & \gamma_{2^{m-1}} & & &\\
& & \ddots & \ddots & \ddots  &  \cdots & \ddots  & \ddots &&\\
& & & \ddots & \ddots & \ddots  & \cdots & \ddots   &\ddots  & \\
& & & &1 & \gamma_{1} & \gamma_{2} & \cdots & \gamma_{2^{m-1}-1} & \gamma_{2^{m-1}}\\
1 & \delta_{1} & \delta_{2} & \cdots & \delta_{2^{m-1}-1} & \delta_{2^{m-1}} & & && \\
& 1 & \delta_{1} & \delta_{2} & \cdots & \delta_{2^{m-1}-1} & \delta_{2^{m-1}} & && \\
& & \ddots & \ddots & \ddots  &  \cdots & \ddots  & \ddots &&\\
& & & \ddots & \ddots & \ddots  & \cdots & \ddots   &\ddots  & \\
& && & 1 & \delta_{1} & \delta_{2} & \cdots & \delta_{2^{m-1}-1} & \delta_{2^{m-1}}
\end{vmatrix}.
\]

We first subtract $\gamma_{j}$ times the first column from the $(j+1)$-st column for
$1 \leq j \leq 2^{m-1}$. Then we subtract $\gamma_{j}$
times the second column from the $(j+2)$-nd column for $1 \leq j \leq 2^{m-1}$. Continuing this procedure,
we finally subtract $\gamma_{j}$ times the $2^{m-1}$-th column from the $(j+2^{m-1})$-th column for
$1 \leq j \leq 2^{m-1}$. Then the above determinant is equal to
\[
\begin{vmatrix}
1 & 0 & 0 & \cdots & 0 & 0 & 0 & 0 & \cdots & 0\\
0 & 1 & 0 & \cdots & 0 & 0 & 0  & 0 & \cdots & 0 \\
\vdots&\vdots & \ddots &\ddots  &  \vdots &\vdots & \vdots & \ddots  & \ddots& \vdots\\
\vdots&\vdots & \ddots &\ddots  &  \vdots &\vdots & \vdots & \ddots  & \ddots& \vdots\\
0 & 0 & 0 & \cdots & 1 &  0 & 0 & 0 & \cdots & 0 \\
1 & g_{12} & g_{13} &  \cdots  & g_{1\, 2^{m-1}} & f_{11} & f_{12} & f_{13} & \cdots & f_{1\, 2^{m-1}} \\
0 & 1 & g_{23} &  \cdots &g_{2\, 2^{m-1}} & f_{21} & f_{22} & f_{23} & \cdots  & f_{2\, 2^{m-1}} \\
\vdots&\vdots & \ddots &\ddots  &  \vdots &\vdots & \vdots & \ddots  & \ddots& \vdots\\
\vdots&\vdots & \ddots &\ddots  &  \vdots &\vdots &  \vdots & \ddots  & \ddots& \vdots\\
0 & 0 & 0 & \cdots & 1 & f_{2^{m-1}\, 1} &  f_{2^{m-1}\, 2} &  f_{2^{m-1}\, 2} & \cdots &  f_{2^{m-1}\,
2^{m-1}}
\end{vmatrix},
\]
where, for example,
\begin{align*}
 g_{12}
 & = 1 \cdot (\delta_1 - \gamma_1), \\
 g_{13}
 & = -\gamma_1 \cdot (\delta_1 - \gamma_1) + 1 \cdot (\delta_2 - \gamma_2), \\
 g_{14}
 & = (\gamma_1^2 -\gamma_2) \cdot (\delta_1 - \gamma_1) - \gamma_1 \cdot (\delta_2 - \gamma_2) + 1 \cdot
(\delta_3 - \gamma_3), \\
 & \vdots \\
 f_{2^{m-1}\, 2^{m-1}}
 & = 1 \cdot (\delta_{2^{m-1}} - \gamma_{2^{m-1}}).
\end{align*}
We put $\gamma_0 = 1$.
We see that
$g_{ij}$ ($j \geq i+1$) is a $\ZZ$-linear combination of terms in the form of
\begin{equation}
\label{eqn:appendix:k1:s}
  \gamma_{k_1} \gamma_{k_2} \cdots \gamma_{k_r} (\delta_s - \gamma_s)
\end{equation}
such that
$k_1 + \cdots + k_r + s = j-i$,
and
$f_{ij}$ ($1 \leq i, j \leq 2^{m-1}$) is a $\ZZ$-linear combination of terms in the form of \eqref{eqn:appendix:k1:s}
such that
\[
k_1 + \cdots + k_r + s = 2^{m-1} + (j-i).
\]

Now we look into $f_{ij}$ in more detail.

Suppose that $j \geq i + 1$. Then $f_{ij}$  does not have a term of \eqref{eqn:appendix:k1:s} with
$s = 2^{m-1}$. Thus $1 \leq s \leq 2^{m-1}-1$, and it follows from
\eqref{eqn:appendix:gamma:degree} and \eqref{eqn:appendix:delta:degree}
that
\begin{align}
\label{eqn:appendix:fij:i<j}
\deg(f_{ij})
& \leq \max\{\deg(\gamma_{k_1}) + \cdots + \deg(\gamma_{k_r})
+ \deg(\delta_s - \gamma_s)\}\\
\notag
& \leq \max\{k_1+ \cdots + k_r + 2s\}
\leq  \max\{(k_1+ \cdots + k_r + s) + s\} \\
\notag
& \leq (2^{m-1} + (j-i)) + (2^{m-1} -1) = 2^m + j-i-1
 \qquad
(\text{if $j \geq i+1$}).
\end{align}

Suppose that $j = i$. Since $f_{ii}$ has the term $1 \cdot (\delta_{2^{m-1}} - \gamma_{2^{m-1}})$,
we then have $\deg(f_{ii}) =  2^m$, and, in fact,
\begin{equation}
\label{eqn:appendix:fij:i=j}
 f_{ii} = b^{2^{m}} + \text{(lower terms in $b$ with integer coefficients)}.
\end{equation}

Suppose that $j \leq i - 1$. Then terms of \eqref{eqn:appendix:k1:s} that appear in $f_{ij}$ have only
$1 \leq s \leq 2^{m-1}- (i-j)$. Then arguing as above, we have
\begin{equation}
\label{eqn:appendix:fij:j<i}
\deg(f_{ij}) \leq  (2^{m-1} + (j-i)) + (2^{m-1} - (i-j)) = 2^m + 2 (j-i) \qquad
(\text{if $j \leq i-1$}).
\end{equation}

Then Lemma~\ref{lemma:appendix:determinant:degree} below (applied to $N = 2^{m-1}$ and $M =  2^m$) and
\eqref{eqn:appendix:fij:i=j} give
\begin{align*}
\Res_t(C_m(t), D_m(t))
& =
\begin{vmatrix}
f_{11} & f_{12} & f_{13} & \cdots & f_{1\, 2^{m-1}} \\
 f_{21} & f_{22} & f_{23} & \cdots  & f_{2\, 2^{m-1}} \\
\vdots & \vdots & \ddots  & \ddots& \vdots\\
\vdots &  \vdots & \ddots  & \ddots& \vdots\\
f_{2^{m-1}\, 1} &  f_{2^{m-1}\, 2} &  f_{2^{m-1}\, 2} & \cdots &  f_{2^{m-1}\, 2^{m-1}}
\end{vmatrix}
\\
& =
f_{11} f_{22} \cdots f_{2^{m-1}\, 2^{m-1}} +  \text{(lower terms in $b$ with integer coefficients)}
\\
& =
(b^{2^{m}})^{2^{m-1}}  + \text{(lower terms in $b$ with integer coefficients)}.
\end{align*}

Since $(b^{2^{m}})^{2^{m-1}}  = b^{2^{2m-1}} = b^{2^{n-1}}$,
this completes the proof of \eqref{eqn:prop:resultant:is:monic} for even $n = 2m$.
\end{proof}

\subsection{A lemma}
We prove a lemma, which we have used in the previous subsection.

\begin{Lemma}
\label{lemma:appendix:determinant:degree}
Fix positive integers $N, M \geq 1$ with $M \geq 2(N-1)$.
For any $1 \leq i, j \leq N$, assume that $f_{ij} \in \ZZ[b]$ is
a polynomial in $b$ with
\begin{equation}
\deg(f_{ij}) \leq
\begin{cases}
M + (j-i) - 1 & (\text{if $j \geq i+1$}), \\
M + 2 (j-i) & (\text{if $j \leq i$}) \\
\end{cases}
\end{equation}
Further, we assume that $\deg(f_{ii}) = M$ for $1 \leq i \leq N$.
Then the determinant of the $N \times N$ matrix $(f_{ij})$ has degree $M N$,
and the highest degree term comes from $\prod_{i=1}^N f_{ii}$.
\end{Lemma}

\Proof
We have
$\det(f_{ij}) = \sum_{\sigma \in S_N} {\rm sgn}(\sigma)
f_{1 \sigma(1)} \cdots f_{N \sigma(N)}$, where $S_N$ denote the $N$-th symmetric group.
We set $\ell = \#\{i \mid \sigma(i) > i\}$. We remark that, if $j \geq i+1$, then
$\deg(f_{ij}) \leq  M + (j-i) - 1 \leq  M + 2 (j-i) - 1$. It follows that
$
\deg(f_{1 \sigma(1)} \cdots f_{N \sigma(N)})
\leq M N + \sum_{i} 2 (\sigma(i) - i) - \ell = MN - \ell.
$
We have $\deg(f_{1 \sigma(1)} \cdots f_{N \sigma(N)}) = MN$ if and only $\ell = 0$ if and only if
to $\sigma$ is the identity. This gives the desired estimate.
\QED

\subsection{The case for odd $n$}
In this subsection, we show \eqref{eqn:prop:resultant:is:monic} when $n$ is an
odd positive integer. Together with \S~\ref{subsec:finiteness:per:param:even:n}, this will complete the proof
of Proposition~\ref{prop:resultant:is:monic}. We write $n = 2m -1$ for $m \geq 2$.

As one can show the odd case similar to the even case, we only sketch a proof.
We note that $H_t^{2m-1}(0, b) = (0, b)$ is equivalent to
$H_t^{m}(0, b) = H_t^{-(m-1)}(0, b)$, which gives $A_m(t) - B_{m-2}(t) = 0$ and $A_{m-1}(t) - B_{m-1}(t) = 0$.
As in the proof of Lemma~\ref{lemma:appendix:A:to:B}, we have
\[
\Res_t(A_{2m-1}(t), A_{2m-2}(t) -b) = \pm \Res_t(A_m(t) - B_{m-2}(t), A_{m-1}(t) - B_{m-1}(t)).
\]

We set
\[
\widetilde{C}_m(t)  := A_m(t) - B_{m-2}(t), \qquad
\widetilde{D}_m(t)  := A_{m-1}(t) - B_{m-1}(t).
\]
By \eqref{eqn:before:lemma:appendix:form:An:Bn}, we write
\begin{align*}
\widetilde{C}_m(t) & = \widetilde{\gamma}_{m\, 0}(b) t^{2^{m-1}} + \widetilde{\gamma}_{m\, 1}(b)
t^{2^{m-1}-1} + \cdots +  \widetilde{\gamma}_{m\, 2^{m-1}}(b), \\
\widetilde{D}_m(t) & := \widetilde{\delta}_{m\, 0}(b) t^{2^{m-2}} + \widetilde{\delta}_{m\, 1}(b)  t^{2^{m-2}-1} +
\cdots +  \widetilde{\delta}_{m\, 2^{m-2}}(b)
\end{align*}
with $\gamma_{m\, i}(b), \delta_{m\, i}(b) \in \ZZ[b]$.

For $\widetilde{C}_m(t)$, Lemma~\ref{lemma:appendix:form:An:Bn} gives $\widetilde{\gamma}_{m\, 0}(b) = 1$ and
$\deg(\widetilde{\gamma}_{m\, i}(b))  \leq i$ for any $0 \leq i \leq 2^{m-1}$.
For $\widetilde{D}_m(t)$, Lemma~\ref{lemma:appendix:form:An:Bn} gives $\widetilde{\delta}_{m\, 0}(b) = 2$ and
$\deg(\widetilde{\delta}_{m\, i}(b))  \leq 2 i$ for any $0 \leq i \leq 2^{m-2}$.
We also have
\begin{align*}
\widetilde{\delta}_{m\, 2^{m-2}}(b) & = \alpha_{m-1\, 2^{m-2}}(b) - \beta_{m-1\, 2^{m-2}}(b)
= b^{2^{m-1}} + \text{(lower terms in $b$)}.
\end{align*}
Then by a similar argument as in the even case, we obtain
\begin{align*}
\Res_t(A_{n}(t), A_{n-1}(t) -b)
& = \pm \Res_t(\widetilde{C}_m(t), \widetilde{D}_m(t)) \\
& = (b^{2^{m-1}})^{2^{m-1}} +  \text{(lower terms in $b$ with integer coefficients)}\\
& = b^{n-1} +  \text{(lower terms in $b$ with integer coefficients)}. 
\end{align*}
\fi


\begin{thebibliography}{99}
\newcommand{\au}[1]{{#1},}
\newcommand{\ti}[1]{\textit{#1},}
\newcommand{\jo}[1]{{#1}}
\newcommand{\vo}[1]{\textbf{#1}}
\newcommand{\yr}[1]{(#1),}
\newcommand{\pp}[1]{#1.}
\newcommand{\ppx}[1]{#1,}
\newcommand{\pps}[1]{#1;}
\newcommand{\bk}[1]{{#1},}
\newcommand{\inbk}[1]{in: {#1}}
\newcommand{\xxx}[1]{{arXiv:#1}}
\newcommand{\no}[1]{no. {#1},}

\bibitem[Au01]{autissier}
\au{P.~Autissier}
\ti{Points entiers sur les surfaces arithm\'etiques}
\jo{J. reine.  angew. Math.}
\vo{531}
\yr{2001}
\pp{201--235}

\bibitem[BD11]{Matt-Laura}
\au{M.~Baker and L.~DeMarco}
\ti{Preperiodic points and unlikely intersections}
\jo{Duke Math. J.}
\vo{159}
\yr{2011}
\no{1}
\pp{1--29}

\bibitem[BD13]{BD}
\au{M.~Baker and L.~DeMarco}
\ti{Special curves and postcritically finite polynomials}
\jo{ Forum Math. Pi}
\vo{1}
\yr{2013}
\pp{e3, 35pp}

\bibitem[BR06]{Baker-Rumely06}
\au{M.~Baker and R.~Rumely}
\ti{Equidistribution of small points, rational dynamics, and potential theory}
\jo{Ann. Inst. Fourier (Grenoble)}
\vo{56}
\yr{2006}
\no{3}
\pp{625--688}

\bibitem[BG06]{BG}
E.~Bombieri and W.~Gubler, \emph{Heights in {D}iophantine geometry}, New
Mathematical Monographs, vol.~4, Cambridge University Press, Cambridge, 2006.


\bibitem[BS91]{bs}
\au{E.~Bedford and J.~Smillie}
\ti{Polynomial diffeomorphisms of $\CC^{2}$: currents, equilibrium measure and hyperbolicity}
\jo{Invent. Math.}
\vo{103}
\yr{1991}
\no{1}
\pp{69--99}

\bibitem[CL06]{CL}
\au{A.~Chambert-Loir}
\ti{Mesures et \'equidistribution sur les espaces de  {B}erkovich}
\jo{J. Reine Angew. Math.}
\vo{595}
\yr{2006}
\pp{215--235}

\bibitem[CLO05]{CLO}
D.~Cox, J.~Little, and D.~O'Shea, 
Using algebraic geometry. Second edition. Graduate Texts in Mathematics,
185. Springer, New York, 2005.

\bibitem[De16]{demarco}
\au{L.~DeMarco}
\ti{Bifurcations, Intersections, and Heights}
\jo{Algebra Number Theory}
\vo{10}
\yr{2016}
\no{5}
\pp{1031--1056}

\bibitem[Den95]{Denis}
\au{L.~Denis}
\ti{Points p\'eriodiques des automorphismes affines}
\jo{J. Reine Angew. Math.}
\vo{467}
\yr{1995}
\pp{157--167}
  
\bibitem[DF17]{DF}
\au{R.~Dujardin and C.~Favre}
\ti{The dynamical Manin-Mumford problem for plane polynomial automorphisms}
\jo{J. Eur. Math. Soc. (JEMS)}
\vo{19}
\yr{2017}
\no{11}
\pp{ 3421--3465}



\bibitem[DG18]{DG}
\au{L.~DeMarco and D.~Ghioca}
\ti{Rationality of dynamical canonical height}
\jo{Ergodic Theory and Dynamical Systems}, 
to appear.

\bibitem[DM17+]{DM}
\au{L.~DeMarco and N.~Mavraki}
\ti{Variation of canonical height and equidistribution}
preprint 2017, arXiv:1701.07947.  

\bibitem[DWY16]{DWY}
\au{L.~DeMarco, X.~Wang, and H.~Ye}
\ti{Torsion points and the Latt\`es family}
\jo{Amer. J. Math.}
\vo{138}
\yr{2016}
\no{3}
\pp{697--732}

\bibitem[FG11]{fabgran}
\au{C.~Faber and A.~Granville}
\ti{Prime factors of dynamical sequences}
\jo{J. Reine Angew. Math.}
\vo{661}
\yr{2011}
\pp{189--214}

\bibitem[FG16+]{fg}
\au{C.~Favre and T.~Gautier}
\ti{Classification of special curves in the space of cubic polynomials}
preprint 2016, arXiv:1603.05126. 

\bibitem[FRL04]{fr04}
\au{C.~Favre and J.~Rivera-Letelier}
\ti{Th\'eor\`eme d'\'equidistribution de Brolin en dynamique
    $p$-adique}
\jo{C. R. Math. Acad. Sci. Paris}
\vo{339}
\yr{2004}
\no{4}
\pp{271--276}

\bibitem[FRL06]{fr06}
\au{C.~Favre and J.~Rivera-Letelier}
\ti{\'Equidistribution quantitative des points des petite hauteur sur la droite projective}
\jo{Math. Ann.}
\vo{355}
\yr{2006}
\no{4}
\pp{311--361}


\bibitem[FM89]{FM}
\au{S.~Friedland and J.~Milnor}
\ti{Dynamical properties of plane polynomial automorphisms}
\jo{Ergodic Theory Dynam. Systems}
\vo{9}
\yr{1989}
\no{1}
\pp{67--99}

\bibitem[FS92]{fs}
\au{J.~Forn{\ae}ss and N.~Sibony} 
\ti{Complex H\'enon mappings in $\CC^2$ and Fatou-Bieberbach domains}
\jo{Duke Math. J.}
\vo{65}
\yr{1992}
\no{2}
\pp{345--380}

\bibitem[GHT13]{ght1}
\au{D.~Ghioca, L.-C.~Hsia, and T.~J.~Tucker}
\ti{Preperiodic points for families of polynomials}
\jo{Algebra Number Theory}
\vo{7}
\yr{2013}
\no{3}
\pp{701--732}


\bibitem[GHT15]{ght2}
\au{D.~Ghioca, L.-C.~Hsia, and T.~J.~Tucker}
\ti{Preperiodic points for families of rational maps}
\jo{Proc. London Math. Soc. (3)}
\vo{110}
\yr{2015}
\no{2}
\pp{395--427}

\bibitem[GHT16]{ght3}
  \au{D.~Ghioca, L.-C.~Hsia, and T.~J.~Tucker}
  \ti{Unlikely Intersection for Two-Parameter Families of Polynomials}
  \jo{Int. Math. Res. Not. IMRN}
  \vo{2016}
  \yr{2016}
  \no{24}
  \pp{7589--7618}
  
\bibitem[GNT13]{gnt}
\au{C.~Gratton, K~Nguyen and T.~Tucker}
\ti{ABC implies primitive prime divisors in arithmetic dynamics}
\jo{Bull. London Math. Soc.}
\vo{45}
\yr{2013}
\no{6}
\pp{1194--1208}

\bibitem[Gu08]{Gubler}
\au{W.~Gubler}
\ti{Equidistribution over function fields}
\jo{Manuscripta Math.}
\vo{127}
\yr{2008}
\no{4}
\pp{485--510}

\bibitem[Ha77]{Ha}
R.~Hartshorne, 
\emph{Algebraic geometry}, Graduate Texts in Mathematics, No. 52. Springer-Verlag, 1977. 

\bibitem[HOV94]{ho}
\au{J.~Hubbard and R.~Oberste-Vorth}
\ti{H\'enon mappings in the complex domain. I. The global topology of dynamical space}
\jo{Inst. Hautes \'Etudes Sci. Publ. Math.}
\vo{79}
\yr{1994}
\pp{5--46.}

\bibitem[IS09]{ing-sil}
\au{P.~Ingram and J.~Silverman}
\ti{Primitive divisors in arithmetic dynamics}
\jo{Math. Proc. Cambridge Philos. Soc.}
\vo{146}
\yr{2009}
\no{2}
\pp{289--302}

\bibitem[In13]{ing13}
\au{P.~Ingram}
\ti{Variation of the canonical height for a family of polynomials}
\jo{J. Reine Angew. Math.}
\vo{685}
\yr{2013}
\pp{73--97}

\bibitem[In14]{ing}
\au{P.~Ingram}
\ti{Canonical heights for H\'{e}non maps}
\jo{Proc. London Math. Soc. (3)}
\vo{108}
\yr{2014}
\no{3}
\pp{780--808}

\bibitem[Ka06]{Ka06}
\au{S.~Kawaguchi}
\ti{Canonical height functions for affine plane automorphisms} 
\jo{Math. Ann} 
\vo{335} 
\yr{2006}
\no{2} 
\pp{285--310} 

\bibitem[Ka13]{Ka}
\au{S.~Kawaguchi}
\ti{Local and global canonical height functions for affine space regular automorphisms}
\jo{Algebra Number Theory}
\vo{7}
\yr{2013}
\no{5}
\pp{1225--1252}

\bibitem[La83]{dio-geo}
S.~Lang, \emph{Fundamentals of diophantine geometry}, 
Springer-Verlag, New York, 1983.

\bibitem[Le13a]{Lee1}
\au{C.-G.~Lee}
\ti{An upper bound for the height for regular affine automorphisms of $\Aff^n$. 
A height bound for regular affine automorphisms.}
\jo{Math. Ann.}
\vo{355}
\yr{2013}
\no{1}
\pp{1--16}

\bibitem[Le13b]{Lee}
\au{C.-G.~Lee}
\ti{The equidistribution of small points for strongly regular pairs of polynomial maps}
\jo{Math. Z.}
\vo{275}
\yr{2013}
\no{3-4}
\pp{1047--1072}

\bibitem[Ma03]{Marcello}
\au{S.~Marcello}
\ti{Sur la dynamique arithm\'etique des automorphismes de l'espace}
\jo{Bull. Soc. Math. France}
\vo{131}
\yr{2003}
\no{2}
\pp{229--257}


\bibitem[MZ08]{MZ1}
\au{D.~Masser and U.~Zannier}
\ti{Torsion anomalous points and families of elliptic curves}
\jo{C. R. Math. Acad. Sci. Paris}
\vo{346}
\yr{2008}
\no{9-10}
\pp{491--494}

\bibitem[MZ10]{MZ2}
\au{D.~Masser and U.~Zannier}
\ti{Torsion anomalous points and families of elliptic curves}
\jo{Amer. J. Math.}
\vo{132}
\yr{2010}
\no{6}
\pp{1677--1691}

\bibitem[MZ12]{MZ3}
\au{D.~Masser and U.~Zannier}
\ti{Torsion points on families of squares of elliptic curves}
\jo{Math. Ann.}
\vo{352}
\yr{2012}
\no{2}
\pp{453--484}

\bibitem[Si83]{silverman}
\au{J.~Silverman}
\ti{Heights and the specialization map for families of abelian varieties}
\jo{J. Reine Angew. Math.}
\vo{342}
\yr{1983}
\pp{197--211}

\bibitem[Si92]{Sil1}
\au{J.~Silverman}
\ti{Variation of the canonical height on elliptic surfaces. I. Three examples}
\jo{J. Reine Angew. Math.}
\vo{426}
\yr{1992}
\pp{151--178}


\bibitem[Si94a]{Sil2}
\au{J.~Silverman}
\ti{Variation of the canonical height on elliptic surfaces. II. Local analyticity properties}
\jo{J. Number Theory}
\vo{48}
\yr{1994}
\no{3}
\pp{291--329}


\bibitem[Si94b]{Sil3}
\au{J.~Silverman}
\ti{Variation of the canonical height on elliptic surfaces. III. Global boundedness properties}
\jo{J. Number Theory}
\vo{48}
\yr{1994}
\no{3}
\pp{330--352}

\bibitem[Si94c]{SilHenon}
\au{J.~Silverman}
\ti{Geometric and arithmetic properties of the H\'enon map}
\jo{Math. Z.}
\vo{215}
\yr{1994}
\no{2}
\pp{237--250}


\bibitem[Ta83]{tate}
\au{J.~Tate}
\ti{Variation of the canonical height of a point depending on a parameter}
\jo{Amer. J. Math.}
\vo{105}
\yr{1983}
\no{1}
\pp{287--294}

\bibitem[Th05]{Th}
A.~Thuillier, A. \emph{Th\'eorie du potentiel sur
    les courbes en g\'eometrie analytique non
    archim\'edienne. Applications \`a la th\'eorie d'Arakelov},
  Ph.D. thesis, University of Rennes, 2005.

\bibitem[Yu08]{Yuan}
\au{X.~Yuan}
\ti{Big line bundles over arithmetic varieties}
\jo{Invent. Math.}
\vo{173}
\yr{2008}
\no{3}
\pp{603--649}

\bibitem[YZ17]{YZ}
\au{X.~Yuan and S.~Zhang}
\ti{The arithmetic Hodge index theorem for adelic line bundles}
\jo{Math. Ann.}
\vo{367}
\yr{2017}
\no{3-4}
\pp{1123--1171}

\bibitem[Za12]{Zannier}
Zannier, Umberto, 
Some problems of unlikely intersections in arithmetic and geometry.
With appendixes by David Masser. Annals of Mathematics Studies, {\bf 181}. 
Princeton University Press, Princeton, NJ, 2012. 

\bibitem[Zh95]{zhang}
\au{S.~Zhang} 
\ti{Small points and adelic metrics}
\jo{J. Algebraic Geom.}
\vo{4}
\yr{1995}
\no{2}
\pp{281--300}

\end{thebibliography}
\end{document}